\numberwithin{equation}{section}
\newtheorem{theorem}{Theorem}[section]
\newtheorem{lemma}[theorem]{Lemma}
\newtheorem{proposition}[theorem]{Proposition}
\newtheorem{remark}[theorem]{Remark}
\newtheorem{definition}[theorem]{Definition}
\theoremstyle{remark}
\def\@rst #1 #2other{#1}
\newcommand\MR[1]{\relax\ifhmode\unskip\spacefactor3000 \space\fi
	\MRhref{\expandafter\@rst #1 other}{#1}}
\newcommand{\MRhref}[2]{\href{http://www.ams.org/mathscinet-getitem?mr=#1}{MR#2}}
\def\MR#1{\href{http://www.ams.org/mathscinet-getitem?mr=#1}{MR#1}}
\newcommand{\C}{\mathbbm{C}}
\newcommand{\E}{\mathbbm{E}}
\newcommand{\R}{\mathbbm{R}}
\renewcommand{\P}{\mathbbm{P}}
\newcommand{\bbH}{\mathbbm{H}}
\newcommand{\eps}{\varepsilon}
\newcommand{\LF}{{{\mathrm{LF}}}}
\newcommand{\pZ}{{\cZ_\kappa^{ \alpha}}}
\let\Re\undefined
\DeclareMathOperator{\Re}{Re}
\let\Im\undefined
\DeclareMathOperator{\Im}{Im}
\DeclareMathOperator{\Cov}{Cov}
\DeclareMathOperator{\Var}{Var}
\DeclareMathOperator{\SLE}{SLE}
\DeclareMathOperator{\rSLE}{rSLE}
\def\cZ{\mathcal{Z}}
\def\cW{\mathcal{W}}
\def\cS{\mathcal{S}}
\def\cP{\mathcal{P}}
\def\cM{\mathcal{M}}
\def\cL{\mathcal{L}}
\def\cF{\mathcal{F}}
\def\cD{\mathcal{D}}
\def\cC{\mathcal{C}}
\def\cB{\mathcal{B}}
\def\cA{\mathcal{A}}
\def\alb#1\ale{\begin{align*}#1\end{align*}}
\def\allb#1\alle{\begin{align}#1\end{align}}
\newcommand{\aryb}{\begin{eqnarray*}}
	\newcommand{\arye}{\end{eqnarray*}}
\def\alb#1\ale{\begin{align*}#1\end{align*}}
\newcommand{\eqb}{\begin{equation}}
	\newcommand{\eqe}{\end{equation}}
\newcommand{\eqbn}{\begin{equation*}}
	\newcommand{\eqen}{\end{equation*}}
\newcommand{\ol}{\overline}
\newcommand{\wt}{\widetilde}
\newcommand{\wh}{\widehat}
\let\originalleft\left
\let\originalright\right
\renewcommand{\left}{\mathopen{}\mathclose\bgroup\originalleft}
\renewcommand{\right}{\aftergroup\egroup\originalright}
\DeclareMathAlphabet{\mathpzc}{OT1}{pzc}{m}{it}
\begin{document}

	\title{Liouville conformal field theory and the quantum zipper}
	\author{
		Morris Ang
	}

	\date{\today}
	
	\maketitle

\begin{abstract}
	Sheffield showed that conformally welding a $\gamma$-Liouville quantum gravity (LQG) surface to itself gives a Schramm-Loewner evolution (SLE) curve with parameter $\kappa = \gamma^2$ as the interface, and Duplantier-Miller-Sheffield proved similar results for $\kappa  = \frac{16}{\gamma^2}$ for $\gamma$-LQG surfaces with boundaries decorated by looptrees of disks or by continuum random trees. We study these dynamics for LQG surfaces coming from Liouville conformal field theory (LCFT). At stopping times depending only on the curve, we give an explicit description of the surface and curve in terms of LCFT and SLE. This has applications to both LCFT and SLE. We prove the boundary BPZ equations for LCFT, 
	a crucial input for subsequent work with Remy, Sun and Zhu deriving the structure constants of boundary LCFT. With Yu we  prove the reversibility of whole-plane $\SLE_\kappa$ for $\kappa > 8$ via a novel radial mating-of-trees, and will show the space of LCFT surfaces is closed under conformal welding.
\end{abstract}
	
	\setcounter{tocdepth}{1}
	
	\tableofcontents

\section{Introduction}
Polyakov introduced a one-parameter family of random surfaces called \emph{Liouville quantum gravity (LQG)} to make sense of summation over surfaces \cite{polyakov-qg1}. The \emph{mating-of-trees} framework studies LQG through its coupling with random curves called \emph{Schramm-Loewner evolution (SLE)}.  
Let $\kappa > 0$ and $\gamma = \min (\sqrt\kappa, \frac4{\sqrt\kappa})$. $\SLE_\kappa$ is a simple curve when $\kappa \leq 4$, self-intersecting when $\kappa \in (4,8)$, and space-filling when $\kappa \geq 8$. 
When $\kappa \in (0,4]$ there is an infinite-volume $\gamma$-LQG surface which, when decorated by an independent $\SLE_\kappa$ curve, is invariant in law under the operation of conformally welding the two boundary arcs according to their random length measures; this is called the \emph{quantum zipper} \cite{shef-zipper, hp-welding, kms-sle4}. 
Similar results hold for other ranges of $\kappa$ when the boundary of the LQG surface is modified to have non-trivial topology \cite{wedges}.

Starting with these stationary  quantum zippers, the mating-of-trees approach develops a theory of conformal welding of special LQG surfaces, culminating in landmark results such as the equivalence of the Brownian map and LQG \cite{lqg-tbm1} and the convergence of random planar maps to LQG \cite{gms-tutte, hs-cardy-embedding}. A recent program \cite{AHS-SLE-integrability, ARS-FZZ, AS-CLE, ASY-triangle} extends the conformal welding theory to a larger class of LQG surfaces which arise from \emph{Liouville conformal field theory (LCFT)}. In these conformal weldings, whole boundary arcs are glued at once, in contrast to the quantum zipper where the gluing is incremental.

We study the quantum zipper dynamics of \cite{shef-zipper, wedges} applied to random surfaces arising from LCFT, applying a different zipping mechanism for each parameter range of $\kappa$. 
For $\kappa \in (0,4]$, we conformally weld the left and right boundaries of the LQG surface. For $\kappa \in (4,8)$, we add a Poissonnian collection of looptrees of LQG disks to the boundary, then mate the forested boundaries. For $\kappa > 8$, we attach a pair of correlated continuum random trees to the boundary arcs of the LQG surface, then mate the continuum random trees. This gives an LQG surface with an interface curve, see figure above. In all three regimes, when the process is run until a stopping time depending only on the curve, we give an explicit description of the joint law of the field and curve. Roughly speaking, we show the curve is described by reverse $\SLE_\kappa$ and  the field is described by LCFT; see Theorems~\ref{thm-simple-zipper},~\ref{thm-simple-zipper-rho},~\ref{thm-forest-zipper} and~\ref{thm-sf-zipper} for details. The $\kappa = 8$ case  requires further arguments and is not treated in this work. 
\begin{figure}
	\begin{center}
		\includegraphics[scale=0.37]{}
	\end{center}
\end{figure}

We then give an application of the LCFT zipper. Using representation theoretic methods, Belavin, Polyakov and Zamolodchikov (BPZ) proposed that some correlation functions of conformal field theories should satisfy certain differential equations \cite{bpz-conformal-symmetry}. This was rigorously proved for LCFT in \cite{krv-local} and used in the landmark computation of the LCFT three-point function \cite{krv-dozz}. There are substantial conceptual and technical difficulties in adapting the argument of \cite{krv-local} to boundary LCFT. We instead use SLE martingales arising from the LCFT zipper to obtain the boundary LCFT BPZ equations. These equations require a non-trivial coupling of cosmological constants~\eqref{eq-cosmocoupling} initially conjectured from special cases \cite{FZZ} and later obtained in physics  under the ansatz of uniqueness of the primary field of given conformal dimension \cite{bb-hem-lcft}; our argument gives a novel conceptual explanation.

Our results have consequences for both LCFT and SLE. With Guillaume Remy, Xin Sun and Tunan Zhu \cite{ARSZ-PT}, we use the BPZ equations to prove that the boundary three-point function and the  bulk-boundary function of LCFT agree with formulas proposed by  Ponsot-Teschner~\cite{pt-formula} and Hosomichi~\cite{hosomichi} respectively; this is the boundary analog of~\cite{krv-dozz} and provides the initial data for the boundary LCFT conformal bootstrap. 
With Pu Yu \cite{ay-reversibility} we establish a radial mating-of-trees  via the LCFT zipper, and use it to prove the reversibility of whole-plane $\SLE_\kappa$ for $\kappa> 8$. Whole-plane reversibility was shown for $\kappa \in (0,4]$ and $\kappa \in (4,8)$ by \cite{zhan-rev-whole-plane} and \cite{ig4} respectively, and our result  resolves the remaining case. This answers two conjectures of~\cite{vw-loewner-kufarev}. Finally, with Pu Yu we will prove that the conformal welding of LCFT surfaces of arbitrary genus gives a curve-decorated LCFT surface, extending the program of \cite{AHS-SLE-integrability, ARS-FZZ, AS-CLE, ASY-triangle}. See Section~\ref{sec-outlook} for details.

\subsection{Liouville quantum gravity and Liouville conformal field theory}

We first briefly recall some preliminaries, see Sections~\ref{subsec-gff} and~\ref{sec-LF} for details. The free boundary Gaussian free field (GFF) on a simply-connected domain $D \subset {\mathbb C}$ is the Gaussian process on $D$ whose covariance kernel is the Green function; it can be understood as a random generalized function $h$ \cite{shef-gff}.  
For $\gamma \in (0,2]$ and $h$ a variant of the GFF, the $\gamma$-LQG area measure $\cA_h$ on $D$ and boundary measure $\cL_h$ on $\partial D$ are heuristically defined by $\cA_h(dz) = e^{\gamma h(z)}dz$ and $\cL_h(dx) = e^{\frac\gamma2 h(x)}dx$. These definitions are made rigorous via regularization and renormalization \cite{shef-kpz, shef-deriv-mart}.

Liouville conformal field theory (LCFT) is the quantum field theory 
introduced by Polyakov in his seminal work on quantum gravity and string theory \cite{polyakov-qg1}. LCFT can be defined on general surfaces with or without boundary; for concreteness we consider the surface $(D, g)$ where $D \subset \C$ is a 
simply connected domain with smooth boundary and $g$ is a Riemannian metric. The \emph{Liouville action} is
\eqb
S(\phi, g) = \frac1{4\pi} \int_D (|\partial^g \phi|^2 + Q R_g \phi + 4\pi \mu e^{\gamma \phi})d \lambda_g  + \frac1{2\pi} \int_{\partial D} (Q K_g \phi + 2\pi \mu_\partial e^{\frac\gamma2 \phi})d\lambda_{\partial g}
\eqe
where $\partial^g, R_g, K_g, \lambda_g, \lambda_{\partial g}$ denote the gradient, Ricci scalar curvature, geodesic curvature of the boundary, volume form and line element along $\partial D$, all with respect to $g$, and the \emph{cosmological constants} $\mu, \mu_\partial$ are complex numbers with $\Re \mu, \Re \mu_\partial \geq 0$ and $\Re (\mu + \mu_\partial) > 0$. We will soon see that $\phi$ should be a variant of the GFF, so  $e^{\gamma \phi} d\lambda_g$ and $e^{\frac\gamma2 \phi} d\lambda_{\partial g}$ can be interpreted as the $\gamma$-LQG area and boundary measures.  For insertions $(\alpha_i, z_i) \in \R \times \ol D$, the \emph{LCFT correlation function} is (nonrigorously) defined by the path integral
\eqb\label{eq-corr-fn-physics}
\langle \prod_{i=1}^n e^{\alpha_i \phi(z_i)} \rangle_{\mu, \mu_\partial} = \int_{\phi: D \to \R} \prod_{i=1}^n e^{\alpha_i \phi(z_i)} e^{-S(\phi, g)} D\phi
\eqe
where $D\phi$ represents a formal ``Lebesgue measure'' on the space of functions $\phi: D \to \R$. Correlation functions can be defined on all surfaces (with or without boundary). A fundamental goal in LCFT is to solve for all the correlation functions. This was achieved at the physics level of rigor by \cite{bpz-conformal-symmetry, do-dozz, zz-dozz} for surfaces without boundary, and \cite{bpz-conformal-symmetry, hosomichi, pt-formula} for surfaces with boundary. 

Mathematically, the correlation functions~\eqref{eq-corr-fn-physics} were rigorously constructed by interpreting $\exp(\frac1{4\pi} \int_D |\partial^g \phi|^2) D\phi$ as the law of $h+\mathbf c$ where $\mathbf c$ is a constant ``sampled'' from Lebesgue measure on $\R$ and $h$ is an independent GFF. This was first carried out on the sphere  \cite{dkrv-lqg-sphere} and subsequently extended to other surfaces  \cite{hrv-disk, remy-annulus, drv-torus, grv-higher-genus}. 
In a series of recent breakthroughs, the correlation functions of LCFT on  surfaces without boundary were rigorously computed \cite{krv-dozz, gkrv-bootstrap, gkrv-segal}. A similar program for surfaces with boundary is currently in progress \cite{ARS-FZZ, ARSZ-PT, wu-bootstrap-annulus}; as we explain in Section~\ref{sec-outlook} the present work gives a key ingredient for \cite{ARSZ-PT}. 

We will focus on LCFT on the disk \cite{hrv-disk}, parametrized by the upper half-plane ${\mathbb H}$.  It is rigorously defined via an infinite measure $\LF_{\mathbb H}$ on the space of generalized functions on ${\mathbb H}$ obtained by an additive perturbation of the GFF, which we call the \emph{Liouville field}; see Definition~\ref{def-LF-bare}.  
For $\delta>0$ and finitely many $(\alpha_j, z_j) \in \R \times \ol {\mathbb H}$ we can make sense of the  measure $\LF_{\mathbb H}^{(\alpha_j, z_j)_j, (\delta, \infty)} =  \prod_j e^{\alpha_j \phi(z_j)} e^{\delta \phi(\infty)}\LF_{\mathbb H}(d\phi)$ via regularization and renormalization. This is the Liouville field with \emph{insertions} of size $\alpha_i$ at $z_i$ and an insertion of size $\delta$ at $\infty$. 
The correlation functions~\eqref{eq-corr-fn-physics} are rigorously defined as expected values over  $\LF_{\mathbb H}^{(\alpha_j, z_j)_j, (\delta, \infty)}$, see for instance~\eqref{eq-corr}.

\subsection{The mating-of-trees approach: quantum surfaces and Schramm-Loewner evolution} \label{sec-random-geom}
We now review a second approach to LQG pioneered by Sheffield \cite{shef-zipper} and Duplantier-Miller-Sheffield \cite{wedges} called \emph{mating-of-trees}, which is motivated by the interpretation of LQG decorated by SLE as the scaling limit of random planar maps decorated by statistical physics models.

For simply connected $D, \wt D \subset \C$ and a distribution $h$ on $D$, suppose $g: D \to \wt D$ is a conformal map. We define the distribution $g \bullet_\gamma h$ on $\wt D$ by 
\eqb\label{eq-coordinate-change}
g \bullet_\gamma h := h \circ g^{-1} + Q \log |(g^{-1})'|, \qquad Q = \frac\gamma2 + \frac2\gamma. \eqe
A \emph{$\gamma$-LQG surface} or  \emph{quantum surface} is an equivalence class of pairs $(D,h)$ where $(D,h) \sim_\gamma (\wt D, \wt h)$ if there exists a conformal map $g: D \to \wt D$ such that $\wt h = g \bullet_\gamma h$ \cite{shef-zipper}. The pair $(D, h)$ is called an embedding of the quantum surface. The $\gamma$-LQG area and boundary measure are intrinsic to the quantum surface: If $h$ is a variant of the GFF on $D$, then writing $g_*$ for the pushforward under $g$, we have $g_*\cA_h = \cA_{\wt h}$ and $g_*\cL_h = \cL_{\wt h}$ where $\wt h = g \bullet_\gamma h$. 
One can further define \emph{beaded quantum surfaces} which may have disconnected interior, and 
define (beaded) quantum surfaces decorated by curves and marked points. See Section~\ref{subsec-wedges-cells}.

Schramm-Loewner evolution (SLE) \cite{schramm0, schramm-sle} is a canonical random planar curve which describes the scaling limits of many critical 2D lattice models, e.g.\ \cite{smirnov-cardy, lsw-lerw-ust, ss-dgff, cdchks-ising}. The parameter $\kappa>0$ describes the ``roughness'' of the $\SLE_\kappa$ curve: the curve is simple when $\kappa \in (0,4]$, self-intersecting (but not self-crossing) when $\kappa \in (4,8)$, and space filling when $\kappa \geq 8$. There are two versions of $\SLE_\kappa$. Forward $\SLE_\kappa$ is a random curve defined for each simply connected domain with two marked boundary points, characterized by conformal invariance and a domain Markov property. See for instance \cite{lawler-book, bn-sle-notes-alt} for introductory expository works. On the other hand, reverse $\SLE_\kappa$ is \emph{not} a curve, rather, it is a random process of curves $(\eta_t)$ in the upper half-plane $\bbH$ where for each time $t$ we have a curve $\eta_t$ from the boundary to an interior point. In a sense, reverse SLE describes a curve growing from its base.

Starting with the work of Sheffield \cite{shef-zipper} and Duplantier-Miller-Sheffield \cite{wedges}, there are now numerous results in the literature saying that for $\kappa > 0$ and $\gamma = \min(\sqrt\kappa, 4/\sqrt\kappa)$, when certain $\gamma$-LQG surfaces are cut by independent $\SLE_\kappa$-type curves, the resulting objects are independent. 
We recall examples of such results for the three $\kappa$ parameter ranges $(0,4], (4,8)$ and $[8, \infty)$ (Propositions~\ref{prop-wedge-welding},~\ref{prop-mate-forests} and~\ref{prop-mot}).

For $W \geq \frac{\gamma^2}2$ there is a $\gamma$-LQG surface called a \emph{weight $W$ quantum wedge} which has the half-plane topology. It has two boundary marked points  which we call $0$ and $\infty$; neighborhoods of $\infty$ have infinite quantum area, whereas regions bounded away from $\infty$ have finite quantum area. See Definition~\ref{def-wedge} for details. The following result is due to \cite[Theorem 1.8]{shef-zipper} for $\kappa < 4$ and  \cite[Theorem 1.2]{hp-welding} for $\kappa = 4$, see Figure~\ref{fig-orig-zipper1} (left).

\begin{proposition}\label{prop-wedge-welding}
	Let $\kappa \leq 4$ and $\gamma = \sqrt\kappa$. Let $(\bbH, \phi, 0, \infty)$ be an embedding of a weight $4$ quantum wedge, and let $\eta$ be an independent $\SLE_\kappa$ curve in $\bbH$ from $0$ to $\infty$. Let $D_\ell$ and $D_r$ be the connected components of $\bbH \backslash \eta$ to the left and right of $\eta$ respectively. Then the quantum surfaces $\cS_\ell = (D_\ell, \phi, 0, \infty)/{\sim_\gamma}$ and $\cS_r = (D_r, \phi, 0, \infty)/{\sim_\gamma}$ are independent weight $2$ quantum wedges. Moreover, the quantum boundary measures along $\eta$ as defined by $\cS_\ell$ and $\cS_r$ agree. 
\end{proposition}
In Proposition~\ref{prop-wedge-welding}, the quantum surface $(\bbH, \phi, 0, \infty)/{\sim_\gamma}$ can be recovered from $(\cS_\ell, \cS_r)$, as the \emph{conformal welding} of $\cS_\ell$ and $\cS_r$ according to quantum length measure on their respective boundary arcs.  See Section~\ref{sec-conf-weld} for details. 

\begin{figure}
	\begin{center}
		\includegraphics[scale=0.45]{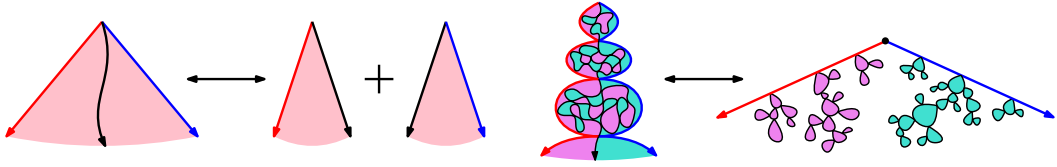}
	\end{center}
	\caption{\label{fig-orig-zipper1} \textbf{Left:} For $\kappa \leq 4, \gamma =\sqrt\kappa$, cutting a weight 4 quantum wedge by an independent $\SLE_\kappa$ curve gives independent weight 2 quantum wedges (Proposition~\ref{prop-wedge-welding}).  \textbf{Right:} For $\kappa \in (4,8), \gamma = \frac4{\sqrt\kappa}$, cutting a weight $2-\frac{\gamma^2}2$ quantum wedge by an independent $\SLE_\kappa(\frac\kappa2-4; \frac\kappa2 - 4)$ curve gives independent forested lines  (Proposition~\ref{prop-mate-forests}).}
\end{figure}

The setup for Proposition~\ref{prop-mate-forests} is more involved. 
One can also define weight $W$ quantum wedges when $W \in (0,\frac{\gamma^2}2)$. In this regime, the quantum wedge is called \emph{thin} and is the concatenation of a chain of countably many ``beads''. Each quantum wedge comes with two marked points, $0$ and $\infty$. See Definition~\ref{def-thin-wedge}. There is also a variant of forward $\SLE_\kappa$ called forward $\SLE_\kappa(\rho_-; \rho_+)$ (with force points immediately to the left and right of the curve starting point), introduced in \cite{lsw-restriction} and studied in, e.g., \cite{dubedat-rho, ig1}.

A \emph{forested line} is a Poisson point process of pairs $(\cD, t)$, where each $t>0$ and each $\cD$ is a looptree of quantum surfaces with the disk topology; it may be visualized as the positive real line $(0,\infty)$ with a looptree $\cD$ attached to the point $t \in (0,\infty)$ for each pair $(\cD, t)$. 
We will not need the precise definition in this paper so we omit it, but see \cite[Section 1.4.2]{wedges} for details.

\begin{proposition}[{\cite[Theorem 1.4.7]{wedges}}]\label{prop-mate-forests}
	Let $\kappa \in (4,8)$ and $\gamma = \frac4{\sqrt\kappa}$. 
	Sample a weight $(2 - \frac{\gamma^2}2)$ (thin) quantum wedge and let $\eta$ be the curve from $0$ to $\infty$ obtained by concatenating  independent $\SLE_{\kappa}(\frac\kappa2 - 4; \frac\kappa2-4)$ curves in each bead. Then $\eta$ divides the quantum wedge into two independent forested lines. Moreover the curve-decorated quantum wedge is  measurable with respect to the pair of forested lines. 
\end{proposition}
See Figure~\ref{fig-orig-zipper1} (right). We note that 
\cite[Theorem 1.4.7]{wedges} further asserts that the LQG length measure on $\eta$ agrees with a \emph{generalized quantum length measure} intrinsically defined for each forested line, so the quantum wedge decorated by SLE in Proposition~\ref{prop-mate-forests} is a ``mating'' of the forested lines according to generalized quantum length. 

When $\kappa \geq 8$, the $\SLE_\kappa$ curve is space-filling. In this regime, the seminal mating-of-trees theorem \cite[Theorem 1.4.1]{wedges} can be stated as follows. See Figure~\ref{fig-mot}. 
\begin{proposition}\label{prop-mot}
	Let $\kappa \geq 8$ and $\gamma = \frac{4}{\sqrt\kappa}$. 
	Let $({\mathbb H}, \phi, 0, \infty, \eta)$ be an embedding of a weight $(2-\frac{\gamma^2}2)$ quantum wedge decorated by an independent $\SLE_\kappa$ curve $\eta$. Parametrize $\eta$ by quantum area, so $A_\phi(\eta([0,s])) = s$. On the counterclockwise (resp.\ clockwise) boundary arc of $\eta([0,s])$ from $0$ to $\eta(s)$, let $X_s^-$ and $X_s^+$ (resp.\ $Y_s^-$ and $Y_s^+$) be the quantum lengths of the boundary segments in $\R$ and ${\mathbb H}$ respectively. Then $(X_s, Y_s) := (X_s^+ - X_s^-,Y_s^+ - Y_s^-)$ is correlated Brownian motion with covariance
	\[\Var(X_s) = \Var(Y_s) = \mathbbm a^2 s, \qquad  \Cov(X_s, Y_s) = -\mathbbm a^2 \cos(\frac{4\pi}\kappa)s \qquad  \text{ where } \mathbbm a^2 = 2/\sin(\frac{4\pi}\kappa).\]
	Moreover, the curve-decorated quantum surface $({\mathbb H}, \phi, 0, \infty, \eta)/{\sim_\gamma}$ is measurable with respect to $(X_s, Y_s)_{s \geq 0}$. 
\end{proposition}
The measurability statement and the fact that $(X_s, Y_s)$ evolve as correlated Brownian motion was shown in \cite{wedges}, the correlation $\cos(\frac{\pi \gamma^2}4)$ was obtained in \cite{kappa8-cov}, and the value of $\mathbbm a^2$ was identified in \cite{ARS-FZZ}.
(For the $\kappa \in (4,8)$ mating-of-trees theorem of \cite{wedges}, see Proposition~\ref{prop-mot-forr}.) 
Proposition~\ref{prop-mot} has the interpretation that the quantum wedge decorated by independent SLE corresponds to a mating of a pair of continuum random trees, where each process $(X_s)$ and $(Y_s)$ corresponds to a continuum random tree; see Figure~\ref{fig-mot} for a brief explanation, or \cite[Sections 1.3, 1.4]{wedges} for more details.

Proposition~\ref{prop-mate-forests} shows that there is a way to mate forested lines to get LQG decorated by SLE, but \cite{wedges} does not give an explicit description of this procedure. Similarly, the mating procedure for continuum random trees in Proposition~\ref{prop-mot} doe not have an explicit description in \cite{wedges}. Our $\kappa \in (4,8)$ and $\kappa > 8$ LCFT zipper theorems can be equivalently formulated in terms of these nonexplicit matings, but we will instead give more concrete statements. 

\begin{figure}
	\begin{center}
		\includegraphics[scale=0.45]{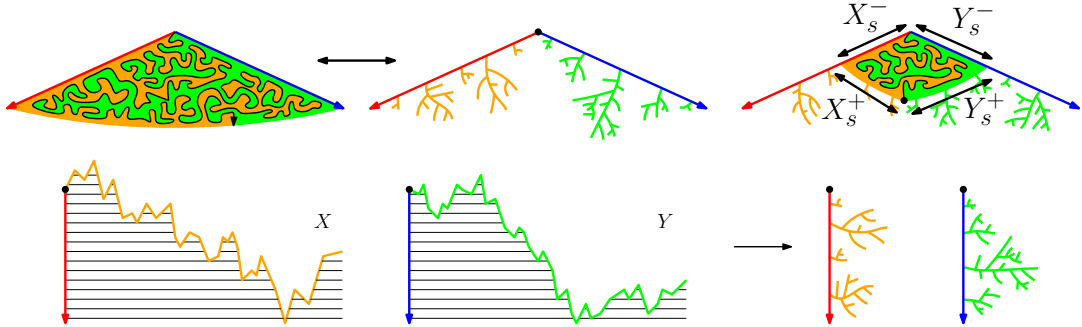}
	\end{center}
	\caption{\label{fig-mot} Interpretation of Proposition~\ref{prop-mot} via continuum random trees. \textbf{Top left:} For $\kappa \geq 8$ and $\gamma = \frac4{\sqrt \kappa}$, cutting a weight $2-\frac{\gamma^2}2$ quantum wedge by an independent $\SLE_\kappa$ curve $\eta$ gives a pair of correlated continuum random trees, which we now describe. \textbf{Top right:} For each $s> 0$, the quantities $X_s^-, X_s^+, Y_s^-, Y_s^+$ describe the quantum boundary lengths of $\eta([0,s])$ (region explored by curve on quantum wedge at time $s$).  \textbf{Bottom:} Let $(X_s, Y_s) := (X_s^+ - X_s^-,Y_s^+ - Y_s^-)$. We can construct a continuum random tree from $X$ by plotting the graph of $s \mapsto X_s$, letting $A \subset \R^2$ be the set of points lying on or below the graph, and identifying points in $A$ which lie on a horizontal chord which stays below the graph. Likewise we construct a continuum random tree using $Y$.}
\end{figure}

For context, we mention some breakthroughs accomplished via the mating-of-trees framework. The Brownian map is the metric space scaling limit of uniform random planar map \cite{mm-brownian-map, legall-uniqueness, miermont-brownian-map}; building on a variant of Proposition~\ref{prop-mate-forests}, \cite{lqg-tbm1} constructed a metric on $(\gamma =\sqrt{8/3})$-LQG and showed it agrees with the Brownian map. Next, it is a longstanding belief that under conformal embedding in the plane, many random planar maps should converge to LQG; this has been proved for the case of the uniform random planar map \cite{hs-cardy-embedding} and the so-called mated-CRT maps \cite{gms-tutte}. These scaling limit results depend fundamentally on the mating-of-trees theorem (stated as Propositions~\ref{prop-mot} and~\ref{prop-mot-forr}).

In recent years the author and collaborators have developed a program where inputs coming from mating-of-trees/conformal welding and LCFT are combined to obtain results in each of these domains \cite{AHS-SLE-integrability, ARS-FZZ, ARS-annulus, AS-CLE, ASY-triangle, ARSZ-PT}. The present work is part of this program.

\subsection{The $\kappa \leq 4$ LCFT  zipper}

Let $\kappa \in (0,4]$ and $\gamma = \sqrt\kappa$. 
See Figure~\ref{fig-zipper-simple} for a summary of the LCFT zipper in this range. Note that throughout the paper, we will use the language of probability theory in the setting of non-probability measures, see Section~\ref{sec-prelim} for a careful explanation. Further note that our Liouville field notation differs from e.g.\ \cite{AHS-SLE-integrability, ARS-FZZ, AS-CLE, ASY-triangle}, see Remark~\ref{rem-lf-notation}.

Let $n \geq 0$, let $(\alpha_j, z_j) \in \R\times \ol {\mathbb H}$ such that $z_1, \dots, z_n$ are distinct, and let $\delta \in \R$. Sample a field $\phi_0$ from the infinite measure $\LF_{\mathbb H}^{(-\frac1{\sqrt\kappa}, 0), (\alpha_j, z_j)_j, (\delta, \infty)}$. Let $s > 0$ satisfy $\cL_{\phi_0}((-\infty, 0)), \cL_{\phi_0}((0, \infty)) > s$. For each $u \in (0, s]$ let $p_u \in (0,\infty)$ and $q_u \in (-\infty, 0)$ be the points such that $\cL_{\phi_0}([0,p_u]) = \cL_{\phi_0}([q_u, 0])= u$. We want to glue the boundary arcs $[q_s, 0]$ and $[0,p_s]$ of ${\mathbb H}$ together, identifying $q_u$ with $p_u$ for $u \in (0,s]$. Almost surely there is a simple curve $\hat \eta_s:[0,s] \to \ol {\mathbb H}$ such that $\hat \eta_s \cap \R = \hat \eta_s(s)$ and a conformal map $\hat g_s: {\mathbb H}\to {\mathbb H}\backslash \hat \eta_s$ fixing $\infty$ such that $\hat g_s(p_u) = \hat g_s (q_u) = \hat \eta_s (u)$ for all $u \leq s$; this is called a \emph{conformal welding}.  
The pair $(\hat \eta_s, \hat g_s)$ is unique modulo conformal automorphisms of ${\mathbb H}$, so specifying the \emph{hydrodynamic normalization} $\lim_{z\to\infty} \hat g_s(z) - z = 0$ uniquely defines $(\hat \eta_s, \hat g_s)$. The existence and uniqueness of $(\hat \eta_s, \hat g_s)$ was shown in \cite{shef-zipper} for $\kappa < 4$; for $\kappa = 4$ existence was established by \cite{hp-welding} and uniqueness by \cite{kms-sle4}. See Section~\ref{sec-conf-weld} for more details on conformal welding of LQG surfaces.

\sloppy
Thus, for $\phi_0 \sim \LF_{\mathbb H}^{(-\frac1{\sqrt\kappa},0), (\alpha_j, z_j)_j, (\delta, \infty)}$, we can define a process $(\hat \eta_s, \hat g_s)$ for $s < \min (\cL_{\phi_0}(-\infty, 0), \cL_{\phi_0}(0, \infty))$. The \emph{half-plane capacity} of $\hat \eta_s$ is $\mathrm{hcap}(\hat\eta_s) := \lim_{z \to \infty} z (z-\hat g_s(z))$. We reparametrize time to get a process $( \eta_t, g_t)$ such that $\mathrm{hcap}(\eta_t) = 2t$. Define $\phi_t = g_t \bullet_\gamma \phi_0$ and  $W_t = \eta_t \cap \R$. See Figure~\ref{fig-zipper-simple}. 

\fussy

We first give a description of the law of the field and curve when the process is run until a stopping time before any marked points are zipped into the curve.

\begin{figure}
	\begin{center}
		\includegraphics[scale=0.36]{}
	\end{center}
	\caption{\label{fig-zipper-simple} Let $\kappa \leq 4$ and $\gamma = \sqrt\kappa$. \textbf{Left:} Liouville field $\phi_0$ with insertions at $z_1 \in {\mathbb H}$ and $z_2 \in \R$. \textbf{Middle:} We conformally weld the left and right boundary arcs of $\phi_0$ according to quantum length to get $(\phi_t, \eta_t)_{t \geq 0}$. Time is parametrized by half-plane capacity of the curve. For a stopping time $\tau$ for $\cF_t = \sigma(\eta_t)$ such that $g_\tau(z_2)$ has not yet hit the curve, Theorem~\ref{thm-simple-zipper} describes the law of $(\phi_\tau, \eta_\tau)$ in terms of the Liouville field and reverse SLE. \textbf{Right:} At the time $T$ that $g_T(z_2)$ hits $W_T$, if the quantum length measure near $W_T$ is finite, we can continue the conformal welding process to ``zip the marked point into the bulk'', e.g. until the depicted time $\tau'$. Theorem~\ref{thm-simple-zipper-rho} describes the law of $(\phi_{\tau'}, \eta_{\tau'})$.
	}
\end{figure}

\begin{theorem}\label{thm-simple-zipper}
	In the setting immediately above, assume $z_1, \dots, z_n \neq 0$ and let $\tau$ be a stopping time with respect to the filtration $\cF_t = \sigma(\eta_t)$ such that a.s.\ $g_\tau(z_j) \not \in  \eta_\tau$ for all $j$. Let $I = \{ i \: : \: z_i \in {\mathbb H}\}$ and $B = \{ b \: : \: z_b \in \R\}$. Then the law of $(\phi_\tau, \eta_\tau)$ is
	\[ \prod_{i \in I} |g_\tau'(z_i)|^{2 \Delta_{\alpha_i}} \prod_{b \in B} |g_\tau'(z_b)|^{\Delta_{2\alpha_b}}  \LF_{\mathbb H}^{(-\frac1{\sqrt\kappa}, W_\tau), (\alpha_j, g_\tau(z_j))_j, (\delta, \infty)} (d\phi) \rSLE^\tau_\kappa (d\eta),\]
	where $\Delta_\alpha := \frac\alpha2 (Q-\frac\alpha2)$ and $\rSLE_\kappa^\tau$ is the law of reverse $\SLE_\kappa$ run until the stopping time $\tau$. 
\end{theorem}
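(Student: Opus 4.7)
\emph{Overall strategy.} My plan is to reduce to the special case with no extra insertions ($n=0$), which is essentially a reformulation of Sheffield's stationary quantum zipper~\cite{shef-zipper} in LCFT language, and then restore the insertions $(\alpha_j, z_j)$ as a Radon-Nikodym tilt whose behavior under the zipper is governed by the conformal covariance of Liouville vertex operators.

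\emph{Base case ($n=0$).} First I would establish that if $\phi_0 \sim \LF_\bbH^{(-\frac{1}{\sqrt\kappa},0),(\delta,\infty)}$ then the joint law of $(\phi_\tau, \eta_\tau)$ is $\LF_\bbH^{(-\frac{1}{\sqrt\kappa},W_\tau),(\delta,\infty)}(d\phi)\, \rSLE_\kappa^\tau(d\eta)$. The insertion of size $-\frac{1}{\sqrt\kappa} = -\frac{\gamma}{2}$ at $0$ is the wedge-tip singularity, so using a dictionary between Liouville fields and quantum wedges of the sort developed in \cite{AHS-SLE-integrability}, one identifies this measure with an appropriate quantum wedge (or a one-parameter integral over $\delta$). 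Sheffield's invariance of the wedge under reverse $\SLE_\kappa$-zipping, disintegrated against $\sigma(\eta_\tau)$, then yields the claimed product form.

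\emph{General insertions and conformal covariance.} For the general $n \geq 0$ case I would use the Girsanov-type identity
\[
\LF_\bbH^{(-\frac{1}{\sqrt\kappa},0),(\alpha_j,z_j)_j,(\delta,\infty)}(d\phi) \;=\; \prod_{j} {:}e^{\alpha_j \phi(z_j)}{:}\; \LF_\bbH^{(-\frac{1}{\sqrt\kappa},0),(\delta,\infty)}(d\phi),
\]
where ${:}e^{\alpha \phi(z)}{:}$ denotes the regularized vertex operator, with normalization $\eps^{\alpha^2/2}$ in the bulk and $\eps^{\alpha^2}$ on the boundary, obtained as a limit of circle (resp.\ half-disk) averages. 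Applying the zipper and invoking the base case on the right-hand side reduces the problem to tracking $\prod_j {:}e^{\alpha_j\phi_0(z_j)}{:}$ under $\phi_0 = g_\tau^{-1}\bullet_\gamma \phi_\tau$. Substituting $\phi_0(z) = \phi_\tau(g_\tau(z)) + Q\log|g_\tau'(z)|$ and matching regularization scales $\eps \leftrightarrow |g_\tau'(z)|\eps$ yields
\[
{:}e^{\alpha\phi_0(z)}{:} \;=\; |g_\tau'(z)|^{\alpha Q - \alpha^2/2}\, {:}e^{\alpha \phi_\tau(g_\tau(z))}{:} \;=\; |g_\tau'(z)|^{2\Delta_\alpha}\, {:}e^{\alpha\phi_\tau(g_\tau(z))}{:}
\]
for bulk points, with exponent $\alpha(Q-\alpha)=\Delta_{2\alpha}$ for boundary insertions (reflecting the doubled rate of variance divergence on the boundary). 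Reading the resulting tilt of $\LF_\bbH^{(-\frac{1}{\sqrt\kappa},W_\tau),(\delta,\infty)}$ backwards via the same Girsanov identity applied at the points $g_\tau(z_j)$ produces exactly the measure stated in the theorem.

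\emph{Main obstacle.} The technical heart is the base case: identifying $\LF_\bbH^{(-\frac{1}{\sqrt\kappa},0),(\delta,\infty)}$ with the appropriate stationary object of~\cite{shef-zipper} and carrying the $\sigma$-finite invariance through a careful disintegration against $\sigma(\eta_\tau)$, given that $\LF_\bbH$ is an infinite measure. The hypotheses $z_j \neq 0$ and that $\tau$ halts before any $g_\tau(z_j)$ is absorbed into the curve are used so that the additional insertions remain auxiliary Girsanov tilts, well-separated from the zipping tip; otherwise one could no longer cleanly factor the insertion at each $z_j$ out of the welding dynamics.
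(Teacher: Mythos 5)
Your strategy is sound and would yield the theorem, but it is genuinely different from the route taken here. The paper proves Theorem~\ref{thm-simple-zipper-rho} \emph{first}: the insertions $(\alpha_j,z_j)$ are not introduced as Girsanov tilts of the $n=0$ zipper but are built directly into Sheffield's coupling as force points of reverse $\SLE_{\kappa,\rho}$ with $\rho_j = 2\sqrt\kappa\alpha_j$ (Proposition~\ref{prop-dms-chordal}, quoted from \cite{wedges}); Theorem~\ref{thm-simple-zipper} is then deduced in one line by reweighting $\rSLE_{\kappa,\rho}$ back to $\rSLE_\kappa$ via the martingale $M_t$ of Lemma~\ref{lem-mtg} and Proposition~\ref{prop-sle-weighted}, which converts the partition-function prefactor of~\eqref{eq-thm-simple-zipper-rho} into exactly the $\prod|g_\tau'|^{2\Delta}$ factors. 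Your route --- prove the $n=0$ case, then add insertions by weighting with regularized vertex operators and track their conformal covariance under $g_\tau$ --- is precisely the mechanism the paper deploys for the $\kappa>8$ analogue (Lemmas~\ref{lem-green-conformal-infty},~\ref{lem-green-conformal},~\ref{lem-reweight-finite} and Proposition~\ref{prop-thm-8-nonbulk}), where no force-point coupling is available off the shelf. What the paper's route buys is that it simultaneously yields the stronger Theorem~\ref{thm-simple-zipper-rho} (insertions zipped into the bulk), which your tilting argument cannot reach since it requires the insertion points to stay off the curve; what your route buys is independence from the force-point version of \cite[Theorem 5.1]{wedges}. Two places where your sketch hides real work: (i) the identity ${:}e^{\alpha\phi_0(z)}{:} = |g_\tau'(z)|^{2\Delta_\alpha}{:}e^{\alpha\phi_\tau(g_\tau(z))}{:}$ must be implemented at fixed regularization scale $\eps$, where the pushforward of the circle-average measure under $g_\tau$ is not a circle average; the fix is the harmonicity/mean-value computation of Lemma~\ref{lem-green-conformal}. (ii) The base case for \emph{arbitrary} $\delta$ does not come from a single wedge; one gets a specific $\delta$ from the neutrality condition (or from uniform embedding) and must then change $\delta$ by weighting by field averages on large semicircles, as in Lemma~\ref{lem-reweight-infty} and Proposition~\ref{prop-reweight-infty}. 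Finally, a small slip: for $\kappa\le 4$ one has $\gamma=\sqrt\kappa$, so $-\frac1{\sqrt\kappa}=-\frac1\gamma$ (i.e.\ $\beta_*=-\frac2\gamma$ in the convention of Remark~\ref{rem-lf-notation}), not $-\frac\gamma2$; the latter is the degenerate insertion relevant to the $\kappa>4$ zippers. This does not affect your argument, which uses $-\frac1{\sqrt\kappa}$ consistently elsewhere.
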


Informally, zipping up a Liouville field until a stopping time $\tau$ that depends only on the zipping interface, the curve $\eta_\tau$ is described by reverse $\SLE_\kappa$, and given $\eta_\tau$ the resulting field is described by a Liouville field with insertions at locations determined by $\eta_\tau$. 

In Theorem~\ref{thm-simple-zipper}  the condition that $g_\tau(z_j) \not \in \eta_\tau$ for all $j$ is necessary, since otherwise the law of the curve would be  singular with respect to reverse $\SLE_\kappa$. In Theorem~\ref{thm-simple-zipper-rho}, we allow boundary marked points to be zipped into the bulk, by using an $\SLE_\kappa$ variant called \emph{reverse $\SLE_{\kappa}$ with force points} (see Section~\ref{subsec-sle}). Regardless, the zipping procedure can only be run until the \emph{continuation threshold}, defined as the first time $t \leq \infty$ that any neighborhood of $W_t$ in $\R$ has infinite quantum length, i.e.\ $\cL_{\phi_t}((W_t-\eps, W_t+ \eps)) = \infty$ for all $\eps >0$. 
Once the continuation threshold is hit, there is no canonical way to continue the conformal welding. 

For finitely many $(a_j, p_j) \in \R \times \ol {\mathbb H}$ such that the points $p_j$ are distinct, we define 
\eqb\label{eq-Z0}
\cZ ((a_j, p_j)_j) = \prod_{i \in I} (2 \Im p_i)^{-a_i^2/2} \prod_{j < k} e^{a_j a_k G(p_j, p_k)} , \quad I = \{ i : p_i \in {\mathbb H}\}, 
\eqe
where $G(p,q) = -\log|p-q| - \log |p-\ol q|$. If the $p_j$ are not distinct, we combine all pairs $(a,p)$ with the same $p$ by summing their $a$'s to get a collection $(a'_j, p'_j)$ with $p'_j$ distinct, and define $\cZ ((a_j, p_j)_j) := \cZ ((a'_j, p'_j)_j) $. 

\begin{theorem}\label{thm-simple-zipper-rho}
	In the setting above Theorem~\ref{thm-simple-zipper}, let $\tau$ be a stopping time with respect to the filtration $\cF_t = \sigma(\eta_t)$ which is not beyond the continuation threshold. 
	Then the law of $(\phi_\tau, \eta_\tau)$ is 
	\eqb\label{eq-thm-simple-zipper-rho}
	\frac{\cZ((-\frac1{\sqrt\kappa}, 0), (\alpha_j, z_j)_j)}{\cZ( (-\frac1{\sqrt\kappa},W_\tau), (\alpha_j, g_\tau(z_j))_j)}
	\LF_{\mathbb H}^{(-\frac1{\sqrt\kappa}, W_\tau), (\alpha_j, g_\tau(z_j))_j, (\delta, \infty)} (d\phi) \rSLE^\tau_{\kappa, \rho} (d\eta)
	\eqe
	where $\rSLE_{\kappa, \rho}^\tau$ denotes the law of reverse $\SLE_{\kappa, \rho}$ with a force point at $z_j$ of weight $\rho_j = 2\sqrt\kappa \alpha_j$ for each $j$, run until the stopping time $\tau$. 
\end{theorem}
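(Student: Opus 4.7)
The plan is to deduce Theorem~\ref{thm-simple-zipper-rho} from Theorem~\ref{thm-simple-zipper} by identifying the $\cZ$-ratio in~\eqref{eq-thm-simple-zipper-rho} as the Radon-Nikodym derivative of $\rSLE_{\kappa,\rho}^\tau$ against $\rSLE_\kappa^\tau$, and then extending past marked-point absorption times by localization up to the continuation threshold. Set $\rho_j := 2\sqrt\kappa\,\alpha_j$.

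First, for stopping times $\tau$ at which no $g_\tau(z_j)$ has hit $\eta_\tau$, Theorem~\ref{thm-simple-zipper} applies and reduces the claim to the SLE-theoretic statement that the process
\[
M_t := \prod_{i \in I} |g_t'(z_i)|^{2\Delta_{\alpha_i}} \prod_{b \in B} |g_t'(z_b)|^{\Delta_{2\alpha_b}} \cdot \frac{\cZ((-\tfrac{1}{\sqrt\kappa},W_t),(\alpha_j,g_t(z_j))_j)}{\cZ((-\tfrac{1}{\sqrt\kappa},0),(\alpha_j,z_j)_j)}
\]
is a local martingale under $\rSLE_\kappa$ whose Girsanov change of measure produces $\rSLE_{\kappa,\rho}$. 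I would verify this by applying Itô's formula to $\log M_t$ under the reverse Loewner flow $\partial_t g_t(z) = -2/(g_t(z)-W_t)$ with $dW_t = \sqrt\kappa\,dB_t$, using $\partial_t \log|g_t'(z)| = \Re(2/(g_t(z)-W_t)^2)$ and the explicit partial derivatives of $\cZ$. The drift of $d\log M_t$ splits into contributions from the deterministic flow of the force points $g_t(z_j)$, the Itô correction $\tfrac\kappa 2\,\partial_W^2 \log \cZ$, and the Jacobian terms. Using $\Delta_\alpha = \tfrac{\alpha}{2}(Q-\tfrac\alpha 2)$ with $Q = \tfrac\gamma 2+\tfrac 2\gamma$ and $\rho_j = 2\sqrt\kappa\alpha_j$, these drifts assemble into the Girsanov exponent that adds $\sum_j \rho_j\,dt/(W_t - g_t(z_j))$ to the driving SDE, i.e.\ converts $\rSLE_\kappa$ into $\rSLE_{\kappa,\rho}$.

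Second, to extend to arbitrary $\tau$ bounded by the continuation threshold, I would localize by $\tau_\eps := \tau \wedge \inf\{t : \min_j |W_t - g_t(z_j)| \leq \eps\}$. The identity at $\tau_\eps$ follows from the first step; then take $\eps \downarrow 0$. On the SLE side, $\rSLE_{\kappa,\rho}^{\tau_\eps}$ converges to $\rSLE_{\kappa,\rho}^\tau$ by continuity of $\rSLE_{\kappa,\rho}$ through force-point collisions up to the continuation threshold. On the field side, when $g_\tau(z_j) \to W_\tau$, the factor $|W_\tau - g_\tau(z_j)|^{-\alpha_j/\sqrt\kappa}$ implicit in $\cZ((-\tfrac{1}{\sqrt\kappa},W_\tau),(\alpha_j,g_\tau(z_j))_j)^{-1}$ exactly cancels the Gaussian multiplicative chaos merger singularity in $\LF_\bbH^{(-\frac{1}{\sqrt\kappa}, W_\tau), (\alpha_j, g_\tau(z_j))_j, (\delta, \infty)}$, yielding a well-defined Liouville field with a single merged insertion of size $-\tfrac{1}{\sqrt\kappa}+\alpha_j$ at $W_\tau$. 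The continuation threshold is precisely the Seiberg-type bound under which this limit remains finite.

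The principal obstacle is the Itô verification in the first step: $M_t$ simultaneously contains boundary weights $\Delta_{2\alpha_b}$, bulk weights $\Delta_{\alpha_i}$, one-point terms $(2\Im p)^{-a^2/2}$, and two-point terms $e^{a_j a_k G(p_j, p_k)}$ from $\cZ$, and the cancellation of all their drift contributions must be tracked in tandem. The key algebraic inputs are the quadratic identity $\Delta_\alpha = \tfrac\alpha 2(Q - \tfrac\alpha 2)$ and the Cauchy-Riemann relation between the boundary-trace Jacobians from bulk insertions and the Green function factors in $\cZ$. A secondary obstacle is the second step: quantitative Gaussian multiplicative chaos estimates are needed to rigorously justify the cancellation between the $\cZ$-singularity and the LF merger singularity in the $\eps \downarrow 0$ limit.
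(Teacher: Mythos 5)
Your first step is sound in content: the martingale
\[
M_t = \prod_{i \in I} |g_t'(z_i)|^{2\Delta_{\alpha_i}} \prod_{b \in B} |g_t'(z_b)|^{\Delta_{2\alpha_b}}\,
\frac{\cZ\big((-\tfrac1{\sqrt\kappa},W_t),(\alpha_j,g_t(z_j))_j\big)}{\cZ\big((-\tfrac1{\sqrt\kappa},0),(\alpha_j,z_j)_j\big)}
\]
and the Girsanov computation converting $\rSLE_\kappa$ into $\rSLE_{\kappa,\rho}$ with $\rho_j=2\sqrt\kappa\,\alpha_j$ are exactly Lemma~\ref{lem-mtg} and Proposition~\ref{prop-sle-weighted} of the paper, and they do show that Theorems~\ref{thm-simple-zipper} and~\ref{thm-simple-zipper-rho} are equivalent \emph{for stopping times at which no $g_\tau(z_j)$ lies on $\eta_\tau$}. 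Two caveats, though. First, a structural one: the paper derives Theorem~\ref{thm-simple-zipper} \emph{from} Theorem~\ref{thm-simple-zipper-rho}, so taking Theorem~\ref{thm-simple-zipper} as the starting input is circular unless you supply an independent proof of it; any such proof ultimately rests on Sheffield's coupling (Proposition~\ref{prop-dms-chordal}, i.e.\ \cite[Theorem 5.1]{wedges}) plus the welding identification of \cite{shef-zipper,hp-welding,kms-sle4}, which your proposal never invokes.

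The genuine gap is your second step. The point of Theorem~\ref{thm-simple-zipper-rho} beyond Theorem~\ref{thm-simple-zipper} is that $\tau$ may occur \emph{after} a boundary marked point has been zipped into the bulk: for $t$ past the collision time, $g_t(z_j)$ is a point of $\bbH$ and $(\alpha_j, g_t(z_j))$ is a bulk insertion, not an insertion merged into $W_t$. Your localization $\tau_\eps = \tau\wedge\inf\{t:\min_j|W_t-g_t(z_j)|\le\eps\}$ only reaches the first collision time as $\eps\downarrow 0$; it cannot produce the law of $(\phi_\tau,\eta_\tau)$ at times strictly beyond it. The "cancellation of the merger singularity" picture describes (at best) the single instant of collision, and the paper's remark following the theorem explains that the apparent blow-up of the prefactor there is absorbed by the normalization $C_\kappa^{\cdots}$ inside the definition of $\LF_\bbH^{\cdots}$ — it is not a statement about a limit of laws that you can then restart from. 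Continuing the process through the collision requires the unique continuation of the reverse $\SLE_{\kappa,\rho}$ SDE past the degenerate configuration $g_t(z_j)=W_t$ (\cite[Proposition 3.8]{wedges}) together with the corresponding statement for the field, and this is precisely the content of the full force-point version of Sheffield's coupling, \cite[Theorem 5.1]{wedges}. The paper's proof goes the other way around: it starts from that coupling (Proposition~\ref{prop-main}), which is stated for arbitrary stopping times up to the continuation threshold and already has the force points built in; handles the neutrality case $-\tfrac1{\sqrt\kappa}+\sum_j\alpha_j+\delta=Q$ directly; and then removes the constraint on $\delta$ by the reweighting-at-infinity argument of Proposition~\ref{prop-reweight-infty}. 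No limiting argument through collisions is needed or, as far as I can see, available as a substitute.
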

In fact,  Theorem~\ref{thm-simple-zipper} is an immediate consequence of Theorem~\ref{thm-simple-zipper-rho} via the formula for the Radon-Nikodym derivative of $\rSLE_{\kappa, \rho}^\tau$ with respect to $\rSLE_{\kappa}^\tau$ given in Proposition~\ref{prop-sle-weighted}.

We emphasize that while we study the same Liouville field as e.g.\ \cite{AHS-SLE-integrability, ARS-FZZ, AS-CLE, ASY-triangle}, for boundary insertions our notation differs by a factor of 2, see Remark~\ref{rem-lf-notation}. The present choice of notation simplifies the statement of Theorem~\ref{thm-simple-zipper-rho} since  boundary insertions zipped into the bulk maintain the same value of $\alpha$.

\begin{remark}
	Let $R_t := \cZ((-\frac1{\sqrt\kappa}, 0), (\alpha_j, z_j)_j)/\cZ( (-\frac1{\sqrt\kappa},W_t), (\alpha_j, g_t(z_j))_j)$ be the prefactor in~\eqref{eq-thm-simple-zipper-rho} at time $t$. If $\tau$ is a time such that $g_\tau(z_j) = W_\tau$ for some $j$, then $\lim_{t\uparrow \tau} R_t \in \{0,\infty\}$ whereas $R_\tau \in (0, \infty)$. This apparent discontinuity is  only cosmetic: the definition of $\LF_{\mathbb H}^{(-\frac1{\sqrt\kappa}, W_t), (\alpha_j, g_t(z_j))_j, (\delta, \infty)}$ includes a factor of $C_\kappa^{(-\frac1{\sqrt\kappa},W_t), (\alpha_j, g_t(z_j))_j, (\delta, \infty)}$ (defined in~\eqref{eq-C}), and $C_\kappa^{(-\frac1{\sqrt\kappa},W_t), (\alpha_j, g_t(z_j))_j, (\delta, \infty)} R_t$ is continuous in $t$. 
\end{remark}

\subsection{The $\kappa \in (4,8)$ LCFT  zipper}\label{sec-intro-48}
	In this section we work with \emph{beaded} quantum surfaces, whose interiors may not be connected. See Section~\ref{subsec-wedges-cells} for the definition. 
	
Let $\kappa \in (4,8)$ and $\gamma =\frac4{\sqrt\kappa}$. Using Proposition~\ref{prop-mate-forests}, 
we first construct a process of decorated beaded quantum surfaces $(\cS_s)_{s > 0}$ which corresponds to incrementally mating a pair of independent forested lines. See  Figure~\ref{fig-zipper-looptree} (left). 

Embed a weight $(2-\frac{\gamma^2}2)$ quantum wedge as $(D, \phi, 0, \infty)$ and let $\eta$ be a curve from $0$ to $\infty$ given by the concatenation of independent $\SLE_{\kappa}(\frac\kappa2-4; \frac\kappa2-4)$ curves in each connected component of $D$. Parametrize $\eta$ by quantum length. Let $\partial_\ell D$ (resp.\ $\partial_r D$) be the boundary arc of $D$ from $0$ to $\infty$ in the clockwise (resp.\ counterclockwise) direction.  For each $s>0$, let $p_\ell(s)$ be the point on $\partial_\ell D \cap \eta([0,s])$ furthest from $0$ on $\partial_\ell D$, and let $I_\ell^-(s)$ be the boundary arc from $0$ to $p_\ell(s)$ on $\partial_\ell D$. Likewise define $p_r(s), I_r^-(s)$.  Let $D(s)$ be the union of $I_\ell^-(s) \cup I_r^-(s) \cup \eta([0,s])$ and the bounded connected components of its complement in $\bbH$. Define 
\[\cS_s = (D(s), \phi, \eta|_{[0,s]}, 0, \eta(s), p_\ell(s), p_r(s))/{\sim_\gamma}\] and let $\partial_\ell^- \cS_s, \partial_r^- \cS_s$ be the boundary arcs parametrized by $I_\ell^-(s), I_r^-(s)$. Let $\mathrm{FL}_\kappa^2$ denote the law of the process $(\cS_s)_{s > 0}$. By Proposition~\ref{prop-mate-forests}, this process describes the incremental mating of a pair of independent forested lines, where $\cS_s$ is the resulting decorated beaded quantum surface at time $s$.

We now explain the $\kappa \in (4,8)$ LCFT zipper.  Let $n \geq 0$, let $(\alpha_j, z_j) \in \R\times \ol {\mathbb H}$ such that $z_1, \dots, z_n$ are distinct, and let $\delta \in \R$. 
Sample $(\phi_0, (\cS_s)_{s >0}) \sim \LF_{\mathbb H}^{(-\frac1{\sqrt\kappa}, 0), (\alpha_j, z_j)_j, (\delta, \infty)} \times \mathrm{FL}^2_\kappa$. Suppose $s>0$ is such that the quantum lengths of $\partial_\ell^-\cS_s$ and $\partial_r^- \cS_s$ are less than $\cL_{\phi_0}((0, \infty))$ and $\cL_{\phi_0}((-\infty, 0))$ respectively. Consider the conformal welding of $\cS_s$ and $(\bbH, \phi_0, 0, \infty)/{\sim_\gamma}$ according to quantum boundary length, where the first marked points of the two quantum surfaces are identified, and the whole boundary arc $\partial_\ell^- \cS_s \cup \partial_r^- \cS_s$ of $\cS_s$ is welded to part of the boundary of $(\bbH, \phi_0, 0, \infty)/{\sim_\gamma}$. This gives a quantum surface decorated with a curve which comes from $\cS_s$. Embed the curve-decorated quantum surface via the hydrodynamic normalization to get $({\mathbb H}, \hat \phi_s, \hat \eta_s)$. That is, if $\hat g_s$ is the conformal map from ${\mathbb H}$ to the unbounded connected component of ${\mathbb H}\backslash \hat \eta_s$ satisfying $\lim_{z \to \infty} \hat g_s(z) - z = 0$, then 
$\hat g_s^{-1} \bullet_\gamma \hat \phi_s = \phi_0$. 

Though we have constructed $(\hat \phi_s, \hat \eta_s, \hat g_s)$ without explicitly using forested lines, since $(\cS_s)_{s > 0}$ corresponds to the incremental mating of a pair of independent forested lines, this process can equivalently be defined by sampling $\phi_0 \sim \LF_{\mathbb H}^{(-\frac1{\sqrt\kappa}, 0), (\alpha_j, z_j)_j, (\delta, \infty)}$ and gluing an independent pair of independent forested lines to the boundary of $(\bbH, \phi_0)$ (as in \cite[Definition 1.4.6]{wedges}), then incrementally mating the pair of forested lines. See  Figure~\ref{fig-zipper-looptree} (middle, right).

We reparametrize the process $(\hat \phi_s, \hat \eta_s, \hat g_s)$ according to half-plane capacity to get $(\phi_t, \eta_t, g_t)$ such that $\mathrm{hcap}(\eta_t) = 2t$. 
Let $W_t$ be the endpoint of $\eta_t$ lying in $\R$. 
The \emph{continuation threshold} is the first time $t\leq \infty$ that any neighborhood of $W_t$ has infinite quantum length. 

\begin{figure}
	\begin{center}
		\includegraphics[scale=0.38]{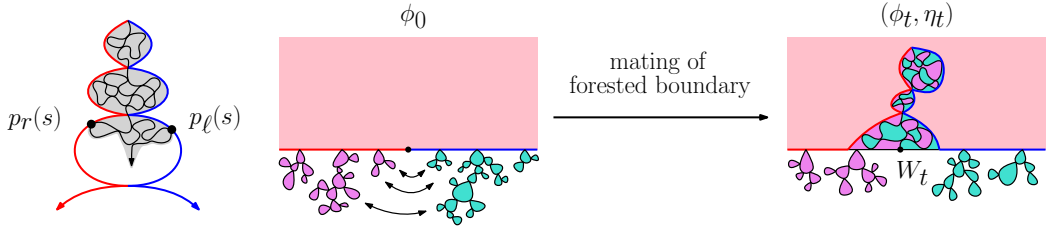}
	\end{center}
	\caption{\label{fig-zipper-looptree} Let $\kappa \in (4,8)$ and $\gamma = \frac4{\sqrt\kappa}$. \textbf{Left:} To define the process $(\cS_s)_{s > 0}$, sample a weight $2-\frac{\gamma^2}2$ quantum wedge and let $\eta$ be the concatenation of an independent $\SLE_\kappa(\frac\kappa2 - 4; \frac\kappa2-4)$ curve in each bead. Parametrize $\eta$ by quantum length. At time $s$, the curve-decorated quantum surface $\cS_s$ corresponds to the shaded region.  \textbf{Middle:} We sample a Liouville field $\phi_0$ and attach independent forested lines to the left and right boundary arcs. Marked points are not depicted. \textbf{Right:} Mating the forested lines corresponds to ``zipping up the quantum zipper''. The curve $\eta_t$ is the interface in ${\mathbb H}$ between the purple and green forests. 
		The conformal map $g_t$ sends  the pink region on the left to that on the right and satisfies $\lim_{z \to \infty}g_t(z) -z=0$. }
\end{figure}

\begin{theorem}\label{thm-forest-zipper} For $\kappa \in (4, 8)$ and $\gamma = \frac4{\sqrt\kappa}$, the conclusions of Theorems~\ref{thm-simple-zipper} and~\ref{thm-simple-zipper-rho} hold for the  process $(\phi_t, \eta_t)$ constructed immediately above.
\end{theorem}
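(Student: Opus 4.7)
The plan is to follow the strategy underlying the proofs of Theorems \ref{thm-simple-zipper} and \ref{thm-simple-zipper-rho}, substituting Proposition \ref{prop-mate-forests} for Sheffield's quantum zipper as the stationary input. First I would establish the forested stationary version: take a weight $2 - \frac{\gamma^2}{2}$ quantum wedge decorated by an independent $\SLE_\kappa(\frac\kappa2 - 4; \frac\kappa2 - 4)$ curve $\eta$, which by Proposition \ref{prop-mate-forests} arises from mating two independent forested lines $(F_L, F_R)$. Since the law of each forested line is invariant under translating its base point along its quantum-length parametrization, the field-curve pair, re-embedded via the hydrodynamic normalization, is stationary under the forested zipper dynamics up to any curve-measurable stopping time $\tau$. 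This yields the forested analogue of the ``stationary quantum zipper'' used for the simple case.

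Next I would transfer the stationary statement to the Liouville field setting via a Radon-Nikodym change of measure. The field $\LF_\bbH^{(-\frac{1}{\sqrt\kappa}, 0), (\alpha_j, z_j)_j, (\delta,\infty)}$ is absolutely continuous with respect to the weight $2 - \frac{\gamma^2}{2}$ wedge field (whose natural boundary insertion at $0$ has strength $-\frac{1}{\sqrt\kappa}$), with derivative given by the additional insertions $(\alpha_j, z_j)$ and $(\delta, \infty)$. Applying this change of measure after zipping and using conformal covariance of the GFF under $g_\tau$, each bulk insertion $(\alpha_i, z_i)$ contributes a factor $|g_\tau'(z_i)|^{2\Delta_{\alpha_i}}$ and each boundary insertion $(\alpha_b, z_b)$ contributes $|g_\tau'(z_b)|^{\Delta_{2\alpha_b}}$, matching the formulas. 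Simultaneously, a Girsanov computation on the Loewner driving function converts the stationary $\SLE_\kappa(\frac\kappa2 - 4; \frac\kappa2 - 4)$ driver into reverse $\SLE_\kappa$ in the setting of Theorem \ref{thm-simple-zipper}, or into reverse $\SLE_{\kappa,\rho}$ with $\rho_j = 2\sqrt\kappa\,\alpha_j$ in the setting of Theorem \ref{thm-simple-zipper-rho}. The hypothesis $g_\tau(z_j) \notin \eta_\tau$ in the first setting ensures that the Girsanov derivative remains finite, whereas in the second setting the persistence of the driver past boundary collisions is recorded by the force points.

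The main obstacle I expect is the continuation threshold analysis in the forested regime for the analogue of Theorem \ref{thm-simple-zipper-rho}. In the simple case the threshold is controlled by ordinary boundary quantum length, but in the forested case one must work with the generalized quantum length of the attached L\'evy forest of disks, and show that this notion of blow-up at $W_t$ matches the SLE continuation threshold determined by the driver and its force points. A secondary subtlety is bookkeeping the beaded topology: since $g_t$ uniformizes only the unbounded component of $\bbH \setminus \hat\eta_t$, tracking how the insertions transform under $g_t \bullet_\gamma \phi_0$ requires care whenever the unbounded component is pinched off from bounded bubbles during the zipping, and one must verify that the pinched-off components contribute to the generalized boundary length measure consistently with the forest-line identification used in the stationary input.
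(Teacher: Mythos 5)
Your bootstrapping half is fine and matches the paper: once the $n=0$ base case is in hand, the reweighting at $\infty$ (Proposition~\ref{prop-reweight-infty}) changes $\delta$, the reweighting at finite points (the analogue of Lemma~\ref{lem-reweight-finite}) produces the $|g_\tau'(z_i)|^{2\Delta_{\alpha_i}}$ and $|g_\tau'(z_b)|^{\Delta_{2\alpha_b}}$ factors together with the insertions, and Proposition~\ref{prop-sle-weighted} converts reverse $\SLE_\kappa$ weighted by the GFF partition function into reverse $\SLE_{\kappa,\rho}$ with $\rho_j = 2\sqrt\kappa\,\alpha_j$. That is exactly how the paper passes from Proposition~\ref{prop-for-special-dms} to the full Theorem~\ref{thm-forest-zipper}.

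The gap is in your base case. Stationarity of the forested zipper on the weight $2-\frac{\gamma^2}2$ wedge (Proposition~\ref{prop-mate-forests} plus translation invariance of forested lines) does not yield the conclusion of Theorems~\ref{thm-simple-zipper}/\ref{thm-simple-zipper-rho}, whose content is an \emph{explicit} description at a curve-measurable stopping time: the curve must be shown to be \emph{reverse} $\SLE_\kappa$ and the field a Liouville field with transported insertions. The stationary picture involves a \emph{forward} $\SLE_\kappa(\frac\kappa2-4;\frac\kappa2-4)$ interface in the beads of a \emph{thin} wedge; there is no Girsanov relation turning that driver into the reverse-SLE growth process of the zipper, and your proposed Girsanov step presupposes that the curve law under the base dynamics is already reverse $\SLE_\kappa$ — which is precisely the missing identification. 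The paper obtains it from \cite[Theorem 6.9]{wedges} (Proposition~\ref{prop-for-special-dms}): that result couples the reverse-SLE Sheffield evolution of the Neumann GFF with a $-\frac\gamma4$-insertion (Proposition~\ref{prop-dms-chordal}) to the Poissonian forested-line decomposition of the complementary bubbles, and crucially also gives independence of the forested lines from $h_0$; adding the constant $\mathbf c \sim dc$ then produces $\LF_\bbH^{(-\frac\gamma4,0),(Q+\frac\gamma4,\infty)}$. Your proposed substitute — absolute continuity of $\LF_\bbH^{(-\frac1{\sqrt\kappa},0),\dots}$ with respect to "the weight $2-\frac{\gamma^2}2$ wedge field whose insertion at $0$ has strength $-\frac1{\sqrt\kappa}$" — also does not parse in this regime: for $\gamma\in(\sqrt2,2)$ that wedge is thin, hence a chain of beads with no embedding as a single field on $\bbH$, and even in the thick regime the uniform embedding of the weight $2-\frac{\gamma^2}2$ wedge carries insertion $\frac{3\gamma}4$ at $0$ (Proposition~\ref{prop-unif-embed}), not $-\frac\gamma4$. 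So the base case needs the dynamical coupling of \cite{wedges}, not just the static mating statement.
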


\subsection{The $\kappa > 8$ LCFT  zipper} \label{sec-intro-sf}
Let $\kappa \geq 8$ and $\gamma = \frac4{\sqrt\kappa}$. Using  Proposition~\ref{prop-mot}, we first construct a process of decorated quantum surfaces $(\cC_s)_{s>0}$ which corresponds to incrementally mating a correlated pair of continuum random trees. 

In the setting of Proposition~\ref{prop-mot}, for $s > 0$ the intersection $\eta([0,s]) \cap \R$ is an interval which we call $[p_\ell(s), p_r(s)]$. Let $\cC_s := (\eta([0,s]), \phi, \eta|_{[0,s]}, 0, \eta(s), p_\ell(s), p_r(s))/{\sim_\gamma}$; this is a quantum surface decorated by a curve and four marked points. Let $\partial_{\ell}^- \cC_s$ and $\partial_{\ell}^+ \cC_s$ be the successive boundary arcs from $0$ to $p_\ell(s)$ to $\eta(s)$, and likewise define $\partial_r^- \cC_s$ and $\partial_r^+ \cC_s$. By definition the quantum lengths of $\partial_\ell^- \cC_s, \partial_\ell^+ \cC_s, \partial_r^- \cC_s, \partial_r^+ \cC_s$ are the quantities $Y_s^-, Y_s^+, X_s^-, X_s^+$ defined in Proposition~\ref{prop-mot}. Let $\mathrm{CRT}^2_\kappa$ denote the law of the process $(\cC_s)_{s \geq 0}$. This process described the incremental mating of the continuum random trees associated to $(X_\cdot, Y_\cdot) = (X_\cdot^+ - X_\cdot^-, Y_\cdot^+ - Y_\cdot^-)$, where $\cC_s$ is the resulting curve-decorated surface at time $s$.

We can now state the LCFT zipper for $\kappa >8$. Suppose $\kappa\geq 8$ and $\gamma = \frac4{\sqrt\kappa}$. 
	Sample $(\phi_0, (\cC_s)_{s \geq 0}) \sim \LF^{(-\frac1{\sqrt\kappa}, 0), (\alpha_j, z_j)_j, (\delta, \infty)} \times \mathrm{CRT}^2_\kappa$. For $s > 0$ such that  $\cL_{\phi_0}((-\infty, 0))$ and $\cL_{\phi_0}((0,\infty))$ are greater than the quantum lengths of $\partial_r^- \cC_s$ and $\partial_\ell^- \cC_s$	 respectively, consider the conformal welding of $\cC_s$ to $(\bbH, \phi_0, 0, \infty)/{\sim_\gamma}$ according to quantum length, where the first marked points of the quantum surfaces are identified, and the whole boundary arc $\partial_\ell^- \cC_s \cup \partial_r^- \cC_s$ is welded to part of the boundary of  $(\bbH,\phi_0,0,\infty)/{\sim_\gamma}$. This gives a quantum surface decorated by a curve from $\cC_s$. Embed the resulting curve-decorated quantum surface via the hydrodynamic normalization to get $({\mathbb H}, \hat \phi_s, \hat \eta_s)$. That is, if $\hat g_s$ is the conformal map from ${\mathbb H}$ to the unbounded connected component of ${\mathbb H}\backslash \hat \eta_s$ satisfying $\lim_{z \to \infty} \hat g_s(z) - z = 0$, then 
$\hat g_s^{-1} \bullet_\gamma \hat \phi_s = \phi_0$.

Though the process $(\hat \phi_s, \hat \eta_s, \hat g_s)$ was defined in a way that does not mention continuum random trees, since $(\cC_s)_{s > 0}$ corresponds to the incremental mating of a pair of correlated continuum random trees, this process can equivalently be defined by sampling $\phi_0 \sim \LF_{\mathbb H}^{(-\frac1{\sqrt\kappa}, 0), (\alpha_j, z_j)_j, (\delta, \infty)}$ and gluing an independent pair of correlated continuum random trees to the boundary of $(\bbH, \phi_0)$ according to quantum length measure, then incrementally mating the pair of trees. See  Figure~\ref{fig-zipper-sf}.

We reparametrize the process $(\hat \phi_s, \hat \eta_s, \hat g_s)$ according to half-plane capacity to get $(\phi_t, \eta_t, g_t)$ such that $\mathrm{hcap}(\eta_t) = 2t$. Let $W_t$ be the endpoint of $\eta_t$ lying in $\R$. 
The continuation threshold is the first time $t$ that any neighborhood of $W_t$ in $\R$ has infinite quantum length.

\begin{figure}
	\begin{center}
		\includegraphics[scale=0.38]{}
	\end{center}
	\caption{\label{fig-zipper-sf} Let $\kappa \geq 8$ and $\gamma = \frac4{\sqrt\kappa}$. \textbf{Left:} We sample a Liouville field $\phi_0$ and attach correlated continuum random trees to the left and right boundaries. Marked points are not depicted. \textbf{Right:} Mating the trees corresponds to ``zipping up the quantum zipper''. The curve $\eta_t$ is the interface in ${\mathbb H}$ between the orange and green trees. 
		The conformal map $g_t$ sends the pink region on the left to that on the right and satisfies $\lim_{z \to \infty} g_t(z)-z = 0$. }
\end{figure}

\begin{theorem}\label{thm-sf-zipper} For $\kappa > 8$ and $\gamma = \frac4{\sqrt\kappa}$, the conclusions of Theorems~\ref{thm-simple-zipper} and~\ref{thm-simple-zipper-rho} hold for the  process $(\phi_t, \eta_t)$ constructed immediately above.
\end{theorem}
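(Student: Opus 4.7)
The plan is to mirror the proofs of Theorems~\ref{thm-simple-zipper} and~\ref{thm-simple-zipper-rho} from the $\kappa\le 4$ regime, replacing the simple conformal welding of \cite{shef-zipper} by the mating-of-trees theorem (Proposition~\ref{prop-mot}) as the base stationarity input. The architecture has two layers: (1) a wedge-level stationarity statement for the CRT-based zipper of this section, and (2) a Girsanov change of measure that imports the insertions $(\alpha_j,z_j)$ and the insertion at $\infty$.

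\textbf{Step 1 (wedge stationarity).} The first and main step is to prove that if $\phi_0$ is sampled so that $(\bbH,\phi_0,0,\infty)$ is an embedding of a weight $(2-\frac{\gamma^2}{2})$ quantum wedge and $(X_t,Y_t)_{t\ge 0}\sim\mathrm{CRT}_\kappa$ is independent, then the process $(\phi_t,\eta_t,g_t)$ of this section satisfies the following: for any $\sigma(\eta_s:s\le t)$-stopping time $\tau$, the conditional law of $(\phi_\tau,\eta_\tau)$ is that of a weight $(2-\frac{\gamma^2}{2})$ wedge with marked boundary point $W_\tau$, decorated by an independent reverse $\SLE_\kappa$ run up to time $\tau$. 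This is a time-reversal of Proposition~\ref{prop-mot}: forward mating builds the wedge together with a forward space-filling $\SLE_\kappa$, while the zipper glues an \emph{independent} CRT to a wedge and then hydrodynamically embeds, which corresponds to inverting the Loewner flow. I would prove it by combining the measurability statement of Prop~\ref{prop-mot} with the time-reversal symmetry of the Brownian driver $(X_t,Y_t)$, the scale-invariance of the wedge, and a careful comparison of the quantum-area parametrization of mating-of-trees with the half-plane-capacity parametrization used to define $(\phi_t,\eta_t,g_t)$.

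\textbf{Steps 2--3 (Girsanov and conclusion).} Once Step~1 is in place, the remainder of the proof is a Girsanov computation identical in structure to the $\kappa\le 4$ case. Up to the explicit normalizing constant $C_\kappa^{\cdots}$ entering the definition of $\LF_\bbH^{(-1/\sqrt\kappa,0),(\alpha_j,z_j)_j,(\delta,\infty)}$, the Liouville field is obtained from the weight $(2-\frac{\gamma^2}{2})$ wedge by a Gaussian tilt by $\prod_j e^{\alpha_j\phi(z_j)}e^{\delta\phi(\infty)}$, i.e.\ by shifting the underlying free field by the deterministic log-singular function $\sum_j\alpha_j G_\bbH(z_j,\cdot)+\delta\cdot(\text{const at }\infty)$. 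Tracking this shift through the zipper dynamics produces, via the conformal covariance of $G_\bbH$, the anomaly factors $|g_\tau'(z_i)|^{2\Delta_{\alpha_i}}$ and $|g_\tau'(z_b)|^{\Delta_{2\alpha_b}}$, and, via the Girsanov shift of the Brownian driver of reverse $\SLE_\kappa$ by $\sum_j\rho_j/(W_t-g_t(z_j))\,dt$ with $\rho_j=2\sqrt\kappa\,\alpha_j$, converts reverse $\SLE_\kappa$ into reverse $\SLE_{\kappa,\rho}$ together with the ratio $\cZ(\cdots)/\cZ(\cdots)$. Stopping before any $g_t(z_j)$ reaches the real line gives Theorem~\ref{thm-simple-zipper}; continuing past such times and using the continuity of $C_\kappa^{\cdots}\cZ/\cZ$ (the content of the Remark following Theorem~\ref{thm-simple-zipper-rho}) gives Theorem~\ref{thm-simple-zipper-rho} up to the continuation threshold. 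Because this Girsanov computation depends only on the abstract zipper framework and not on the value of $\kappa$, it can be imported essentially verbatim from the $\kappa\le 4$ proofs, so the real work is concentrated in Step~1. The main obstacle there is that Prop~\ref{prop-mot} is stated forward in quantum area with a wedge as \emph{output}, whereas the zipper is parametrized by half-plane capacity with the wedge as \emph{input}; bridging the two clocks and extracting the desired stopping-time Markov property will require the time-reversal argument sketched above together with a matching of the hydrodynamic embedding to the mating-of-trees embedding of the wedge.
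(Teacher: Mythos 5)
Your Steps 2--3 do match the paper's route in outline: insertions are introduced by a weighting/Girsanov argument (the paper implements this by weighting by field averages on small circles and near $\infty$, Lemmas~\ref{lem-green-conformal} and~\ref{lem-reweight-finite}, run through the template of Proposition~\ref{prop-reweight-infty}), the $\cZ/\cZ$ ratio and the $\rSLE_{\kappa,\rho}$ tilt come from Proposition~\ref{prop-sle-weighted}, and the passage across times when force points hit the curve is handled by piecing the process over the finitely many collision times. The genuine gap is in Step 1, which you rightly call the real work but neither prove nor state correctly. First, the statement itself is off: the law preserved at the tip of the zipper is the Liouville field with a $-\frac1{\sqrt\kappa}=-\frac\gamma4$ insertion at $W_\tau$ (this is what Sheffield's coupling, Proposition~\ref{prop-main}, already gives), whereas the uniform embedding of the weight $(2-\frac{\gamma^2}2)$ wedge is $\LF_\bbH^{(\frac{3\gamma}4,0),(Q-\frac{3\gamma}4,\infty)}$ (Proposition~\ref{prop-unif-embed}). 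These differ by a $\gamma$-insertion, so ``wedge in, wedge out at the tip'' is not the correct stationarity statement, and chasing it would leave a mismatch with the $(-\frac1{\sqrt\kappa},0)$ insertion in the theorem.

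Second, and more seriously, ``time-reversal of Proposition~\ref{prop-mot}'' plus ``a careful comparison of clocks'' does not bridge the forward, area-parametrized mating-of-trees (wedge as output) with the reverse-SLE, capacity-parametrized zipper (Liouville field as input). The paper's bridge is the quantum-typical-point mechanism: run the abstract Sheffield-coupling process (Lemma~\ref{lem-shef-coupling}) until the zipped-in quantum area equals an independent Lebesgue-distributed $A$; the tip is then a point sampled from $\cA_\phi$, hence carries an extra $\gamma$-insertion by Proposition~\ref{prop-pointed}, converting $-\frac\gamma4$ into $\frac{3\gamma}4$ and connecting to the wedge/quantum-cell decomposition of Proposition~\ref{prop:MOT-stationary-qs} (which in turn rests on Lemma~\ref{lem-wedge-cell} and the uniform embedding). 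Identifying the welded-on piece as an area-$A$ quantum cell, i.e.\ as the mating of the CRT pair for area $A$, then requires the limiting argument of Propositions~\ref{prop:MOT-zip-abstract} and~\ref{prop:MOT-zip-concrete}, with truncation on events whose finiteness (Lemma~\ref{lem-EN-finite}) is exactly what fails at $\kappa=8$ and forces the exclusion of that case. None of this machinery appears in your sketch, so Step 1 as written is an assertion rather than a proof, and the assertion being made is not the one the argument needs.
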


\begin{remark}\label{rem-not-proved}
	Theorem~\ref{thm-sf-zipper} should also hold for $\kappa = 8$ but our argument does not apply to this critical value, see Remark~\ref{rem-difficulty}. We leave the $\kappa = 8$ LCFT  zipper as an open problem. 
\end{remark}

\begin{remark}
	The mating-of-trees theorem Proposition~\ref{prop-mot} was also proved for the regime $\kappa \in (4,8)$ and $\gamma = \frac4{\sqrt\kappa}$, see Proposition~\ref{prop-mot-forr} for its statement. In this regime we can analogously  define $(\cC_s)$, but a.s.\ $\cC_s$ will not be simply connected for all $s$, 
	 and the conformal welding of $\cC_s$ to $(\bbH, \phi_0, 0, \infty)/{\sim_\gamma}$ will only be simply connected on the (fractal, measure zero) subset of times $\{ s \::\: X_s^+ = Y_s^+ = 0\}$. Restricting to these times and replacing the space-filling curves with non-space-filling curves  measurable with respect to them, the result is the process described by Theorem~\ref{thm-forest-zipper}. In this sense, the continuum random tree construction is equivalent to the forested line construction. See  Section~\ref{subsec-bpz-48} and Figure~\ref{fig-mot-forr} for more details.	
\end{remark}

\subsection{BPZ equation for Liouville conformal field theory}

We now give an application of the quantum zipper to LCFT, by proving a Belavin-Polyakov-Zamolodchikov (BPZ) equation for correlation functions with a degenerate boundary insertion. 

In this section we distinguish bulk and boundary insertions with different letters. Let $m, n \geq 0$. Let $(\alpha_j, z_j) \in \R \times {\mathbb H}$ for $j \leq m$ and assume the $z_j$ are distinct. Let $-\infty = x_0 < x_1 < \dots < x_n < x_{n+1} = +\infty$ be boundary points, let $\beta_1, \dots, \beta_n \in \R$, and let $\delta \in \R$. Let $\beta_* \in \{-\frac\gamma2, -\frac2\gamma\}$.  

For $k = 0, \dots, n$ let $I_k = (x_k, x_{k+1})$. Fix an index $k_* \in \{0, \dots, n\}$ and let $w \in I_{k_*}$.
Let $I_L = (x_{k_*}, w)$ and $I_R = (w, x_{k_* + 1})$. 
See Figure~\ref{fig-bpz}.  For $k \neq k_*$ let $\mu_k \in {\mathbb C}$ satisfy $\Re \mu_k \geq 0$, and assume  $\mu_L, \mu_R \in {\mathbb C}$ satisfy $\Re \mu_L, \Re \mu_R\geq0$ and are defined in terms of $\sigma_L, \sigma_R\in {\mathbb C}$ as follows: 
\eqb\label{eq-cosmocoupling}
\mu_L = g(\sigma_L), \,\, \mu_R = g(\sigma_R) \quad \text{ where } g(\sigma) = \frac{\cos(\pi \gamma (\sigma - \frac Q2))}{\sqrt{\sin (\pi \gamma^2/4)}} \text{ and } \sigma_L - \sigma_R = \pm \frac{\beta_*}2.  
\eqe

\begin{figure}
	\begin{center}
		\includegraphics[scale=0.38]{}
	\end{center}
	\caption{\label{fig-bpz} Marked boundary points on $\partial {\mathbb H}$ and the intervals between them. 
		The bulk insertions are $(\alpha_j, z_j)_j$ (not depicted), the boundary insertions are $(\frac{\beta_*}2, w),(\frac\delta2, \infty)$ and  $(\frac{\beta_k}2, x_k)$ for $1 \leq k \leq n$.
		Each boundary interval $I_\bullet$ has an associated boundary cosmological constant $\mu_\bullet$. }
\end{figure}
Suppose the \emph{Seiberg bounds} hold:
\eqb\label{eq-seiberg}
\sum_{j} \alpha_j + \sum_k \frac{\beta_k}2 + \frac\delta2 + \frac{\beta_*}2 > Q, \qquad \alpha_j, \beta_k < Q \text{ for all }j,k, \quad  \delta <Q.
\eqe
We define the LCFT correlation function $F_{\beta_*}(w, (z_j)_j, (x_k)_k)$ to be 
\eqb\label{eq-corr}
\LF_{\mathbb H}^{(\frac{\beta_*}2, w), (\alpha_j, z_j)_j, (\frac{\beta_k}2, x_k), (\frac\delta2, \infty)}[\exp(-\cA_\phi({\mathbb H})- \sum_{\substack{0\leq k \leq n \\ k \neq k_*}} \mu_k \cL_\phi(I_k) - \mu_L \cL_\phi(I_L) - \mu_R \cL_\phi(I_R))].
\eqe
Because the Seiberg bounds hold, the integral~\eqref{eq-corr} converges absolutely \cite[Theorem 3.1]{hrv-disk}.

\begin{theorem}\label{thm-bpz}
	For $(\gamma, \beta_*) \neq (\sqrt2, -\frac\gamma2)$, the correlation functions $F_{\beta_*}$ are smooth and satisfy the BPZ equation
	\[\left(\frac1{\beta_*^2} \partial_{ww} + \sum_j (\frac1{w - z_j} \partial_{z_j} + \frac1{w-\ol z_j} \partial_{\ol z_j}) + \sum_k \frac1{w - x_k} \partial_{x_k} +\sum_j \Re \frac{2 \Delta_{\alpha_j}}{(w - z_j)^2} + \sum_k \frac{\Delta_{\beta_k}}{(w - x_k)^2} \right) F_{\beta_*} =0 .\]
\end{theorem}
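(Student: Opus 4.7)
The plan is to use the LCFT quantum zipper to construct an SLE local martingale involving $F_{\beta_*}$, and then extract the BPZ equation via Ito's formula. By translation invariance I may assume $w = 0$. Observing that $\beta_*/2 = -\frac1{\sqrt\kappa}$ for the natural choice $\kappa = \gamma^2$ (when $\beta_* = -\frac2\gamma$) or $\kappa = 16/\gamma^2$ (when $\beta_* = -\frac\gamma2$), the insertion $(\beta_*/2, w)$ at the degenerate boundary point matches the zipping insertion of the appropriate quantum zipper (Theorem~\ref{thm-simple-zipper-rho}, \ref{thm-forest-zipper}, or~\ref{thm-sf-zipper}). The excluded pair $(\sqrt 2, -\gamma/2)$ corresponds exactly to $\kappa = 8$ (Remark~\ref{rem-not-proved}). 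Apply the zipper to $\phi_0 \sim \LF_\bbH^{(\beta_*/2, 0), (\alpha_j, z_j)_j, (\beta_k/2, x_k)_k, (\delta/2, \infty)}$ to obtain the coupled process $(\phi_t, \eta_t, g_t, W_t)$, and consider the martingale (by the tower property)
\[ M_t := \E\!\left[e^{-\cA_{\phi_0}(\bbH) - \sum_{k \neq k_*}\mu_k \cL_{\phi_0}(I_k) - \mu_L \cL_{\phi_0}(I_L) - \mu_R \cL_{\phi_0}(I_R)} \, \big| \, \cF_t\right]. \]

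The crux is to identify $M_t$ as $\wt R_t \cdot F_{\beta_*}(W_t, (g_t(z_j))_j, (g_t(x_k))_k)$ for an explicit prefactor $\wt R_t$. Using conservation of quantum length under the conformal welding, $\cL_{\phi_0}(I_L) = \cL_{\phi_t}((g_t(x_{k_*}), W_t)) + \ell_t$, where $\ell_t$ is the common quantum length of each side of $\eta_t$, and similarly for $I_R$, the integrand factors into its post-zipping image times an extra curve-length tilt $e^{-(\mu_L+\mu_R)\ell_t}$. Combined with the joint-law description of Theorem~\ref{thm-simple-zipper-rho}, this yields $F_{\beta_*}$ evaluated at the new points, times the $\cZ$-ratio, times this tilt. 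The main obstacle is absorbing $e^{-(\mu_L+\mu_R)\ell_t}$ cleanly, and this is exactly where the coupling~\eqref{eq-cosmocoupling} intervenes. Heuristically, viewing $\mu_L = g(\sigma_L)$ and $\mu_R = g(\sigma_R)$ as Seiberg-dressed insertion parameters adjacent to the zipping tip, the tilts on the two sides of $\eta_t$ should convert --- via boundary LCFT reflection/two-point identities in the spirit of~\cite{ARS-FZZ,AHS-SLE-integrability} --- into effective shifts of the insertion at $W_t$; the condition $\sigma_L - \sigma_R = \pm \beta_*/2$ is precisely the compatibility requirement for these two shifts to combine consistently with the degenerate insertion $\beta_*$. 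After this absorption, $M_t = \wt R_t \cdot F_{\beta_*}(W_t, (g_t(z_j)), (g_t(x_k)))$, with driving process a reverse $\SLE_{\kappa,\rho}$ with force points $\rho_j = 2\sqrt\kappa\,\alpha_j$ at each $z_j$ and $\sqrt\kappa\,\beta_k$ at each $x_k$, inherited from the $\cZ$-ratio Girsanov in Theorem~\ref{thm-simple-zipper-rho}.

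Finally, assuming smoothness of $F_{\beta_*}$ (proved by standard differentiation-under-the-Liouville-integral arguments as in~\cite{krv-local}), Ito's formula applied to the local martingale $\wt R_t \cdot F_{\beta_*}$ gives a PDE. The quadratic variation $d[W]_t = \kappa\, dt$ contributes $\frac\kappa4\partial_{WW} = \frac1{\beta_*^2}\partial_{ww}$; the reverse Loewner drifts $dg_t(z) = -\frac{2}{g_t(z)-W_t}\,dt$ contribute $\sum_j(\frac1{w-z_j}\partial_{z_j} + \frac1{w-\ol z_j}\partial_{\ol z_j}) + \sum_k \frac1{w-x_k}\partial_{x_k}$; the force-point drifts of $dW_t$ cancel against the logarithmic derivatives of the $\cZ$-ratio in $\wt R_t$; and the Jacobian/conformal-weight contributions (from $\partial_t \log|g_t'(z)| = -\Re\frac{2}{(g_t(z)-W_t)^2}$ applied to the implicit $|g_t'|^{\Delta}$ factors in $\wt R_t$) yield the zeroth-order terms $\Re\frac{2\Delta_{\alpha_j}}{(w-z_j)^2}$ and $\frac{\Delta_{\beta_k}}{(w-x_k)^2}$. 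Setting the total drift to zero gives the BPZ equation. The uniform treatment across the three zipper regimes requires some care, since $\ell_t$ is measured with respect to simple quantum length ($\kappa \in (0,4]$), generalized quantum length including forests ($\kappa \in (4,8)$), or the CRT structure for space-filling $\kappa > 8$; nevertheless the length-conservation argument produces the same $(\mu_L+\mu_R)\ell_t$ tilt in each case, so~\eqref{eq-cosmocoupling} plays its role uniformly.
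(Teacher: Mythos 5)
Your overall skeleton (quantum zipper $\to$ SLE local martingale in $F_{\beta_*}$ $\to$ generator/It\^o computation $\to$ BPZ) matches the paper, and your identification of which zipper goes with which $\beta_*$ and why $(\sqrt2,-\frac\gamma2)$ is excluded is correct. But the crux of your argument --- how the coupling~\eqref{eq-cosmocoupling} enters --- is where the proposal breaks down. Your picture of a ``common quantum length $\ell_t$ of each side of $\eta_t$'' producing a tilt $e^{-(\mu_L+\mu_R)\ell_t}$ is only valid for $\kappa\le4$, and there the resolution is immediate rather than requiring any ``reflection/two-point identity'': for $\beta_*=-\frac2\gamma$ one has $\pi\gamma(\sigma_L-\sigma_R)=\mp\pi$, so~\eqref{eq-cosmocoupling} forces $\mu_L+\mu_R=0$ and the tilt is identically $1$ (equivalently, $A_t$, the $L_{k,t}$, and $L_t-R_t$ are all conserved by the welding, so $G_t=e^{-A_t-\sum\mu_kL_{k,t}-\mu_L(L_t-R_t)}$ is constant). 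For $\kappa>4$ the picture is simply wrong: the two boundary lengths do \emph{not} change by a common amount --- they change by the correlated Brownian motions $X_s,Y_s$ of the mating-of-trees, and the area changes by $s$ --- so there is no $(\mu_L+\mu_R)\ell_t$ tilt to absorb. The actual mechanism (Lemma~\ref{lem-bm-mtg}) is that $e^{-s-\mu_LX_s-\mu_RY_s}$ is an exponential martingale precisely when $\mu_L=c\cos x$, $\mu_R=c\cos(x+\theta)$ with $\theta=\frac{4\pi}\kappa$, via the identity $\cos^2x+\cos^2(x+\theta)-2\cos x\cos(x+\theta)\cos\theta=\sin^2\theta$; this is the conceptual explanation of~\eqref{eq-cosmocoupling} and your proposed ``absorption via Seiberg-dressed shifts of the insertion at $W_t$'' is not an argument that can be completed.

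There is a second genuine gap: you assume smoothness of $F_{\beta_*}$ ``by standard differentiation-under-the-integral arguments as in \cite{krv-local}.'' The paper explicitly flags that it is unclear whether those arguments adapt to boundary LCFT; the local martingale property only shows $F_{\beta_*}$ is a \emph{weak} solution of the generator equation, and smoothness is then \emph{derived} from hypoellipticity of the resulting operator via H\"ormander's condition (following \cite{dubedat-virasoro, pw-multiple-SLE}). Without that step your It\^o computation is not justified. A minor further point: the paper runs plain reverse $\SLE_\kappa$ (no force points) stopped before any $g_t(x_k)$ hits the curve and uses the $|g_t'|^{\Delta}$-weighted form of Theorem~\ref{thm-simple-zipper}, which makes the generator computation direct; your force-point formulation is equivalent via Proposition~\ref{prop-sle-weighted} but requires an extra cancellation you would need to carry out.
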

If we assume the $\kappa = 8$ LCFT zipper not proved in this paper, the proof of Theorem~\ref{thm-bpz} carries through to the remaining case $(\gamma, \beta_*) = (\sqrt2, -\frac\gamma2)$. 

Here is a proof sketch. Consider the LCFT zipper with $\kappa = \frac4{\beta_*^2}$, and let $A_t, L_t, R_t$ be the quantum area and left and right quantum boundary lengths at time $t$. For $\beta_* = -\frac2\gamma$ and $\kappa \leq 4$, the coupling of $\sigma_L$ and $\sigma_R$ gives $\mu_L +\mu_R=0$, so $\mu_L L_t + \mu_R R_t = \mu_L (L_t - R_t)$. (Note that $\mu_L+ \mu_R = 0$ and $\mathrm{Re}(\mu_L), \mathrm{Re}(\mu_R) \geq 0$ force $\mu_L$ and $\mu_R$ to be purely imaginary.) This quantity is invariant under the conformal welding zipper process, giving rise to an $\SLE_\kappa$ martingale from which the BPZ equation is immediate.  For $\beta_* = -\frac\gamma2$ and $\kappa >4$, although $e^{-A_t - \mu_LL_t - \mu_R R_t}$ is not invariant, it evolves as a martingale  (Lemma~\ref{lem-bm-mtg}); the proof uses the mating-of-trees Brownian motion and requires the coupling~\eqref{eq-cosmocoupling}. This again gives an $\SLE_\kappa$ martingale and thus the BPZ equation. These arguments only yield that $F_{\beta_*}$ is a weak solution, but a hypoellipticity argument following \cite{dubedat-virasoro, pw-multiple-SLE} then implies that $F_{\beta_*}$ is smooth, completing the proof. 

In their pioneering work, Belavin, Polyakov and Zamolodchikov used representation theoretic methods to derive BPZ equations for the sphere \cite{bpz-conformal-symmetry}. These were recently mathematically proved for LCFT \cite{krv-local} using a subtle argument involving cancellations of not absolutely convergent integrals. See also \cite{remy-fb-formula, rz-gmc-interval, rz-boundary} for BPZ equations on the disk with bulk cosmological constant zero, that is, in~\eqref{eq-corr} the term $-\cA_\phi({\mathbb H})$ is removed. 

When the bulk cosmological constant is nonzero, the BPZ equations do not hold unless there is a coupling of the cosmological constants $\mu_L$ and $\mu_R$ via~\eqref{eq-cosmocoupling}. This was proposed by \cite{FZZ} after examining special cases, and by \cite{bb-hem-lcft} under the physical ansatz that in LCFT there exists only one primary field of a given conformal dimension.  Our SLE martingale argument gives an orthogonal conceptual explanation for~\eqref{eq-cosmocoupling}.

\begin{remark}
	The statement of Theorem~\ref{thm-bpz} was chosen for simplicity. Our argument is quite robust, and many of the conditions can be loosened. 
	\begin{itemize}
		\item We can choose  $\beta_k \geq Q$ if the boundary cosmological constants for the intervals adjacent to $x_k$ are zero. We can choose $\delta \geq  Q$ if $\mu_0 = \mu_n = 0$. 
		\item The condition that $\Re \mu_k, \Re \mu_L, \Re \mu_R \geq 0$ can be relaxed so long as~\eqref{eq-corr} converges absolutely.
		\item The bound $\sum_j \alpha_j + \sum_k \frac{\beta_k}2 + \frac\delta2 + \frac{\beta_*}2 >  Q$ is needed to ensure convergence of~\eqref{eq-corr}.  Sometimes the correlation function  can be defined even when~\eqref{eq-corr} is nonconvergent, by introducing \emph{truncations} where the exponential in~\eqref{eq-corr} is replaced by $e^z - 1$ or $e^z - 1- z$ for instance. See e.g.\ \cite[Proposition 3.4]{AHS-SLE-integrability}, 
		\cite[(1.7)]{ARS-FZZ} and \cite[Theorem B]{cercle-reflection-toda}. 
	\end{itemize}
\end{remark}

We note that the smoothness result of Theorem~\ref{thm-bpz} is itself rather nontrivial, and follows from the hypoellipticity argument of Dub\'edat \cite{dubedat-virasoro}.  \cite{krv-local} proved the bulk correlation functions are $C^2$ using mollifications of the GFF, and this was extended to $C^\infty$ by \cite{oikarinen-smoothness}. Using similar methods, \cite{cercle-hem} obtained expressions for weak first and second derivatives of boundary correlation functions. It is not clear to us whether these arguments can be extended to smoothness for boundary LCFT.

Finally, the \emph{higher equations of motion} for boundary LCFT developed in the physics literature by \cite{bb-hem-lcft} are closely related to the BPZ equation. Remarkably, shortly after the first version of this paper was uploaded, \cite{bw-hem-lcft} succeeded in proving a formulation of the level two higher equations of motion in terms of conformal blocks, and independently \cite{cercle-hem} obtained a formulation in terms of correlation functions and hence proved a generalization of Theorem~\ref{thm-bpz} without the assumption~\eqref{eq-cosmocoupling} and where the right hand side of the PDE is correspondingly nonzero \cite[Theorem 1.3]{cercle-hem}.

\subsection{Outlook}\label{sec-outlook}
We state here a few applications of the LCFT  zipper, and mention some future directions and open questions.

\subsubsection{Integrability of boundary LCFT}
The basic objects of conformal field theories are their correlation functions, and solving a conformal field theory means obtaining exact formulae for them. 
For the case of LCFT on surfaces without boundary, this was carried out in a series of landmark works that proved the  three-point structure constant equals the DOZZ formula proposed in physics \cite{krv-dozz}, then made rigorous the conformal bootstrap program of physicists to recursively solve for all correlation functions on all surfaces \cite{gkrv-bootstrap, gkrv-segal}. 

A similar program is currently being carried out for LCFT on surfaces with boundary. 
With Remy and Sun we computed the one-point bulk structure constant using mating-of-trees \cite{ARS-FZZ}. Taking the BPZ equation Theorem~\ref{thm-bpz} as input, with Remy, Sun and Zhu \cite{ARSZ-PT} we computed the boundary three-point structure constant and the bulk-boundary structure constant, making rigorous the formulas of Ponsot-Teschner~\cite{pt-formula} and Hosomichi~\cite{hosomichi} respectively. Our work provides all the initial data needed to compute all correlation functions on surfaces with boundary. The boundary conformal bootstrap  program has been initiated, see \cite{wu-bootstrap-annulus}.

\subsubsection{Reversibility of whole plane $\SLE_\kappa$ when $\kappa > 8$}
For $\kappa \in (0,8]$, chordal SLE is reversible in the following sense. 
Let $f: {\mathbb H}\to {\mathbb H}$ be a conformal automorphism with $f(0) = \infty$ and $f(\infty) = 0$. 
If $\eta$ is $\SLE_\kappa$ in ${\mathbb H}$ from $0$ to $\infty$, then the time-reversal of $f \circ \eta$ has the same law as $\eta$ up to time-reparametrization \cite{zhan-reversibility, ig3}. However, reversibility fails for  $\kappa > 8$ chordal $\SLE_\kappa$ \cite{ig4}.  

\emph{Whole-plane SLE} is a variant of SLE in ${\mathbb C}$ that starts at $0$ and targets $\infty$. Its reversibility was established by \cite{zhan-rev-whole-plane} for $\kappa \leq 4$ and \cite{ig4} for $\kappa \in (4,8]$. For $\kappa > 8$ the reversibility of whole-plane $\SLE_\kappa$  was conjectured in \cite{vw-loewner-kufarev} via reversibility of the $\kappa \to \infty$ large deviations rate function, which they obtained from a field-foliation coupling they interpret as describing a ``$\kappa \to \infty$ radial mating-of-trees''.
With Yu \cite{ay-reversibility}, we establish a radial mating-of-trees using the LCFT  zipper, then use mating-of-trees reversibility and LCFT reversibility to show whole-plane $\SLE_\kappa$ reversibility for $\kappa > 8$. 

\subsubsection{A general theory of conformal welding in LCFT}
It is known in many cases that conformally welding quantum surfaces described by LCFT produces a quantum surface also described by LCFT. This was first demonstrated for quantum wedges \cite{shef-zipper}, and many more cases followed \cite{wedges, hp-welding,  msw-cle-lqg, msw-non-simple,  ahs-disk-welding, AHS-SLE-integrability, ARS-FZZ, AS-CLE, ASY-triangle}. In all cases the welding interfaces are either chords or loops, and the surfaces have genus 0. In forthcoming work with Pu Yu, we will use the LCFT  zipper to obtain radial conformal weldings and subsequently develop a systematic framework that produces conformal weldings of quantum surfaces of arbitrary genus.

\subsubsection{Other BPZ equations in random conformal geometry}
Our argument uses the mating-of-trees framework to prove the boundary BPZ equation for LCFT on the disk. The BPZ equation for LCFT on the sphere has already been shown \cite{krv-local}, but can something similar to our proof of Theorem~\ref{thm-bpz} give an alternative proof?

The \emph{conformal loop ensemble (CLE)} \cite{shef-cle,shef-werner-cle} is a canonical conformally invariant collection of loops that locally look like SLE \cite{shef-cle}, and arises as the scaling limit of the collection of interfaces of statistical physics models. It is expected that CLE is described by a CFT -- indeed a suitably-defined CLE three-point function agrees with the \emph{generalized minimal model} CFT structure constant \cite{AS-CLE} -- so CLE multipoint functions should satisfy the BPZ equations. 
In the present work we obtained the boundary LCFT BPZ equation from a BPZ equation for SLE (in the sense that using It\^o calculus on the martingale of Lemma~\ref{lem-mtg} gives a second-order differential equation resembling BPZ), using mating-of-trees. One might hope to turn this argument around: could a BPZ equation for CLE be obtained from the BPZ equation for LCFT \cite{krv-local} using mating-of-trees? 

\medskip

\noindent\textbf{Organization of the paper.}
In Section~\ref{sec-prelim} we recall some preliminaries about LQG, LCFT, SLE and the mating-of-trees framework. In Section~\ref{sec-gff-partition} we explain that reverse $\SLE_{\kappa, \rho}$ is reverse $\SLE_\kappa$ weighted by a GFF partition function. In Section~\ref{sec-sheffield-zipper} we adapt a GFF quantum zipper of \cite{wedges} to obtain the $\kappa \leq 4$ LCFT zipper. In Section~\ref{sec-sf-zip} we prove the $\kappa > 8$ LCFT zipper, and in Section~\ref{sec-4-8} the $\kappa \in (4,8)$ LCFT zipper. Finally, we establish the LCFT boundary BPZ equations in Section~\ref{sec-bpz}.
\medskip 

\noindent\textbf{Acknowledgements.} 
We thank Guillaume Remy, Xin Sun and Tunan Zhu for earlier discussions on alternative approaches towards proving the boundary LCFT BPZ equations. 
We thank Guillaume Baverez, Yoshiki Fukusumi, Xin Sun, Yilin Wang,  Baojun Wu, Pu Yu and Dapeng Zhan for helpful discussions, and thank two anonymous referees for their valuable feedback. The author was supported by the Simons Foundation as a Junior Fellow at the Simons Society of Fellows.

\section{Preliminaries}\label{sec-prelim}
Our arguments fundamentally depend on non-probability measures. To facilitate the presentation, we will extend the terminology of probability theory to the non-probability setting. For a finite or $\sigma$-finite measure space $(\Omega, \cF, M)$, we say $X$ is a \emph{random variable} if it is a $\cF$-measurable function. The \emph{law} of $X$ is the push-forward measure $M_X := X_* M$, and we say that $X$ is \emph{sampled} from $M_X$. If $X$ is real-valued, we write $M[X] := \int X(\omega) M(d\omega)$. If $X$ is nonnegative, \emph{weighting by $X$} corresponds to considering the measure space $(\Omega, \cF, \wt M)$ where $\wt M \ll M$ is defined by $\frac{d\wt M}{dM}(\omega) = X(\omega)$. \emph{Conditioning} on an event $E \in \cF$ (with $0 < M(E) < \infty$) corresponds to considering the probability space $(E, \cF_E, M_E)$ where $M_E = M[E \cap \cdot] / M[E]$ and  $\cF_E= \{ A \cap E \: : \: A \in \cF\}$. Finally,  if $M$ is finite (i.e., $M(\Omega)<\infty$), define the probability measure $M^\# := M/M(\Omega)$. 

We note that most measures considered in this paper are non-probability measures, the exceptions being the Gaussian free field law $P_\bbH$ introduced in Section~\ref{subsec-gff}, and the laws of SLE and its variants. 

\subsection{The Gaussian free field and Liouville quantum gravity}\label{subsec-gff}
Let $m$ be the uniform probability measure on the unit half-circle $\{ z\: : \: z \in {\mathbb H}, |z| = 1\}$. The Dirichlet inner product is defined by $\langle f, g\rangle_\nabla = (2\pi)^{-1} \int_{\mathbb H} \nabla f \cdot \nabla g$. Consider the collection of smooth functions $f$ on $\ol \bbH$ with $\langle f, f\rangle_\nabla < \infty$ and $\int f(z) m(dz) = 0$, and let $H$ be its Hilbert space closure with respect to $\langle \cdot, \cdot \rangle_\nabla$. Let $(f_n)$ be an orthonormal basis of $H$ and let $(\alpha_n)$ be a collection of independent standard Gaussians. The summation 
\[h = \sum_n \alpha_n f_n \]
a.s.\ converges in the space of distributions (i.e., the strong dual space of smooth compactly supported functions on $\bbH$). Then $h$ is the \emph{free boundary Gaussian free field on ${\mathbb H}$ normalized so $\int h(z) m(dz) = 0$} \cite[Section 4.1.4]{wedges}.   We denote its law by $P_\bbH$; this is a probability measure on the space of distributions. (The GFF does have some stronger regularity but for our purposes the space of distributions suffices; see e.g.\ \cite[Remark 6.7]{berestycki-lqg-notes}.)
	One can also define free boundary Gaussian free fields with different normalizations.

Write $|z|_+ := \max ( |z|, 1)$. For $z,w \in \ol {\mathbb H}$ we define
\begin{equation}\label{eq-G}
	\begin{gathered}
		G_{\mathbb H}(z,w) =  -\log|z-w| - \log|z-\ol w| + 2\log |z|_+ + 2\log |w|_+, \\
		G_{\mathbb H}(z,\infty) = \lim_{w\to \infty} G_{\mathbb H}(z,w) =  2\log|z|_+.
	\end{gathered}
\end{equation}
The GFF $h$ is the centered Gaussian field with covariance structure formally given by ${\mathbb E}[h(z)h(w)] = G_{\mathbb H}(z,w)$. This is formal because $h$ is a distribution and so does not admit pointwise values, but for smooth compactly supported        functions $f,g$ on ${\mathbb H}$ we have ${\mathbb E}[(h, f) (h, g)] = \iint G_{\mathbb H}(z,w) f(z)g(w) \, dz\, dw$.

Suppose $\phi = h + g$ where $g$ is a (random) function on ${\mathbb H}$ which is continuous at all but finitely many points. Let $\phi_\eps(z)$ denote the average of $\phi$ on $\partial B_\eps(z) \cap {\mathbb H}$. 
For $\gamma \in (0,2)$, the $\gamma$-LQG area measure $\cA_\phi$ on ${\mathbb H}$ can be defined by the almost sure weak limit $\cA_\phi (dz) = \lim_{\eps \to 0} \eps^{\gamma^2/2} e^{\gamma \phi_\eps(z)} dz$ \cite{shef-kpz}. Similarly, the $\gamma$-LQG boundary length measure $\cL_\phi$ on $\R$ can be defined by $\cL_\phi(dx) := \lim_{\eps \to 0} \eps^{\gamma^2/4} e^{\frac\gamma2 \phi_\eps(x)}dx$. For the critical parameter $\gamma = 2$ a correction is needed to make the measure nonzero; we set $\cA_\phi (dz) = \lim_{\eps \to 0} (\log(1/\eps) - \phi_\eps(z)) \eps^2 e^{2 \phi_\eps(z)}dz$ and $\cL_\phi(dz) = \lim_{\eps \to 0} (\log(1/\eps) - \frac12 \phi_\eps(x)) \eps e^{\phi_\eps(x)}dx$ \cite{shef-deriv-mart}. See for instance \cite{lacoin-crit, powell-crit} for more details. 

A \emph{distribution modulo additive constant} on $\bbH$ is a continuous linear functional on the space of smooth compactly supported functions $f$ with $\int_\bbH f(z) \, dz = 0$. Alternatively, it is 
	an equivalence class of distributions where $\phi_1 \sim \phi_2$ whenever $\phi_1 - \phi_2 \equiv C$ for some constant $C$. The \emph{free boundary Gaussian free field  modulo additive constant} on $\bbH$ is the projection of $h \sim P_\bbH$ to the space of distributions modulo additive constant. See, e.g.,  \cite[Section 6.1]{berestycki-lqg-notes}.

\subsection{The Liouville field}\label{sec-LF} 
Let $\gamma \in (0,2]$ be the LQG parameter, and $Q = \frac\gamma2 + \frac2\gamma$. Write $|z|_+ := \max (|z|, 1)$. In this section we define some infinite measures on the space of distributions on $\bbH$. Recall that we extend the terminology of probability theory to the setting of non-probability measures (see start of Section~\ref{sec-prelim}), and in particular will use words such as ``sample'' and ``law'' for the infinite measures in the following definitions. 
\begin{definition}\label{def-LF-bare}
	Let $(h, \mathbf c)$ be sampled from $P_{\mathbb H} \times [e^{-Qc}\,dc]$ and let $\phi(z) = h(z) - 2Q \log |z|_+ + \mathbf c$. We call $\phi$ the \emph{Liouville field on ${\mathbb H}$} and denote its law by $\LF_{\mathbb H}$. 
\end{definition}

Define
\[ C_\gamma^{(\alpha, z)} =
\left\{
\begin{array}{ll}
	|z|_+^{-2\alpha(Q-\alpha)}(2 \Im z)^{-\alpha^2/2}  & \mbox{if } z \in {\mathbb H} \\
	|z|_+^{-2\alpha(Q-\alpha)}  & \mbox{if } z \in \R
\end{array}
\right. .\]
For $\delta \in \R$, $n \geq 0$ and $(\alpha_j, z_j) \in \R \times \ol {\mathbb H}$ for $j \leq n$ such that the $z_j$ are distinct, let 
\eqb \label{eq-C}
C_\gamma^{(\alpha_j, z_j)_j, (\delta, \infty)} = \prod_j C_\gamma^{(\alpha_j, z_j)} e^{\alpha_j \delta G_{\mathbb H}(z_j, \infty)} \times \prod_{1 \leq j < k \leq n} e^{\alpha_j \alpha_k G_{\mathbb H}(z_j, z_k)}.
\eqe
More generally, if the $(\alpha_j, z_j)$ are such that the $z_j$ are not distinct, we combine all pairs $(\alpha, z)$ with the same $z$ by summing their $\alpha$'s to get a collection $(\alpha'_j, z_j')$ where the $z_j'$ are distinct, and define $C_\gamma^{(\alpha_j, z_j)_j, (\delta, \infty)} = C_\gamma^{(\alpha_j', z_j')_j, (\delta, \infty)}$.
\begin{definition}\label{def-LF}
	For $\delta \in \R$, $n \geq 0$ and $(\alpha_j, z_j) \in \R \times \ol {\mathbb H}$, let $(h, \mathbf c)$ be sampled from $C^{(\alpha_j, z_j)_j, (\delta, \infty)}_\gamma P_{\mathbb H}\times [e^{(\sum_j \alpha_j + \delta - Q)c}\, dc]$, and set
	\[\phi(z) = h(z)  + \sum_j \alpha_j G_{\mathbb H}(z, z_j) +(\delta - Q) G_{\mathbb H}(z, \infty) + \mathbf c  .\]
	We call $\phi$ the  \emph{Liouville field with insertions $(\alpha_j, z_j)_j$, $(\delta, \infty)$} and we write $\LF_{\mathbb H}^{(\alpha_j, z_j)_j, (\delta, \infty)}$ for its law. 
\end{definition}
Note that $\LF_{\mathbb H}^{(\alpha_j, z_j)_j, (\delta, \infty)}$ depends implicitly on the parameter $\gamma$. 
When $\delta = 0$ there is no insertion at $\infty$, so we write $C_\gamma^{(\alpha_j, z_j)_j}$ and $\LF_{\mathbb H}^{(\alpha_j, z_j)_j}$ rather than $C_\gamma^{(\alpha_j, z_j)_j, (0, \infty)}$ and $\LF_{\mathbb H}^{(\alpha_j, z_j)_j, (0, \infty)}$. 

\begin{remark}\label{rem-neutrality}
	When $\sum_j \alpha_j + \delta = Q$, Definition~\ref{def-LF} has the following simplification. Sample $(h, \mathbf c) \sim 	C^{(\alpha_j, z_j)_j, (\delta, \infty)}_\gamma P_{\mathbb H} \times dc$, and let
	\eqb
	\phi(z) = h(z) + \sum_j \alpha_j G(z, z_j) + \mathbf c	
	\eqe
	where $G(z,w) = -\log|z-w| - \log |z- \ol w|$.  Then the law of $\phi$ is $\LF_{\mathbb H}^{(\alpha_j, z_j)_j, (\delta, \infty)}$. Moreover, we have the simplification $C^{(\alpha_j, z_j)_j, (\delta, \infty)}_\gamma = \cZ((\alpha_j, z_j)_j)$ with $\cZ$ defined in and below~\eqref{eq-Z0}. 
\end{remark}

\begin{remark}\label{rem-lf-notation}
	Let $m, n \geq 0$, let $(\alpha_j, z_j) \in \R \times {\mathbb H}$ for $j \leq m$, let $(\beta_k, x_k) \in \R \times \R$ for $k \leq n$, and let $\beta \in \R$.
	The measure that we call $\LF_{\mathbb H}^{(\alpha_j, z_j)_j, (\frac12\beta_k, x_k)_k, (\frac12\beta, \infty)}$ is instead called ``$\mathrm{LF}_{\mathbb H}^{(\alpha_j, z_j)_j, (\beta_k, x_k)_k, (\beta, \infty)}$'' in 
	\cite{AHS-SLE-integrability, ARS-FZZ}; there, the boundary insertions are described by their log-singularities (near $x_k$ the field blows up as $-\beta_k \log |\cdot - x_k|$), whereas our notation instead uses the Green function coefficient (near $x_k$ the field blows up like $\frac{\beta_k}{2} G_{\mathbb H}(\cdot, x_k)$). Our notation is more convenient for this paper since the Green function coefficient is invariant when a boundary  point is zipped into the bulk. 
\end{remark}

Finally, we will need the following rooted measure statement: sampling a point from the quantum area measure of a Liouville field is equivalent to adding a $\gamma$-insertion to the field.
\begin{proposition}\label{prop-pointed}
	Suppose $\gamma \in (0,2)$. 
	Let $n \geq 0$ and $(\alpha_j, z_j) \in \R \times \ol {\mathbb H}$ for $j \leq n$, and let $\delta \in \R$. Then
	\[ \cA_\phi(dz) \, \LF_{\mathbb H}^{(\alpha_j, z_j)_j, (\delta, \infty)}(d\phi) = \LF_{\mathbb H}^{(\alpha_j, z_j)_j, (\gamma, z),  (\delta, \infty)}(d\phi) \, dz.\]
\end{proposition}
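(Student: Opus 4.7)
The plan is to apply a Girsanov argument on the Gaussian structure of the GFF component of $\phi$, together with a substitution on the scalar $\mathbf c$-coordinate. By Definition~\ref{def-LF} I can write $\phi = h + f_0 + \mathbf c$ under $\LF_\bbH^{(\alpha_j, z_j)_j, (\delta, \infty)}$, where $h \sim P_\bbH$, $\mathbf c$ has density proportional to $e^{(\sum_j \alpha_j + \delta - Q)c}\, dc$, and $f_0 := \sum_j \alpha_j G_\bbH(\cdot, z_j) + (\delta - Q) G_\bbH(\cdot, \infty)$. Using the regularization $\cA_\phi(dz) = \lim_{\eps \to 0}\eps^{\gamma^2/2} e^{\gamma \phi_\eps(z)}\, dz$, I would test both sides against a bounded continuous functional $F$ and, by Fubini together with the $L^1$ convergence of the GMC regularization in the phase $\gamma \in (0,2)$, reduce the claimed identity to a pointwise-in-$z$ identity of measures on the space of fields.

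For fixed $z$, the $\mathbf c$-integration absorbs the factor $e^{\gamma \mathbf c}$ into the density, shifting the exponent from $\sum_j \alpha_j + \delta - Q$ to $\gamma + \sum_j \alpha_j + \delta - Q$, which is exactly the exponent associated to the extra insertion $(\gamma, z)$ in the target $\LF_\bbH^{(\alpha_j, z_j)_j, (\gamma, z),(\delta, \infty)}$. The $h$-integration is handled by the Cameron-Martin formula: with $\Psi_\eps(w) := \E[h(w)\, h_\eps(z)]$,
\[ e^{\gamma h_\eps(z)}\, dP_\bbH(h) \;=\; \exp\!\bigl(\tfrac{\gamma^2}{2}\E[h_\eps(z)^2]\bigr)\,d\bigl(\text{law of } h + \gamma \Psi_\eps\bigr),\]
so under this reweighting $h$ is replaced by $h + \gamma \Psi_\eps$; as $\eps \to 0$, the shift converges to $\gamma G_\bbH(\cdot, z)$, which is precisely the additional drift that $\LF_\bbH^{(\alpha_j, z_j)_j, (\gamma, z), (\delta, \infty)}$ prescribes beyond $f_0$.

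It then remains to verify that the surviving deterministic prefactors match $C_\gamma^{(\alpha_j, z_j)_j, (\gamma, z),(\delta, \infty)} / C_\gamma^{(\alpha_j, z_j)_j, (\delta, \infty)}$. The Girsanov factor $\eps^{\gamma^2/2} e^{(\gamma^2/2)\E[h_\eps(z)^2]}$ converges to $e^{(\gamma^2/2)\, \tilde G(z,z)}$ with $\tilde G(z,z) := \lim_{w \to z}\bigl(G_\bbH(z, w) + \log|z - w|\bigr) = -\log(2\Im z) + 4 \log |z|_+$, giving $(2\Im z)^{-\gamma^2/2} |z|_+^{2\gamma^2}$. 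Multiplying by $e^{\gamma f_0(z)} = |z|_+^{2\gamma(\delta - Q)}\prod_j e^{\gamma \alpha_j G_\bbH(z, z_j)}$ (using $G_\bbH(z,\infty) = 2\log|z|_+$) and simplifying reproduces exactly $C_\gamma^{(\gamma, z)} \, e^{\gamma \delta G_\bbH(z, \infty)}\prod_j e^{\gamma \alpha_j G_\bbH(z, z_j)}$, which by~\eqref{eq-C} is the desired ratio.

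The main obstacle is justifying the $\eps \to 0$ passage: one needs $\eps^{\gamma^2/2} e^{\gamma h_\eps(z)}\, dz$ to converge (against bounded test functions $F(\phi)$) and the limit to commute with the Lebesgue integration over $z$. In the subcritical regime $\gamma \in (0, 2)$ this is standard, following from Kahane's theory with $L^p$ bounds on the regularized GMC for some $p > 1$, uniform on compact subsets of $\bbH$ avoiding the $z_j$; the Lebesgue-null set of $z$ that coincide with some $z_j$ contributes nothing to the integrated identity.
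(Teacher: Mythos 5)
Your argument is correct and is exactly the route the paper takes: the paper gives the same Girsanov heuristic ($\cA_\phi(dz)$ and the $(\gamma,z)$-insertion both arising as limits of $\eps^{\gamma^2/2}e^{\gamma\phi_\eps(z)}$) and defers the details to the analogous statement for $\C$ in \cite[Lemma 2.31]{AHS-SLE-integrability}, whose proof is the Cameron--Martin shift plus prefactor matching that you carry out. Your bookkeeping of the $\mathbf c$-exponent, the shift $h\mapsto h+\gamma G_\bbH(\cdot,z)$, and the constant $(2\Im z)^{-\gamma^2/2}|z|_+^{2\gamma^2}$ against $C_\gamma^{(\alpha_j,z_j)_j,(\gamma,z),(\delta,\infty)}/C_\gamma^{(\alpha_j,z_j)_j,(\delta,\infty)}$ all check out.
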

Formally, Proposition~\ref{prop-pointed} holds because $\cA_\phi(dz) = \lim_{\eps \to 0} \eps^{\gamma^2/2} e^{\gamma \phi_\eps(z)} dz$ and \\$\LF_{\mathbb H}^{(\alpha_j, z_j)_j, (\gamma, z),  (\delta, \infty)}(d\phi)=\lim_{\eps \to 0} \eps^{\gamma^2/2} e^{\gamma \phi_\eps(z)} \LF_{\mathbb H}^{(\alpha_j, z_j)_j, (\delta, \infty)}(d\phi)$, where $\phi_\eps(z)$ is the average of $\phi$ on $\partial B_\eps(z) \cap {\mathbb H}$. A rigorous proof of the analogous statement for ${\mathbb C}$ is given in  \cite[Lemma 2.31]{AHS-SLE-integrability}; the proof of Proposition~\ref{prop-pointed} is identical so we omit it.

\subsection{Reverse Schramm-Loewner evolution}\label{subsec-sle}
In this section we recall some basic properties of reverse SLE.
Let $\kappa > 0$. 
Let $\rho_1, \dots, \rho_n \in \R$ and let $z_1, \dots, z_n \in \ol {\mathbb H}$ be distinct points. 
With $(B_t)_{t \geq 0}$ a standard Brownian motion, the driving function $(W_t)_{t \geq 0}$ for reverse $\SLE_{\kappa,\rho}$ is defined by the stochastic differential equations
\eqb\label{eq:SDE}
\begin{gathered}
	W_0 = 0, \quad dW_t = \sum_{j=1}^n \Re \left(\frac{-\rho_j}{Z^j_t- W_t}\right)\, dt + \sqrt\kappa\, dB_t,	\\
	Z^j_0 = z_j, \quad d Z^j_t = -\frac{2}{Z^j_t - W_t}\,dt  \quad \text{for }j = 1, \dots, n.
\end{gathered}
\eqe
For each $z \in {\mathbb H}$ and $s \geq 0$, let $(g_{s,t}(z))_{t \geq s}$ be the solution to $g_{s,s}(z) = z$, $\frac{d}{dt} g_{s,t}(z) = -\frac2{g_{s,t}(z) - W_t}$. This defines a family of conformal maps $g_{s,t}$; we also write $g_t := g_{0,t}$. 
For each $t >0$ we can define a curve $\eta_t:[0,t] \to \ol {\mathbb H}$ by $\eta_t(u) := \lim_{z \to W_{t-u}} g_{t-u, t}(z)$. We call the family of curves $(\eta_t)_{t \geq 0}$ \emph{reverse $\SLE_{\kappa}$ with force points at $z_j$ of weight $\rho_j$}.  Note that $g_t$ is the unique conformal map from ${\mathbb H}$ to the unbounded connected component of ${\mathbb H} \backslash \eta_t$ such that $\lim_{z \to \infty} g_t(z) - z = 0$, and that reverse SLE is parametrized by half-plane capacity in the sense that $\mathrm{hcap}(\eta_t) := \lim_{z \to \infty} z (z-g_t(z))$ equals $2t$. The curve $\eta_t$ is simple when $\kappa \leq 4$, self-intersecting (but not self-crossing) when $\kappa \in (4,8)$, and space-filling when $\kappa \geq8$.  

\begin{figure}
	\begin{center}
		\includegraphics[scale=0.8]{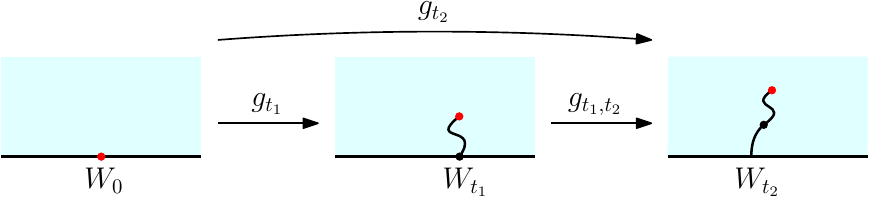}
		\caption{\label{fig-reverse-sle} Reverse $\SLE_\kappa$ for $\kappa \leq 4$; similar diagrams with different topologies can be drawn for $\kappa \in (4.8)$ and $\kappa \geq 8$. The curve grows from its base. As depicted, $\eta_t(0) = W_t$ and $\eta_t(t) = \lim_{z \to W_0} g_t(z)$.}
	\end{center}
\end{figure}

In the case of no force points ($n=0$), this process is well-defined for all time. For $n \geq 0$, by absolute continuity with respect to the $n=0$ case, the SDE~\eqref{eq:SDE} can be run until the time $\tau$ that $g_t(z_j) = W_t$ for some $j$.	Let $J_\tau = \{ j \: : \: g_\tau(z_j) = W_\tau\}$. If $\sum_{j \in J_\tau} \rho_j < \frac\kappa2 + 4$, then there is a unique way of continuing the process such that $g_t(z_j) \in {\mathbb H}$ for all $t > \tau$ \cite[Proposition 3.3.1]{wedges}. Applying this continuation procedure to each time a force point hits $W_t$, we see that reverse $\SLE_{\kappa,\rho}$ is well-defined until the \emph{continuation threshold}: the first time $\tau \leq \infty$ that the sum of the weights of force points hitting $W_\tau$ is at least $\frac\kappa2 + 4$.

\subsection{Quantum wedges and quantum cells}\label{subsec-wedges-cells}
In this section we define a number of special quantum surfaces, including the \emph{quantum wedges} introduced in \cite{shef-zipper, wedges}, and the \emph{quantum cell} which arises from partially mating correlated continuum random trees.

A quantum surface decorated by one or more curves and marked points is an equivalence class of tuples $(D, h, (\eta_i)_i, (z_j)_j)$ where $D \subset \C$ is a simply connected domain, $h$ is a distribution on $D$, each $\eta_i$ is a curve in $\ol D$, and each $z_j$ is a point in $\ol D$, under the equivalence relation $\sim_\gamma$ where $(D, h, (\eta_i)_i, (z_j)_j) \sim_\gamma (\wt D, \wt h, (\wt \eta_i)_i, (\wt z_j)_j)$ if there exists a conformal map $g: D \to \wt D$ such that $\wt h = g \bullet_\gamma h$, $\wt \eta_i = g \circ \eta_i$ and $\wt z_i = g(z_i)$ for all $i,j$. Here, $\bullet_\gamma$ is as defined in~\eqref{eq-coordinate-change}.

We now define \emph{beaded quantum surfaces}, which extend the notion of quantum surface to the setting of nontrivial topology. Consider pairs $(D,h)$ where $D \subset \C$ is a closed set such that each connected component of the interior of $D$ with its prime end boundary is homeomorphic to the closed disk, and $h$ is a distribution defined on the interior of $D$. We say $(D, h) \sim_\gamma(\wt D, \wt h)$ if there is a homeomorphism $g: D \to \wt D$ which is conformal on each component of the interior of $D$ and which satisfies $\wt h = 	g \bullet_\gamma h$. A beaded quantum surface is an equivalence class of pairs $(D, h)$ under $\sim_\gamma$. As before, we can define beaded quantum surfaces decorated by one or more curves and marked points, by specifying that $g$ should also identify these curves and points. 

We first define quantum wedges. For symmetry reasons it is easier to work in the strip $\cS := \R \times (0,\pi)$ than in ${\mathbb H}$. Let $m$ be the uniform probability measure on the segment $\{ 0\} \times (0,\pi)$. 
As in Section~\ref{subsec-gff}, consider the space of smooth functions $f$ on $\overline \cS$ with  $\langle f, f \rangle_{\nabla}< \infty$ and $\int f (z) m (dz) = 0$, and let $H$ be the Hilbert space closure with respect to $\langle \cdot, \cdot \rangle_{\nabla}$. As before, let $h = \sum_j \alpha_j f_j$ where the $\alpha_j$ are i.i.d.\ standard Gaussians and $\{f_j\}$ is an orthonormal basis for $H$. We call $h$ the \emph{GFF on $\cS$ normalized so $\int h (z) m(dz) = 0$}, and denote its law by $P_\cS$. 

We can decompose $H = H_\mathrm{av} \oplus H_\mathrm{lat}$ where $H_\mathrm{av}$ (resp.\ $H_\mathrm{lat}$) is the subspace of functions which are constant (resp.\ have mean zero) on $\{ t \} \times (0,\pi)$ for all $t \in \R$; this gives a decomposition $h = h_\mathrm{av} + h_\mathrm{lat}$ of $h$ into independent components. 

We now define thick quantum wedges. These are $\gamma$-LQG surfaces with the half-plane topology, and have a \emph{weight} parameter $W \geq \frac{\gamma^2}2$. 
\begin{definition}[Thick quantum wedge]\label{def-wedge}
	For $W \geq \frac{\gamma^2}2$, let $\alpha = \frac12(Q + \frac\gamma2 - \frac W\gamma)$. 
	Let 
	\[Y_t = \left\{
	\begin{array}{ll}
		B_{2t} + (Q - 2\alpha)t  & \mbox{if } t \geq 0 \\
		\wt B_{-2t} + (Q - 2\alpha)t & \mbox{if } t < 0
	\end{array}
	\right. \]
	where $(B_s)_{s \geq 0}$ is standard Brownian motion, and independently $(\wt B_s)_{s \geq 0}$ is standard Brownian motion conditioned on $\wt B_{2s} - (Q - 2\alpha)s < 0$ for all $s>0$. Let $\phi_\mathrm{av}(z) = Y_{\Re z}$, let $\phi_\mathrm{lat}$ be the projection of an independent free boundary GFF onto $H_\mathrm{lat}$, and let $\phi_0 = \phi_\mathrm{av}+ \phi_\mathrm{lat}$. We call the quantum surface $(\cS, \phi_0, -\infty, +\infty)/{\sim_\gamma}$ a \emph{weight $W$ quantum wedge} and write $\cM^\mathrm{wed}(W)$ for its law. 
\end{definition}

Thin quantum wedges are an ordered collection of two-pointed quantum surfaces.
\begin{definition}[Thin quantum wedge]\label{def-thin-wedge}
	For $W \in (0, \frac{\gamma^2}2)$, sample a Bessel process of dimension $\delta = 1 + \frac{2W}{\gamma^2}$. It decomposes as a countable ordered collection of Bessel excursions; for each excursion $e$ we construct a two-pointed quantum surface $\cB_e = (\cS, \phi^e, -\infty, \infty)/{\sim_\gamma}$ as follows. Let $(Y_{t}^e)_{t \in \R}$ be any time-reparametrization of $\frac2\gamma \log e$ which has quadratic variation $2dt$. Let $\phi_{\mathrm{av}}^e(z) = Y_{\Re z}^e	$, let $\phi^e_{ \mathrm{lat}}$ be the projection of a free boundary GFF (independent of everything else) onto $H_\mathrm{lat}$, and set $\phi^e = \phi_{\mathrm{av}}^e+ \phi_{\mathrm{lat}}^e$. Then the \emph{weight $W$ quantum wedge} is the ordered collection $(\cB_e)$. 
\end{definition}

Assume $\gamma \in (0,\sqrt2]$ and $\kappa = \frac{16}{\gamma^2}$. 
Let $({\mathbb H}, h, 0, \infty)$ be an embedding of a sample from $\cM^\mathrm{wed}(2 - \frac{\gamma^2}2)$. Let $\eta$ be an independent $\SLE_{\kappa}$ from $0$ to $\infty$; parametrize $\eta$ by quantum area so $\cA_h (\eta([0,a])) = a$ for all $a>0$. Let $D_a = \eta([0,a])$ and let $p = \eta(0)$ and $q = \eta(a)$. Let $x_L$ (resp.\ $x_R$) be the last point on the left (resp.\ right) boundary arc of ${\mathbb H}$ hit by $\eta$ before time $a$. We note that since $\gamma \leq \sqrt2$ the domain $D_a$ is simply connected \cite[Remark 5.2]{ghs-mating-survey}.

\begin{definition}\label{def-quantum-cell}
	For $\gamma \in (0,\sqrt2]$, we call the  curve-decorated quantum surface 
	$\cC_a = (D_a, h, \eta|_{[0,a]}, p, q, x_L, x_R)/{\sim_\gamma}$ an \emph{area $a$ quantum cell}. We denote its law by $P_a$. 
\end{definition}
An analogous definition can be given when $\gamma \in (\sqrt2,2)$; in this regime  $\cM^\mathrm{wed}(2-\frac{\gamma^2}2)$ is a thin quantum wedge, and the corresponding $D_a$ is not simply connected \cite[Remark 5.2]{ghs-mating-survey} so the definition is slightly more complicated. We thus omit it.

We remark that the area $a$ quantum cell is the decorated quantum surface arising from mating a pair of correlated continuum random trees corresponding to the pair of Brownian motions $(X_t, Y_t)_{t \geq 0}$ as in Proposition~\ref{prop-mot} until the time when the quantum area is $a$.
Quantum cells are used in the construction of $(\phi_t, \eta_t)$ in Theorem~\ref{thm-sf-zipper}.

\begin{remark}
	When $(\gamma, \kappa) \neq (\sqrt2,8)$, the quantum cell $\cC_a$ is measurable with respect to $(D_a, h, \eta|_{[0,a]})/{\sim_\gamma}$, that is, the four marked points can be recovered from $(D_a, h, \eta|_{[0,a]})/{\sim_\gamma}$. First, we can recover two of the marked points $p = \eta(0)$ and $q = \eta(a)$. Let $\psi: D_a \to {\mathbb H}$ be a conformal map with $\psi(p) = 0$ and $\psi(q) = \infty$.
	From the imaginary geometry construction of space-filling $\SLE_\kappa$ \cite{ig4},  modulo time-parametrization the law of $\psi \circ \eta$ given $\psi(x_L)$ and $\psi(x_R)$ is $\SLE_{\kappa, \rho}$ with force points of size $\kappa/2 - 4$ at  $\psi(x_L)$ and $\psi(x_R)$. Since $\kappa/2-4\neq 0$ the points $\psi(x_L)$ and $\psi(x_R)$ are measurable with respect to $\psi \circ \eta$, so we can recover $x_L$ and $x_R$ also. In this paper, we will not consider the $(\gamma, \kappa) = (\sqrt2, 8)$ case, so for notational simplicity we will frequently omit the marked points. 
\end{remark}

Since the duration $a$ Brownian motion $(X_t, Y_t)_{[0,a]}$ is reversible, it is unsurprising that the area $a$ quantum cell is reversible:

\begin{lemma}\label{lem-rev-cell}
	$\cC_a$ is symmetric in law in the following sense. 
	If $\wt \eta$ is the time-reversal of $\eta|_{[0,a]}$, then $(D_a, h, \eta|_{[0,a]}, p, q, x_L, x_R)/{\sim_\gamma}$ and $(D_a, h, \wt\eta, q, p, x_R, x_L)/{\sim_\gamma}$ have the same law. (Here, the second quantum surface is obtained from the first by reversing the direction of the curve $\eta|_{[0,a]}$ to get  $\wt \eta$ and switching the marked points $p \leftrightarrow q$ and $x_L \leftrightarrow x_R$.)
\end{lemma}
\begin{proof}
	\cite[Theorem 1.4.1]{wedges} states the following. Sample a  \emph{$\gamma$-quantum cone embedded as $({\mathbb C}, h, 0, \infty)$}  and an independent \emph{SLE$_\kappa$ curve $\eta$ in $\C$ from $\infty$ to $\infty$} (see \cite[Page 27, Footnote 4]{wedges} for the definition of this variant of $\SLE_\kappa$).  Choose the monotone reparametrization of $\eta$ such that $\eta(0) = 0$ and the quantum area of $\eta([a,b])$ is $b-a$ for all $a < b$.
Then the quantum surface $(\eta((0,\infty)), h, 0, \infty)/{\sim_\gamma}$ has law $\cM^\mathrm{wed}(2-\frac{\gamma^2}2)$. Thus $(\eta([0,a]), h, \eta|_{[0,a]})/{\sim_\gamma}$ has law $P_a$. 

Let $z = \eta(a)$, then  \cite[Theorem 1.4.1]{wedges} states that $({\mathbb C}, h(\cdot + z), \eta(\cdot + a) - z)/{\sim_\gamma}$ also has the law of a $\gamma$-quantum cone decorated by an independent $\SLE_\kappa$ from $\infty$ to $\infty$, and by the reversibility of this $\SLE_\kappa$ curve, the decorated quantum surface $({\mathbb C}, h(\cdot + z), \eta(a - \cdot) - z)/{\sim_\gamma}$ also has this law. Thus, $(\eta([0,a]), h, \tilde \eta)$ also has law $P_a$, where $\tilde \eta: [0,a] \to {\mathbb C}$ is given by $\tilde \eta(t) = \eta(a-t)$. 
\end{proof}

\subsection{Conformal welding} \label{sec-conf-weld}

In this section we explain what a conformal welding is, and why the conformal weldings of quantum surfaces in this paper exist and are unique. See \cite[Section 3.5]{wedges} for further details. 

Given a pair of topological disks  with boundary $D_1, D_2$ and a homeomophism identifying their boundaries, one can obtain a topological surface $S$ by taking the union of the disks quotiented by the homeomorphism, and this surface has a curve $\eta$ given by the image of the boundaries of the disks. 
Further suppose that each disk $D_i$ comes with a conformal structure, thus endowing $S \backslash \eta$ with a conformal structure. A \emph{conformal welding} is any way of extending this conformal structure to the whole surface $S$. 
The notion of conformal welding can similarly be defined for the configurations we consider, e.g., welding a pair of domains along distinguished boundary arcs, or welding two boundary arcs of the same domain. 
It is possible that no conformal welding exists, or that multiple conformal weldings exist. 

In this paper we consider conformal weldings of two boundary arcs with the homeomorphism specified by their respective LQG boundary measures, where the fields are GFFs plus log singularities at finitely many marked points.
We explain why, in this setting, the conformal welding exists and is unique. 

If there are no marked points, then for $\gamma < 2$ the existence and uniqueness are due to \cite{shef-zipper}, and for $\gamma = 2$ the existence was shown by \cite{hp-welding} and uniqueness was demonstrated by \cite{kms-sle4}. 
Next, consider the case where we have marked points. Let $S$ be the topological surface obtained from quotienting by the homeomorphism, and $\eta$ the image of the boundary arcs being welded. We have a conformal structure on $S \backslash \eta$ and want to show it extends uniquely to a conformal structure on $S$. By local absolute continuity with respect to the first case, if $\eta' \subset \eta$ is the image of boundary intervals containing no marked points, then the conformal structure of $S \backslash \eta$ uniquely extends to a conformal structure on $S \backslash (\eta \backslash \eta')$. Taking limits, we conclude that if $A$ is the image of the set of marked points, then the conformal structure on $S \backslash \eta$ uniquely extends to a conformal structure on $S \backslash A$. Since $A$ is finite, the conformal structure uniquely extends to $S$ as desired. 

\section{Reverse $\SLE_{\kappa, \rho}$ and the GFF partition function}\label{sec-gff-partition}
We make no claim of originality for the material in this section, and present it here for completeness. See the end of this section for references.

Let $n \geq 0$ and let $(\alpha_j, z_j) \in \R \times \ol {\mathbb H}$ for $j \leq n$, where the $z_j$ are distinct. Let $w \in \R$. Let $\kappa > 0$, $\gamma = \min (\sqrt \kappa, \frac 4{\sqrt \kappa})$. The GFF partition function $\cZ$ defined in~\eqref{eq-Z0}, times a factor arising from uniformizing in ${\mathbb H}$,  is a martingale observable for reverse SLE:
\begin{lemma}\label{lem-mtg}
	Let $\kappa > 0$. Let $n \geq 0$, and let $(\alpha_j, z_j) \in \R \times \ol{\mathbb H}$ for $j\leq n$.  Let $I = \{j : z_j \in {\mathbb H}\}$ and $B = \{j: z_j \in \R\} $. Sample reverse $\SLE_\kappa$ with no force points (as in~\eqref{eq:SDE}) and define
	\eqb\nonumber
	M_t = \prod_{i \in I} |g_t'(z_i)|^{2 \Delta_{\alpha_i}} \prod_{b \in B} |g_t'(z_b)|^{\Delta_{2\alpha_b}} \cZ\big((-\frac1{\sqrt\kappa}, W_t), (\alpha_j, g_t(z_j))_j \big) 
	\eqe
	where $\Delta_\alpha := \frac\alpha2 (Q - \frac\alpha2)$ and $Q = \frac\gamma2 + \frac2\gamma$.
	Let $\tau$ be a stopping time such that almost surely $g_t(z_j)\neq W_t$ for all $t \leq \tau$. Then $M_{t\wedge \tau}$ is a martingale. 
\end{lemma}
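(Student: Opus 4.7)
The plan is to verify the martingale property by a direct It\^o calculation. Under reverse $\SLE_\kappa$ with no force points, $W_t = \sqrt\kappa B_t$ is the only Brownian driver. Differentiating the Loewner equation in $t$ and $z$ gives
\[ dp_j = -\frac{2}{p_j - W_t}\,dt, \qquad d\log|g_t'(z_j)| = \Re\frac{2}{(p_j - W_t)^2}\,dt, \qquad p_j(t) := g_t(z_j),\]
both pure drift. Factor $M_t = D_t F_t$ with $D_t$ the product of conformal-derivative prefactors and $F_t := \cZ((-\tfrac{1}{\sqrt\kappa}, W_t),(\alpha_j, p_j)_j)$. Since $W_t \in \R$ makes $G(W_t, p_j) = -2\log|W_t - p_j|$ for every $j$,
\[ \log F_t = -\sum_{i \in I}\frac{\alpha_i^2}{2}\log(2\Im p_i) + \sum_j \frac{2\alpha_j}{\sqrt\kappa}\log|W_t - p_j| + \sum_{j<k}\alpha_j\alpha_k G(p_j, p_k).\]
Applying It\^o to $\log M_t$ and using $dM_t/M_t = d\log M_t + \tfrac12\langle d\log M_t\rangle$, one obtains $dM_t/M_t = \sqrt\kappa\,(\partial_W \log F_t)\,dB_t + \mathrm{Drift}_t\,dt$. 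The goal is to verify $\mathrm{Drift}_t \equiv 0$.

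The drift collects four contributions: (i) the drift of $\log D_t$; (ii) the derivatives $\partial_{p_j},\partial_{\bar p_j}\log F_t$ contracted against the Loewner drifts of the marked points; (iii) the It\^o term $\tfrac{\kappa}{2}\partial_W^2\log F_t$; and (iv) the exponential correction $\tfrac{\kappa}{2}(\partial_W\log F_t)^2$. Using $(\Re\tfrac{1}{W-p})^2 = \tfrac{1}{2}\Re\tfrac{1}{(W-p)^2} + \tfrac{1}{2|W-p|^2}$, split (iv) into diagonal ($j=k$) and off-diagonal ($j\neq k$) pieces. The diagonal $\Re\tfrac{1}{(p_j-W)^2}$ contributions from (i), (iii), the $\log|W-p_j|$ piece of (ii), and the diagonal of (iv) combine per marked point with total coefficient $\alpha_j(2Q - \sqrt\kappa - \tfrac{4}{\sqrt\kappa}) = 0$; this uses $\Delta_\alpha = \tfrac{\alpha}{2}(Q-\tfrac{\alpha}{2})$ together with the fact that $2Q = \sqrt\kappa + \tfrac{4}{\sqrt\kappa}$ whether $\gamma=\sqrt\kappa$ or $\gamma = 4/\sqrt\kappa$. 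The factor-of-two mismatch between the bulk exponent $2\Delta_{\alpha_i}$ and boundary exponent $\Delta_{2\alpha_b}$ in $D_t$ precisely matches the degeneration of the bulk Green function $-\log|z-w|-\log|z-\bar w|$ into $-2\log|z-w|$ for boundary $w$, making the computation uniform. The diagonal $|W-p_i|^{-2}$ part of (iv) (present only for bulk points) cancels the drift of the $(2\Im p_i)^{-\alpha_i^2/2}$ piece of (ii), using $d\log(2\Im p_i) = \tfrac{2}{|p_i - W|^2}\,dt$. Finally, the off-diagonal part of (iv) pairs against the remaining drift of (ii) coming from the pair Green functions $\alpha_j\alpha_k G(p_j, p_k)$, by a partial-fractions telescoping of $\partial_{p_j} G(p_j,p_k)$ against the Loewner drift $-2/(p_j - W_t)$.

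The main obstacle is not conceptual but organizational: treating $p_j$ and $\bar p_j$ as independent Wirtinger variables, tracking signs, and handling the bulk/boundary dichotomy in parallel. The calculation is a direct analog of the GFF/SLE martingale computations in \cite{shef-zipper, wedges} and of the BPZ-type differential operator structure. The stopping assumption that $g_t(z_j) \neq W_t$ for all $j$ and $t \leq \tau$ ensures every coefficient in $\mathrm{Drift}_t$ is locally bounded, upgrading the local martingale property to a true martingale on $[0,\tau]$ by a standard localization argument.
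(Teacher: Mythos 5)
Your proposal is correct and follows essentially the same route as the paper: a direct It\^o computation of $d\log M_t$ using the reverse Loewner flow, the identity $\Re(z^2)+|z|^2=2(\Re z)^2$, and the vanishing of the per-point coefficient $\alpha_j(2Q-\sqrt\kappa-\tfrac{4}{\sqrt\kappa})$, with the off-diagonal exponential correction cancelling against the drift of the pair Green functions (the paper writes out only the bulk case and notes the boundary case is identical, which your factor-of-two observation makes explicit). Your value $d\log(2\Im p_i)=\tfrac{2}{|p_i-W|^2}\,dt$ is the correct one and is what the paper's final formula actually uses, despite a factor-of-two typo in its intermediate display.
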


\begin{proof}
	To lighten notation, we assume there are only bulk insertions (i.e. $z_j \in {\mathbb H}$ for all $j$);  the general case follows from the same arguments. We write $Z_{j,t} = g_t(z_j)$. Our goal is to compute $d \log M_t$. Recalling $G(p,q) := -\log|p-q| - \log |p - \ol q|$, we have
	\begin{align}\label{eq-logMt}
		\log M_t = &\sum_j 2 \Delta_{\alpha_j} \log |g_t'(z_j)| - \frac{\alpha_j^2}2 \log (2 \Im Z_{j,t}) 
		\\&+ \frac{2\alpha_j}{\sqrt\kappa} \log |Z_{j,t} - W_t|  + \sum_{1 \leq j < k \leq m} \alpha_j \alpha_k G(Z_{j,t}, Z_{k,t}). \nonumber
	\end{align}
	By the definition of the reverse Loewner flow and $Z_{j,t} = g_t(z_j)$, we have
	\[
	\begin{gathered}
		dW_t = \sqrt \kappa dB_t, \enspace d\langle W_t \rangle = \kappa dt,  \\
		\enspace dZ_{j,t} = -\frac{2dt}{Z_{j,t} - W_t} , \enspace d \ol Z_{j,t} = -\frac{2dt}{\ol Z_{j,t} - W_t}, \enspace d \log |g_t'(z_j)| = \Re \frac{2dt}{(Z_{j,t} - W_t)^2}. 
	\end{gathered}\]
	The last identity holds since
	$d (g_t(z)) = -\frac{2}{g_t(z) - W_t}dt$ implies  $d(g_t'(z)) = -\frac2{(g_t(z)-W_t)^2} g_t'(z)dt$. 
	Using these we have 
	\[d \log (2 \Im Z_{j,t}) = d \log (Z_{j,t} - \ol Z_{j,t}) = \frac1{Z_{j,t} - \ol Z_{j,t}} dZ_{j,t} + \frac1{\ol Z_{j,t} - Z_{j,t}} d \ol Z_{j,t} = \frac1{|Z_{j,t} - W_t|^2}dt\] and, since $\log |Z_{j,t} - W_t| = \Re  \log (Z_{j,t} - W_t) $, we have  \[
	d\log |Z_{j,t} - W_t| 
	= \Re \frac{\sqrt\kappa}{W_t - Z_{j,t}} dB_t  - \Re \frac{\sqrt\kappa Q}{(W_t-Z_{j,t})^2}dt.\]
	Thus
	\begin{align} \label{eq-appendix-comp}
		&d \left(2 \Delta_{\alpha_j} \log |g_t'(z_j)|  - \frac{\alpha_j^2}2 \log (2 \Im Z_{j,t}) + \frac{2\alpha_j}{\sqrt\kappa}  \log |Z_{j,t} - W_t| \right) \\ &=
		\Re \frac{2\alpha_j}{W_t - Z_{j,t}} dB_t -  (\Re \frac{1}{(Z_{j,t} - W_t)^2} + \frac{1}{|Z_{j,t} - W_t|^2})\alpha_j^2dt \nonumber\\
		&= \Re \frac{2\alpha_j}{W_t - Z_{j,t}} dB_t - 2\alpha_j^2 \left(\Re \frac1{Z_{j,t} - W_t}\right)^2 dt.  \nonumber
	\end{align}
	In the last equality, we used the identity $\Re (z^2) + |z|^2 = 2 (\Re z)^2$. Next, since $G(Z_{j,t}, z) = - \Re (\log (Z_{j,t} - z) + \log (Z_{j,t} - \ol z))$,  we have
	$d G(Z_{j,t}, z) = \Re\left( (\frac{1}{Z_{j,t} - z} + \frac1{Z_{j,t} - \ol z})\frac{2}{Z_{j,t} - W_t}\right)dt$, so 
	\alb
	d& G(Z_{j,t} , Z_{k,t}) = \Re \left((\frac{1}{Z_{j,t} - Z_{k,t}} + \frac1{Z_{j,t} - \ol Z_{k,t}})\frac{2}{Z_{j,t} - W_t}
	+
	(\frac{1}{Z_{k,t} - Z_{j,t}} + \frac1{Z_{k,t} - \ol Z_{j,t}})\frac{2}{Z_{k,t} - W_t}
	\right) dt \\
	&=  \Re \left(
	-\frac2{(Z_{j,t} - W_t)(Z_{k,t} - W_t)} - \frac2{(Z_{j,t} - W_t)(\ol Z_{k,t} - W_t)} \right) dt = -4 \Re \left(\frac1{Z_{k,t} - W_t} \right) \Re \left(\frac1{Z_{j,t}-W_t} 
	\right) dt.
	\ale
	Combining this with~\eqref{eq-appendix-comp}, we can compute $d \log M_t$ from~\eqref{eq-logMt}:
	\[d \log M_t = ( \sum_j \Re \frac{2\alpha_j}{W_t-Z_{j,t}} )d B_t - \frac12( \sum_j \Re \frac{2\alpha_j}{W_t-Z_{j,t}} )^2 dt. \]
	Thus, $M_{t\wedge \tau}$ is a martingale, completing the proof for the case of only bulk insertions. The general case is essentially the same: we get the same expression for $d \log M_t$ so $M_{t \wedge \tau}$ is a martingale. 
\end{proof}

\begin{proposition}\label{prop-sle-weighted}
	In the setting of Lemma~\ref{lem-mtg}, the law of reverse $\SLE_\kappa$ run until $\tau$ and weighted by $\frac{M_\tau}{M_0}$ is precisely that of  $\SLE_{\kappa, \rho}$ run until the stopping time $\tau$, where at each $z_j$ there is a force point of weight $\rho_j = 2\sqrt\kappa \alpha_j$. In other words, writing $\rSLE_{\kappa}^\tau$ (resp.\ $\rSLE_{\kappa, \rho}^\tau$) for the law of the reverse SLE$_\kappa$ (resp.\ reverse SLE$_{\kappa, \rho}$) curve at time $\tau$, we have the Radon-Nikodym derivative
	\[\frac{d\SLE_{\kappa, \rho}^\tau}{d\SLE_\kappa^\tau} =  \frac{M_\tau}{M_0}.\]
\end{proposition}
\begin{proof}
	We use the notation $\pZ(w, (z_j)_j) := \cZ((-\frac1{\sqrt\kappa}, w), (\alpha_j, z_j)_j )$. 
	By Girsanov's theorem, under the reweighted law, the driving function satisfies the SDE
	\[
	dW_t =  \sqrt\kappa\, dB_t + \kappa \partial_w \log \pZ(W_t, (g_t(z_j))_j) dt.
	\]
	We have $\partial_w \log \pZ(w, (z_j)_j) =  \sum_j -\frac2{\sqrt\kappa}\alpha_j \partial_w \Re \log (z_j - w) = \sum_j \Re \left(\frac{-2\alpha_j/\sqrt\kappa}{z_j - w} \right)$, so the SDE agrees with~\eqref{eq:SDE} as desired. 
\end{proof}

The observation that reverse SLE with force points is intimately related to the Coulomb gas partition function is not original. See for instance \cite{lawler-reverse-multifractal, fukusumi17, alekseev19, kk-multiple-sle,km-bdy-cft,  koshida-multiple-backward}. A similar perspective arose earlier in the setting of forward SLE. See e.g.\ \cite{bb-cft-sle,bauer, kytola-cft-sle}.

\section{Sheffield's coupling and the $\kappa \leq 4$ LCFT zipper}\label{sec-sheffield-zipper}

In this section we prove Theorems~\ref{thm-simple-zipper} and~\ref{thm-simple-zipper-rho}. These results are fairly straightforward consequences of Sheffield's coupling of LQG and SLE.

A version of the following was first proved in \cite{shef-zipper}, and the full version where force points may be zipped into the bulk was proved in \cite{wedges}. Recall $G(z,w) = -\log|z-w| - \log |z-\ol w|$. 
\begin{proposition}[{\cite[Theorem 5.1.1]{wedges}}]\label{prop-dms-chordal}
	Let $\kappa > 0$, $\gamma = \min(\sqrt\kappa, \frac4{\sqrt\kappa})$, let $n \geq 0$ and $(\rho_j, z_j) \in \R \times \ol {\mathbb H}$ for $j \leq n$. Suppose that $(W_t)_{t \geq 0}$ is the driving function for reverse $\SLE_{\kappa,\rho}$ with force points at $z_j$ of size $\rho_j$, and let $g_t$ be the Loewner map. For each $t \geq0$ let 
	\[ {\mathfrak h}_t(z) = -\frac1{\sqrt\kappa} G(z,W_t) + \frac1{2\sqrt\kappa} \sum_{j=1}^n  \rho_j G(g_t(z), g_t(z_j)) + Q \log |g_t'(z)|. \]
	Let $h$ be an independent free boundary GFF modulo additive constant on ${\mathbb H}$. Suppose $\tau$ is an a.s.\ finite stopping time 
	such that $\tau$ occurs before or at the continuation threshold for $W_t$. Then, as distributions modulo additive constant, 
	\eqb\label{eq:gff-zip}
	{\mathfrak h}_0 +h \stackrel d=  {\mathfrak h}_\tau + h \circ g_\tau. 
	\eqe
\end{proposition}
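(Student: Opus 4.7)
The plan is to verify the distributional identity by checking characteristic functions against test functions $\psi \in C_c^\infty(\bbH)$ with $\int \psi = 0$, which captures distributions modulo an additive constant. Since $h$ is independent of the SLE, conditional on $\cF_t^{\mathrm{SLE}} := \sigma(W_s : s \leq t)$ the variable $(h \circ g_t, \psi)$ is centered Gaussian with variance
\[ V_t := \iint G_\bbH(g_t(z), g_t(w))\,\psi(z)\psi(w)\,dz\,dw. \]
Writing $Z_t := \exp\bigl(i({\mathfrak h}_t, \psi) - \tfrac12 V_t\bigr)$, we have $\E\bigl[\exp(i({\mathfrak h}_t + h \circ g_t, \psi)) \bigm| \cF_t^{\mathrm{SLE}}\bigr] = Z_t$, so the identity ${\mathfrak h}_0 + h \stackrel{d}{=} {\mathfrak h}_\tau + h \circ g_\tau$ modulo constant reduces to showing $\E[Z_\tau] = Z_0$ for every admissible $\psi$.

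I would prove this by showing $Z_t$ is a local martingale. Using the Loewner SDE~\eqref{eq:SDE} for $W_t$ and $dg_t(z) = -2\,dt/(g_t(z)-W_t)$, decompose $d({\mathfrak h}_t, \psi)$ into its $dB_t$-coefficient and drift; Itô's formula on $Z_t = \exp(\cdot)$ then implies $Z_t$ is a local martingale precisely when (i) the drift of $d({\mathfrak h}_t, \psi)$ vanishes and (ii) the square of its $dB_t$-coefficient equals $-dV_t/dt$. Condition (ii) is the classical GFF--Loewner variance-matching identity: the $dB_t$-part of $d({\mathfrak h}_t, \psi)$ arises only from $-\tfrac1{\sqrt\kappa}G(\cdot, W_t)$ and equals $-2\int \Re(1/(z-W_t))\,\psi(z)\,dz$, whose square matches $-dV_t/dt$ after differentiating $G_\bbH(g_t(z), g_t(w))$ along the Loewner flow via the chain rule. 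Condition (i) decomposes into four drift contributions: (a) the Itô correction from $d\langle W\rangle_t = \kappa\,dt$ on $-\tfrac1{\sqrt\kappa}G(\cdot,W_t)$; (b) the Loewner drift of $Q\log|g_t'(\cdot)|$; (c) the force-point drift $\sum_j \Re(-\rho_j/(Z_t^j - W_t))\,dt$ in $dW_t$ acting on $-\tfrac1{\sqrt\kappa}G(\cdot, W_t)$; and (d) the Loewner evolution of $\tfrac{1}{2\sqrt\kappa}\sum_j\rho_j G(g_t(\cdot), g_t(z_j))$. Terms (a) and (b) cancel by the classical $n=0$ Sheffield identity; terms (c) and (d) cancel by the algebraic identity that underlies Lemma~\ref{lem-mtg} and forces the coefficient $\tfrac{1}{2\sqrt\kappa}$ in the definition of ${\mathfrak h}_t$. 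Once $Z_t$ is a local martingale, optional stopping at $\tau$---combined with standard localization using the hypothesis that $\tau$ is at or before the continuation threshold to rule out blowups---gives $\E[Z_\tau] = Z_0$, and since $\psi$ was arbitrary we obtain equality of distributions modulo constant.

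The main obstacle is executing the cancellation of (c) versus (d) cleanly: this requires the chain rule on $G(g_t(z), g_t(z_j))$ along the Loewner flow and careful matching against $\Re(-\rho_j/(Z_t^j-W_t))\Re(1/(z-W_t))$-type expressions produced by differentiating $-\tfrac1{\sqrt\kappa}G(\cdot, W_t)$ in the driving direction, with the algebra mirroring that in the proof of Lemma~\ref{lem-mtg}. Secondary issues are making the distributional Itô calculus rigorous (routine since integration is against smooth $\psi$ and the integrands are smooth away from $W_t$ and the force points) and justifying the localization near the continuation threshold.
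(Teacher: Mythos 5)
The paper does not actually prove this proposition: it is imported as a citation from \cite[Theorem 5.1]{wedges}, so there is no in-paper argument to compare against. Your outline is essentially the standard proof of that theorem (Sheffield's zipper computation extended to force points): reduce to $\E[Z_\tau]=Z_0$ for the conditional characteristic functional over mean-zero test functions, show $Z_t$ is a local martingale by making the drift vanish and matching the square of the $dB_t$-coefficient to $-dV_t/dt$, and conclude by optional stopping --- which here is painless, since $|Z_t|=e^{-V_t/2}\le 1$, so the local martingale is a bounded true martingale and no delicate localization is needed. The decomposition into (a)--(d) with the cancellations (a)$+$(b)$=0$ and (c)$+$(d)$=0$ is exactly the right structure, and it is indeed what forces the coefficient $\tfrac1{2\sqrt\kappa}$.

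One correction you must make for the computation to close: the first term of $\mathfrak h_t$ has to be $-\tfrac1{\sqrt\kappa}G(g_t(z),W_t)$ rather than $-\tfrac1{\sqrt\kappa}G(z,W_t)$ as printed in the statement (this is what \cite[Theorem 5.1]{wedges} says, and it is also what the application in Proposition~\ref{prop-main} requires, since there one needs $\mathfrak h_\tau=\mathfrak f_\tau\circ g_\tau+Q\log|g_\tau'|$). Taking the printed formula literally, as you do, your condition (ii) fails: the $dB_t$-coefficient would be $-2\int\Re\bigl(1/(z-W_t)\bigr)\psi(z)\,dz$, whereas the Loewner chain rule gives $-dV_t/dt=\bigl(2\int\Re\bigl(1/(g_t(z)-W_t)\bigr)\psi(z)\,dz\bigr)^2$; similarly the It\^o correction (a) would carry $(z-W_t)^{-2}$ against $(g_t(z)-W_t)^{-2}$ in (b), and (c) would carry $\Re\bigl(1/(z-W_t)\bigr)$ against $\Re\bigl(1/(g_t(z)-W_t)\bigr)$ in (d), so neither cancellation goes through. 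With the corrected term, every identity you assert checks out (e.g.\ $dG(g_t(z),g_t(z_j))=-4\Re\bigl(1/(g_t(z)-W_t)\bigr)\Re\bigl(1/(g_t(z_j)-W_t)\bigr)dt$ kills the force-point drift exactly). The one remaining technical point your sketch glosses over is that the local-martingale computation is only valid between the finitely many times at which a boundary force point collides with $W_t$; at such a time $g_t(z_j)$ is pushed from $\R$ into $\bbH$ by the continuation procedure of \cite[Proposition 3.8]{wedges}, and the martingale identity has to be patched across these times using continuity of $Z_t$. This is a genuine step in the original proof that your closing remark about ``localization near the continuation threshold'' does not quite cover.
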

Note that if there is a force point at $0$ with weight $\rho \geq \frac\kappa2 + 4$ then the reverse $\SLE_\kappa$ immediately hits the continuation threshold, i.e.\ $\tau = 0$.

As we see next, adding a constant chosen from Lebesgue measure to the field essentially gives Theorem~\ref{thm-simple-zipper-rho} when $\delta = Q - \sum_j \alpha_j + \frac1{\sqrt\kappa}$. 
\begin{proposition}\label{prop-main}
	Let $\kappa > 0$ and $\gamma = \min(\sqrt\kappa, \frac4{\sqrt\kappa})$. Let $n\geq 0$ and let $(\alpha_j, z_j) \in \R \times \ol{\mathbb H}$ for $j \leq n$. Let $\delta = Q - \sum_j \alpha_j + \frac1{\sqrt\kappa}$.
	Let $(\eta_t)$ be reverse $\SLE_{\kappa, \rho}$ with force points at $z_j$ of size $\rho_j = 2\sqrt\kappa \alpha_j$, run until a stopping time $\tau$ which a.s.\ occurs before or at the continuation threshold. Let $g_\tau$ be the conformal map from ${\mathbb H}$ to the unbounded connected component of ${\mathbb H}\backslash \eta_\tau$ such that $\lim_{z \to \infty} g_\tau(z)-z = 0$.
	Then for any nonnegative measurable function $F$ on the space of distributions in $\bbH$, 
	\eqb\label{eq-lf-shef}
	\frac{\mathrm{LF}_{\mathbb H}^{(-\frac1{\sqrt\kappa},0), (\alpha_j, z_j)_j, (\delta,\infty)} [F(\phi) ]}
	{
		\cZ((-\frac1{\sqrt\kappa}, 0), (\alpha_j, z_j)_j)
	} = {\mathbb E} \left[ \frac{ \mathrm{LF}_{\mathbb H}^{(-\frac1{\sqrt\kappa}, W_\tau), (\alpha_j, g_\tau(z_j))_j, (\delta, \infty)} [F(g_\tau^{-1} \bullet_\gamma \phi)]}
	{
		\cZ((-\frac1{\sqrt\kappa}, W_\tau), (\alpha_j, g_\tau(z_j))_j)
	}
	\right].
	\eqe
	Here, the expectation is taken with respect to $(\eta_t)$.
\end{proposition}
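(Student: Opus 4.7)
The plan is to derive~\eqref{eq-lf-shef} directly from Proposition~\ref{prop-dms-chordal} combined with the simplified form of the Liouville field in the neutrality case (Remark~\ref{rem-neutrality}).

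First, observe that the hypothesis $\delta = Q - \sum_j \alpha_j + \tfrac{1}{\sqrt\kappa}$ is precisely the neutrality condition $-\tfrac{1}{\sqrt\kappa} + \sum_j \alpha_j + \delta = Q$, which is unchanged under the replacements $0 \leadsto W_\tau$ and $z_j \leadsto g_\tau(z_j)$. By Remark~\ref{rem-neutrality}, one can therefore sample $\LF_\bbH^{(-\frac{1}{\sqrt\kappa}, 0), (\alpha_j, z_j)_j, (\delta, \infty)}$ as $\phi = h + {\mathfrak h}_0 + \mathbf c$, where $(h,\mathbf c) \sim \cZ((-\tfrac1{\sqrt\kappa}, 0), (\alpha_j, z_j)_j) \cdot P_\bbH \times dc$ and ${\mathfrak h}_0(z) := -\tfrac{1}{\sqrt\kappa} G(z, 0) + \sum_j \alpha_j G(z, z_j)$. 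With the choice $\rho_j = 2\sqrt\kappa\, \alpha_j$ appearing in the statement, this ${\mathfrak h}_0$ is \emph{exactly} the field of the same name in Proposition~\ref{prop-dms-chordal}, and a parallel decomposition with normalizing constant $\cZ((-\tfrac1{\sqrt\kappa}, W_\tau), (\alpha_j, g_\tau(z_j))_j)$ and ${\mathfrak h}_\tau^{\mathrm{new}}(z) := -\tfrac{1}{\sqrt\kappa} G(z, W_\tau) + \sum_j \alpha_j G(z, g_\tau(z_j))$ describes the Liouville field on the right-hand side of~\eqref{eq-lf-shef}.

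Next, apply Proposition~\ref{prop-dms-chordal}: modulo additive constant, ${\mathfrak h}_0 + h \stackrel{d}{=} {\mathfrak h}_\tau + h \circ g_\tau$ jointly with the reverse SLE trajectory $(W_t, g_t)_{t \le \tau}$. Integrating both sides against Lebesgue $dc$ upgrades this to an equality of infinite measures on genuine distributions paired with the SLE trajectory; since the left-hand measure does not depend on the SLE, one may further take the product with $\rSLE_{\kappa,\rho}^\tau$ on the left. The left-hand side then corresponds, after applying the test function $F$, to $\LF_\bbH^{(-\tfrac{1}{\sqrt\kappa}, 0), (\alpha_j, z_j)_j, (\delta, \infty)}[F(\phi)] / \cZ((-\tfrac1{\sqrt\kappa}, 0), (\alpha_j, z_j)_j)$, which is exactly the left-hand side of~\eqref{eq-lf-shef}.

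The final step is to recognize the right-hand infinite measure as the $g_\tau^{-1} \bullet_\gamma$-pushforward of the analogous decomposition for $\LF_\bbH^{(-\frac{1}{\sqrt\kappa}, W_\tau), (\alpha_j, g_\tau(z_j))_j, (\delta, \infty)}/\cZ((-\tfrac1{\sqrt\kappa}, W_\tau), (\alpha_j, g_\tau(z_j))_j)$ against $\rSLE_{\kappa,\rho}^\tau$. The key identity is that $g_\tau^{-1} \bullet_\gamma$ applied to $h' + {\mathfrak h}_\tau^{\mathrm{new}} + \mathbf c$ should equal $h' \circ g_\tau + {\mathfrak h}_\tau + \mathbf c$, since the $Q \log|g_\tau'|$ term sitting inside ${\mathfrak h}_\tau$ is precisely the conformal-weight term produced by $\bullet_\gamma$; together with conformal invariance of the Neumann GFF modulo constant (allowing $h \circ g_\tau$ to be replaced in law by $h' \circ g_\tau$ for a fresh Neumann GFF $h'$), this yields the right-hand side of~\eqref{eq-lf-shef}. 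The main obstacle is verifying this last identification: the asymmetric appearance in ${\mathfrak h}_t$ of $G(z, W_t)$ for the welding-center insertion versus $G(g_t(z), g_t(z_j))$ for the force-point insertions is exactly what is required for compatibility with the $\bullet_\gamma$ coordinate change and the hydrodynamic normalization of $g_\tau$, and the computation must be carried out carefully, with any residual $z$-independent discrepancies absorbed into the ratio of the two $\cZ$ factors.
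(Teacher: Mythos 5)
Your overall route is the same as the paper's: rewrite both Liouville fields via the neutrality simplification of Remark~\ref{rem-neutrality} (so that the two $\cZ$ denominators in~\eqref{eq-lf-shef} exactly cancel the normalizing constants $C_\gamma^{\cdots}$ of Definition~\ref{def-LF}), apply Proposition~\ref{prop-dms-chordal}, and integrate the additive constant against $dc$ to upgrade the ``modulo additive constant'' identity~\eqref{eq:gff-zip} to an identity of measures on fields. All of that matches the paper's three-line argument.

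However, the step you explicitly defer --- verifying that $g_\tau^{-1}\bullet_\gamma\big(h'+{\mathfrak h}_\tau^{\mathrm{new}}+\mathbf c\big)=h'\circ g_\tau+{\mathfrak h}_\tau+\mathbf c$ --- is the entire content of the proof, and your prediction of how it closes is backwards. Computing directly, $g_\tau^{-1}\bullet_\gamma\big(h'+{\mathfrak h}_\tau^{\mathrm{new}}+\mathbf c\big)(z)=h'(g_\tau(z))-\tfrac{1}{\sqrt\kappa}G(g_\tau(z),W_\tau)+\sum_j\alpha_j G(g_\tau(z),g_\tau(z_j))+Q\log|g_\tau'(z)|+\mathbf c$. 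Comparing with ${\mathfrak h}_\tau$ as printed in Proposition~\ref{prop-dms-chordal}, the discrepancy is $-\tfrac{1}{\sqrt\kappa}\big[G(g_\tau(z),W_\tau)-G(z,W_\tau)\big]$, which is a \emph{nonconstant} function of $z$ (the two terms have logarithmic singularities at different points: the welding preimages of the base versus the point $W_\tau$ of the domain, whose image $g_\tau(W_\tau)$ is not a distinguished point of the picture). It therefore cannot be absorbed into the ratio of $\cZ$'s, whose only role is to cancel the $C_\gamma$ normalizations from Remark~\ref{rem-neutrality}. So the asymmetric form $G(z,W_t)$ is not ``exactly what is required''; it is an obstruction. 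The resolution is that in \cite[Theorem 5.1]{wedges} the first term of ${\mathfrak h}_t$ is $-\tfrac{1}{\sqrt\kappa}G(g_t(z),W_t)$, i.e.\ the driving-function insertion is treated exactly like the force-point insertions (this is also forced by stationarity of the zipper: the pushed-forward field must carry its $-\tfrac1{\sqrt\kappa}$ singularity at the welding base $W_\tau$, not at $g_\tau(W_\tau)$), and the printed statement of Proposition~\ref{prop-dms-chordal} should be read accordingly; note ${\mathfrak h}_0$ is unchanged since $g_0=\mathrm{id}$. With that form the identity you need holds \emph{exactly}, with no residual term of any kind, and your argument closes and coincides with the paper's.
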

Here~\eqref{eq-lf-shef} is understood in the sense of the extended real numbers, so the left and right hand sides may both equal $+\infty$. In particular we need no integrability assumption on $F$.
\begin{proof}
	We will use the Liouville field description from Remark~\ref{rem-neutrality}. 
	Write $\mathfrak f_\tau(z) = -\frac{1}{\sqrt\kappa} G(z,W_\tau) + \sum_j \alpha_j G(z, g_\tau(z_j))$. Writing ${\mathbb E}$ to denote expecation with respect to $\eta_\tau$  and independently $h \sim P_{\mathbb H}$, the right hand side of~\eqref{eq-lf-shef} equals
	\[{\mathbb E}[\int_\R F(g_\tau^{-1} \bullet_\gamma (h + \mathfrak f_\tau + c) ) \, dc ] = {\mathbb E}[\int_\R F(h \circ g_\tau + \mathfrak h_\tau + c) 	 \, dc ].\]
	By Proposition~\ref{prop-dms-chordal} this equals ${\mathbb E}[\int_\R F(h + \mathfrak h_0 + c)\, dc]$, which is the left hand side of~\eqref{eq-lf-shef}.
\end{proof}

To remove the constraint that $\delta = Q - \sum_j \alpha_j + \frac1{\sqrt\kappa}$, in Proposition~\ref{prop-reweight-infty} we will weight by the average of the field on very large semicircles to change the value of $\delta$, see Lemma~\ref{lem-reweight-infty}. As an intermediate step we need the following collection of identities. Recall $G_{\mathbb H}$ from~\eqref{eq-G} and $G(z,w) = -\log|z-w| - \log|z-\ol w|$. 
\begin{lemma}\label{lem-green-conformal-infty}
	Suppose $K \subset \ol {\mathbb H}$ is compact,  ${\mathbb H}\backslash K$ is simply connected and $g: {\mathbb H}\to {\mathbb H} \backslash K$ is the conformal map such that $\lim_{|z| \to\infty} g(z)-z = 0$. Let $u \in g(B_{1/\eps}(0) \cap {\mathbb H})$, let $\eps > 0$ and let $\theta_{\eps, \infty}$ be the uniform probability measure on $\{ z \in {\mathbb H} : |z| = 1/\eps\}$.  Suppose $g(-1/\eps), g(1/\eps) \in \R$ and $|g(z)| \geq 1$ whenever $|z| = 1/\eps$, then 
	\[
	\begin{gathered}
		\int G(u,v) (g_* \theta_{\eps, \infty})(dv) = 2\log \eps; \qquad \int \log g'(v) \theta_{\eps, \infty}(dv) = 0;
		\\
		\int G_{\mathbb H}(u, v)(g_*\theta_{\eps, \infty})(dv) = G_{\mathbb H}(u, \infty); \qquad \int G_{\mathbb H}(\infty, v) (g_*\theta_{\eps, \infty})(dv) = -2 \log \eps.
	\end{gathered}
\]
\end{lemma}
\begin{proof}
	Extend $g$ by Schwarz reflection to a map with image ${\mathbb C} \backslash (K \cup \ol K)$. Consider $a \in B_\eps(0) \backslash \{0\}$, then $a^{-1} \in \C \backslash B_{1/\eps}(0)$ so $g(a^{-1}) \in \C \backslash g(B_{1/\eps}(0))$. In particular, $g(a^{-1}) \neq u$ for all $a$. Thus the map $f:B_{\eps}(0) \backslash \{0\} \to {\mathbb C}$ defined by $f(a) = (u - g(a^{-1}))^{-2} a^{-2}$ is holomorphic. One can check that $f$ extends continuously to $f(0) = 1$, hence the extended map $f:B_{\eps}(0)\to {\mathbb C}$ is holomorphic. Let $\theta_{\eps, 0}$ be the uniform probabilitiy measure on $\partial B_{\eps}(0)$. Since $\log |f|$ is harmonic we have $(\log |f|, \theta_{\eps, 0}) = \log |f(0)| = 0$, and rephrasing gives $\int -2 \log|u - g(a^{-1})| \theta_{\eps, 0}(da) = 2 \log \eps$. Now the change of variables $v = a^{-1}$ gives the first assertion. 
	The second assertion  is proved similarly: the function $f:B_\eps(0) \backslash \{0\} \to \C$ given by  $f(a) := g'(a^{-1})$ is holomorphic, and since $\lim_{|z| \to\infty} g(z)-z = 0$, the function $f$ extends to a holomorphic function $f: B_\eps(0) \to \C$ with $f(0) = 1$. As before $0 = (\log|f|, \theta_{\eps, 0}) = \int \log g'(a^{-1}) \theta_{\eps, 0}(da)$, and the change of variables $v = a^{-1}$ gives the claim. The third and fourth assertions follows from $G_{\mathbb H}(u,v) = G(u,v) - G(0, v) + G_{\mathbb H}(u, \infty)$ and $G_{\mathbb H}(\infty, v) = -G(0, v)$ for $|v|\geq1$.
\end{proof}

As we now explain, weighting the Liouville field by the field average near $\infty$ changes the value of $\delta$: 
\begin{lemma}\label{lem-reweight-infty}
	Let $\gamma \in (0,2]$. 
	In the setting of Lemma~\ref{lem-green-conformal-infty}, suppose $(\alpha_j, w_j) \in \R \times \ol {\mathbb H}$ and $\delta, \delta' \in \R$. Let ${\mathbb H}_\eps = B_{1/\eps}(0)  \cap {\mathbb H}$. Then for any nonnegative measurable function $F$ on the space of distributions on $\bbH$ such that $F(\phi)$ depends only on $\phi|_{g({\mathbb H}_\eps)}$, 
	\[ \LF_{\mathbb H}^{(\alpha_j, w_j)_j, (\delta, \infty)} [\eps^{(\delta'-\delta)(\delta'+\delta - 2Q)} e^{(\delta'-\delta)(g^{-1}\bullet_\gamma \phi, \theta_{\eps, \infty})}F(\phi)] = \LF_{\mathbb H}^{(\alpha_j, w_j)_j, (\delta', \infty)}[F(\phi)].\]
\end{lemma}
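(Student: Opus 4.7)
My plan is to unpack both sides via Definition~\ref{def-LF} and recognize the reweighting as a combination of a Cameron–Martin shift for the GFF $h$ and a shift of the $\mathbf c$-integration. Write $\lambda := \delta' - \delta$ and $\theta := g_* \theta_{\eps, \infty}$. Sample $(h, \mathbf c) \sim C_\gamma^{(\alpha_j, w_j)_j, (\delta, \infty)} P_\bbH \times [e^{(\sum_j \alpha_j + \delta - Q)c}\,dc]$ and set $\phi := h + \sum_j \alpha_j G_\bbH(\cdot, w_j) + (\delta - Q) G_\bbH(\cdot, \infty) + \mathbf c$, so $\phi \sim \LF_\bbH^{(\alpha_j, w_j)_j, (\delta, \infty)}$.

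First I would rewrite the exponent $(g^{-1}\bullet_\gamma\phi, \theta_{\eps,\infty})$. Since $g^{-1}\bullet_\gamma \phi = \phi \circ g + Q \log|g'|$, a change of variables combined with the vanishing $\int \log|g'|\,d\theta_{\eps, \infty} = 0$ from Lemma~\ref{lem-green-conformal-infty} yields $(g^{-1}\bullet_\gamma \phi, \theta_{\eps, \infty}) = (\phi, \theta)$. Substituting the decomposition of $\phi$ and invoking the $G_\bbH$ identities in Lemma~\ref{lem-green-conformal-infty} (valid for $\eps$ small enough, since the hydrodynamic normalization $g(z)-z \to 0$ forces each $w_j \in g(\bbH_\eps)$ and $|g(z)| > 1$ on $|z| = 1/\eps$ eventually), we get
\[
(\phi, \theta) = (h, \theta) + \sum_j \alpha_j G_\bbH(w_j, \infty) - 2(\delta - Q) \log \eps + \mathbf c.
\]

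Next I would collect the deterministic pieces of the reweighting factor $\eps^{\lambda(\delta'+\delta-2Q)} e^{\lambda(\phi,\theta)}$. The $\eps$-powers combine to $\eps^{\lambda(\delta'+\delta-2Q)-2\lambda(\delta-Q)} = \eps^{\lambda^2}$. The $e^{\lambda\mathbf c}$ factor converts the $\mathbf c$-density $e^{(\sum_j \alpha_j + \delta - Q)c}\,dc$ into $e^{(\sum_j\alpha_j + \delta' - Q)c}\,dc$, matching the $\mathbf c$-law for $\LF_\bbH^{(\alpha_j, w_j)_j, (\delta', \infty)}$. The deterministic exponential $\prod_j e^{\lambda\alpha_j G_\bbH(w_j, \infty)} = \prod_j |w_j|_+^{2\lambda\alpha_j}$ is precisely the ratio $C_\gamma^{(\alpha_j, w_j)_j, (\delta', \infty)}/C_\gamma^{(\alpha_j, w_j)_j, (\delta, \infty)}$ by the product formula~\eqref{eq-C}, so the overall multiplicative constant upgrades correctly.

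The last piece is $\eps^{\lambda^2} \cdot e^{\lambda(h,\theta)}$. I would apply Cameron–Martin: $(h,\theta)$ is centered Gaussian with variance $V_\eps := \iint G_\bbH(u,v)\,\theta(du)\,\theta(dv) = -2\log\eps$ (by iterating Lemma~\ref{lem-green-conformal-infty}), so reweighting $P_\bbH$ by $e^{\lambda(h,\theta)}$ produces a factor $e^{\lambda^2 V_\eps/2} = \eps^{-\lambda^2}$ (which cancels the $\eps^{\lambda^2}$) times the law of $h + \lambda\psi$, where $\psi(z) := \int G_\bbH(z, v)\,\theta(dv)$. On $g(\bbH_\eps)$ this shift equals $\lambda G_\bbH(\cdot, \infty)$, and because $F(\phi)$ depends only on $\phi|_{g(\bbH_\eps)}$, the effect is to convert the $(\delta - Q)G_\bbH(\cdot, \infty)$ term in $\phi$ into $(\delta' - Q)G_\bbH(\cdot, \infty)$, giving exactly the field of $\LF_\bbH^{(\alpha_j, w_j)_j, (\delta', \infty)}$ and completing the identification. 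The conceptual core is this Cameron–Martin step; the particular exponent $(\delta'-\delta)(\delta'+\delta - 2Q)$ is tuned precisely so that the Gaussian moment cancels the deterministic $\eps$-powers. The rest is bookkeeping, together with verifying the hypotheses of Lemma~\ref{lem-green-conformal-infty} for small $\eps$.
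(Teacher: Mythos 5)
Your proposal is correct and follows essentially the same route as the paper's proof: decompose $\phi = h + \mathfrak f + \mathbf c$, use Lemma~\ref{lem-green-conformal-infty} to evaluate $(\mathfrak f, g_*\theta_{\eps,\infty})$ and to drop the $Q\log|g'|$ term, absorb $e^{(\delta'-\delta)\mathbf c}$ into the $\mathbf c$-density, and apply Girsanov/Cameron--Martin to $e^{(\delta'-\delta)(h,g_*\theta_{\eps,\infty})}$ so that the resulting shift equals $(\delta'-\delta)G_\bbH(\cdot,\infty)$ on $g(\bbH_\eps)$. The bookkeeping of the $\eps$-powers and the constant $C_\gamma^{(\alpha_j,w_j)_j,(\delta',\infty)}/C_\gamma^{(\alpha_j,w_j)_j,(\delta,\infty)}$ matches the paper's computation exactly.
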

\begin{proof}
	Write $\theta_\eps = (\delta'-\delta) \theta_{\eps, \infty}$. 
	Let $Z_\eps = {\mathbb E}[e^{(h, g_*\theta_\eps)}]$ where $h \sim P_{\mathbb H}$, then Lemma~\ref{lem-green-conformal-infty} gives $Z_\eps = \eps^{-(\delta' -\delta)^2}$. With $\mathfrak f(z) := \sum_j \alpha_j G_{\mathbb H}(z, w_j) + (\delta - Q) G_{\mathbb H}(z, \infty)$,  Lemma~\ref{lem-green-conformal-infty} also gives
	\begin{align*}
		(\mathfrak f, g_*\theta_\eps) &= (\delta'-\delta)\sum_j 2\alpha_j \log |w_j|_+ - 2(\delta' - \delta)(\delta - Q) \log \eps \\ 
		&= \log \frac{C_\gamma^{(\alpha_j, w_j)_j, (\delta', \infty)}}{C_\gamma^{(\alpha_j, w_j)_j, (\delta, \infty)}} - 2(\delta' - \delta)(\delta - Q) \log \eps.
	\end{align*}
	Writing $\phi = h + \mathfrak f + c$, Lemma~\ref{lem-green-conformal-infty} gives $(g^{-1} \bullet_\gamma \phi, \theta_\eps) = (\phi, g_*\theta_\eps) + Q(\log|g'|, \theta_\eps)= (\phi, g_*\theta_\eps)$, so
	\[e^{(g^{-1} \bullet_\gamma \phi, \theta_\eps)} = \eps^{-2(\delta'-\delta)(\delta-Q)} e^{(\delta'-\delta)c} \frac{C_\gamma^{(\alpha_j, w_j)_j, (\delta', \infty)}}{C_\gamma^{ (\alpha_j, w_j)_j, (\delta, \infty)}} e^{(h, g_*\theta_\eps)} .\]
	Thus, writing $\wt F (\phi) = \eps^{(\delta'-\delta)(\delta'+\delta - 2Q)} e^{(\delta'-\delta)(g^{-1}\bullet_\gamma \phi, \theta_{\eps, \infty})}F(\phi)$, we have 
	\alb
	\LF&_{\mathbb H}^{(\alpha_j, w_j)_j, (\delta, \infty)} [\wt F(\phi) ] =  C_\gamma^{(\alpha_j, w_j)_j, (\delta, \infty)}\int_{-\infty}^\infty e^{(\sum_j \alpha_j + \delta - Q)c} {\mathbb E}[ \wt F(h + \mathfrak f + c) ] \, dc \\
	&= C_\gamma^{(\alpha_j, w_j)_j, (\delta', \infty)} \int_{-\infty}^\infty e^{(\sum_j \alpha_j + \delta' - Q)c}{\mathbb E}[ \frac{e^{(h, g_* \theta_\eps)}}{Z_\eps} F(h + \mathfrak f + c) ] \, dc \\
	&= C_\gamma^{(\alpha_j, w_j)_j, (\delta', \infty)} \int_{-\infty}^\infty e^{(\sum_j \alpha_j + \delta' - Q)c}{\mathbb E}[ F(h + \mathfrak f + \int G_{\mathbb H}(\cdot, v) g_*\theta_{\eps}(dv)+ c) ] \, dc.
	\ale 
	where in the last line we use Girsanov's theorem.
	Lemma~\ref{lem-green-conformal-infty} gives for any $u \in g({\mathbb H}_\eps)$ that $\int G(u, v)g_*\theta_{\eps}(dv) = (\delta'-\delta)G_{\mathbb H}(u, \infty)$, so since $F$ only depends on the field on $g({\mathbb H}_\eps)$, the last line equals the right hand side of the desired identity.
\end{proof}

The following is a consequence of Lemma~\ref{lem-reweight-infty}. 

\begin{proposition}\label{prop-reweight-infty}
	Suppose any of Theorems~\ref{thm-simple-zipper},~\ref{thm-simple-zipper-rho},~\ref{thm-forest-zipper} or~\ref{thm-sf-zipper} holds for insertions $(\alpha_j, z_j)_j$ and $(\delta, \infty)$. Then the same theorem holds for insertions $(\alpha_j, z_j)_j$ and $(\delta', \infty)$ where $\delta'$ is arbitrary.
\end{proposition}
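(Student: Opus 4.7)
The plan is to deduce the $(\delta',\infty)$ case from the $(\delta,\infty)$ case by reweighting with the field average on a large semicircle, applying Lemma~\ref{lem-reweight-infty} twice: once to the initial field and once to the field at time $\tau$. Fix whichever of Theorems~\ref{thm-simple-zipper}--\ref{thm-sf-zipper} is in question. Let $\mu^\delta$ denote the law of $(\phi_\tau,\eta_\tau)$ produced by the zipper run from $\phi_0\sim \LF_\bbH^{(-\frac1{\sqrt\kappa},0),(\alpha_j,z_j)_j,(\delta,\infty)}$, and let $\nu^\delta$ denote the explicit product measure on the right-hand side of that theorem. The hypothesis is $\mu^\delta=\nu^\delta$, and I want to conclude $\mu^{\delta'}=\nu^{\delta'}$.

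Set $W_\eps(\phi):=\eps^{(\delta'-\delta)(\delta'+\delta-2Q)}e^{(\delta'-\delta)(\phi,\theta_{\eps,\infty})}$. Lemma~\ref{lem-reweight-infty} applied with $g=\mathrm{id}$ gives the Radon--Nikodym identity
\[ \LF_\bbH^{(-\frac1{\sqrt\kappa},0),(\alpha_j,z_j)_j,(\delta',\infty)}(d\phi_0) = W_\eps(\phi_0)\,\LF_\bbH^{(-\frac1{\sqrt\kappa},0),(\alpha_j,z_j)_j,(\delta,\infty)}(d\phi_0) \]
on test functions depending only on $\phi_0|_{\bbH\cap B_{1/\eps}(0)}$. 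The conformal welding in Theorems~\ref{thm-simple-zipper}--\ref{thm-simple-zipper-rho} and the mating of independent forested lines or CRTs in Theorems~\ref{thm-forest-zipper}--\ref{thm-sf-zipper} are measurable functionals of $\phi_0$ together with independent auxiliary randomness, so the same weight $W_\eps(\phi_0)$ converts $\mu^\delta$ into $\mu^{\delta'}$. Crucially, by the hydrodynamic normalization $\phi_0=g_\tau^{-1}\bullet_\gamma \phi_\tau$, so $W_\eps(\phi_0)=W_\eps(g_\tau^{-1}\bullet_\gamma\phi_\tau)$ is itself a $(\phi_\tau,\eta_\tau)$-measurable quantity.

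Substituting $\mu^\delta=\nu^\delta$ and tracking the reweighting through the explicit form of $\nu^\delta$: the reverse-SLE factor ($\rSLE^\tau_\kappa$ or $\rSLE^\tau_{\kappa,\rho}$) and the Jacobian or $\cZ$-ratio prefactors do not involve $\delta$ and so are unchanged; a second application of Lemma~\ref{lem-reweight-infty}, now with $g=g_\tau$ and insertion data $(-\frac1{\sqrt\kappa},W_\tau),(\alpha_j,g_\tau(z_j))_j$, converts the $\LF_\bbH^{\cdots,(\delta,\infty)}(d\phi_\tau)$ factor of $\nu^\delta$ into $\LF_\bbH^{\cdots,(\delta',\infty)}(d\phi_\tau)$. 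The resulting measure is exactly $\nu^{\delta'}$, establishing $\mu^{\delta'}=\nu^{\delta'}$.

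The only real bookkeeping is the localization needed for Lemma~\ref{lem-reweight-infty}, whose hypotheses require $\{|z|=1/\eps\}\cap\bbH$ to lie outside $\eta_\tau$ and the test function to be supported in $g_\tau(\bbH\cap B_{1/\eps}(0))$. Since $\tau$ is a.s.\ finite, $\eta_\tau$ is a.s.\ bounded, so I would verify the identity $\mu^{\delta'}=\nu^{\delta'}$ first against functions supported on $\{\eta_\tau\subset B_R(0)\}$ with $\eps<1/R$, and then extend to general test functions by a standard monotone-class argument as $R\to\infty$. I do not foresee any deeper obstacle; the argument runs uniformly across the four theorems because the auxiliary randomness (forests, CRTs) is independent of $\phi_0$ and so commutes with the reweighting.
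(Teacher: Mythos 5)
Your proposal is correct and follows essentially the same route as the paper: the paper also weights by $\eps^{(\delta'-\delta)(\delta'+\delta-2Q)}e^{(\delta'-\delta)(\phi_0,\theta_{\eps,\infty})}$, applies Lemma~\ref{lem-reweight-infty} twice (once with $K=\emptyset$ for the initial field and once with $K$ the hull of $\eta_\tau$, using exactly your observation that $\phi_0=g_\tau^{-1}\bullet_\gamma\phi_\tau$ makes the two weights coincide), and handles localization by restricting to stopping times compatible with the radius $1/\eps$ before sending $\eps\to0$. The only cosmetic difference is that the paper first reduces Theorem~\ref{thm-simple-zipper} to Theorem~\ref{thm-simple-zipper-rho} via Proposition~\ref{prop-sle-weighted}, whereas you treat all four theorems uniformly.
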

\begin{proof}
	We discuss the cases of Theorems~\ref{thm-simple-zipper} and~\ref{thm-simple-zipper-rho} ($\kappa \leq 4$) in detail; the $\kappa > 4$ results are obtained identically. 
	Proposition~\ref{prop-sle-weighted} implies that  Theorem~\ref{thm-simple-zipper} is equivalent to  Theorem~\ref{thm-simple-zipper-rho} when $\tau$ a.s.\ satisfies $g_\tau(z_j) \not \in \eta_\tau$ for all $j$. Thus it suffices to discuss only Theorem~\ref{thm-simple-zipper-rho}.

	We introduce some notation to simplify the discussion. For $\eps > 0$ let ${\mathbb H}_\eps = B_{1/\eps}(0)\cap {\mathbb H}$. 
	In the setting of Theorem~\ref{thm-simple-zipper-rho}, let $\mathrm{Zip}^{(\alpha_j, z_j)_j, (\delta, \infty)}_\tau$ denote the law of $(\phi_\tau, \eta_\tau)$, and let $\mathrm{Zip}^{(\alpha_j, z_j)_j, (\delta, \infty)}_{\tau, \eps}$ denote the law of $(\phi_\tau|_{g_{\tau}(\bbH_\eps)}, \eta_\tau)$. Let $\nu^{(\alpha_j, z_j)_j, (\delta, \infty)}_\tau$ be the law of $(\phi, \eta)$ described in~\eqref{thm-simple-zipper-rho}, and correspondingly let $\nu^{(\alpha_j, z_j)_j, (\delta, \infty)}_{\tau, \eps}$ be the law of $(\phi|_{g_\tau(\bbH_\eps)}, \eta)$. With this notation,
	  Theorem~\ref{thm-simple-zipper-rho} can be rephrased as  $\mathrm{Zip}^{(\alpha_j, z_j)_j, (\delta, \infty)}_\tau = \nu^{(\alpha_j, z_j)_j, (\delta, \infty)}_\tau$.

	Consider the setup of Theorem~\ref{thm-simple-zipper-rho} with insertions $(\alpha_j, z_j)_j$, $(\delta, \infty)$.
	For $\eps$ of the form $\eps = 2^{-n}$, let 
		$\tau_\eps \leq \tau$ be the earliest time $t$ such that at least one of the following holds: (A) $t = \tau$, (B) $g_t(-1/\eps) = W_t$, (C) $g_t (1/\eps) = W_t$, (D) $|g_t(z)| = 1$ for some $z$ with $|z| = 1/\eps$. Let $E_\eps = \{ \tau_\eps = \tau\}$. 
Since $\tau$ is a.s.\ finite, the event $\bigcup_{\eps} E_\eps$ occurs almost surely.

Weight by $\eps^{(\delta' - \delta)(\delta'+\delta-2Q)} e^{(\delta' - \delta) (\phi_0, \theta_{\eps, \infty})}$. By Lemma~\ref{lem-reweight-infty} with $K = \emptyset$, the weighted law of $\phi_0|_{{\mathbb H}_\eps}$ agrees with the law of $\phi|_{{\mathbb H}_\eps}$ where $\phi \sim \LF_{\mathbb H}^{(-\frac1{\sqrt\kappa}, 0), (\alpha_j, z_j)_j, (\delta', \infty)}$. By the definition of $\tau_\eps$, the field  $\phi_{\tau_\eps}|_{g_{\tau_\eps}(\bbH_\eps)}$ can be obtained from $\phi_0|_{\bbH_\eps}$ by conformal welding. 
Thus, the weighted law of $(\phi_{\tau_\eps}|_{g_{\tau_\eps}(\bbH_\eps)}, \eta_{\tau_\eps})$ is $\mathrm{Zip}^{(\alpha_j, z_j)_j, (\delta, \infty)}_{\tau_\eps, \eps}$, and so the weighted law of $(\phi_{\tau_\eps}|_{g_{\tau_\eps}(\bbH_\eps)}, \eta_{\tau_\eps})$ restricted to $E_{\eps}$ is $1_{E_\eps}\mathrm{Zip}^{(\alpha_j, z_j)_j, (\delta, \infty)}_{\tau, \eps}$. On the other hand, by Theorem~\ref{thm-simple-zipper-rho} with the stopping time $\tau_\eps$, the (unweighted) law of $(\phi_{\tau_\eps}, \eta_{\tau_\eps})$ is $\nu_{\tau_\eps}^{(\alpha_j, z_j)_j, (\delta, \infty)}$, so by Lemma~\ref{lem-reweight-infty} with $K$ the hull of $\eta_{\tau_\eps}$, the weighted law of $(\phi_{\tau_\eps}|_{g_{\tau_\eps}({\mathbb H}_\eps)}, \eta_{\tau_\eps})$ is $\nu^{(\alpha_j, z_j)_j, (\delta, \infty)}_{\tau_\eps, \eps}$. Since $\tau_\eps = \tau$ on $E_\eps$, the weighted law of $(\phi_{\tau_\eps}|_{g_{\tau_\eps}({\mathbb H}_\eps)}, \eta_{\tau_\eps})$  restricted to $E_\eps$ is $1_{E_\eps}\nu^{(\alpha_j, z_j)_j, (\delta, \infty)}_{\tau, \eps}$. 
We conclude that $1_{E_\eps} \mathrm{Zip}^{(\alpha_j, z_j)_j, (\delta', \infty)}_{\tau, \eps} = 1_{E_\eps} \nu^{(\alpha_j, z_j)_j, (\delta', \infty)}_{\tau, \eps}$. 

We will now take the $\eps \to 0$ limit to obtain the desired $\mathrm{Zip}^{(\alpha_j, z_j)_j, (\delta', \infty)}_\tau = \nu^{(\alpha_j, z_j)_j, (\delta', \infty)}_\tau$. First fix $\eps>0$, let $\wt \eps \in (0,\eps)$ and define $\wt \tau = \tau_\eps$. If we define $\wt \tau_{\wt \eps}$ from $\wt \tau$ in the same way that $\tau_\eps$ is defined from $\tau$, we get $\wt \tau_{\wt \eps} = \tau_\eps$. Therefore, the conclusion of the previous paragraph applied to $(\wt \tau, \wt \eps)$ implies $\mathrm{Zip}^{(\alpha_j, z_j)_j, (\delta', \infty)}_{\wt \tau, \wt \eps} = \nu^{(\alpha_j, z_j)_j, (\delta', \infty)}_{\wt \tau, \wt \eps}$. Since $\wt \tau = \tau$ on $E_\eps$, this gives $1_{E_\eps} \mathrm{Zip}^{(\alpha_j, z_j)_j, (\delta', \infty)}_{\tau, \wt \eps} = 1_{E_\eps} \nu^{(\alpha_j, z_j)_j, (\delta', \infty)}_{\tau, \wt \eps}$.
Taking the $\wt \eps \to 0$ limit gives $1_{E_\eps} \mathrm{Zip}^{(\alpha_j, z_j)_j, (\delta', \infty)}_{\tau, 0} = 1_{E_\eps} \nu^{(\alpha_j, z_j)_j, (\delta', \infty)}_{\tau, 0}$, 
so $1_{E_\eps} \mathrm{Zip}^{(\alpha_j, z_j)_j, (\delta', \infty)}_\tau = 1_{E_\eps} \nu^{(\alpha_j, z_j)_j, (\delta', \infty)}_\tau$. As $\bigcup_{\eps > 0} E_\eps$ occurs a.s., we conclude $\mathrm{Zip}^{(\alpha_j, z_j)_j, (\delta', \infty)}_\tau = \nu^{(\alpha_j, z_j)_j, (\delta', \infty)}_\tau$, i.e.,  Theorem~\ref{thm-simple-zipper-rho} holds for insertions $(\alpha_j, z_j)_j, (\delta', \infty)$. 
\end{proof}

Combining the above, we now prove the $\kappa \leq 4$ theorems. 

\begin{proof}[Proof of Theorem~\ref{thm-simple-zipper-rho}]
	Suppose $-\frac1{\sqrt\kappa} + \sum_j \alpha_j  + \delta = Q$.  Proposition~\ref{prop-main} implies that if we sample $(\phi, \eta)$ from~\eqref{eq-thm-simple-zipper-rho} and let $g: {\mathbb H} \to  {\mathbb H}\backslash \eta$ be the conformal map such that  $\lim_{z\to\infty} g(z)-z = 0$, then the law of $\phi_0 = g^{-1} \bullet_\gamma \phi$ is  $\LF_{\mathbb H}^{(-\frac1{\sqrt\kappa}, 0), (\alpha_j, z_j)_j, (\delta, \infty)}$. 
	Now, we justify that $(\phi, \eta)$ is obtained from $\phi_0$ by conformal welding  according to quantum length. For $\kappa < 4$,  \cite[Theorem 1.3]{shef-zipper} shows that when an $\SLE_\kappa$ curve is drawn on top of an independent weight 4 quantum wedge, the quantum boundary length measures on the left and right of the curve a.s.\ agree. For any segment of $\eta_\tau$ disjoint from $W_\tau$ and the points $g_\tau(z_j)$, by local absolute continuity the same statement holds. Since the quantum length measure is nonatomic, $\eta_\tau$ is the conformal welding according to quantum length. The same argument applies for $\kappa = 4$ using \cite[Theorem 1.2]{hp-welding}.
	 Thus Theorem~\ref{thm-simple-zipper-rho} holds when $-\frac1{\sqrt\kappa} + \sum_j \alpha_j  + \delta = Q$. 
	Proposition~\ref{prop-reweight-infty} then removes this constraint. 
\end{proof}

Finally, for future use we note an easy generalization of Proposition~\ref{prop-main}.
	\begin{lemma}\label{lem-main-generalized}
		Proposition~\ref{prop-main} holds for general $\delta \in \R$. That is, the condition $\delta = Q -\sum_j \alpha_j + \frac1{\sqrt\kappa}$ can be removed. 
	\end{lemma}
	\begin{proof}
		This follows from  Proposition~\ref{prop-main} and Lemma~\ref{lem-reweight-infty} by an argument similar to that of Proposition~\ref{prop-reweight-infty}. 
	\end{proof}

\begin{proof}[Proof of Theorem~\ref{thm-simple-zipper}]
	This is immediate from Theorem~\ref{thm-simple-zipper-rho} and  Proposition~\ref{prop-sle-weighted}.
\end{proof}

\section{The $\kappa > 8$ LCFT zipper}\label{sec-sf-zip}

In this section we prove Theorem~\ref{thm-sf-zipper}. There are two key steps. First, we need to produce the quantum cell in the setting of LCFT. In Section~\ref{subsec-unif-embed} we prove that the uniform embedding of the quantum wedge is a Liouville field, and in Section~\ref{sec-8-cell} we use this to transfer a statement about cutting a quantum cell from a quantum wedge to one about cutting a quantum cell from a Liouville field. Second, we must show the quantum cell arises in the time-evolution described by Sheffield's coupling (see Proposition~\ref{prop-main}). This is accomplished for a special case in Section~\ref{sec-8-special} by a technical limiting argument. In Section~\ref{sec-8-general} we bootstrap the special case to the full result.

\subsection{Uniform embedding of quantum wedge}\label{subsec-unif-embed}
We show that when a quantum wedge is embedded in the upper half-plane uniformly at random, the resulting field is a Liouville field. 
\begin{proposition}\label{prop-unif-embed}
	Let $W > \frac{\gamma^2}2$ and $\alpha = \frac12(Q+\frac\gamma2-\frac W\gamma)$. Sample $(\cW, T) \sim \cM^\mathrm{wed}(W) \times dT$ and let $({\mathbb H}, \phi_0, 0, \infty)$ be an embedding of $\cW$ chosen in a way not depending on $T$. Let $f_T(z) = e^{T} z$. Then the law of $\phi = f_T \bullet_\gamma \phi_0$ is $\frac1{Q-2\alpha} \LF_{\mathbb H}^{(\alpha, 0), (Q - \alpha, \infty)}$. 
\end{proposition}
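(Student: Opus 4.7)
The plan is to transfer the problem to the strip $\cS = \R\times(0,\pi)$ via $g(z) = e^z\colon \cS \to \bbH$, exploiting $f_T \circ g = g \circ \tau_T$ for $\tau_T(z) = z + T$. Taking $\phi_0 := g \bullet_\gamma h_\mathrm{wedge}$ with $h_\mathrm{wedge} = \phi_\mathrm{av} + \phi_\mathrm{lat}$ from Definition~\ref{def-wedge}, associativity of $\bullet_\gamma$ yields
\[
	f_T \bullet_\gamma \phi_0(v) = Y_{\log|v| - T} + \phi_\mathrm{lat}(\log v - T) - Q\log|v|.
\]
With $D := Q - 2\alpha > 0$, $\mathbf c := Y_{-T}$, and $\bar X_t := X_{t-T} - X_{-T}$ where $X_s := Y_s - Ds$ is the centered component of the wedge BM, this rewrites as
\[
	f_T \bullet_\gamma \phi_0(v) = \mathbf c - 2\alpha\log|v| + \bar X_{\log|v|} + \phi_\mathrm{lat}(\log v - T).
\]
Since $C_\gamma^{(\alpha,0),(Q-\alpha,\infty)} = 1$ in the neutrality regime $\alpha + (Q-\alpha) = Q$, Remark~\ref{rem-neutrality} reduces the proposition to showing that, under $dT \otimes \cM^\mathrm{wed}(W)$, the triple $(\mathbf c, \bar X, \phi_\mathrm{lat}(\cdot - T))$ has joint law $\tfrac{1}{D}dc \otimes \mu_{\mathrm{2sBM}} \otimes \mu_\mathrm{lat}$ with independent components, where $\mu_{\mathrm{2sBM}}$ is centered two-sided BM at rate $2$ anchored at $0$.

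The lateral and GFF identifications are essentially automatic. Since $\phi_\mathrm{lat}$ is independent of $Y$ and the subspace $H_\mathrm{lat}$ is horizontally translation-invariant, $\phi_\mathrm{lat}(\cdot - T) \eqD \phi_\mathrm{lat}$ and is independent of $(\mathbf c, \bar X)$. Assuming the Brownian claim below, the sum $\bar X_{\Re z} + \phi_\mathrm{lat}(z - T)$ realises the orthogonal decomposition of a Neumann GFF on $\cS$ normalised to have zero mean on $\{0\}\times(0,\pi)$, which by conformal invariance pulls back through $g$ (sending that segment to the unit half-circle) to a sample of $P_\bbH$.

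The crux is therefore a Brownian re-rooting identity: under $dT \otimes \cM^\mathrm{wed}(W)$, the pair $(\bar Y, \mathbf c)$ with $\bar Y_t := Dt + \bar X_t$ has joint law (two-sided BM with drift $D$ at rate $2$ anchored at $0$) $\otimes \tfrac{1}{D}dc$, with independent marginals. I plan to prove this via the measurable bijection $(Y, T) \leftrightarrow (\bar Y, c)$ given by $Y_s = \bar Y_{s + T} + c$ and $T = T(\bar Y, c) := \inf\{u \in \R : \bar Y_u \geq -c\}$, the first upcrossing of $-c$ by $\bar Y$, which is a.s.\ finite since $\bar Y_t \to \pm\infty$ as $t \to \pm\infty$. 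By construction the reconstructed $Y$ satisfies $Y_0 = \bar Y_T + c = 0$ and $Y_s = \bar Y_{s+T} + c < 0$ for $s < 0$ (since $\bar Y$ is strictly below $-c$ before its first upcrossing), so $Y$ is a wedge path, and conversely every wedge path arises this way for a unique $(\bar Y, c)$. The uniform Jacobian $1/D$ then follows from a Williams-type first-passage decomposition of $\bar Y$ at level $-c$: the post-passage segment is in law a fresh BM with drift $D$ started from $0$, while the pre-passage segment, read in reverse time, is the Doob $h$-transform (with $h(y) = 1 - e^{-Dy}$) of BM with drift $D$ conditioned to stay strictly below $-c$, which matches exactly the conditioned half of Definition~\ref{def-wedge}. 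The factor $1/D$ arises as the expected local time of $\bar Y$ at its starting level, equivalently the Green's function at $0$ of BM with drift $D$ at rate $2$. The main obstacle is carrying out this Williams decomposition carefully and identifying the $h$-transformed pre-passage segment with Definition~\ref{def-wedge}'s conditioned component; once that standard but nontrivial excursion-theoretic computation is in hand, assembling the pieces matches $\tfrac{1}{Q - 2\alpha}\LF_\bbH^{(\alpha,0),(Q-\alpha,\infty)}$.
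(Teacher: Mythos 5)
Your reduction is the same as the paper's: pass to the strip via $z\mapsto e^z$, so that scaling by $e^T$ becomes horizontal translation by $T$; split off the lateral part, which is handled by translation invariance in law and independence from $(Y,T)$; invoke the neutrality simplification of Remark~\ref{rem-neutrality} (with $C_\gamma^{(\alpha,0),(Q-\alpha,\infty)}=1$); and reduce everything to the single claim that re-rooting the radial process $Y$ at an independent Lebesgue time $T$ yields a two-sided drifted Brownian motion anchored at $0$ together with an independent additive constant of density $(Q-2\alpha)^{-1}dc$. That claim is exactly the paper's Lemma~\ref{lem-unif-wed}, and your algebra leading to it (including $-DT+X_{-T}=Y_{-T}$ and the form of $f_T\bullet_\gamma\phi_0$) is correct. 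Where you diverge is in how you prove the Brownian identity. The paper does it softly: translation symmetry of $X_1(t)=Y_{t-T}$, a Fubini/occupation-density computation giving the one-point marginal density $(Q-2\alpha)^{-1}$, and the Markov property run forwards and backwards from time $0$ to identify the conditional law of the path given $X_1(0)$. You instead invert the map via the first-passage time $T=\inf\{u:\bar Y_u\geq -c\}$ and appeal to a Williams-type decomposition plus a Green's function evaluation (your $1/D$ does agree with the paper's occupation-density constant).

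The difficulty is that the step you defer as ``standard but nontrivial'' is the entire content of the lemma, and it is not quite off-the-shelf: classical Williams decomposes a one-sided path at its overall maximum, whereas you must decompose a doubly infinite path at the first passage of a level $-c$ integrated against the infinite measure $(Q-2\alpha)^{-1}dc$, identify the time-reversed pre-passage segment with the singular conditioning of $\wt B$ in Definition~\ref{def-wedge}, and check that the pushforward of $dc$ under $c\mapsto T_{-c}(\bar Y)$ is Lebesgue in $T$ and independent of the reconstructed path. (The post-passage part is indeed immediate from the strong Markov property, as you say.) Nothing here is false, but as written this is a gap at the crux. Either carry out that excursion-theoretic computation in full, or substitute the paper's argument for Lemma~\ref{lem-unif-wed}, which avoids any path decomposition.
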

Proposition~\ref{prop-unif-embed}  is analogous to \cite[Theorem 2.13]{AHS-SLE-integrability} which proves a similar statement for quantum disks. 
The proof hinges on the following Brownian motion identity.
\begin{lemma}\label{lem-unif-wed}
	Let $\alpha < \frac Q2$. The random processes $X_1, X_2$ defined below have the same law: 
	\begin{itemize}
		\item Let $P$ be the law of $Y$ from Definition~\ref{def-wedge}. Sample $(Y, T)\sim P \times dT$ and let $X_1(t) = Y_{t - T}$ for all $t\in \R$. 
		
		\item Let $P'$ be the law of standard two-sided Brownian motion $B$ with $B_0= 0$. Sample $(B, \mathbf c)$ from $(Q - 2\alpha)^{-1}P' \times dc$, and let $X_2(t) = B_{2t} + (Q-2\alpha)t + \mathbf c$ for all $t \in \R$. 
	\end{itemize}
	\begin{proof}
		We claim that for any $y \in \R$ we have $X_1 + y \stackrel d= X_1$. Indeed, the process $Y_t$ is a variance 2 Brownian motion with drift $(Q-2\alpha)$ started at $-\infty$ and run until it reaches $+\infty$, so setting $\wt Y_t = Y_t + y$ and $\wt \tau = \inf\{ t : \wt Y_t \geq 0\}$, we have $(\wt Y_{t + \wt \tau})_{t \in \R} \stackrel d= (Y_t)_{t \in \R}$.  Now the translation invariance of Lebesgue measure $dT$ yields $X_1 + y \stackrel d= X_1$. We will refer to  this fact as  \emph{translation symmetry}. 
		
		We first show that the law of $X_1(0)$ is $(Q-2\alpha)^{-1}dx$. 
		By Fubini's theorem, the measure of $\{X_1(0) \in (0,b)\}$ is ${\mathbb E}[\int_\R 1_{Y_t \in (0,b)} \, dt] = \int_\R \P[Y_t \in (0,b)]\, dt$. This expectation is $((Q-2\alpha)^{-1}+o(1))b$ as $b \to \infty$ since the drift term of $Y_t = B_{2t} + (Q-2\alpha)t$ dominates the Gaussian fluctuations of $B_{2t}$ for large $t$. Explicitly, fix $\eps \in (0,Q-2\alpha)$ that we will later send to $0$. For the lower bound, 
 we have $\{ |B_{2t}| < \eps t\} \subset \{ Y_t \in (0, (Q - 2\alpha + \eps)t) \}$, hence
			\[\int_\R \P[Y_t \in (0,b)]\, dt > \int_{\eps b}^{b/(Q - 2\alpha +\eps)}  \P[|B_{2t}| < \eps t]\, dt = (\frac b{Q-2\alpha+\eps} - \eps b)(1 - o_b(1)).\]
			For the upper bound, when $t > (Q-2\alpha-\eps)^{-1}b$ we have $\{Y_t < b\}\subset 
			\{ B_{2t} < -\eps t\}$ so 
			\[\int_{(Q-2\alpha-\eps)^{-1}b}^\infty \P[Y_t < b] \, dt <  \int_{(Q-2\alpha-\eps)^{-1}b}^\infty \P[B_{2t} < -\eps t] \, dt < \int_{(Q-2\alpha-\eps)^{-1}b}^\infty e^{-(\eps t)^2/4t} \, dt,\]
			giving $ \int_0^\infty \P[Y_t < b]\,dt \leq (Q-2\alpha - \eps)^{-1} b + o_b(1)$. We conclude $\int_\R \P[Y_t \in (0,b)]\, d= ((Q-2\alpha)^{-1}+o(1))b$. Translation symmetry implies the law of $X_1(0)$ is $c \, dx$ for some constant $c$, and we see that $c = (Q-2\alpha)^{-1}$. 
		
		Now, let $I$ be a finite interval. We show that conditioned on
		$X_1|_{(-\infty, 0]}$ and $X_1(0) \in I$, the process $X_1|_{[0,\infty)}$ agrees in law with $B_{2t} + (Q-2\alpha)t + X_1(0)$ where $(B_t)_{t \geq 0}$ is standard Brownian motion. By translation symmetry we may assume $I \subset (0, \infty)$,  then we may restrict to the event $\{T > 0\}$ since $Y_t < 0$ for $t<0$. By the Markov property of Brownian motion, given $(Y_t)_{(-\infty, T]}$ and $Y_T \in I$, the conditional law of $(Y_t)_{[T, \infty)}$ is $B_{2t} + (Q-2\alpha)t + Y_T$; rephrasing in terms of $X_t$ gives the desired statement. 
		
		Finally, a similar argument  gives that conditioned on $X_1|_{[0,\infty)}$ and $X_1(0) \in I$, the process  $X_1|_{(-\infty, 0]}$ agrees in law with $B_{-2t} + (Q - 2\alpha)t + X_1(0)$. Since $X_1(0) \stackrel d= X_2(0)$, and the conditional law of $X_1$ given $X_1(0)=x$ agrees with the conditional law of $X_2$ given $X_2(0) = x$ for all $x$, we conclude $X_1 \stackrel d= X_2$. 
	\end{proof}
\end{lemma}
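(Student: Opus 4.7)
The plan is to disintegrate both infinite measures over the value of the process at time $0$, show that the marginals and the conditional laws agree, and invoke the Markov structure on each side to conclude.

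First, I would establish the spatial translation invariance $X_1 + y \overset{d}{=} X_1$. The path $Y$ is a two-sided Brownian motion with positive drift $\mu := Q - 2\alpha$, pinned by $Y_0 = 0$ with the left half conditioned to stay negative. Adding a constant $y$ destroys the pinning, but if $\tau := \inf\{t : Y_t + y \geq 0\}$ and $\wt Y_t := Y_{t + \tau} + y$, then the strong Markov property of $Y$ together with $\tau < \infty$ a.s.\ (positive drift) give $\wt Y \overset{d}{=} Y$. Since $T \sim dT$ is translation invariant on $\R$, the compensating shift $T \mapsto T - \tau$ preserves Lebesgue measure, so $X_1 + y \overset{d}{=} X_1$.

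Second, I would identify the $\sigma$-finite marginal of $X_1(0)$ as $\mu^{-1}\, dx$. By Fubini, the mass that $X_1(0)$ assigns to a bounded set $A$ equals the expected occupation time $\E\int_\R \mathbf 1_{Y_s \in A}\, ds$ of $Y$. Combined with step one, this measure must be a constant multiple of Lebesgue measure. To pin the constant down it suffices to take $A = (0, b)$ and let $b \to \infty$: the drift $\mu t$ dominates the Gaussian fluctuations of $B_{2t}$ for large $t$, so the expected occupation of $(0,b)$ is $b/\mu + O(1)$, yielding density $\mu^{-1}$.

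Third, I would upgrade the marginal to the full joint law using the Markov property. Conditional on $X_1(0) = x$ lying in a bounded interval and on the past $X_1|_{(-\infty, 0]}$, the future $X_1|_{[0, \infty)}$ is distributed as $B_{2t} + \mu t + x$ by applying the strong Markov property of $Y$ at the random time $T$; an analogous argument (running time backward and using the Williams-type decomposition encoded in the conditioning that defines $Y$ on $(-\infty,0]$) identifies the conditional law of the past given the future and $X_1(0)$. These conditionals exactly match the disintegration of $X_2$, whose marginal at $0$ is $\mu^{-1}\, dc$ and whose two halves are independent BM-with-drift processes, so $X_1 \overset{d}{=} X_2$.

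The step I expect to require the most care is the first: $P \times dT$ is an infinite measure and $\tau$ depends on the realization of $Y$, so Fubini and the change-of-variable $T \mapsto T - \tau$ must be applied with attention to integrability. I would localize by testing against bounded path functionals supported on $\{X_1(0) \in [-M, M]\}$, carry out the shift argument there, and then send $M \to \infty$.
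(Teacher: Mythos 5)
Your proposal is correct and follows essentially the same route as the paper's proof: translation symmetry via the hitting-time shift and invariance of $dT$, the Fubini/occupation-time identification of the marginal of $X_1(0)$ as $(Q-2\alpha)^{-1}\,dx$, and the Markov property at the random time $T$ to match the conditional laws of the two halves with those of $X_2$. The extra care you flag about Fubini and the change of variables for the infinite measure is a reasonable point of caution, but the argument goes through as the paper presents it.
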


\begin{proof}[Proof of Proposition~\ref{prop-unif-embed}]
	Let $\log: {\mathbb H}\to \cS$ be the confomal map sending $(0, 1, \infty)$ to $(-\infty, 0, +\infty)$, and let $\phi^\cS_0 = \log \bullet_\gamma \phi_0$. By definition there is a random $x \in \R$ such that $(\phi^{\cS}_0, T) \stackrel d= (\phi_\mathrm{av}(\cdot +x) + \phi_\mathrm{lat}(\cdot + x), T')$ where the law of $T'$ is Lebesgure measure on $\R$ and $(\phi_\mathrm{av}, \phi_\mathrm{lat})$ are independently sampled as in Definition~\ref{def-wedge}. By the translation invariance of Lebesgue measure on $\R$, and the translation invariance in law of $\phi_\mathrm{lat}$, Lemma~\ref{lem-unif-wed} implies $\phi_\mathrm{av}(\cdot + x - T') + \phi_\mathrm{lat}(\cdot + x - T')\stackrel d= X_2(\Re \cdot) + h_\mathrm{lat}$ where $X_2$ is as in Lemma~\ref{lem-unif-wed} and $h_\mathrm{lat}$ is the projection of an independent GFF on $\cS$ to $H_\mathrm{lat}(\cS)$. Since the projection of a GFF to $H_\mathrm{av}(\cS)$ has the law of $(B_{2t})_{t \in \R}$ where $B_t$ is standard two-sided Brownian motion, we conclude that $\log \bullet_\gamma( f_T \bullet_\gamma \phi_0)$ agrees in law with $h + (Q - 2\alpha) \Re (\cdot) + \mathbf c$ where $(h, \mathbf c) \sim \frac1{Q - 2\alpha}P_\cS\times dc$.  But by definition $\log^{-1} \bullet_\gamma (h + (Q - 2\alpha) \Re (\cdot) + \mathbf c)$ has law $\frac1{Q - 2\alpha}\LF_{\mathbb H}^{(\alpha, 0), (Q - \alpha, \infty)}$, concluding the proof. 
\end{proof}

\subsection{Cutting a quantum cell from the Liouville field}\label{sec-8-cell}

The goal of this section is to prove the following. We write $\SLE_\kappa$ for the law of forward $\SLE_\kappa$ in $({\mathbb H}, 0, \infty)$. Recall that $P_a$ is the law of the area $a$ quantum cell. 
\begin{proposition}\label{prop:MOT-stationary-qs}
	Suppose $\kappa > 8$ and $\gamma = \frac4{\sqrt\kappa}$. 
	Sample a triple $( \psi, z, \eta)$ from the measure
	\eqb\label{eq-unif-pt-wed}
	\cA_\psi(dz) \, \LF_{\mathbb H}^{(\frac{3\gamma}4, 0),(Q - \frac{3\gamma}4, \infty)} (d\psi)  \SLE_{\kappa}(d\eta).
	\eqe
	Parametrize $\eta$ by quantum area and let $A$ be the time $\eta$ first hits $z$. Let $g: {\mathbb H} \to  {\mathbb H}\backslash \eta([0,A])$ be the unique conformal map with $g(0) = z$ and $g(w) = w + O(1)$ as $w \to \infty$, and let $\phi = \psi \circ g + Q \log |g'|$. Let $\cC = (\eta([0,A]), h, \eta|_{[0,A]})/{\sim_\gamma}$. Then the law of $(\phi, \cC, A)$ is 
	\eqb \label{eq-LF-cell-decomp}
	\LF_{\mathbb H}^{(\frac{3\gamma}4, 0),(Q - \frac{3\gamma}4, \infty)} (d\phi) P_A(d\cC) 1_{A>0} dA.
	\eqe
\end{proposition}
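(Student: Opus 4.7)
The plan is to realize the sampling~\eqref{eq-unif-pt-wed} as a uniformly embedded weight $(2-\gamma^2/2)$ quantum wedge decorated by an independent space-filling SLE and a uniform area-marked point, to use mating-of-trees (Proposition~\ref{prop-mot}) together with Brownian stationarity to cut out the area-$A$ quantum cell, and then to transport the embedding of the post-cell wedge back to LCFT via a scaling equivariance and a second application of Proposition~\ref{prop-unif-embed}.

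For $W = 2-\gamma^2/2$ the parameter from Definition~\ref{def-wedge} is $\alpha = \frac{3\gamma}4$, so Proposition~\ref{prop-unif-embed} identifies the uniform embedding of $\cM^\mathrm{wed}(2-\gamma^2/2)$ with $\frac{1}{Q-2\alpha}\LF_\bbH^{(\frac{3\gamma}4,0),(Q-\frac{3\gamma}4,\infty)}$. Fix a deterministic embedding rule $\cW\mapsto\psi_0$ and sample $(\cW,T,\tilde\eta,A)\sim\cM^\mathrm{wed}(2-\gamma^2/2)\times dT\times\SLE_\kappa\times 1_{A>0}\,dA$, with $\tilde\eta$ parametrized by quantum area with respect to $\psi_0$. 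Setting $\psi=f_T\bullet_\gamma\psi_0$, $\eta=f_T\circ\tilde\eta$, and $z=\eta(A)=f_T(\tilde\eta(A))$, the joint law of $(\psi,\eta,z)$ equals $(Q-2\alpha)^{-1}$ times the measure in~\eqref{eq-unif-pt-wed}: the scale-invariance of $\SLE_\kappa$ and Proposition~\ref{prop-unif-embed} identify the marginal of $(\psi,\eta)$, and since $\eta$ is space-filling and area-parametrized, the pushforward of $1_{A>0}\,dA$ along $A\mapsto\eta(A)$ is $\cA_\psi(dz)$.

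By Proposition~\ref{prop-mot}, the pair $(\cW,\tilde\eta)$ encodes a pair of correlated Brownian motions $(X_t,Y_t)_{t\ge 0}\sim\mathrm{CRT}_\kappa$. Truncating at quantum time $A$ splits this into: (i) $(X_s,Y_s)_{s\le A}$, encoding the area-$A$ quantum cell $\cC=(\tilde\eta([0,A]),\psi_0,\tilde\eta|_{[0,A]})/{\sim_\gamma}\sim P_A$; and (ii) the tail $(X_{A+s}-X_A,Y_{A+s}-Y_A)_{s\ge 0}\sim\mathrm{CRT}_\kappa$, which by stationarity of Brownian motion encodes a fresh weight $(2-\gamma^2/2)$ quantum wedge $\cW'$ decorated by an independent $\SLE_\kappa$. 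In particular $(\cC,A)$ and $(\cW',\tilde\eta|_{[A,\infty)})$ are independent, and both are independent of the auxiliary shift $T$. Next I verify the scaling equivariance of the embedding map. Let $g_0:\bbH\to\bbH\setminus\tilde\eta([0,A])$ be the unique conformal map with $g_0(0)=\tilde\eta(A)$ and $g_0(w)-w$ bounded (explicitly $g_0(w)=w+W_A+o(1)$ where $W_A$ is the driving function of $\tilde\eta$ at time $A$), and set $\phi_0=\psi_0\circ g_0+Q\log|g_0'|$, a specific embedding of $\cW'$ in $\bbH$ marked at $0,\infty$. The map $g$ in the proposition statement (for the scaled triple $(\psi,\eta,z)$) equals $f_T\circ g_0\circ f_T^{-1}$: from the expansion of $g_0$ one checks $g(0)=e^T\tilde\eta(A)=z$ and $g(w)=w+e^TW_A+o(1)$, matching the normalization. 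A short computation then gives $\phi=\psi\circ g+Q\log|g'|=f_T\bullet_\gamma\phi_0$. Crucially, $\phi_0$ is measurable with respect to $(\cC,A,\cW',\tilde\eta|_{[A,\infty)})$ and in particular does not depend on $T$.

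Conditioning on $(\cC,A)$, the post-cell data $\cW'\sim\cM^\mathrm{wed}(2-\gamma^2/2)$ and the shift $T\sim dT$ are independent of each other and of $(\cC,A)$. Applying Proposition~\ref{prop-unif-embed} to $\cW'$ with $\phi_0$ as the pre-shift embedding gives that $\phi=f_T\bullet_\gamma\phi_0$ has conditional law $\frac{1}{Q-2\alpha}\LF_\bbH^{(\frac{3\gamma}4,0),(Q-\frac{3\gamma}4,\infty)}$ independent of $(\cC,A)$. Combining this with the prefactor $(Q-2\alpha)$ from the first paragraph yields the desired identity~\eqref{eq-LF-cell-decomp}. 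The main obstacle is the bookkeeping in the first paragraph---carefully showing that the LCFT sampling $\cA_\psi(dz)\,\LF_\bbH(d\psi)\,\SLE_\kappa(d\eta)$ coincides up to the factor $Q-2\alpha$ with the wedge-plus-$T$-shift realization, which requires Proposition~\ref{prop-unif-embed}, the space-filling area parametrization $dA\leftrightarrow\cA_\psi(dz)$, and the scale-invariance of $\SLE_\kappa$; everything else is a direct consequence of mating-of-trees stationarity and the one-line scaling equivariance calculation.
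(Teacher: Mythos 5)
Your proposal is correct and follows essentially the same route as the paper: realize \eqref{eq-unif-pt-wed} as a uniformly embedded weight $(2-\frac{\gamma^2}2)$ wedge with an independent area-parametrized space-filling $\SLE_\kappa$ and a Lebesgue-sampled area $A$, cut out the area-$A$ cell leaving an independent fresh wedge, check the scaling equivariance $g=f_T\circ g_0\circ f_T^{-1}$, and apply Proposition~\ref{prop-unif-embed} a second time. The only (cosmetic) difference is that the paper isolates the cell-cutting step as Lemma~\ref{lem-wedge-cell}, proved by resampling $A$ first and citing the Markov property of the decorated wedge from \cite{wedges} directly, whereas you phrase the same input through the Brownian-motion encoding of Proposition~\ref{prop-mot} together with independence of increments.
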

To that end, we prove a similar statement for the quantum wedge (Lemma~\ref{lem-wedge-cell}), then transfer to the Liouville field using uniform embedding (Proposition~\ref{prop-unif-embed}).

For the next lemma, let $P(d\psi_0)$ be any law on fields such that the law of $({\mathbb H}, \psi_0, 0, \infty)/{\sim_\gamma}$ is $\cM^\mathrm{wed}(2-\frac{\gamma^2}2)$. For instance $P$ could be the law of the field of Definition~\ref{def-wedge} after parametrizing in ${\mathbb H}$ via $z \mapsto e^{z}$.

\begin{lemma}\label{lem-wedge-cell}
	Suppose $\kappa > 8$ and $\gamma = \frac4{\sqrt\kappa}$. 
	Sample $(z_0, \psi_0, \eta_0) \sim \cA_{\psi_0}(dz_0) \, P(d\psi_0) \SLE_\kappa(d\eta_0)$.  Parametrize $\eta_0$ by quantum area and let $A$ be the time it hits $z_0$. Then the marginal law of $A$ is the Lebesgue measure $\mathrm{Leb}_{(0,\infty)}$, and the conditional  law of the pair of quantum surfaces $({\mathbb H} \backslash \eta_0([0,A]), \psi_0, z_0, \infty)/{\sim_\gamma}$ and $(\eta_0([0,A]), \psi_0, \eta_0|_{[0,A]})/{\sim_\gamma}$ given $A$ is the product measure $\cM^\mathrm{wed}(2-\frac{\gamma^2}2) P_A$. 
\end{lemma}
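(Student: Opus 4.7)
The strategy is to lift the problem to the Brownian encoding provided by Proposition~\ref{prop-mot}, where the Markov property of Brownian motion makes the decomposition transparent.

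First I would translate the sampling procedure into CRT language. By Proposition~\ref{prop-mot}, the decorated quantum wedge $(\bbH, \psi_0, 0, \infty, \eta_0)/{\sim_\gamma}$ is in measurable bijection with a sample $(X_t, Y_t)_{t \geq 0} \sim \mathrm{CRT}_\kappa$, compatibly with the parametrization of $\eta_0$ by quantum area. Since $\eta_0$ is quantum-area-parametrized, the pushforward of $\cA_{\psi_0}$ under $\eta_0^{-1}$ is Lebesgue measure on $(0,\infty)$. Writing $A = \eta_0^{-1}(z_0)$, this means the joint sampling in the lemma is equivalent to sampling $(\psi_0, \eta_0) \sim P \times \SLE_\kappa$ and independently $A \sim \mathrm{Leb}_{(0,\infty)}$, then setting $z_0 = \eta_0(A)$. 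This immediately yields the claim that $A$ has marginal law $\mathrm{Leb}_{(0,\infty)}$ and is independent of the CRT.

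Next I would split the CRT at time $A$. Since $A$ is independent of $(X,Y)$ and Brownian motion has independent increments, conditionally on $A$ the past $(X_t, Y_t)_{t \in [0,A]}$ and the shifted future $(X_{A+t}-X_A, Y_{A+t}-Y_A)_{t \geq 0}$ are independent, with the latter again distributed as $\mathrm{CRT}_\kappa$. By Definition~\ref{def-quantum-cell}, the past process encodes precisely the area-$A$ quantum cell $\cC_A = (\eta_0([0,A]), \psi_0, \eta_0|_{[0,A]}, \dots)/{\sim_\gamma}$ with law $P_A$.

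The final step, which I expect to be the main obstacle, is to identify the complementary curve-decorated surface $(\bbH \setminus \eta_0([0,A]), \psi_0, z_0, \infty)/{\sim_\gamma}$ (with decoration $\eta_0|_{[A,\infty)}$) with the wedge-plus-$\SLE$ encoded by the future CRT. Applying Proposition~\ref{prop-mot} to $(X_{A+t}-X_A, Y_{A+t}-Y_A)_{t \geq 0}$ yields a weight $(2-\tfrac{\gamma^2}2)$ quantum wedge decorated by an independent $\SLE_\kappa$, and one must show that this reconstruction coincides with the complementary surface above. Heuristically this is forced, since mating-of-trees depends only on the increments of $(X,Y)$: gluing the cell $\cC_A$ to the wedge encoded by the shifted future along their matched boundary length profiles must reproduce the original wedge $(\bbH, \psi_0, 0, \infty, \eta_0)/{\sim_\gamma}$, and by the measurability half of Proposition~\ref{prop-mot} (the map from CRT to curve-decorated wedge is a bijection) this consistency forces the desired identification. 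Combined with the independence of past and future CRT, this produces the conditional product law $\cM^\mathrm{wed}(2-\tfrac{\gamma^2}2) \times P_A$ given $A$, completing the proof.
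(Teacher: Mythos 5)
Your first step is exactly the paper's: resample as $(A,\psi_0,\eta_0)\sim \mathrm{Leb}_{(0,\infty)}\,P\,\SLE_\kappa$ and set $z_0=\eta_0(A)$, using $\cA_{\psi_0}=(\eta_0)_*\mathrm{Leb}_{(0,\infty)}$ to see the laws agree; this gives the marginal of $A$ and, after conditioning on $A$, the fact that $\cC_A$ has law $P_A$ by Definition~\ref{def-quantum-cell}. Up to there you are fine.

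The gap is in the step you yourself flag as the main obstacle. You want to conclude that the complementary surface $(\bbH\setminus\eta_0([0,A]),\psi_0,z_0,\infty)/{\sim_\gamma}$ is the wedge produced by mating the shifted future process $(X_{A+t}-X_A,Y_{A+t}-Y_A)_{t\ge 0}$, and you argue this is ``forced'' because mating-of-trees depends only on increments and because the CRT-to-surface map of Proposition~\ref{prop-mot} is a bijection. But the statement of Proposition~\ref{prop-mot} only gives measurability of the whole decorated wedge with respect to the whole process $(X_t,Y_t)_{t\ge0}$; it says nothing about locality or compatibility with concatenation, i.e.\ that the inverse map applied to $(X,Y)|_{[0,A]}$ produces the explored piece and applied to the future increments produces the unexplored piece. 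That concatenation-compatibility is precisely the assertion you are trying to prove, so the ``consistency forces the identification'' argument is circular: without an independent characterization of how the glued surface is assembled from the two halves, the bijection gives you nothing. The correct resolution is the one the paper uses: the last two claims of \cite[Theorem~1.9]{wedges} state directly that, for the area-parametrized $\SLE_\kappa$ on the wedge, the unexplored surface $(\eta_0([A,\infty)),\psi_0,\eta_0(A),\infty)/{\sim_\gamma}$ again has law $\cM^\mathrm{wed}(2-\frac{\gamma^2}2)$ and is independent of the explored cell $\cC_A$. Citing that (which is the Markov/stationarity property of the mating-of-trees exploration, proved in \cite{wedges}, not a formal consequence of the statement of Proposition~\ref{prop-mot}) closes the argument; with it, your proof collapses to the paper's.
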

\begin{proof}
	We construct the same random objects $(z_0, \psi_0, \eta_0)$ of the lemma statement from a different approach. Sample $(A, \psi_0, \eta_0) \sim \mathrm{Leb}_{(0,\infty)}(dA) \, P(d\psi_0) \SLE_\kappa(d\eta_0)$. Let $z_0 = \eta(A)$. Since  $\cA_{\psi_0} = \eta_* \mathrm{Leb}_{(0,\infty)}$, the law of $(z_0, \psi_0, \eta_0)$ is $\cA_{\psi_0}(dz_0) P(d\psi_0)\SLE_\kappa(d\eta_0)$. By our construction the marginal law of $A$ is $\mathrm{Leb}_{(0,\infty)}$.
	
	Condition on $A$. The conditional law of 
	$(\psi_0, \eta_0)$ is $P(d\psi_0) \SLE_\kappa (d\eta_0)$. 	
	By definition, the  law of $\cC_A := (\eta([0,A]), \psi_0, \eta_0|_{[0,A]})/{\sim_\gamma}$ is $P_A$. Finally, by the last two claims of \cite[Theorem 1.4.1]{wedges},   $(\eta_0([A,\infty)), \psi_0, \eta(A), \infty)/{\sim_\gamma}$ has law $\cM^\mathrm{wed}(2-\frac{\gamma^2}2)$ and is independent of $\cC_A$.

\end{proof}

We are now ready to prove  Proposition~\ref{prop:MOT-stationary-qs}.

\begin{proof}[Proof of Proposition~\ref{prop:MOT-stationary-qs}]	
	Sample $(z_0, \psi_0, \eta_0, T) \sim \cA_{\psi_0}(dz_0) \, P(d\psi_0) \SLE_\kappa(d\eta_0) \, dT$. Let $f_T(z) = e^T z$, and let $(z, \psi, \eta) = (f_T(z_0), f_T \bullet_\gamma \psi_0, f_T \circ \eta_0)$. By Proposition~\ref{prop-unif-embed} and the scale-invariance of $\SLE_\kappa$ the law of $(z, \psi, \eta)$ is~\eqref{eq-unif-pt-wed} times $(Q-\frac{3\gamma}2)^{-1}$. 
	
	Let $A$ be the time that $\eta_0$ hits $z_0$. 
	Let $g_0: {\mathbb H} \to {\mathbb H} \backslash \eta_0([0,A])$ be the conformal map with $g_0(0) = z_0$ and $g_0(w) = w + O(1)$ as $w \to \infty$,  and let $\phi_0 = \psi_0 \circ g_0 + Q \log |g_0'|$.
	Let $g: {\mathbb H}\to {\mathbb H}\backslash \eta([0,A])$ be the conformal map with $g(0) = z$ and $\lim_{w \to \infty} g(w) - w = 0$, and let $\phi = \psi \circ g + Q \log |g_T'|$. 
	Let $\cC = (\eta([0,A]), \psi, \eta|_{[0,A]})/{\sim_\gamma} = (\eta_0([0,A]), \psi_0, \eta_0|_{[0,A]})/{\sim_\gamma}$.
	
	By Lemma~\ref{lem-wedge-cell} the law of $(({\mathbb H}, \phi_0, 0, \infty)/{\sim_\gamma}, \cC, A, T)$ is $\cM^\mathrm{wed}(2-\frac{\gamma^2}2) \, P_A(d\cC) \, 1_{A > 0} dA \, dT$. It is easy to check that $f_T \circ g_0 = g \circ f_T$, so $f_T \bullet_\gamma \phi_0 = \phi$. Thus by Proposition~\ref{prop-unif-embed} the law of $(\phi, \cC, A)$ is~\eqref{eq-LF-cell-decomp} times $(Q-\frac{3\gamma}2)^{-1}$.
\end{proof}

\subsection{The $n=0$ case of   Theorem~\ref{thm-sf-zipper}}\label{sec-8-special}
In this section, we prove the following. 
\begin{proposition}\label{prop-thm-8}
	Theorem~\ref{thm-sf-zipper} holds for the special case where $n=0$.
\end{proposition}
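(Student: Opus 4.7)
The plan is to derive Proposition~\ref{prop-thm-8} from Proposition~\ref{prop:MOT-stationary-qs} by a bulk-rooting argument, then extract the general $\cF_t$-stopping time statement via a limit.

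By Proposition~\ref{prop-reweight-infty}, it suffices to fix any convenient value of $\delta$. Take $\delta = Q - \tfrac{3\gamma}{4}$, so that the insertion at $\infty$ in $\LF_\bbH^{(-\gamma/4, 0),(Q-3\gamma/4,\infty)}$ matches the one appearing in Proposition~\ref{prop:MOT-stationary-qs}. The bridge between the zipper's initial insertion $(-\gamma/4, 0)$ and the $(3\gamma/4, 0)$ of Proposition~\ref{prop:MOT-stationary-qs} is the arithmetic identity $\tfrac{3\gamma}{4} = -\tfrac{\gamma}{4} + \gamma$, combined with Proposition~\ref{prop-pointed}: augmenting the initial field $\phi_0 \sim \LF_\bbH^{(-\gamma/4,0),(Q-3\gamma/4,\infty)}$ by an independent bulk point $z_0 \sim \cA_{\phi_0}$ is equivalent to $\LF_\bbH^{(-\gamma/4,0),(\gamma, z_0),(Q-3\gamma/4,\infty)}(d\phi_0)\,dz_0$.

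Now run the $\kappa > 8$ LCFT zipper on this bulk-augmented field until the random time $\tau_{z_0}$ at which $g_t(z_0)$ is absorbed into the tip $W_t$; at this moment the two insertions at the tip merge with combined weight $-\gamma/4 + \gamma = 3\gamma/4$. I claim that the joint law of field, cell, and curve at time $\tau_{z_0}$, after translating $W_{\tau_{z_0}}$ to $0$, coincides with the triple produced by Proposition~\ref{prop:MOT-stationary-qs} read in reverse: an independent pair $(\phi, \cC_A) \sim \LF_\bbH^{(3\gamma/4, 0),(Q-3\gamma/4,\infty)} \otimes P_A$ glued to form the wedge-with-bulk-point configuration, with the interface curve arising by conformal pullback of the forward $\SLE_\kappa$ under the hydrodynamic uniformization and hence described by reverse $\SLE_\kappa$ up to $\tau_{z_0}$. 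This yields Theorem~\ref{thm-sf-zipper} (in its $n=0$ form, with a spurious bulk marked point) at the special stopping times $\tau_{z_0}$.

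The main obstacle --- the ``technical limiting argument'' alluded to in Remark~\ref{rem-not-proved} --- is removing the artificial bulk point $z_0$ and handling arbitrary $\cF_t$-stopping times $\tau$ at once. I would send $|z_0|\to\infty$ along a suitable sequence and use the Green-function asymptotic $G_\bbH(z, z_0) = G_\bbH(z,\infty) + 2\log|z_0| + o(1)$ to absorb the vanishing bulk insertion into the insertion at $\infty$, increasing $\delta$ by $\gamma$; Proposition~\ref{prop-reweight-infty} is then invoked once more to restore the original $\delta$. The hard part is controlling this limit uniformly in $\tau$: one must justify the exchange of limits with the zipper dynamics, extend from $\tau \leq \tau_{z_0}$ to all $\cF_t$-stopping times via the Markov property of reverse $\SLE_\kappa$, and rule out anomalous mass concentration near the curve as the bulk insertion escapes to $\infty$. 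Verifying these convergence statements with sufficient precision to identify the limiting joint law as $\LF_\bbH^{(-\gamma/4, W_\tau),(\delta,\infty)} \otimes \rSLE_\kappa^\tau$ is the core analytic task of this subsection.
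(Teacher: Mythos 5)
Your proposal has a fatal geometric error at its core. You augment the \emph{initial} field $\phi_0$ by a bulk point $z_0 \sim \cA_{\phi_0}(dz_0)$ and propose to run the zipper until ``$g_t(z_0)$ is absorbed into the tip $W_t$.'' But that time is a.s.\ never reached: the maps $g_t$ send $\bbH$ onto the \emph{complement} $\bbH\setminus\eta_t$ of the hull, and the hull consists entirely of newly welded material (the mated quantum cell), not of any part of the original surface $(\bbH,\phi_0)$. Equivalently, under the reverse Loewner flow $dZ_t = -2/(Z_t-W_t)\,dt$ one has $d(\Im Z_t)>0$, so a bulk point of the time-zero picture drifts away from $\R$ and never meets the curve. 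The $\gamma$-insertion that merges with the $-\frac{\gamma}{4}$ at the tip must instead sit at a quantum-typical point of the \emph{hull at a later time} $T$ (equivalently, of the cell being glued on); that point coincides with the tip at the earlier time $\tau$ at which the accumulated area equals $A$, and the merging happens upon \emph{unzipping} from $T$ back to $\tau$. This is exactly how Proposition~\ref{prop:MOT-zip-abstract} is set up, with $z=\eta_T(T-\tau)$ and the identification $-\frac{\gamma}{4}+\gamma=\frac{3\gamma}{4}$ applied via Proposition~\ref{prop-pointed} to $\cA_{\phi_T}|_{\eta_T([0,T])}$, not to $\cA_{\phi_0}$. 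Your closing limit $|z_0|\to\infty$ to ``remove'' the bulk point is likewise moot, and in any case redundant with Proposition~\ref{prop-reweight-infty}.

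There is also a structural gap independent of the above: you assert, rather than prove, that the interface of the concrete mating-of-trees process is ``described by reverse $\SLE_\kappa$'' and that the time-$\tau_{z_0}$ configuration is ``Proposition~\ref{prop:MOT-stationary-qs} read in reverse.'' Establishing the reverse-SLE/Liouville description of the \emph{concrete} welding process is the entire content of the statement. The paper closes this loop in the opposite direction: it first builds an abstract process $M$ from Sheffield's coupling whose law at every $\cF_t$-stopping time is $\LF_\bbH^{(-\gamma/4,\,\cdot\,),(Q-3\gamma/4,\infty)}\,\rSLE_\kappa^\tau$ by construction (Lemma~\ref{lem-shef-coupling}, via Proposition~\ref{prop-main}); it then shows via Propositions~\ref{prop:MOT-zip-abstract} and~\ref{prop:MOT-zip-concrete} (with truncation events $E_N,F_\delta,G_\eps$ to control limits of infinite measures) that the area-$a$ increment of $M$ is an independent area-$a$ quantum cell welded to $\phi_0$; and finally it invokes the measurability of the mated surface with respect to the Brownian motions (Proposition~\ref{prop-mot}) to conclude that $M$ \emph{is} the mating-of-trees zipper. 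Your sketch contains neither the construction of $M$ nor the mechanism for upgrading an identification at special area-times to arbitrary $\cF_t$-stopping times, so even after repairing the location of the bulk insertion the argument would not close.
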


In Lemma~\ref{lem-shef-coupling} 
we construct a process $(\phi_t, \eta_t)_{t \geq 0}$ on field-curve pairs where the evolution is given by Sheffield's coupling. 
In Proposition~\ref{prop:MOT-zip-abstract} we run this process until the time $\tau$ that the quantum area has increased by a Lebesgue-typical amount, and identify the law of $(\phi_\tau, \eta_\tau)$. In Proposition~\ref{prop:MOT-zip-concrete} we use Propositions~\ref{prop:MOT-stationary-qs} and~\ref{prop:MOT-zip-abstract} to show that $(\phi_\tau, \eta_\tau)$ comes from conformally welding $\phi_0$ with a quantum cell. Proposition~\ref{prop-thm-8} then follows quickly.

We write $\SLE_{\kappa}^t$ to denote the law of forward $\SLE_{\kappa}$ in ${\mathbb H}$ from $0$ to $\infty$ run for time $t$ (so its half-plane capacity is $2t$), and write $\rSLE_{\kappa}^t$ for reverse SLE in $({\mathbb H}, 0, \infty)$ run for time $t$. 
Let $\mathfrak F$ be the space of distributions on ${\mathbb H}$ (the test functions being smooth functions  compactly supported in $\bbH$) and let $\mathfrak C$ be the space of bounded parametrized curves in $\ol {\mathbb H}$ equipped with the metric $d(\eta_1, \eta_2) =|T_1 - T_2| +  \sup_{t \leq T_1 \wedge T_2} |\eta_1(t) - \eta_2(t)|$ where $T_i$ is the duration of $\eta_i$. 
\begin{lemma}\label{lem-shef-coupling}
	Let $\kappa > 4$ and $\gamma = \frac4{\sqrt\kappa}$. There is an infinite measure $M$ on $C([0,\infty), \mathfrak F \times \mathfrak C)$ such that, for $(\phi_t, \eta_t)_{t\geq0}$ sampled from $M$, 
	\begin{enumerate}[i)]
		\item $\eta_t$ satisfies $\mathrm{hcap}(\eta_t) = 2t$ and is parametrized by half-plane capacity (i.e.\ $\mathrm{hcap}(\eta_t([0,s])) = 2s$);
		\item for $\tau$ an a.s.\  finite stopping time  for the filtration $\cF_t = \sigma(\eta_t)$, the law of $(\phi_\tau, \eta_\tau)$ is \\ $\LF_{\mathbb H}^{(-\frac\gamma4, \eta_\tau(0)), (Q - \frac{3\gamma}4, \infty)} \rSLE_{\kappa}^\tau$;
		
		\item for $t_1 < t_2$, let $g_{t_1, t_2}: {\mathbb H} \to {\mathbb H}\backslash \eta_{t_2}([0,t_2-t_1])$ be the conformal map with $g_{t_1, t_2}(\eta_{t_1}(0)) = \eta_{t_2}(t_2-t_1)$ and $\lim_{z \to \infty} g_{t_1, t_2}(z) - z = 0$. Then $\phi_{t_1} = g_{t_1,t_2}^{-1} \bullet_\gamma \phi_{t_2}$. 
	\end{enumerate}
\end{lemma}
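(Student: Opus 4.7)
The plan is to promote Sheffield's coupling of the free boundary GFF with reverse SLE (Proposition~\ref{prop-dms-chordal}) to a coupling of the \emph{Liouville field} with reverse SLE, by pairing its modulo-constant statement with an independent additive constant drawn from an exponentially tilted Lebesgue measure. This gives an infinite measure satisfying (i) automatically, yields (iii) via the Loewner semigroup, and reduces (ii) at stopping times to the mod-constant statement of Proposition~\ref{prop-dms-chordal}.

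\textbf{Construction.} Sample $\eta = (\eta_t)_{t \geq 0}$ from reverse $\SLE_\kappa$ with no force points, with driving function $W_t$ and Loewner maps $g_t: \bbH \to \bbH \setminus \eta_t$. Independently sample $h \sim P_\bbH$ and $\mathbf c$ from the infinite measure $e^{-\gamma c}\,dc$. For each $t \geq 0$ set
\[
\phi_t(z) := h(g_t(z)) + Q\log|g_t'(z)| + \mathfrak f_t(z) + \mathbf c,
\]
where $\mathfrak f_t$ is the deterministic function encoding the insertions at $W_t$ and $\infty$, chosen so that at $t=0$ the expression coincides with a sample from $\LF_\bbH^{(-\gamma/4, 0),(Q - 3\gamma/4, \infty)}$. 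Let $M$ be the joint law of $(\phi_t, \eta_t)_{t \geq 0}$; it is infinite because $\int e^{-\gamma c}\,dc = \infty$, and (i) is built into the reverse SLE parametrization.

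\textbf{Verification of (iii) and (ii).} For (iii), the Loewner semigroup identity $g_{t_2} = g_{t_1, t_2} \circ g_{t_1}$, combined with the chain rule for $\log|g'|$ and the change-of-variables behavior of $\mathfrak f_t$ under $g_{t_1, t_2}$, yields $\phi_{t_1} = g_{t_1, t_2}^{-1} \bullet_\gamma \phi_{t_2}$ by direct substitution. For (ii), Proposition~\ref{prop-dms-chordal} with no force points, applied at a stopping time $\tau$ for $\cF_t = \sigma(\eta_t)$, gives the mod-constant identity $\mathfrak h_\tau + h \circ g_\tau \eqD \mathfrak h_0 + h$; adding the independent constant $\mathbf c$ and using the independence of $(h, \mathbf c)$ from $\eta$ then upgrades this to an equality of full Liouville fields. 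When $\kappa \geq 4$ (so $\gamma = 4/\sqrt\kappa$) the tip coefficient $-\gamma/4$ coincides with Sheffield's $-1/\sqrt\kappa$; when $\kappa \leq 4$ we additionally reweight by the GFF partition function as in Proposition~\ref{prop-sle-weighted} to shift the tip coefficient at the cost of an auxiliary force point.

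\textbf{Main obstacle.} The most delicate step is reconciling the notational conventions: Sheffield's mod-constant identity is phrased in terms of the bare Green function $G$, with no explicit insertion at $\infty$, whereas the target Liouville field uses $G_\bbH$ with an explicit $(Q - 3\gamma/4, \infty)$ insertion and a specific $e^{-\gamma c}$ tilt on $\mathbf c$. Bridging the two requires the identity $G(z,w) = G_\bbH(z,w) - G_\bbH(z,\infty) - G_\bbH(w,\infty)$ together with the hydrodynamic expansion $g_t(z) = z + 2t/z + O(z^{-2})$ at $\infty$, and one must verify that the resulting exponential tilt on $\mathbf c$ is independent of $t$, which is what allows a single infinite-measure constant to serve consistently for the entire path $(\phi_t)_{t\geq 0}$.
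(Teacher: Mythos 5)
Your construction has a genuine gap at property (ii), and it stems from confusing the annealed content of Sheffield's coupling with the quenched statement the lemma requires. Property (ii) asserts that \emph{conditionally on} $\eta_\tau$, the field $\phi_\tau$ is a fresh Liouville field with insertion at $W_\tau$. Your $\phi_t = h\circ g_t + Q\log|g_t'| + \mathfrak f_t + \mathbf c$ with $h$ independent of the curve does not have this property: conditionally on $\eta_\tau$, the term $h\circ g_\tau + Q\log|g_\tau'| = g_\tau^{-1}\bullet_\gamma h$ is a GFF pulled back through $g_\tau$, which is not a free-boundary GFF on $\bbH$ plus a nice function. Proposition~\ref{prop-dms-chordal} only tells you that the \emph{annealed} law of $\mathfrak h_\tau + h\circ g_\tau$ (averaged over the curve) equals that of $\mathfrak h_0 + h$ --- i.e.\ your $\phi_\tau$ has the marginal law of the \emph{time-zero} field with insertion at $0$, not the conditional law $\LF_\bbH^{(\cdot, W_\tau),\cdot}$ given $\eta_\tau$. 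Adding the independent constant $\mathbf c$ cannot upgrade an annealed identity to a quenched one. There is also a direction problem in (iii): the semigroup is $g_{t_2} = g_{t_1,t_2}\circ g_{t_1}$, which is compatible with $\phi_t = g_t\bullet_\gamma \phi_0$ but not with your $\phi_t = g_t^{-1}\bullet_\gamma h + \cdots$; and if you flip to $\phi_t = g_t\bullet_\gamma\phi_0$ to rescue (iii), then (ii) fails even more visibly, since the pushed-forward field has its log singularity at the interior tip $\eta_\tau(\tau)=g_\tau(0)$ rather than at $W_\tau\in\R$. No formula of the form ``independent GFF composed with the Loewner maps plus deterministic corrections'' can satisfy (ii) and (iii) simultaneously --- the existence of such a coupling is precisely the nontrivial content of the lemma.

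The paper resolves this tension differently: it fixes a horizon $T$, \emph{defines} $(\phi_T,\eta_T)$ to have the product law $\LF_\bbH^{(\cdot,W_T),\cdot}\,\rSLE_\kappa^T$ (so (ii) holds at time $T$ by fiat), defines $\phi_t = g_{t,T}^{-1}\bullet_\gamma\phi_T$ for $t<T$, and then deduces (ii) at an earlier stopping time $\tau$ from the strong Markov property of reverse SLE together with Proposition~\ref{prop-main} applied to the increment of duration $T-\tau$; consistency of the resulting family $(M_T)_T$ plus the Kolmogorov extension theorem then produces a single measure $M$ on $C([0,\infty),\mathfrak F\times\mathfrak C)$. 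Both the ``impose the law at the terminal time and unzip'' step and the extension argument are absent from your proposal, and both are needed. Finally, your remark about handling $\kappa\le 4$ by ``reweighting to shift the tip coefficient at the cost of an auxiliary force point'' does not work: Proposition~\ref{prop-sle-weighted} changes the insertions at the force points $z_j$, not the tip insertion at $W_t$, which is pinned to $-\frac{1}{\sqrt\kappa}$ by Sheffield's coupling.
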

\begin{proof}
	Fix $T$. Sample $(\phi_T, \eta_T) \sim \LF_{\mathbb H}^{(-\frac\gamma4, \eta_\tau(0)), (Q - \frac{3\gamma}4, \infty)} \rSLE_{\kappa}^T$, for $0 \leq t < T$ let $\eta_t$ be the reverse SLE curve at time $t$, for $t_1 < t_2 < T$ define $g_{t_1,t_2}:  {\mathbb H} \to {\mathbb H}\backslash \eta_{t_2}([0,t_2-t_1])$  as above, and let $\phi_t = g_{t, T}^{-1} \bullet_\gamma \phi_T$. Let $M_T$ denote the law of $(\phi_t, \eta_t)_{[0,T]} \in C([0,T], \mathfrak F \times \mathfrak C)$.
	We claim that $M_T$ satisfies the desired conditions up to time $T$.  i) is immediate by definition. 
	For ii), we want to determine the law of the field and curve when $(\phi_T, \eta_T)$ is ``unzipped'' for $(T-\tau)$ units of time; this follows from the strong Markov property of reverse $\SLE_\kappa$, and  Lemma~\ref{lem-main-generalized} with $(n, \delta)=(0,Q - \frac{3\gamma}4)$ and with its stopping time equal to $T - \tau$. Next, by ii) the law of $(\phi_{t_2}, \eta_{t_2})$ is 
	$\LF_{\mathbb H}^{(-\frac\gamma4, \eta_{t_2}(0)), (Q - \frac{3\gamma}4, \infty)} \rSLE_{\kappa}^{t_2}$, so applying Lemma~\ref{lem-main-generalized} with $n=0$, $\delta = Q - \frac{3\gamma}4$, and stopping time $t_2 - t_1$ gives iii). Finally, continuity of $(\phi_t)_{t \geq 0}$ is immediate from iii), and continuity of reverse $\SLE$ is well known. 
	
	The measure $M_T$ satisfies the desired properties up to time $T$. Moreover for $T' > T$, if we sample $(\phi_t, \eta_t)_{t \leq T'} \sim M_{T'}$, then by ii) the law of $(\phi_t, \eta_t)_{t \leq T}$ is $M_T$. Thus  the Kolmogorov extension theorem gives the existence of $M$. 
\end{proof}

For $(\phi_t, \eta_t)_{t \geq 0} \sim M$, for each $t$ let $W_t := \eta_t(0) \in \R$. We point out that the field-curve pair $(\phi_t(\cdot + W_t), \eta_t - W_t)$ is the translation of $(\phi_t, \eta_t)$ sending $\eta_t(0)$ to $0$.

\begin{proposition} \label{prop:MOT-zip-abstract}
	Let $\kappa > 8$ and $\gamma = \frac4{\sqrt\kappa}$.
	Sample $(\{(\phi_t, \eta_t)\}_{t \geq 0}, A)$ from the product measure $M 1_{A>0} \, dA$. Let $\tau > 0$ be the time $t$ that $\cA_{\phi_t}(\eta_t([0,t])) = A$.
	Then the law of $(\phi_\tau(\cdot + W_\tau), \eta_\tau - W_\tau, \tau)$ is  $ \mathcal K \cdot  \LF_{\mathbb H}^{(\frac{3\gamma}4, 0), (Q - \frac{3\gamma}4,\infty)}  \SLE_{\kappa}^t  1_{t >0} dt$ for some constant $\mathcal K \in (0,\infty)$.
\end{proposition}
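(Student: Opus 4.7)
The plan is to combine property (ii) of Sheffield's coupling (Lemma~\ref{lem-shef-coupling}) with the cell decomposition of Proposition~\ref{prop:MOT-stationary-qs} and the cell reversibility of Lemma~\ref{lem-rev-cell}.

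First I would change variables from $A$ to $\tau$. By Sheffield's coupling property (iii) combined with the positivity of quantum area on non-degenerate hulls, the function $t \mapsto A(t) := \cA_{\phi_t}(\eta_t([0,t]))$ is a.s.\ continuous and strictly increasing. Hence the joint law of $(\phi_\tau, \eta_\tau, \tau)$ under $M \times 1_{A > 0}\,dA$ equals $A'(\tau)\,d\tau \cdot M_\tau(d\phi\,d\eta)$, where $M_\tau$ denotes the marginal of $M$ at time $\tau$.

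Second I would use property (ii) and translate by $W_\tau$. Property (ii) gives $M_\tau = \LF_\bbH^{(-\gamma/4, W_\tau), (Q-3\gamma/4, \infty)}(d\phi)\,\rSLE_\kappa^\tau(d\eta)$. Time-reversing the driving Brownian motion of $\rSLE_\kappa^\tau$ shows that $\eta_\tau - W_\tau$ agrees in law with $\SLE_\kappa^\tau$ (forward SLE of capacity $2\tau$); while by the conformal covariance of $\LF_\bbH$ under translations (which have trivial conformal anomaly, with any residual $W_\tau$-dependent GFF normalization shift absorbed into the Lebesgue measure on the additive constant $\mathbf{c}$), the field $\phi_\tau(\cdot + W_\tau)$ is independent of $\eta_\tau - W_\tau$ with law $\LF_\bbH^{(-\gamma/4, 0), (Q-3\gamma/4, \infty)}$. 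Hence under $M \times dA$, the joint law of $(\phi_\tau(\cdot + W_\tau), \eta_\tau - W_\tau, \tau)$ equals (up to an overall constant) $A'(\tau)\, d\tau \cdot \LF_\bbH^{(-\gamma/4, 0), (Q-3\gamma/4, \infty)}(d\phi)\,\SLE_\kappa^\tau(d\eta)$.

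Third I would match this with the target $C \cdot \LF_\bbH^{(3\gamma/4, 0), (Q-3\gamma/4, \infty)}(d\phi)\,\SLE_\kappa^t(d\eta)\,dt$ via Propositions~\ref{prop:MOT-stationary-qs} and~\ref{prop-pointed}. Proposition~\ref{prop:MOT-stationary-qs} states that $\cA_\psi(dz)\,\LF_\bbH^{(3\gamma/4, 0), (Q-3\gamma/4, \infty)}(d\psi)\,\SLE_\kappa(d\eta)$ decomposes, after cutting the quantum cell at time $A = \eta^{-1}(z)$ (in area parametrization), as $\LF_\bbH^{(3\gamma/4, 0), (Q-3\gamma/4, \infty)}(d\phi)\,P_A(d\cC)\,1_{A>0}\,dA$. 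By Proposition~\ref{prop-pointed} the weighting $\cA_\psi(dz)$ corresponds to adding a bulk $\gamma$-insertion, which after the conformal identification of the cell (via the map $g$ with $g(0)=z$) is what promotes the outer field's insertion at $0$ from the wedge-type $(-\gamma/4)$-scaling to the rooted $(3\gamma/4)$-scaling. The quantum cell $\cC_\tau$ arising in Sheffield's reverse-SLE coupling has the same law $P_{A(\tau)}$ as the forward-SLE cell in Proposition~\ref{prop:MOT-stationary-qs}, by Lemma~\ref{lem-rev-cell} (reversibility, which interchanges the two endpoints of the interface). Matching the two decompositions yields the claim, with $C$ absorbing the $(Q-3\gamma/2)^{-1}$ factor from Proposition~\ref{prop:MOT-stationary-qs}, the Jacobian of the reparametrization between quantum area $A$ and half-plane capacity $t$, and any residual translation factors from Step 2.

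The main obstacle is the third step: identifying the reverse-SLE Sheffield cell with the forward-SLE quantum cell of Proposition~\ref{prop:MOT-stationary-qs} via Lemma~\ref{lem-rev-cell}, and verifying that the (generally random) $A'(\tau)$ weighting combines with the conformal covariance factors from the hydrodynamic embedding and the insertion shift at $0$ to produce a single deterministic constant $C \in (0,\infty)$, rather than a nontrivial function of $(\phi,\eta,\tau)$.
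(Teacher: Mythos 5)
There are two genuine gaps here, and together they are the entire substance of the proposition. First, the time $\tau$ is \emph{not} a stopping time for the filtration $\cF_t = \sigma(\eta_t)$: it is determined by the quantum area $\cA_{\phi_t}(\eta_t([0,t]))$, hence by the field. Property (ii) of Lemma~\ref{lem-shef-coupling} therefore does not give you $M_\tau = \LF_\bbH^{(-\gamma/4, W_\tau),(Q-3\gamma/4,\infty)}\,\rSLE_\kappa^\tau$, and the weight $A'(\tau)$ in your change of variables cannot be absorbed into a deterministic constant: it is correlated with $(\phi_\tau,\eta_\tau)$, and this weighting is precisely the mechanism of the proposition. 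Sampling $A$ from Lebesgue measure and stopping when the zipped-in area equals $A$ amounts to rooting the law at a quantum-area-typical point of the hull; via Proposition~\ref{prop-pointed} this produces a $\gamma$-insertion at that point, which in the limit sits at the tip $W_\tau$ and converts the insertion $-\frac\gamma4$ into $-\frac\gamma4+\gamma=\frac{3\gamma}4$. Your Step 2 output, $\LF_\bbH^{(-\gamma/4,0),(Q-3\gamma/4,\infty)}$ times a ``constant,'' is inconsistent with the target $\LF_\bbH^{(3\gamma/4,0),(Q-3\gamma/4,\infty)}$ for exactly this reason; the discrepancy is a nonconstant Radon--Nikodym derivative, not a normalization.

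Second, your Step 3 is circular. You invoke ``the quantum cell $\cC_\tau$ arising in Sheffield's reverse-SLE coupling has the same law $P_{A(\tau)}$ as the forward-SLE cell,'' citing Lemma~\ref{lem-rev-cell}. But Lemma~\ref{lem-rev-cell} only asserts reversibility of the cell already known to be a quantum cell (i.e.\ cut from a wedge by an independent forward $\SLE_\kappa$); the statement that the region swallowed by the Sheffield process, equipped with $\phi_\tau$, \emph{is} such a cell independent of the outer field is Proposition~\ref{prop:MOT-zip-concrete}, which the paper deduces \emph{from} the present proposition (together with Proposition~\ref{prop:MOT-stationary-qs}), not the other way around. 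The paper's actual proof avoids both pitfalls by introducing an auxiliary independent time $T$ with $T-\tau<\delta$, applying Lemma~\ref{lem-shef-coupling}(ii) at $T$ (which is legitimate since $T$ is independent of the process), rewriting the conditional law of $z=\eta_T(T-\tau)$ as $\cA_{\phi_T}(dz)$ restricted to the hull, converting that weighting into a $(\gamma,z)$-insertion via Proposition~\ref{prop-pointed}, and then sending $\delta\to0$ and $\eps\to0$ so that $z\to W_T$ and the insertions merge --- all carried out after truncating to the finite-measure events $E_N$ (Lemma~\ref{lem-EN-finite}) so that the total-variation limits of infinite measures make sense. If you want to salvage your outline, you would need to supply this rooting-and-limiting argument in place of Steps 1--2, at which point Step 3 becomes unnecessary.
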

At high level, the proof of Proposition~\ref{prop:MOT-zip-abstract} goes as follows, see Figure~\ref{fig-sf-until-A}. First, if we instead sample $(\{(\phi_t, \eta_t)\}_{t \geq 0}, A, T)\sim \delta^{-1} 1_{T \in [\tau, \tau + \delta]} M 1_{A>0} \, dA  \, dT$ then the marginal law of $(\phi_\tau, \eta_\tau, \tau)$ is the same as in Proposition~\ref{prop:MOT-zip-abstract}. Secondly, by the definition of $\tau$ the field $\phi_T$ has a singularity $\gamma G(\cdot, z)$ at
$z = \eta_T(T - \tau)$ since $z$ is ``quantum typical'' (Proposition~\ref{prop-pointed}). As $\delta \to 0$ we have $T - \tau \to 0$ so $z \to W_T$, so in the limit the field $\phi_T$ has the singularity $\gamma G(\cdot, W_T) - \frac\gamma4 G(\cdot, W_T)$ at $W_T$. Finally, since $T \to \tau$ as $\delta \to 0$, we have $(\phi_T(\cdot  +W_T), \eta_T - W_T, T) \to (\phi_\tau(\cdot + W_\tau), \eta_\tau - W_\tau, \tau)$. The primary complication is in taking limits of infinite measures; below we truncate on events to work with finite measures. We state the finiteness of one such event as Lemma~\ref{lem-EN-finite}. 
\begin{proof}
	To simplify notation, let $(\phi^1, \eta^1) := (\phi_\tau(\cdot + W_\tau), \eta_\tau - W_\tau)$.
	Let $\rho$ be an arbitrary smooth compactly-supported function in $\bbH$. 
	Let $N>0$ and define 
	\eqb\label{eq-EN}
	E_N := \{\tau , |(\phi^1, \rho)| , |(f^{-1} \bullet_\gamma \phi^1, \rho)| < N \}
	\eqe
	where $f: {\mathbb H}\to {\mathbb H}\backslash \eta^1([0,\tau])$ is the conformal map with $f(0) = \eta^1(\tau)$ and $f(z) = z + O(1)$ as $z \to \infty$. 
	Let $P_N$ denote the conditional law of $(\phi^1, \eta^1, \tau)$ given $E_N$; this is well defined because the measure of $E_N$ is finite (Lemma~\ref{lem-EN-finite}). Likewise let $\wt P_N$ denote the conditional law of $(\phi, \eta, t) \sim \LF_{\mathbb H}^{(\frac{3\gamma}4, 0), (Q - \frac{3\gamma}4,\infty)}  \SLE_{\kappa}^t  1_{t >0} dt$ given
	$\{t, |(\phi, \rho)|, |(g^{-1} \bullet_\gamma\phi)| < N\}$
	with $g: {\mathbb H}\to {\mathbb H}\backslash \eta([0,t])$ the conformal map with $g(0) = \eta(t)$ and $g(z) = z + O(1)$ as $z \to \infty$. We will show that $P_N = \wt P_N$; sending $N \to \infty$ concludes the proof. 
	
	Sample $(\{(\phi_t, \eta_t)\}_{t \geq 0}, A, T)$ from $1_{T > \tau} M  \, 1_{A>0} \, dA \, 1_{T > 0} dT$ where as before $\tau$ is the time $t$ that $\cA_{\phi_t}(\eta_t([0,t])) = A$. As before let $(\phi^1, \eta^1) := (\phi_\tau(\cdot + W_\tau), \eta_\tau - W_\tau)$. Let $F_{\delta} = \{ T - \tau < \delta\}$. 
	Let $(\phi^2, \eta^2) = (\phi_T(\cdot + W_T), \eta_T - W_T)$ and let $G_{\eps} = \{ \eta^2([0, T - \tau]) \subset B_{\eps}(0)\}$. See Figure~\ref{fig-sf-until-A}. The two claims below imply $P_N = \wt P_N$ as needed.
	\medskip 
	
	\noindent \textbf{Claim 1: Law of $(\phi^1, \eta^1, \tau)$ given $E_N \cap F_\delta \cap G_\eps$ converges to $P_N$ as $\delta \to 0$ then $\eps \to 0$. }
	\\Let $X = T - \tau$, then the conditional law of $(\phi^1, \eta^1, \tau, X)$ given $E_N \cap F_{\delta}$ is $P_N  \delta^{-1} 1_{X \in [0,\delta]} \, dX$. As $\delta \to 0$, we have $X \to 0$ in probability, so by continuity of the Loewner chain the diameter of $\eta^2([0, T - \tau])$ shrinks to 0 in probability. Hence the conditional probability of $G_{\eps}$ given $E_N \cap F_\delta$ is $1-o_{\delta}(1)$. Thus, as $\delta \to 0$ then $\eps \to 0$, the conditional law of $(\phi^1, \eta^1)$ given $E_N \cap F_{\delta} \cap G_{\eps}$ converges to $P_N$ in total variation. 
	\medskip
	
	\noindent \textbf{Claim 2: Law of $(\phi^1, \eta^1, \tau)$ given $E_N \cap F_\delta \cap G_\eps$ converges to $\wt P_N$ as $\delta \to 0$ then $\eps \to 0$. }
	\\By the second property of $M$ in Lemma~\ref{lem-shef-coupling} applied to time $T$, and the fact that for fixed $t$ the curves $\SLE_\kappa^t$ and centered $\rSLE_\kappa^t$ have the same law, the unconditioned law of $(\phi^2, \eta^2, A, T)$ is 
	\[1_{A \in (0, \cA_{\phi^2}(\eta^2([0,T]))}dA\, \LF_{\mathbb H}^{(-\frac\gamma4, 0), (Q - \frac{3\gamma}4, \infty)}(d\phi^2) \SLE_\kappa^{T}(d\eta^2)1_{T >0} dT.\]
	Let $z = \eta^2(T - \tau)$. 
	The conditional law of $z$ given $(\phi^2, \eta^2, T)$ is the probability measure proportional to $\cA_{\phi^2}|_{\eta^2([0,T])}$. Thus, using Proposition~\ref{prop-pointed},  the unconditioned law of $(\phi^2, \eta^2, z, T)$ is 
	\begin{align}
		&1_{z \in \eta^2([0,T])}\cA_{\phi^2}(dz) \LF_{\mathbb H}^{(-\frac\gamma4, 0), (Q - \frac{3\gamma}4, \infty)}(d\phi^2) \SLE_\kappa^{T}(d\eta^2)1_{T >0} dT \nonumber\\
		&=\LF_{\mathbb H}^{(-\frac\gamma4, 0), (\gamma, z), (Q - \frac{3\gamma}4, \infty)}(d\phi^2) 1_{z \in \eta^2([0,T])}dz \SLE_\kappa^{T}(d\eta^2)1_{T >0} dT. \label{eq-phi2-phrasing}
	\end{align}
	Now we express the event $E_{N} \cap F_{\delta} \cap G_{\eps}$ in terms of $(\phi^2, \eta^2, z, T)$. Let $\tau_z$ be the time $\eta^2$ hits $z$, let $f_1: {\mathbb H}\to {\mathbb H}\backslash \eta^2([0,\tau_z])$ (resp.\ $f_0: {\mathbb H} \to {\mathbb H} \backslash \eta^2([0,T])$) be the conformal map sending $0$ to $\eta^2(\tau_z)$ (resp.\ $\eta^2(T)$) and with asymptotic behavior $w \mapsto w + O(1)$ as $w \to \infty$, and let $f = f_0 \circ f_1^{-1}$. Then
	\[E_N = \{T - \tau_z,|(\phi^1, \rho)|, |(f^{-1} \bullet_\gamma \phi^1, \rho)| < N \}, \qquad \phi^1 := f_1^{-1} \bullet_\gamma \phi^2,	\] $F_{\delta} = \{\tau_z < \delta \}$ and $G_{\eps} = \{\eta^2([0,\tau_z]) \subset B_{\eps}(0) \}$. 
	
	We now have the description~\eqref{eq-phi2-phrasing} of the unconditioned law of $(\phi^2, \eta^2, z,T)$, and the previous paragraph's description of $E_N, F_\delta, G_\eps$. 
	Since $f_1$ converges to the identity map and $z \to 0$ by the definition of $G_\eps$, and since $\lim_{z \to 0} G_{\mathbb H} (\cdot, z) = G_{\mathbb H}(\cdot, 0)$ in $\mathfrak F$, 
	the law of $(\phi^1, \eta^1, T)$  conditioned on $E_N \cap F_\delta \cap G_\eps$ converges to $\wt P_N$ as $\delta \to 0$ then $\eps \to 0$. See Lemma~\ref{lem-couple-conv} (with $I = [-N, N]$ and $\rho', I'$ chosen such that $\{(\phi^1, \rho') \in I'\} = \{|f^{-1} \bullet_\gamma \phi^1, \rho)|<N \}$) for details. Thus the second claim holds. 
\end{proof}

\begin{figure}
	\begin{center}
		\includegraphics[scale=0.38]{}
	\end{center}
	\caption{\label{fig-sf-until-A} Figure for Proposition~\ref{prop:MOT-zip-abstract}. 
		Sample $(\{(\phi_t, \eta_t)\}_{t \geq 0}, A, T)$ from $1_{T > \tau} M \, 1_{A>0} \, dA \, 1_{T > 0} dT$ and let $\tau$ be the time the curve (grey) has quantum area $A$.
		Roughly speaking, $E_N$ is the event that $\tau$ and certain field observables of $\phi_0$ and $\phi^1$ are not too large, 
		$F_\delta=\{T - \tau < \delta \}$, and  $G_\eps$ is the event that the dark grey region is small in a Euclidean sense.  As $\delta \to 0$ then $\eps \to 0$,  $(\phi^2, \eta^2)$ approximates $(\phi^1, \eta^1)$. We use this to show $P_N = \wt P_N$. }
\end{figure}

\begin{lemma}\label{lem-EN-finite}
	In the setting of Proposition~\ref{prop:MOT-zip-abstract}, with $(\phi^1, \eta^1) := (\phi_\tau(\cdot + W_\tau), \eta_\tau - W_\tau)$ and $E_N$ defined in~\eqref{eq-EN}, the $(M \, 1_{A>0}\,dA)$-measure of $E_N$ is finite. 
\end{lemma}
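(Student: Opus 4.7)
The plan is to use Fubini to integrate out $A$ and the constant mode of the Liouville field, reducing the problem to verifying finiteness of an explicit integral over reverse $\SLE_\kappa$.

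First, two preliminary observations. By property (iii) of $M$ in Lemma~\ref{lem-shef-coupling}, $\phi_\tau = g_{0,\tau}\bullet_\gamma \phi_0$, so writing $T_\tau(z):=z-W_\tau$ we have $\phi^1 = (T_\tau\circ g_{0,\tau})\bullet_\gamma \phi_0$; since $T_\tau\circ g_{0,\tau}$ satisfies the hydrodynamic normalization defining $f$, we get $f = T_\tau\circ g_{0,\tau}$ and hence $f^{-1}\bullet_\gamma \phi^1 = \phi_0$. Also, $\Phi(t):=\cA_{\phi_t}(\eta_t([0,t]))$ is increasing in $t$, via the decomposition $\eta_{t_2}([0,t_2]) = \eta_{t_2}([0,t_2-t_1]) \sqcup g_{t_1,t_2}(\eta_{t_1}([0,t_1]))$ and the conformal invariance of the $\gamma$-LQG area. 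Hence on $E_N$ we have $A = \Phi(\tau) \le \Phi(N)$, and since $|(\phi_0,\rho)|<N$ is a constraint on the trajectory alone (not on $A$), integrating out $A$ gives
\[
(M\times 1_{A>0}\,dA)(E_N) \;\le\; \int_M \Phi(N)\, 1_{|(\phi_0,\rho)|<N}\,dM.
\]

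Next, apply property (ii) of $M$ at the deterministic time $\tau=N$, so that the marginal of $(\phi_N,\eta_N)$ is $\LF_\bbH^{(-\gamma/4,W_N),(Q-3\gamma/4,\infty)}\times\rSLE_\kappa^N$. Writing $\Phi(N) = \int 1_{z\in\eta_N([0,N])}\cA_{\phi_N}(dz)$ and applying Proposition~\ref{prop-pointed} yields
\[
\int_M \Phi(N)\, 1_{|(\phi_0,\rho)|<N}\,dM = \int \rSLE_\kappa^N(d\eta_N)\int_{\eta_N([0,N])}\LF_\bbH^{(-\gamma/4,W_N),(\gamma,z),(Q-3\gamma/4,\infty)}\big[1_{|(\phi_0,\rho)|<N}\big]\,dz.
\]
The augmented insertions sum to $Q$, so by Definition~\ref{def-LF} the constant mode $\mathbf c$ of $\phi_N$ is Lebesgue-distributed; since the rule $g\bullet_\gamma (h+\mathbf c) = g\bullet_\gamma h + \mathbf c$ shows $(\phi_0,\rho)$ is an affine function of $\mathbf c$ with slope $1$, the bound $|(\phi_0,\rho)|<N$ restricts $\mathbf c$ to an interval of length $2N$ and the inner integral equals $2N\cdot C_\gamma^{(-\gamma/4,W_N),(\gamma,z),(Q-3\gamma/4,\infty)}$. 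A direct computation from~\eqref{eq-C} and~\eqref{eq-G}, using the real-boundary identity $G_\bbH(W_N,z)=-2\log|W_N-z|+2\log|W_N|_++2\log|z|_+$, shows all powers of $|W_N|_+$ and $|z|_+$ cancel, leaving $C_\gamma = (2\Im z)^{-\gamma^2/2}\,|W_N-z|^{\gamma^2/2}$.

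It therefore suffices to verify $\E_{\rSLE_\kappa^N}\!\big[\int_{\eta_N([0,N])}(2\Im z)^{-\gamma^2/2}|W_N-z|^{\gamma^2/2}\,dz\big]<\infty$. Since $\kappa > 8$ one has $\gamma^2/2<1$, so the boundary singularity is locally integrable; bounding $|W_N-z|\le \diam(\eta_N([0,N]))$ and estimating in polar coordinates around $W_N$ yields an upper bound $C\cdot\diam(\eta_N([0,N]))^2$, and the diameter of the reverse-$\SLE_\kappa$ hull at capacity $2N$ has moments of every order by standard SLE estimates. The main step is the cancellation in the $C_\gamma$ computation---a mechanical but careful verification---after which finiteness is immediate from $\kappa>8$; this is ultimately why $\kappa=8$ requires separate treatment (cf.\ Remark~\ref{rem-not-proved}).
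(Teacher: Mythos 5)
Your proof is correct, and its first half coincides with the paper's reduction: you identify $f^{-1}\bullet_\gamma\phi^1$ with $\phi_0$, use the monotonicity of $t\mapsto\cA_{\phi_t}(\eta_t([0,t]))$ to integrate out $A$ and bound the measure of $E_N$ by $M[\cA_{\phi_N}(\eta_N([0,N]))\,1_{|(\phi_0,\rho)|<N}]$, and then invoke property ii) of $M$ at the deterministic time $N$ — this is exactly the paper's passage to~\eqref{eq-EN-fin-rephrased}. You diverge in how that quantity is bounded. The paper keeps the area measure explicit: it integrates out the zero mode using $\cA_{\wt h+c}=e^{\gamma c}\cA_{\wt h}$ together with the exponent $-\frac\gamma4+(Q-\frac{3\gamma}4)-Q=-\gamma$ (yielding the same factor $2N$ you obtain), then bounds $\E[\cA_{\wt h}(R_x)]$ by a first-moment Gaussian multiplicative chaos computation over a deterministic box $R_x$ containing the hull, and combines this with a sub-Gaussian tail bound on the hull's horizontal extent. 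You instead disintegrate via the rooted-measure identity of Proposition~\ref{prop-pointed}, note that the augmented insertions $(-\frac\gamma4,W_N),(\gamma,z),(Q-\frac{3\gamma}4,\infty)$ sum to $Q$ so the zero mode is exactly Lebesgue and contributes $2N$, and are left with the explicit deterministic integral $\int_{\eta_N([0,N])}(2\Im z)^{-\gamma^2/2}|W_N-z|^{\gamma^2/2}\,dz\lesssim\diam(\eta_N([0,N]))^2$, finished by moments of the hull diameter; your cancellation of the $|W_N|_+$ and $|z|_+$ powers in $C_\gamma$ checks out. The two routes are essentially equivalent in content — both rest on the local integrability of $(\Im z)^{-\gamma^2/2}$ near $\R$, i.e.\ on $\gamma^2/2=8/\kappa<1$ — but yours isolates that as the single analytic input, which makes the failure at $\kappa=8$ (Remark~\ref{rem-difficulty}) especially transparent.
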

\begin{proof}
	The event $\{ \tau < N\}$ agrees with $\{ \cA_{\phi_N}(\eta_N([0,N])) < A \}$. Thus, the $(M \, 1_{A>0}\,dA)$-measure of $E_N$ is $M[\cA_{\phi_N} (\eta_N([0,N]))1_{\wt E_N}]$,  where $\wt E_N = \{ |(\phi_\tau, \rho)|, |(\phi_0, \rho)| < N \}$. 
	By the second property of $M$ from Lemma~\ref{lem-shef-coupling}, and the fact that for fixed $t$ the curves $\SLE_\kappa^t$ and centered $\mathrm{rSLE}_\kappa^t$ have the same law, the law of $(\phi_N(\cdot + W_N), \eta_N - W_N)$ is $\LF_{\mathbb H}^{(-\frac\gamma4, 0), (Q - \frac{3\gamma}4, \infty)} \SLE_{\kappa}^N$. Thus it suffices to show that 
	\eqb\label{eq-EN-fin-rephrased}
	(\LF_{{\mathbb H}}^{(-\frac\gamma4, 0), (Q - \frac{3\gamma}4,\infty)} \, \SLE_\kappa^N) [\cA_\phi(\eta([0,N])) 1_{|(f \bullet_\gamma \phi, \rho)| < N} ] < \infty,
	\eqe
	where $f:{\mathbb H}\backslash \eta([0,N]) \to {\mathbb H}$ is the conformal map sending $f(\eta(N)) = 0$ and with $f(z) = z + O(1)$ as $z \to \infty$. 
	
	We first claim that for some $C>0$,
	\eqb\label{eq-SLE-size}
	\P \left[\eta([0,N]) \not \subset R_x \right] < 2 e^{-x^2/C^2} \quad \text{ for }\eta \sim \SLE_\kappa^N \text{ and }x>0, \text{ where } R_x := [-x, x] \times [0,2\sqrt N].
	\eqe
	Indeed, the driving function $W_t$ of $\SLE_\kappa$ is a multiple of Brownian motion so $\sup_{0<t<N} |W_t|$ is sub-Gaussian, and  for compact $K \subset \ol {\mathbb H}$ such that ${\mathbb H} \backslash K$ is simply connected, $\mathrm{hcap}(K) = 2N$ implies $K \subset \R \times [0, 2\sqrt N]$. 
	
	Next, fix $K$ (we will later set $K = \eta$) and let $f: {\mathbb H}\backslash K \to {\mathbb H}$ be a conformal map fixing $\infty$. Then
	\begin{align*}
		&\LF_{{\mathbb H}}^{(-\frac\gamma4, 0), (Q - \frac{3\gamma}4,\infty)}[\cA_\phi(R_x) 1_{|(f \bullet_\gamma \phi, \rho)|< N}] = \int_\R e^{-\gamma c} {\mathbb E} [\cA_{\wt h + c} (R_x) 1_{|(f \bullet_\gamma (\wt h + c), \rho)| < N)}] \, dc \\
		&= \int_\R {\mathbb E} [\cA_{\wt h} (R_x) 1_{|(f \bullet_\gamma \wt h, \rho) + c | < N)}] \, dc =2N {\mathbb E}[\cA_{\wt h}(R_x)],
	\end{align*}
	where the expectation is taken over $h \sim P_{\mathbb H}$ and we write $\wt h = h - \frac\gamma4 G_{\mathbb H}(\cdot, 0) + (Q - \frac{3\gamma}4) G_{\mathbb H}(\cdot, \infty)$, we use $-\frac\gamma4 + (Q - \frac{3\gamma}4) - Q = -\gamma$ in the first equality and $\cA_{\wt h + c} = e^{\gamma c} \cA_{\wt h}$ in the second equality, and we interchange ${\mathbb E}$ and $\int$ in the third equality. Since for some $C'$ we have $(\wt h - h)\leq C'\log x$ on $R_x$, 
	\[{\mathbb E}[\cA_{\wt h}(R_x)] \leq x^{C'} {\mathbb E}[\cA_h(R_x)] = x^{C'} \int_{R_x} (2\Im z)^{-\frac{\gamma^2}2} |z|_+^{2\gamma^2} dz \lesssim x^{C' + 2\gamma^2+1}.\]
	where we use here the formula for $G_{\mathbb H}$ from~\eqref{eq-G}. Thus, $\LF_{{\mathbb H}}^{(-\frac\gamma4, 0), (Q - \frac{3\gamma}4,\infty)}[\cA_\phi(R_x) 1_{|(f \bullet_\gamma \phi, \rho)|< N}]$ is bounded above by a polynomial in $x$; combining with~\eqref{eq-SLE-size} gives the desired~\eqref{eq-EN-fin-rephrased}.

\end{proof}

\begin{lemma}\label{lem-couple-conv}
	Let $\rho, \rho'$ be  smooth compactly-supported functions in $\bbH$, and let $I, I' \subset \R$ be intervals of finite nonzero length. For $\eps > 0$, let $K \subset B_\eps(0) \cap \ol\bbH$ be a compact set such that  $\bbH\backslash K$ is simply connected, $z \in \partial K \cap \bbH$ and let $f_1: \bbH\to \bbH\backslash K$ be the conformal map with $f_1(0) = z$ with asymptotic behavior $w \mapsto w + O(1)$ as $w \to \infty$. For $\phi^2 \sim \LF_{\bbH}^{(-\frac\gamma4, 0), (\gamma, z), (Q - \frac{3\gamma}4, \infty)}$ and $\phi^1 := f_1^{-1} \bullet_\gamma \phi^2$, the law of $\phi^1$  conditioned on $\{ (\phi^1, \rho) \in I\}$ converges as $\eps \to 0$ to the law of $\phi \sim \LF_\bbH^{(\frac{3\gamma}4, 0), (Q - \frac{3\gamma}4, \infty)}$ conditioned on $\{ (\phi, \rho) \in I\}$.  Moreover, the conditional probability of $\{ (\phi^1, \rho') \in I'\}$ converges as $\eps \to 0$ to the conditional probability of $\{ (\phi, \rho') \in I'\}$, and the law of $\phi^1$ conditioned on $\{ (\phi^1, \rho)\in I , (\phi^1, \rho') \in I' \}$ converges as $\eps \to 0$ to the law of $\phi$ conditioned on $\{ (\phi, \rho) \in I, (\phi, \rho') \in I'\}$.
\end{lemma}
\begin{proof}
	Recall from Definition~\ref{def-LF} that $\phi = h + \frac{3\gamma}4 G_\bbH(\cdot, 0)  -\frac{3\gamma}4 G_\bbH(\cdot, \infty)$ where $(h, \mathbf c) \sim P_\bbH \times dc$. Notice that if we define $\wt \phi^2$ on the same probability space via $\wt \phi^2 := h - \frac\gamma4 G_\bbH(\cdot, 0)+ \gamma G_\bbH(\cdot, z)  -\frac{3\gamma}4 G_\bbH(\cdot, \infty)$, then the law of $\wt \phi^2$ is a multiple of $\LF_{\bbH}^{(-\frac\gamma4, 0), (\gamma, z), (Q - \frac{3\gamma}4, \infty)}$, so the conditional law of $\wt \phi^1 := f_1^{-1} \bullet_\gamma \wt \phi^2$ given $\wt E_{\rho, I} := \{ (\wt \phi^1, \rho)\in I\}$ agrees with the conditional law of $\phi^1$ given $\{ (\phi^1, \rho)\in I\}$. The field $\wt \phi^1$ a.s.\ converges to $\phi$ as $\eps \to 0$, since $f_1$ converges to the identity map and $G_\bbH(\cdot, z)$ converges to $G_\bbH(\cdot, 0)$ in the space of distributions $\mathfrak F$. 
	It is easy to check that the symmetric difference of the events $E_{\rho, I} := \{ (\phi, \rho) \in I\}$ and $\wt E_{\rho, I}$ has measure $o_\eps(1)$, yielding the first claim. Similarly one can check that the symmetric difference of $E_{\rho, I} \cap E_{\rho',I'}$ and $\wt E_{\rho,I} \cap \wt E_{\rho',I'}$ has measure $o_\eps(1)$; this implies the second claim. 
\end{proof}

\begin{proposition}\label{prop:MOT-zip-concrete}
	In the setting of Proposition~\ref{prop:MOT-zip-abstract}, let $\cC = (\eta_\tau([0,\tau]), \phi_\tau, \eta_\tau)/{\sim_\gamma}$. Then the law of $(\phi_0, \cC, A)$ is \[\LF_{\mathbb H}^{(-\frac\gamma4, 0), (Q-\frac{3\gamma}4,\infty)}\, P_a\,  1_{a>0} da.\]
\end{proposition}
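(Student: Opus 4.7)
The proof combines Propositions~\ref{prop:MOT-stationary-qs} and~\ref{prop:MOT-zip-abstract} via an extension of the SLE curve and a change of parametrization.

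By Proposition~\ref{prop:MOT-zip-abstract}, the joint law of the centered triple $(\psi, \eta, \tau) := (\phi_\tau(\cdot + W_\tau), \eta_\tau - W_\tau, \tau)$ is $C \cdot \LF_\bbH^{(\frac{3\gamma}{4}, 0), (Q-\frac{3\gamma}{4}, \infty)}(d\psi) \, \SLE_\kappa^t(d\eta) \, 1_{t>0}\,dt$ for some $C \in (0,\infty)$. The cell $\cC$, area $A = \cA_\psi(\eta)$, and unzipped field $\phi_0 = \tilde g^{-1} \bullet_\gamma \psi$ (with $\tilde g: \bbH \to \bbH \backslash \eta$ mapping $0$ to the tip $\eta(\tau)$ and satisfying $\tilde g(z) = z + O(1)$ as $z\to\infty$) are invariant under horizontal translation, and are therefore measurable functions of this centered triple.

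Next, by the domain Markov property of $\SLE_\kappa$, I extend $\eta$ (of half-plane capacity $2\tau$) to a full $\SLE_\kappa$ curve $\tilde\eta$ in $\bbH$ such that $\eta = \tilde\eta|_{[0,\tau]}$, by concatenating an independent future $\SLE_\kappa$. This extension does not affect the triple $(\phi_0, \cC, A)$, and the joint law of $(\psi, \tilde\eta, \tau)$ becomes $C \cdot \LF_\bbH^{(\frac{3\gamma}{4}, 0), (Q-\frac{3\gamma}{4}, \infty)}(d\psi) \, \SLE_\kappa(d\tilde\eta) \, 1_{t>0}\,dt$. Setting $z := \tilde\eta(\tau)$ and $A := \cA_\psi(\tilde\eta([0,\tau]))$, the change of variables $\tau \mapsto A$ has Jacobian $dt/dA = 1/A'(\tau(A))$ where $A'(\tau) := \partial_\tau \cA_\psi(\tilde\eta([0,\tau]))$. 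The reparametrized measure is therefore $\frac{C}{A'(\tau(A))} \LF_\bbH^{(\frac{3\gamma}{4}, 0), (Q-\frac{3\gamma}{4}, \infty)}(d\psi) \SLE_\kappa(d\tilde\eta) 1_{A>0} dA$. This is to be compared with the input measure of Proposition~\ref{prop:MOT-stationary-qs}, namely $\cA_\psi(dz) \LF_\bbH^{(\frac{3\gamma}{4}, 0), (Q-\frac{3\gamma}{4}, \infty)}(d\psi) \SLE_\kappa(d\tilde\eta)$, which under the identification $z = \tilde\eta^{\mathrm{area}}(A)$ equals $\LF_\bbH^{(\frac{3\gamma}{4}, 0), (Q-\frac{3\gamma}{4}, \infty)}(d\psi) \SLE_\kappa(d\tilde\eta) 1_{A>0} dA$, and whose conclusion identifies $(\phi_0, \cC, A) \sim \LF_\bbH^{(\frac{3\gamma}{4}, 0), (Q-\frac{3\gamma}{4}, \infty)} P_A \, 1_{A>0} dA$. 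Our reparametrized measure differs from this input by the Radon--Nikodym density $C/A'(\tau(A))$.

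The crux is to show that, as a functional of the output triple $(\phi_0, \cC, A)$, the density $C/A'(\tau(A))$ realizes precisely the Radon--Nikodym derivative of $\LF_\bbH^{(-\gamma/4, 0), (Q-3\gamma/4, \infty)}$ with respect to $\LF_\bbH^{(3\gamma/4, 0), (Q-3\gamma/4, \infty)}$ on the outside field $\phi_0$, leaving the cell law $P_A$ unchanged. Heuristically, $A'(\tau)$ is the infinitesimal quantum area accumulated at the growing tip per unit of half-plane capacity; by the rooted measure identity (Proposition~\ref{prop-pointed}), this density is dual to a $\gamma$-insertion at the tip. Under the cut the tip corresponds to the origin $0$ of $\phi_0$, so removing this weighting converts the boundary insertion there from $3\gamma/4$ to $-\gamma/4$. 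The main obstacle is to make this Radon--Nikodym identification rigorous: I would do this by testing both measures against bounded measurable functionals of $(\phi_0, \cC, A)$ and restricting to finite-measure truncations analogous to the event $E_N$ used in the proof of Proposition~\ref{prop:MOT-zip-abstract}, which reduces the identification to an explicit computation in terms of the definitions of the Liouville field and the mating-of-trees Brownian motion.
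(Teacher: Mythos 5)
Your reduction correctly identifies that Proposition~\ref{prop:MOT-zip-abstract} hands you the triple $(\phi_\tau(\cdot+W_\tau),\eta_\tau-W_\tau,\tau)$ with law $C\cdot\LF_\bbH^{(\frac{3\gamma}4,0),(Q-\frac{3\gamma}4,\infty)}\SLE_\kappa^t\,1_{t>0}dt$, and that $(\phi_0,\cC,A)$ is a functional of this triple. However, the step where you pass from the capacity-time truncation $1_{t>0}\,dt$ to the quantum-area truncation $1_{A>0}\,dA$ via a Jacobian $1/A'(\tau(A))$ is a genuine gap, and it cannot be repaired in the form you propose. Conditionally on the field $\psi$ and the full curve $\tilde\eta$, the truncation point under your measure is the image of Lebesgue measure under the \emph{capacity} parametrization of the tip, while the input to Proposition~\ref{prop:MOT-stationary-qs} requires it to be distributed according to $\cA_\psi$, i.e.\ the image of Lebesgue measure under the \emph{quantum-area} parametrization. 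These are genuinely different measures (the first does not depend on $\psi$ at all, the second is the $\gamma$-LQG measure) and are mutually singular, so $A'(\tau)$ does not exist as a density and there is no Radon--Nikodym derivative between the two laws of $(\psi,\tilde\eta,z)$. The same obstruction appears in the conclusion you are aiming for: $\LF_\bbH^{(\frac{3\gamma}4,0),(Q-\frac{3\gamma}4,\infty)}$ and $\LF_\bbH^{(-\frac{\gamma}4,0),(Q-\frac{3\gamma}4,\infty)}$ are mutually singular measures on fields (they prescribe different logarithmic singularities at $0$), so no weighting of the former by a measurable functional of $(\phi_0,\cC,A)$ can produce the latter; the proposed ``density'' $C/A'(\tau(A))$ would have to be a singular object, and truncating to events like $E_N$ does not address this.

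The paper's proof circumvents exactly this issue by introducing a second independent area variable: it samples $(A_1,A_2)\sim 1_{A_1,A_2>0}dA_1\,dA_2$ and cuts the cell $\cC_2$ of area $A_2$ at the interior point $z=\eta^2(\tau^2-\tau^1)$. Because $A_1$ is Lebesgue-distributed, $z$ \emph{is} quantum-area-typical for $\phi^2$ given $(\phi^2,\eta^2,\tau^2)$, so Proposition~\ref{prop:MOT-stationary-qs} applies directly there; the conversion between capacity time and quantum area at the endpoints of the curve is never done by a change of variables but is instead delegated to Proposition~\ref{prop:MOT-zip-abstract} itself, which is applied twice --- once to reach time $\tau^2$, and once in the unzipping direction to recover $\phi_0$ from $(\phi^1,\eta^1,\tau^1)$, which is where the $-\frac\gamma4$ insertion appears. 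The auxiliary area is then removed by taking $A_1\sim\mathrm{Unif}_{[0,\eps]}$ and sending $\eps\to0$. This two-time decomposition is the missing idea in your outline.
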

\begin{proof}
	Sample $(\{(\phi_t, \eta_t)\}_{t \geq 0}, A_1, A_2)$ from $M \, 1_{A_1, A_2 > 0} dA_1 dA_2$. Let $\tau^1$ (resp.\ $\tau^2$) be the time $t$ that  $\cA_{\phi_t}( \eta_t([0,t]) )$ equals $A_1$ (resp.\ $A_1+A_2$). Let $\cC_2 = (\eta_{\tau^2}([0,\tau^2-\tau^1]), \phi_{\tau^2}, \eta_{\tau^2}|_{[0,\tau^2-\tau^1]})/{\sim_\gamma}$.
	We will show that the law of $(\phi_0, \cC_2, A_1, A_2)$ is 
	\eqb\label{eq:MOT-zip-weighted}
	\LF_{\mathbb H}^{(-\frac\gamma4, 0), (Q-\frac{3\gamma}4,\infty)}(d\phi^0) P_{A_2}(d\cC_2) 1_{A_1,A_2>0}dA_1 dA_2.
	\eqe
	To simplify notation, let $(\phi^j, \eta^j) = (\phi_j(\cdot + W_{\tau_j}), \eta_j - W_{\tau_j})$ for $j=1,2$
	and let $\eta^{12} = \eta^2|_{[0,\tau^2-\tau^1]}$. Let $z = \eta^2(\tau^2 - \tau^1)$.  See Figure~\ref{fig-sf-two-step}. 
	
	\begin{figure}
		\begin{center}
			\includegraphics[scale=0.38]{}
		\end{center}
		\caption{\label{fig-sf-two-step} Figure for Proposition~\ref{prop:MOT-zip-concrete}. The pair $(\phi^j, \eta^j)$ corresponds to $(\phi_{\tau_j}, \eta_{\tau_j})$ translated so that the curve starts from $0$ rather than $W_{\tau_j}$. The curve $\eta^{12}$ is the initial segment of $\eta^2$.}
	\end{figure}
	
	Let $S = A_1 + A_2$, then by a change of variables the law of $(\{(\phi_t, \eta_t)\}_{t \geq 0}, A_1, S)$ is $M \, 1_{A_1 \in [0,S]} dA_1 \, 1_{S > 0} dS$. By Proposition~\ref{prop:MOT-zip-abstract} the law of $(A_1, \phi^2, \eta^2, \tau^2)$ is 
	\[\mathcal K \cdot 1_{A_1 \in [0,S]}dA_1\, \LF_{\mathbb H}^{(\frac{3\gamma}4, 0), (Q - \frac{3\gamma}4, \infty)}(d\phi^2) \SLE_{\kappa}^{\tau^2}(d\eta^2) 1_{\tau^2>0} d\tau^2, \quad S:= \cA_{\phi^2}(\eta^2([0,\tau^2])).\] Since $z$ is the point such that $\eta^2$ covers $S - A_1$ units of quantum area before hitting $z$, we conclude the law of $(z,\phi^2, \eta^2, \tau^2)$ is
	\[ \mathcal K\cdot 1_{z \in \eta^2([0,\tau^2])} \cA_{\phi^2}(dz) \LF_{\mathbb H}^{(\frac{3\gamma}4, 0), (Q - \frac{3\gamma}4, \infty)}(d\phi^2) \SLE_{\kappa}^{\tau^2}(d\eta^2) 1_{\tau^2>0} d\tau^2.\]
	By Lemma~\ref{lem-curve-leb-z} stated below, the law of $(\phi^2, \eta^1, \eta^{12},z,\tau^1)$ is 
	\[\mathcal K \cdot \SLE_{\kappa}^z(d\eta^{12})\, \cA_{\phi^2}(dz)\, \LF_{\mathbb H}^{(\frac{3\gamma}4, 0),  (Q - \frac{3\gamma}4, \infty)}(d\phi^2)   \SLE_{\kappa}^{\tau^1}(d\eta^1)  \, 1_{\tau^1>0} d\tau^1\]
	where $\SLE_{\kappa}^z$ denotes forward $\SLE_{\kappa}$ run until the time it hits $z$. 
	By Proposition~\ref{prop:MOT-stationary-qs}, the law of $(\phi^1, \eta^1, \tau^1, \cC_2, A_2)$ is 
	\[\mathcal K \cdot \LF_{\mathbb H}^{(\frac{3\gamma}4, 0), (Q - \frac{3\gamma}4, \infty)}(d\phi^1) \SLE_{\kappa}^{\tau^1}(d\eta^1) 1_{\tau^1>0} d\tau^1 P_{A_2}(d \cC_2) 1_{A_2>0} dA_2.  \]
	By Proposition~\ref{prop:MOT-zip-abstract} applied to the measure $\mathcal K \cdot \LF_{\mathbb H}^{(\frac{3\gamma}4, 0), (Q - \frac{3\gamma}4, \infty)}(d\phi^1) \SLE_{\kappa}^{\tau^1}(d\eta^1) 1_{\tau^1>0} d\tau^1$, we conclude that the law of $(\phi_0, \cC_2, A_1, A_2)$ is~\eqref{eq:MOT-zip-weighted}.
	
	As a consequence of~\eqref{eq:MOT-zip-weighted}, if we sample  $(\{(\phi_t, \eta_t)\}_{t \geq 0}, A_1, A_2)$ from $M \, \mathrm{Unif}_{[0,\eps]}(dA_1) 1_{A_2 > 0} dA_2$ where $\mathrm{Unif}_{[0,\eps]}$ is the uniform probability measure on $[0,\eps]$, then the law of $(\phi_0, \cC_2, A_1, A_2)$ is 
	$\LF_{\mathbb H}^{(-\frac\gamma4, 0), (Q-\frac{3\gamma}4,\infty)}(d\phi_0) P_{A_2}(d\cC_2) \mathrm{Unif}_{[0,\eps]}(dA_1) 1_{A_2>0}dA_2$. Sending $\eps \to 0$ yields the desired result. 
\end{proof}

\begin{lemma}\label{lem-curve-leb-z}
	Let $z \in {\mathbb H}$ and consider a pair $(\eta, T) \sim 1_E \SLE_\kappa^t 1_{t>0} dt$ where $E$ is the event that $z$ lies in the range of the curve. Let $\tau_z$ be the time $\eta$ hits $z$, let $\eta^{12} = \eta|_{[0,\tau_z]}$, let $T^1 = T - \tau_z$ and let $\eta^1 = f^{-1} \circ \eta(\cdot + \tau_z)|_{[0, T^1]}$ where $f: {\mathbb H}\to {\mathbb H} \backslash \eta([0,\tau_z])$ is the conformal map with $f(0) = z$ and $f(w) = w + O(1)$ as $w \to \infty$. Then the law of $(\eta^{12}, \eta^1, T^1)$ is $\SLE_\kappa^z \SLE_\kappa^{t^1} 1_{t^1>0}dt^1$, where $\SLE_\kappa^z$ is the law of $\SLE_\kappa$ run until it hits $z$. 
\end{lemma}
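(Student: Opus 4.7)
The plan is to reduce the statement to the domain Markov property and conformal invariance of $\SLE_\kappa$, after disintegrating the measure $1_E \SLE_\kappa^t 1_{t>0}\,dt$ in a convenient way.

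First I would rewrite the measure on $(\eta,T)$ as follows. Let $\eta^\infty$ be a full infinite-time $\SLE_\kappa$ in $\bbH$ from $0$ to $\infty$ parametrized by half-plane capacity, and let $T$ be independently Lebesgue-distributed on $(0,\infty)$. Then $(\eta^\infty|_{[0,T]}, T)$ has law $\SLE_\kappa^t(d\eta)1_{t>0}dt$. Since $\kappa>8$ the curve $\eta^\infty$ is space-filling, so $\tau_z := \inf\{s : \eta^\infty(s)=z\}$ is a.s.\ finite, and the restriction $1_E$ corresponds to requiring $T > \tau_z$. Making the change of variables $T^1 := T-\tau_z$, the pair $(\eta^\infty, T^1)$ has law $\SLE_\kappa(d\eta^\infty) \times 1_{t^1>0}dt^1$; in particular $T^1$ is Lebesgue-distributed on $(0,\infty)$ and independent of $\eta^\infty$.

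Next I would invoke the domain Markov property and conformal invariance of $\SLE_\kappa$. Conditionally on $\eta^{12} := \eta^\infty|_{[0,\tau_z]}$, whose marginal law is exactly $\SLE_\kappa^z$ by definition, the continuation $(\eta^\infty(\tau_z + s))_{s\geq 0}$ is $\SLE_\kappa$ in $\bbH\setminus\eta^{12}$ from $z$ to $\infty$. The conformal map $f^{-1} : \bbH\setminus\eta^{12}\to\bbH$ sends $z$ to $0$, fixes $\infty$, and has hydrodynamic normalization $f^{-1}(w)=w+O(1)$ at $\infty$, so $f^{-1}$ is precisely the centered Loewner map at time $\tau_z$. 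Hence $\eta^1(s) := f^{-1}(\eta^\infty(\tau_z+s))$ is $\SLE_\kappa$ in $\bbH$ from $0$ to $\infty$, and it is independent of $\eta^{12}$.

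The one parametrization check I would spell out is that $\eta^1$ remains parametrized by half-plane capacity. This is immediate from the additivity of hcap under Loewner flow: for $s\ge 0$,
\[
\mathrm{hcap}\bigl(\eta^1([0,s])\bigr) \;=\; \mathrm{hcap}\bigl(\eta^\infty([0,\tau_z+s])\bigr) - \mathrm{hcap}\bigl(\eta^\infty([0,\tau_z])\bigr) \;=\; 2(\tau_z+s) - 2\tau_z \;=\; 2s.
\]
Combining the three independent pieces $\eta^{12}\sim\SLE_\kappa^z$, $\eta^1\sim\SLE_\kappa$, and $T^1\sim 1_{t^1>0}dt^1$, and noting that $\eta^1|_{[0,T^1]}$ with $T^1$ Lebesgue has law $\SLE_\kappa^{t^1}(d\eta^1)1_{t^1>0}dt^1$, yields the factorization claimed in the lemma.

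There is no real obstacle here: the only thing to verify carefully is the hcap bookkeeping above, which is a standard computation, and the fact that $f^{-1}$ coincides with the centered Loewner map, which follows by uniqueness of the hydrodynamically normalized conformal map.
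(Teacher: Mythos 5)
Your proof is correct and follows essentially the same route as the paper's: the change of variables $T^1 = T - \tau_z$ (using that $T$ is an independent Lebesgue time) plus the domain Markov property and conformal invariance at the stopping time $\tau_z$. The extra bookkeeping you include (that $f^{-1}$ is the centered hydrodynamically normalized Loewner map and that half-plane capacity is additive, so $\eta^1$ stays capacity-parametrized) is left implicit in the paper but is a worthwhile check.
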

\begin{proof}
	From the change of variables $T^1 = T - \tau_z$, the law of $T^1$ is $1_{t^1>0} dt^1$. Conditioned on $T^1$ the conditional law of $\eta^{12}$ is $\SLE_\kappa^z$, and by the domain Markov property, conditioned on $T^1$ and $\eta^{12}$ the conditional law of $\eta^1$ is $\SLE_\kappa^{T^1}$.
\end{proof}

\begin{proof}[Proof of Proposition~\ref{prop-thm-8}]
	By Proposition~\ref{prop-mot} and Definition~\ref{def-quantum-cell}, the area $a$ quantum cell is the quantum surface obtained by mating $(X_t, Y_t)_{t \geq 0} \sim \mathrm{CRT}^2_\kappa$ for quantum area $a$. 
	
	Recall $M$ from Lemma~\ref{lem-shef-coupling}.
	Proposition~\ref{prop:MOT-zip-concrete} implies that the time-evolution of $M$ for quantum area $a$ arises from conformally welding an area $a$ quantum cell to $\phi_0$, and so corresponds to mating trees sampled from $\mathrm{CRT}^2_\kappa$ for quantum area $a$. Sending $a \to \infty$, we conclude that $M$ is the process of  Theorem~\ref{thm-sf-zipper}  when $n=0$ and $\delta = Q - \frac{3\gamma}4$. 
	The first property of $M$ in Lemma~\ref{lem-shef-coupling} is the desired description of $(\phi_\tau, \eta_\tau)$, so we obtain Theorem~\ref{thm-sf-zipper} for $n=0, \delta = Q-\frac{3\gamma}4$. Finally, we extend to arbitrary $\delta\in \R$ by Proposition~\ref{prop-reweight-infty}. 
\end{proof}

\begin{remark}\label{rem-difficulty}
	The main difficulty in adapting our proof of Proposition~\ref{prop-thm-8} to the case $\gamma = \sqrt2, \kappa = 8$ is in proving the analog of  Proposition~\ref{prop:MOT-zip-abstract}, where 
	$\LF_{\mathbb H}^{(\frac{3\gamma}4, 0), (Q - \frac{3\gamma}4, \infty)} = \LF_{\mathbb H}^{(\frac Q2, 0), (\frac Q2, \infty)}$ has to be replaced by $\lim_{\beta \uparrow Q} \frac1{Q - \beta} \LF_{\mathbb H}^{(\frac\beta2, 0), (\frac Q2, \infty)}$. This limit is constructed in \cite[Section 2.5]{ASY-triangle}. Our present argument does not apply since the $(M \, 1_{A>0}\,dA)$-measure of $E_N$ is infinite, that is,  Lemma~\ref{lem-EN-finite} fails for $\kappa = 8$. One could try using a finite event such as $E_N \cap \{ A < N\}$, or instead directly tackle the $\kappa = 8$ variant of Proposition~\ref{prop-thm-8} by taking a $\kappa \to 8$ limit. 
\end{remark}

\subsection{Proof of Theorem~\ref{thm-sf-zipper}}\label{sec-8-general}

In this section, we prove Theorem~\ref{thm-sf-zipper}.
The first step is to extend Proposition~\ref{prop-thm-8} to  the setting where insertions are allowed but the process stops before any insertions hit the curve:

\begin{proposition}\label{prop-thm-8-nonbulk}
	Theorem~\ref{thm-sf-zipper} holds for any stopping time $\tau$ such that a.s.\ $g_\tau(z_j) \not \in \eta_\tau$ for all $j$. 
\end{proposition}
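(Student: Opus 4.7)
The strategy is to bootstrap from Proposition~\ref{prop-thm-8} (the $n=0$ case) via a Girsanov-type reweighting of the Liouville field that introduces the insertions $(\alpha_j, z_j)_j$. The main observation is that by Proposition~\ref{prop-thm-8} combined with Proposition~\ref{prop-reweight-infty}, for any $\delta \in \R$ the process $M^{(\delta)}$ obtained from Lemma~\ref{lem-shef-coupling} (with $Q - \frac{3\gamma}{4}$ there replaced by $\delta$) coincides with the CRT welding process of Theorem~\ref{thm-sf-zipper} started from $\phi_0 \sim \LF_\bbH^{(-\frac{1}{\sqrt{\kappa}}, 0), (\delta, \infty)}$. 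Under $M^{(\delta)}$ the conditional law of $(\eta_t)_{t \ge 0}$ given $\phi_0$ is thus the law of the curve obtained by welding $\phi_0$ with an independent sample from $\mathrm{CRT}_\kappa$; consequently, reweighting $M^{(\delta)}$ by any function of $\phi_0$ alone preserves this conditional structure and produces exactly the CRT welding process started from the reweighted marginal.

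We realize the insertions via regularized vertex operators. For $z \in \bbH$ set $V^\eps_\alpha(z;\phi) := \eps^{\alpha^2/2} e^{\alpha \phi_\eps(z)}$, and for $z \in \R$ set $V^\eps_\alpha(z;\phi) := \eps^{\alpha^2} e^{\alpha \phi_\eps(z)}$, where $\phi_\eps(z)$ is the average of $\phi$ on $\partial B_\eps(z) \cap \bbH$. A Cameron--Martin / Girsanov computation on the Gaussian component of the Liouville field (absorbing the deterministic mean shift $\sum_j \alpha_j G_\bbH(\cdot, z_j)$) yields, in the $\eps \to 0$ limit and tested against nonnegative $\phi_0$-measurable functionals,
\[
C_\star \prod_j V^\eps_{\alpha_j}(z_j; \phi_0)\, \LF_\bbH^{(-\frac{1}{\sqrt{\kappa}}, 0), (\delta, \infty)}(d\phi_0) \longrightarrow \LF_\bbH^{(-\frac{1}{\sqrt{\kappa}}, 0), (\alpha_j, z_j)_j, (\delta, \infty)}(d\phi_0),
\]
where $C_\star$ absorbs explicit normalization constants computable from the $C_\gamma$ prefactors in Definition~\ref{def-LF}. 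Hence weighting $M^{(\delta)}$ by $C_\star \prod_j V^\eps_{\alpha_j}(z_j; \phi_0)$ and sending $\eps \to 0$ produces exactly the $(\alpha_j, z_j)_j$-insertion variant of the process of Theorem~\ref{thm-sf-zipper}.

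To identify the joint law of $(\phi_\tau, \eta_\tau)$ under the reweighted measure, we use the deterministic relation $\phi_0(z) = \phi_\tau(g_\tau(z)) + Q \log |g_\tau'(z)|$ from Lemma~\ref{lem-shef-coupling}(iii). Since $g_\tau(z_j) \notin \eta_\tau$, $g_\tau$ is a conformal bijection in a neighborhood of $z_j$ mapping an $\eps$-(half-)disk around $z_j$ to an approximate $(\eps|g_\tau'(z_j)|)$-(half-)disk around $g_\tau(z_j)$; averaging over the corresponding (half-)circles gives $\phi_{0, \eps}(z_j) = \phi_{\tau, \eps|g_\tau'(z_j)|}(g_\tau(z_j)) + Q \log|g_\tau'(z_j)| + o(1)$, so
\[
V^\eps_{\alpha_j}(z_j; \phi_0) = |g_\tau'(z_j)|^{C_j}\, V^{\eps|g_\tau'(z_j)|}_{\alpha_j}(g_\tau(z_j); \phi_\tau) + o(1), \qquad C_j = \begin{cases} 2\Delta_{\alpha_j} & z_j \in \bbH, \\ \Delta_{2\alpha_j} & z_j \in \R. \end{cases}
\]
Combining with the known marginal $\LF_\bbH^{(-\frac{1}{\sqrt{\kappa}}, W_\tau), (\delta, \infty)}\, \rSLE_\kappa^\tau$ of $(\phi_\tau, \eta_\tau)$ under $M^{(\delta)}$, and applying the vertex operator Girsanov identity of the previous paragraph now at the points $g_\tau(z_j)$, taking $\eps \to 0$ yields
\[
\prod_j |g_\tau'(z_j)|^{C_j}\, \LF_\bbH^{(-\frac{1}{\sqrt{\kappa}}, W_\tau), (\alpha_j, g_\tau(z_j))_j, (\delta, \infty)}(d\phi_\tau)\, \rSLE_\kappa^\tau(d\eta),
\]
which is precisely the law claimed in Theorem~\ref{thm-simple-zipper}. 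The main obstacle will be rigorously justifying the two $\eps \to 0$ limits and controlling the $o(1)$ error terms; this is handled by standard GMC / LCFT estimates, using Koebe's theorem to bound $|g_\tau'(z_j)|$ under the stopping condition $g_\tau(z_j) \notin \eta_\tau$, together with a truncation argument to reduce to finite-measure subsets of the $\sigma$-finite $M^{(\delta)}$.
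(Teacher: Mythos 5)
Your strategy is the same as the paper's: bootstrap from the $n=0$ case (Proposition~\ref{prop-thm-8}) by reweighting with regularized vertex operators built from circle averages of $\phi_0$ at the $z_j$, observe that the weight is $\phi_0$-measurable so the CRT-welding conditional structure is preserved, and then read off the weight in terms of $\phi_\tau$ via the covariance of circle averages under $g_\tau$. The difference is in how the regularization is removed, and this is where your write-up has a real gap. You defer the justification of two $\eps\to 0$ limits of weighted infinite measures, together with the $o(1)$ errors coming from the fact that $g_\tau(\partial B_\eps(z_j))$ is not a circle, to ``standard GMC/LCFT estimates.'' In an infinite-measure ($\sigma$-finite) setting, exchanging these limits with the conditional decomposition is precisely the delicate point, and you have not identified what controls it. The paper avoids the issue entirely: by Lemma~\ref{lem-green-conformal} (the mean-value property applied to $G(u,g(\cdot))$), weighting by $e^{(g^{-1}\bullet_\gamma\phi,\theta_\eps)}$ with $\theta_\eps$ the uniform measure on the $\eps$-circles is \emph{exactly} equivalent, via Girsanov, to inserting point singularities at the $g(z_j)$, for every fixed $\eps$, provided the test functional depends only on the field away from the images of those circles (Lemma~\ref{lem-reweight-finite}); the exact variance computation also produces the $|g'(z_j)|^{2\Delta_{\alpha_j}}$ prefactors with no error term. ``Sending $\eps\to0$'' then only means exhausting the class of test functionals, so no convergence of measures is ever needed. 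The paper also appends a compensating charge $\beta=-\sum_j\alpha_j$ at $\infty$ so that $\theta_\eps$ has total mass zero (allowing the computation with $G$ rather than $G_\bbH$ under the neutrality condition $-\frac1{\sqrt\kappa}+\sum_j\alpha_j+\delta=Q$), and removes that constraint afterwards with Proposition~\ref{prop-reweight-infty}; your version omits this normalization, which is not fatal but is what keeps the bookkeeping exact. To complete your argument you should replace the limiting vertex-operator identities by these fixed-$\eps$ exact identities, or else supply the uniform integrability needed to pass to the limit.
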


Given this, we can  complete the proof of Theorem~\ref{thm-sf-zipper}.

\begin{proof}[Proof of Theorem~\ref{thm-sf-zipper}]
	Proposition~\ref{prop-main} gives us a  process $(\wt \phi_t, \wt \eta_t)_{0 \leq t \leq \tau}$ such that $\wt \phi_0$ has law $\LF_{\mathbb H}^{(-\frac1{\sqrt\kappa}, 0), (\alpha_j, z_j)_j, (\delta, \infty)}$ and $(\wt \phi_\tau, \wt \eta_\tau)$ has law~\eqref{eq-thm-simple-zipper-rho}, where $\tau$ is a stopping time for $\cF_t = \sigma(\wt \eta_t)$.	Let $S = \{ t : g_t(z_j) = W_t \text{ for some }j\}$.
	By Proposition~\ref{prop-thm-8-nonbulk}, if $\sigma, \sigma'$ are stopping times for $\cF_t$ such that $[\sigma, \sigma'] \cap S = \emptyset$, then on $[\sigma, \sigma']$,  this process can alternatively be described by conformal welding with a process $(\cC_s)_{s>0}$ with law $\mathrm{CRT}^2_\kappa$ (equivalently,  mating continuum random trees)  independent of $\cF_{\sigma}$ until the stopping time $\sigma'$. Since $S$ is  finite, by continuity we conclude that the \emph{whole} process $(\wt \phi_t, \wt \eta_t)_{0 \leq t \leq \tau}$ arises from the procedure described in Theorem~\ref{thm-sf-zipper}. This completes the proof. 
\end{proof}

The proof of Proposition~\ref{prop-thm-8-nonbulk} is identical to that of Proposition~\ref{prop-reweight-infty}, but with different calculations which we detail below. We will show how to  weight  the Liouville field  to introduce insertions in Lemma~\ref{lem-reweight-finite}, then apply this to the special case of Theorem~\ref{thm-sf-zipper} to obtain the more general statement desired. 

For Lemma~\ref{lem-reweight-finite} we need  Lemma~\ref{lem-green-conformal-infty} and the following. Recall  $G(z,w) = - \log|z-w| - \log |z - \ol w|$. 

\begin{lemma}\label{lem-green-conformal}
	Suppose $K \subset \ol {\mathbb H}$ is compact,  ${\mathbb H}\backslash K$ is simply connected and there is a conformal map $g: {\mathbb H}\to {\mathbb H} \backslash K$ such that $\lim_{|z| \to\infty} g(z)-z = 0$. Let $\eps > 0$. Then for $z_0 \in {\mathbb H}$ such that $\Im z_0 > \eps$ and $u \not \in g(B_\eps(z_0))$ we have $\int G(u,v) (g_* \theta_{\eps, z_0})(dv) = G(u, g(z_0))$, where $\theta_{\eps, z_0}$ is the uniform probability measure on $\partial B_\eps(z_0)$. For $x_0 \in \partial {\mathbb H}$ such that $g([x_0 - \eps, x_0 +\eps]) \subset \R$ and $u \not \in g(B_\eps(x_0) \cap {\mathbb H})$ we have $\int G(u, v)(g_* \theta_{\eps, x_0})(dv) = G(u, g(x_0))$, where $\theta_{\eps, x_0}$ is the uniform probability measure on $\partial B_\eps(x_0) \cap {\mathbb H}$.
\end{lemma}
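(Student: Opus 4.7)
The plan is to apply the mean value property of harmonic functions, supplemented by Schwarz reflection for the boundary statement. The two underlying ingredients are: (i) as a function of $v$, $G(u,v) = -\log|u-v| - \log|\bar u - v|$ is harmonic on $\bbH$ away from $v \in \{u, \bar u\}$, each term being the real part of the logarithm of a holomorphic (resp.\ conjugate-holomorphic) function; and (ii) $v \mapsto G(u,v)$ has vanishing normal derivative on $\R$ by the standard image-charge computation (the $y$-derivatives of $-\log|v-u|$ and $-\log|v-\bar u|$ cancel when $\Im v = 0$). Precomposing with the holomorphic map $g$, the function $w \mapsto G(u, g(w))$ is harmonic on $\bbH$ away from $g^{-1}(\{u,\bar u\}) \cap \bbH$, and inherits a vanishing normal derivative along any boundary segment that $g$ sends into $\R$.

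For the first assertion, $\Im z_0 > \eps$ places $B_\eps(z_0) \subset \bbH$. The hypothesis $u \notin g(B_\eps(z_0))$ rules out singularities at $g^{-1}(u)$, and $\bar u \in g(\bbH) \subset \overline{\bbH}$ can occur only when $u \in \R$, in which case $\bar u = u$ is also excluded. Hence $w \mapsto G(u, g(w))$ is harmonic on $B_\eps(z_0)$, the mean value property gives $\int G(u, g(w))\, \theta_{\eps,z_0}(dw) = G(u, g(z_0))$, and pushing forward by $g$ yields the claimed identity.

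For the second assertion, since $g$ maps $[x_0 - \eps, x_0+\eps] \subset \R$ into $\R$, ingredient (ii) shows that $w \mapsto G(u, g(w))$ has vanishing normal derivative along this segment; a case analysis on $u$ as above shows it is also harmonic on $B_\eps(x_0) \cap \bbH$. Schwarz reflection therefore extends it to a function $\tilde h$ harmonic on the full disk $B_\eps(x_0)$ via $\tilde h(w) = G(u, g(\bar w))$ for $\Im w < 0$. Applying the mean value property to $\tilde h$ at $x_0$, and using that the reflection symmetry forces the averages of $\tilde h$ over the upper and lower halves of $\partial B_\eps(x_0)$ to coincide, gives that the average of $w \mapsto G(u, g(w))$ over the upper semicircle equals $\tilde h(x_0) = G(u, g(x_0))$; pushing forward by $g$ gives the second identity.

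The argument is essentially routine, so there is no serious obstacle; the only care needed is the bookkeeping to confirm that in each of the cases $u \in \bbH$, $u \in \R$, or $u \in K$, the stated hypotheses rule out \emph{both} the singularity at $g^{-1}(u)$ and the mirror singularity at $g^{-1}(\bar u)$ inside the relevant (half-)disk.
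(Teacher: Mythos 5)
Your proof is correct and follows the same route as the paper: the first identity is exactly the mean value property applied to the harmonic function $G(u,g(\cdot))$ on $B_\eps(z_0)$. The paper dispatches the boundary case with "proved similarly"; your Neumann/Schwarz-reflection argument (using that $g$ maps the real segment into $\R$, so the even reflection of $G(u,g(\cdot))$ is harmonic on the full disk and the semicircle average equals the value at $x_0$) is the standard way to make that precise.
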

\begin{proof}
	For the first assertion, the function $G(u, g(\cdot))$ is harmonic on $\ol {B_\eps(z_0)}$ because $G(u, \cdot)$ is harmonic and $g$ is conformal. Thus, by the mean value property of harmonic functions, $\int G(u,v) (g_*\theta_{\eps, z_0})(dv) = \int G(u, g(v)) \theta_{\eps, z_0}(dv) = G(u, g(z_0))$. The second assertion is proved similarly. 
\end{proof}

Now we add insertions to the Liouville field by weighting. We place constraints on the insertions so that they sum to $Q$; this allows us to work with $G(\cdot, \cdot)$ rather than the more complicated $G_{\mathbb H}(\cdot, \cdot)$. Recall that $\theta_{\eps, \infty}$ is the uniform probability measure on $\partial B_{1/\eps}(0) \cap {\mathbb H}$.  
\begin{lemma}\label{lem-reweight-finite}
	In the setting of Lemma~\ref{lem-green-conformal}, let $(\alpha_j, z_j) \in \R \times \ol{\mathbb H}$ and $\beta = -\sum_j \alpha_j$. Let $I = \{ i : z_i \in {\mathbb H}\}$ and $B = \{ b : z_b \in \R\}$. 
	Suppose $\eps >0$ satisfies $|z_j - z_k| > 2\eps$ for $j \neq k$,  $\Im z_i > \eps$ for $i \in I$,  $g((z_b - \eps, z_b + \eps))\subset \R$ for $b \in B$, and $g(-1/\eps), g(1/\eps)\in \R$. Let ${\mathbb H}_\eps^g = {\mathbb H} \backslash (g({\mathbb H} \backslash B_{1/\eps}(0)) \cup \bigcup_j g(B_\eps(z_j)))$ and $\theta_\eps := \sum_j \alpha_j \theta_{\eps, z_j} + \beta \theta_{\eps, \infty}$. For any $(\alpha, w) \in \R \times {\mathbb H}_\eps^g$ and nonnegative function $F(\phi)$ depending only on $\phi|_{{\mathbb H}_\eps^g}$, 
	\alb
	&\LF_{\mathbb H}^{(\alpha, w), (Q-\alpha, \infty)} [ \eps^{- \beta^2 +2 \alpha \beta } \prod_{i \in I} \eps^{\alpha_i^2/2}\prod_{b \in B} \eps^{\alpha_b^2}  \times e^{(g^{-1}\bullet_\gamma \phi, \theta_\eps)}F(\phi)]
	\\
	&=\prod_{i \in I} |g_\tau'(z_i)|^{2 \Delta_{\alpha_i}} \prod_{b \in B} |g_\tau'(z_b)|^{\Delta_{2\alpha_b}}
	\LF_{\mathbb H}^{(\alpha, w), (\alpha_j, g(z_j))_j, (\beta + Q - \alpha, \infty)}[F(\phi)].
	\ale
\end{lemma}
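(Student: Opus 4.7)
The plan is to follow the proof of Lemma~\ref{lem-reweight-infty} nearly verbatim, now with $\theta_\eps = \sum_j \alpha_j \theta_{\eps, z_j} + \beta \theta_{\eps, \infty}$ replacing the single-atom measure used there. Using Remark~\ref{rem-neutrality} (applicable since $\alpha + (Q - \alpha) = Q$), I write a sample $\phi \sim \LF_\bbH^{(\alpha, w), (Q - \alpha, \infty)}$ as $\phi = h + \alpha G_\bbH(\cdot, w) - \alpha G_\bbH(\cdot, \infty) + \mathbf c$ with $(h, \mathbf c) \sim C_\gamma^{(\alpha, w), (Q - \alpha, \infty)} P_\bbH \times dc$. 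Expanding $(g^{-1} \bullet_\gamma \phi, \theta_\eps) = (\phi, g_*\theta_\eps) + Q(\log|g'|, \theta_\eps)$, the choice $\beta = -\sum_j \alpha_j$ ensures $\theta_\eps$ has total signed mass zero: this kills the $\mathbf c$-contribution and makes the $2\log|\cdot|_+$ pieces of $G_\bbH$ drop out when paired against $g_*\theta_\eps$.

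Using Lemmas~\ref{lem-green-conformal} and~\ref{lem-green-conformal-infty}, together with the harmonicity of $\log|g'|$ on $\bbH$ (extended through each interval $g([z_b - \eps, z_b + \eps]) \subset \R$ by Schwarz reflection), the deterministic part of the exponent evaluates to
\[ \alpha \sum_j \alpha_j G(w, g(z_j)) \;+\; 2 \alpha \beta \log \eps \;+\; Q \sum_{i \in I} \alpha_i \log|g'(z_i)| \;+\; Q \sum_{b \in B} \alpha_b \log|g'(z_b)|. \]
Applying Girsanov to $e^{(h, g_*\theta_\eps)}$ shifts $h$ by $\int G_\bbH(\cdot, v) g_*\theta_\eps(dv)$, which on $\bbH_\eps^g$ is exactly $\sum_j \alpha_j G_\bbH(\cdot, g(z_j)) + \beta G_\bbH(\cdot, \infty)$ by the same Green-function identities. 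Since $F$ depends only on $\phi|_{\bbH_\eps^g}$, the resulting integrand matches a sample from $\LF_\bbH^{(\alpha, w), (\alpha_j, g(z_j))_j, (\beta + Q - \alpha, \infty)}$; the total insertion is still $Q$, so $\mathbf c$ retains Lebesgue law.

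The remaining piece is the Girsanov normalization $Z_\eps = \exp(\tfrac12 \Var((h, g_*\theta_\eps)))$. Cross-energies reduce via Lemmas~\ref{lem-green-conformal} and~\ref{lem-green-conformal-infty} to $G(g(z_j), g(z_k))$ pairs (finite-finite, matching the cross-terms of $C_\gamma^{(\alpha, w), (\alpha_j, g(z_j))_j, (\beta + Q - \alpha, \infty)} / C_\gamma^{(\alpha, w), (Q - \alpha, \infty)}$) or to $2\log\eps$ (finite-infinity, producing an $\eps^{-2\beta^2}$ factor). The self-energy at a bulk $z_i$ evaluates \emph{exactly} to $-\log|g'(z_i)| - \log\eps - \log(2\Im g(z_i))$ by a double mean-value argument: $\log|(g(u) - g(v))/(u - v)|$ is harmonic in each variable on $B_\eps(z_i)$, iterated mean-value gives $\log|g'(z_i)|$; the residual $-\log|u-v|$ integrates to $-\log\eps$ on the circle; and $-\log|g(u) - \ol{g(v)}|$, handled via $\ol{g(v)} = g(\ol v)$ and mean-value on the disjoint circle $\partial B_\eps(\ol{z_i})$, gives $-\log(2\Im g(z_i))$. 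Boundary self-energies yield $-2\log|g'(z_b)| - 2\log\eps$ by applying the same argument to the doubled (full) circle obtained by Schwarz reflection; a short calculation shows the pure $\eps$ term is exactly $-2\log\eps$ with no residual constant. The infinity self-energy gives $2\log\eps$ similarly, by inverting through $z \mapsto 1/z$ and using $\lim_{z \to \infty} g(z)/z = 1$.

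Combining everything, the identities $2\Delta_{\alpha_i} = Q\alpha_i - \alpha_i^2/2$ and $\Delta_{2\alpha_b} = Q\alpha_b - \alpha_b^2$ make the $|g'|$-powers from $Q(\log|g'|, \theta_\eps)$ and the self-energies collapse into $\prod_i |g'(z_i)|^{2\Delta_{\alpha_i}} \prod_b |g'(z_b)|^{\Delta_{2\alpha_b}}$; the $(2\Im g(z_i))^{-\alpha_i^2/2}$ factors from bulk self-energies are absorbed into the $C_\gamma^{(\alpha_i, g(z_i))}$ pieces of the new normalization; and all $\eps$-powers combine with $\eps^{2\alpha\beta}$ from the deterministic exponent to cancel against $\eps^{-\beta^2 + 2\alpha\beta}\prod_i \eps^{\alpha_i^2/2}\prod_b \eps^{\alpha_b^2}$ on the left. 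The main technical step is the \emph{exact} (not merely leading-order) evaluation of the self-energies via the Schwarz-reflected double mean-value trick, applied to both the $-\log|g(u) - g(v)|$ and $-\log|g(u) - \ol{g(v)}|$ summands of $G$; once these identities are in hand, the rest is routine bookkeeping parallel to Lemma~\ref{lem-reweight-infty}.
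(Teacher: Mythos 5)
Your proposal is correct and follows essentially the same route as the paper: the paper likewise declares the argument identical to that of Lemma~\ref{lem-reweight-infty} except for the computations of $Z_\eps = \E[e^{(h,g_*\theta_\eps)}]$ and of the deterministic part of $(g^{-1}\bullet_\gamma\phi,\theta_\eps)$, which it carries out exactly as you do — using that $\theta_\eps$ has total mass zero, Lemmas~\ref{lem-green-conformal-infty} and~\ref{lem-green-conformal} for the cross-energies, and the harmonicity of $w\mapsto -\log|g(w)-g(z_i)|+\log|w-z_i|$ (with Schwarz reflection at boundary insertions) for the exact self-energies. Your write-up actually spells out the self-energy evaluation in more detail than the paper does, and your stated values are the ones consistent with the $|g'(z_j)|$- and $(2\Im g(z_i))$-powers in the final identity.
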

\begin{proof}
	The proof of this is identical to that of Lemma~\ref{lem-reweight-infty}; the only difference is in the computations of $Z_\eps:= {\mathbb E}[e^{(h, g_*\theta_\eps)}]$ and $e^{(g_t^{-1} \bullet_\gamma \phi, \theta_\eps)}$. 
	
	To compute $Z_\eps$ we first need $\Var((h, g_*\theta_\eps))$. Since $\int 1 \, g_*\theta_\eps(dv) = 0$, the law of $(h, g_*\theta_\eps)$ when $h \sim P_{\mathbb H}$ agrees with that when $h$ is instead considered as a distribution modulo additive constant. Thus, 
	$\Var ((h, g_*\theta_\eps)) =  \iint G(u,v) g_* \theta_\eps (du) g_*\theta_\eps(dv)$ where $G(u,v) = -\log|u-v| - \log |u - \ol v|$. 
	
	For $i \in I$ we have
	\begin{align*}
		\iint G(u,v) g_*\theta_{\eps, z_i}(du) g_*\theta_{\eps, z_i}(dv) &= \int G(g(z_i), g(v)) \theta_{\eps, z_i}(dv)
		\\ &= - \log |g'(z_i)| + \log (2 \Im (g(z_i))) + \log \eps,
	\end{align*}
	where in the first equality we use Lemma~\ref{lem-green-conformal} and
	in the second equality we use the fact that the function $f: B_\eps(z_i) \to \R$ defined by $f(w) = -\log|g_t(w) - g_t(z_i)| + \log |w - z_i|$  for $w \neq z_i$ and $f(z_i) = -\log |g_t'(z_i)|$ is harmonic. Similarly, $\iint G(u,v)  g_*\theta_{\eps, z_b}(du) g_*\theta_{\eps, z_b}(dv) = -2 \log |g_\tau'(z_b)| + 2 \log \eps$, and proceeding similarly we can compute all cross-terms. Summing gives the value of $\Var ((h, g_*\theta_\eps))$, and finally 
	\[Z_{\eps} = e^{\frac12{\Var} ((h, g_*\theta_\eps))}=  \eps^{-\beta^2 + \sum_i \alpha_i^2/2 + \sum_b \alpha_b^2} \prod_{i \in I} |g'(z_i)|^{-\alpha_i^2/2} \prod_{b \in B} |g'(z_b)|^{-\alpha_b^2} C_{\gamma}^{(\alpha_j, g(z_j))_j, (\beta, \infty)}.\]
	Next,  by Lemmas~\ref{lem-green-conformal-infty} and~\ref{lem-green-conformal} we have $(G(\cdot, w), g_*\theta_\eps) = -2\beta\log\eps + \sum_j \alpha_j G(z_j, w)$, so with $\phi = h +\alpha G(\cdot, w)  + c$, we have 
	\[e^{(g^{-1} \bullet_\gamma \phi, \theta_\eps)} = \eps^{2\alpha\beta} \frac{C_\gamma^{(\alpha, w), (\alpha_j, g(z_j))_j, (\delta, \infty)}}{C_\gamma^{(\alpha_j, g(z_j))_j, (\beta, \infty)}} e^{(h, g_*\theta_\eps)} .\]
	The rest of the argument is identical to that of Lemma~\ref{lem-reweight-infty}.
\end{proof}

\begin{proof}[Proof of Proposition~\ref{prop-thm-8-nonbulk}]
	By 
	Proposition~\ref{prop-thm-8},  Theorem~\ref{thm-sf-zipper} holds for $n=0, \delta = Q + \frac\gamma4$.  Applying the argument of Proposition~\ref{prop-reweight-infty} with  Lemma~\ref{lem-reweight-finite} as input, we obtain the $n \geq 0$ case when the insertions $(\alpha_j, z_j)_j$ and $(\delta, \infty)$ satisfy $-\frac1{\sqrt\kappa} + \sum_j \alpha_j +\delta = Q$. We were able to use  Lemma~\ref{lem-reweight-finite} because of the condition $g_\tau(z_j) \not \in \eta_\tau$. A final application of Proposition~\ref{prop-reweight-infty} removes this constraint. 	
\end{proof}

\section{The $\kappa \in (4,8)$ LCFT zipper}\label{sec-4-8}

In this section we prove Theorem~\ref{thm-forest-zipper}. 
\cite[Theorem 6.4.1]{wedges} says that when a particular GFF variant is cut by a forward $\SLE_{\kappa}$ with $\kappa \in (4,8)$, the connected components in the complement of the curve are a pair of independent forested lines. As we will explain, their argument proves the stronger result Proposition~\ref{prop-nonsimple-dms}, from which Theorem~\ref{thm-forest-zipper} easily follows. 

The following is part of the setup for the proof of \cite[Theorem 6.4.1]{wedges}, as described in \cite[Section 6.4.3]{wedges}. 
\begin{lemma}\label{lem-M-nonsimple}
	Let $\kappa \in (4, 8)$ and $\gamma = \frac4{\sqrt\kappa}$. 
	There exists a random process $(\wt h^t, \wt \eta^t)$ 
	with the following properties.
	\begin{enumerate}[(i)]
		\item  the marginal law of the process $\wt \eta^t$ is reverse $\SLE_\kappa$, and each $\wt \eta^t$ is parametrized by half-plane capacity, i.e.,  $\mathrm{hcap}(\wt \eta^t([0,s])) = 2s$ for $s \leq t$;  
		\item we have $\wt h^0 \stackrel d= h + \frac\gamma2 \log|\cdot|$ where $h\sim P_\bbH$;
		\item for any stopping time $\tau$ for the filtration $\cF_t = \sigma(\wt \eta^t)$, conditioned on $\wt \eta^\tau$, we have $\wt h^\tau \stackrel d= \wt h + \frac\gamma2 \log |\cdot - \wt \eta^\tau(0)|$ where $\wt h$ is a free boundary GFF with additive constant fixed in a way that depends on $\wt \eta^\tau$;
		\item for $t_1 < t_2$, let $g_{t_1, t_2}$ be the conformal map from $\bbH$ to the unbounded connected component of $\bbH\backslash \wt \eta^{t_2}([0, t_2 - t_1])$ such that $g_{t_1, t_2}(\wt \eta^{t_1}(0)) = \wt \eta^{t_2}(t_2 - t_1)$ and $\lim_{z \to \infty} g_{t_1, t_2}(z) - z = 0$. Then $\wt h^{t_1} = g_{t_1, t_2}^{-1} \bullet_\gamma \wt h^{t_2}$. 
	\end{enumerate}
\end{lemma}
\begin{proof}
	The proof is identical to that of Lemma~\ref{lem-shef-coupling}: we use  Proposition~\ref{prop-dms-chordal} to construct the process for $t \in [0,T]$ for any finite $T$, then apply Kolmogorov's extension theorem to extend this construction to all $t \geq 0$.
\end{proof}

\begin{proposition}	\label{prop-nonsimple-dms}
	Let $\kappa \in (4, 8)$ and $\gamma = \frac4{\sqrt\kappa}$. 
	Let $h_0 = h + \frac\gamma2 \log|\cdot|$ where $h \sim P_\bbH$, and independently sample   $(\wt \cS_s)_{s > 0} \sim \mathrm{FL}^2_\kappa$. Construct the process $(h_t, \eta_t)$ from $(h_0, (\wt \cS_s)_{s > 0})$ in the manner described above Theorem~\ref{thm-forest-zipper} with $\phi_0$ replaced by $h_0$. Then $(h_t,  \eta_t)$  agrees in law with the process described in Lemma~\ref{lem-M-nonsimple}. 
\end{proposition}
\begin{proof}
	The desired result is implicitly obtained in the proof of \cite[Theorem 6.4.1]{wedges}. We give a high-level overview of their argument, and explain how our desired result follows. 
	
	Consider the process $(\wt h^t, \wt \eta^t)$ from Lemma~\ref{lem-M-nonsimple}. At each time $t$, the bounded connected components of $\bbH \backslash \wt\eta^t$, with the LQG field $\wt h^t$ restricted to each component, give countably many quantum surfaces. Let $\cP_t$ be the collection of these quantum surfaces. By Property (iv) of Lemma~\ref{lem-M-nonsimple}, for $t < t'$ we have $\cP_t\subset \cP_{t'}$, so we can define $\cP_\infty = \lim_{ t\to\infty} \cP_t$ and  label each quantum surface $\cB \in \cP_\infty$ with the time $\tau(\cB) := \inf\{ t\: : \: \cB \in \cP_t\}$ at which it is ``zipped in''. 
	Let $\wh \cP_\infty$ be $\cP_\infty$ where the times $\tau(\cB)$ are forgotten but the induced ordering on the $\cB$ is retained. 
	
	As a step of the proof of \cite[Theorem 6.4.1]{wedges}, it is shown that $\wh \cP_\infty$ has the law of a Poisson point process of LQG disks (this is the  Poisson point process in the theorem statement of \cite[Theorem 6.4.1]{wedges}). By the definition of forested line in \cite[Section 1.4.2]{wedges}, the quantum disks in $\wh \cP_\infty$ to the left and right of the curve give a pair of independent forested lines $(F_L, F_R)$.

	Let $\wt D_t$ be the complement in $\bbH$ of the unbounded connected component of $\bbH \backslash \wt \eta^t$, let $\wt p_\ell(t), \wt p_r(t) \in \R$ satisfy $\wt D_t \cap \R = [\wt p_r(t), \wt p_\ell(t)]$, let $\wt \eta^t_\mathrm{rev}$ be the time-reversal of $\wt \eta^t$ parametrized by quantum length, and let $\wt \cS_t' := (\wt D_t, \wt h^t, \wt \eta^t_\mathrm{rev}, \wt \eta^t_\mathrm{rev}(0), \wt \eta^t(0), \wt p_\ell(t), \wt p_r(t))/{\sim_\gamma}$. Let $(\wt \cS_s)_{s>0}$ be the time-reparametrization of  $(\wt \cS_t')_{t > 0}$ according to the quantum length of the curve. We claim that $(\wt \cS_s)_{s > 0}$ is the incremental mating of $(F_L, F_R)$ from Proposition~\ref{prop-mate-forests}, i.e., the process with law $\mathrm{FL}^2_\kappa$. 
	This follows from three facts. First, for each $t$, the curve $\wt \eta^t$ and field $\wt h^t$ (viewed modulo additive constant) are, in neighborhoods bounded away from $\wt \eta^t(t)$ and $\R$, locally absolutely continuous with respect to the curve and field of  $\SLE_\kappa$ on an independent weight $\frac{3\gamma^2}2 - 2$ quantum wedge. Second, the latter setting arises from (a pair of quantum wedges and) a pair of independent forested lines mated in the manner specified by  Proposition~\ref{prop-mate-forests} \cite[Theorem 1.4.8]{wedges}. Third, the mating procedure of Proposition~\ref{prop-mate-forests} is local (Lemma~\ref{lem-local-mating-nonsimple}). 
	
	Thus, $\wt h^0 \stackrel d= h_0$ (Property (ii) of Lemma~\ref{lem-M-nonsimple}), and  the law of $(\wt \cS_s)_{s > 0}$ is $\mathrm{FL}^2_\kappa$. To conclude, we need to check that $\wt h^0$ is independent of $(\wt \cS_s)_{s > 0}$, i.e., $\wt h^0$ is  independent of $\wh \cP_\infty$. We deduce this by looking into the proof from \cite{wedges} that $\wh \cP_\infty$ has the law of a Poisson point process. For $c > 0$, let $\cP_\infty^c \subset \cP_\infty$ be the set of quantum surfaces having quantum area and length both at least $c$, and let $(\cB_j^c)_{j \geq 1}$ be the sequence of elements of $\cP_\infty^c$. 
	
	The proof of \cite[Theorem 6.4.1]{wedges} shows that for each $c$ and $n$, conditioned on $\cB_1^c, \dots, \cB_{n-1}^c$, the conditional law of $\cB_{n}^c$ is precisely the law $\cL_c$ of a quantum disk conditioned to have quantum area and boundary length at least $c$. We now summarize their argument. They consider a sequence of stopping times for $\cF_t = \sigma(\eta_t)$ such that a.s.\ some subsequence $t_1, t_2, \dots$ converges to $\tau(\cB_n^c)$ from above. For each $t_i$, they apply Property (iii) of Lemma~\ref{lem-M-nonsimple} and use the GFF Markov property to deduce that, conditioned on $(\tau(\cB^c_n), \wt h^{\tau(\cB_n^c)}, \wt \eta^{\tau(\cB_n^c)})$ and some further information, the conditional law of $\cB_n$ is close to $\cL_c$. Taking a limit as $i \to \infty$, 
		the conclusion is that conditioned on  $(\tau(\cB^c_n), \wt h^{\tau(\cB_n^c)}, \wt \eta^{\tau(\cB_n^c)})$, the conditional law of $\cB_n^c$ is $\cL_c$. They then use the fact that $(\cB_1^c, \dots, \cB_{n-1}^c)$ is measurable with respect to $(\wt h^{\tau(\cB_n^c)}, \wt \eta^{\tau(\cB_n^c)})$ to obtain the claim at the start of this paragraph. In our setting, we instead note that $(\wt h^0, \cP_{\tau(\cB^c_n)})$ is measurable with respect to $(\tau(\cB^c_n), \wt h^{\tau(\cB_n^c)}, \wt \eta^{\tau(\cB_n^c)})$, and hence $(\wt h^0, \cP_{\tau(\cB^c_n)})$ is independent of $\cB^c_n$. Varying $c$ and $n$, we conclude that $\wt h^0$ is independent of $\wh \cP_\infty$, as needed. 
\end{proof}

\begin{proof}[{Proof of Theorem~\ref{thm-forest-zipper}}]
	Sample $\mathbf c$ from Lebesgue measure on $\R$ and independently sample $(h_0, (\wt \cS_s)_{s > 0})$ as in Proposition~\ref{prop-nonsimple-dms}. Define $\phi_t = h_t + \mathbf c$, so the law of $\phi_0$ is $\LF_\bbH^{(-\frac\gamma2, 0), (\delta, \infty)}$ where $\delta =  Q + \frac\gamma4$. For each $s>0$ let $\cS_s$ be obtained from $\wt \cS_{e^{-\frac2\gamma \mathbf c} s}$ by adding $\mathbf c$ to its field; since for each constant $c>0$ the weight $2-\frac{\gamma^2}2$ quantum wedge is invariant in law under adding $c$ to its field, we conclude that the law of  $(\phi_0, (\cS_s)_{s>0})$ is $\LF_\bbH^{(-\frac\gamma2, 0), (\delta, \infty)} \times \mathrm{FL}^2_\kappa$. Proposition~\ref{prop-nonsimple-dms} thus gives Theorem~\ref{thm-forest-zipper} for $n=0$ and $\delta = Q + \frac\gamma4$. Next, Proposition~\ref{prop-reweight-infty} implies Theorem~\ref{thm-forest-zipper} for $n=0$. Finally,
	in Section~\ref{sec-8-general} the full Theorem~\ref{thm-sf-zipper} is obtained from its $n=0$ special case; this argument applies verbatim in our setting, except in the proof of Theorem~\ref{thm-sf-zipper}, the phrase ``conformal welding with a process $(\cC_s)_{s>0}$ with law $\mathrm{CRT}^2_\kappa$ (equivalently,  mating continuum random trees)'' should be replaced by ``conformal welding with a process $(\cS_s)_{s>0}$ with law $\mathrm{FL}^2_\kappa$ (equivalently, mating independent forested lines)''. Thus, the $n=0$ case of Theorem~\ref{thm-forest-zipper} implies the full Theorem~\ref{thm-forest-zipper}.
\end{proof}

In the proof of Proposition~\ref{prop-nonsimple-dms}, we used the fact that the mating of forested lines from Proposition~\ref{prop-mate-forests} is locally defined, which we now justify. The statement involves the  \emph{quantum cut point measure} on the set of cut points of a thin quantum wedge, which describes the time parameter of the Poisson point process description of the quantum wedge, see \cite[Section 2.4]{ahs-disk-welding} for details. 
\begin{lemma}\label{lem-local-mating-nonsimple}
	For $\kappa \in (4,8)$ and $\gamma =\frac4{\sqrt\kappa}$, there exists a (deterministic) function $\mathrm{Mate}$ such that the following holds. Let $0<t_1 < t_2$. Let $(D, h, 0, \infty)/{\sim_\gamma}$ be an embedding of a weight $2-\frac{\gamma^2}2$ quantum wedge and let $\eta$ be the curve from $0$ to $\infty$ obtained by  concatenating independent $\SLE_\kappa(\frac\kappa2-4; \frac\kappa2-4)$ curves in each component of $D$. Let $p_1, p_2 \in D$ be the cut points of $D$ at quantum cut point distances $t_1$ and $t_2$ from $0$, let $D' \subset D$ be the set of components of $D$ lying between $p_1$ and $p_2$, let $\eta'$ be the curve segment of $\eta$ from $p_1$ to $p_2$, and let $F_L'$ and $F_R'$ be the segments of forested lines obtained by cutting $\cD' = (D', h, \eta', p_1, p_2)/{\sim_\gamma}$ by $\eta'$.  
	Then $\cD' = \mathrm{Mate}(F_L', F_R')$ a.s..
\end{lemma}
\begin{proof}
	The weight $2-\frac{\gamma^2}2$ quantum wedge is an ordered Poissonian collection of beads (Definition~\ref{def-thin-wedge}), and the \emph{weight $2-\frac{\gamma^2}2$ quantum disk} can be defined as a random (interval) subset  of the weight $2-\frac{\gamma^2}2$ quantum wedge \cite[Definition 2.2]{ahsy-nonsimple}. By \cite[Proposition 3.25]{ahsy-nonsimple}, the forested line segments obtained by cutting a weight $2-\frac{\gamma^2}2$ quantum disk by independent $\SLE_\kappa(\frac\kappa2-4; \frac\kappa2-4)$ determine the decorated quantum disk a.s.; this gives the claim. 
\end{proof}

\section{The boundary BPZ equation for Liouville conformal field theory}\label{sec-bpz}

In this section we use the LCFT zipper to prove the boundary BPZ equations stated in Theorem~\ref{thm-bpz}. Let $m,n \geq 0$.  Let $(\alpha_j, z_j) \in \R \times {\mathbb H}$ for $j \leq m$ and assume $z_1, \dots, z_m$ are distinct. Let $-\infty = x_0 < x_1 < \dots < x_n < x_{n+1} = +\infty$, let $\beta_1, \dots, \beta_n \in \R$ and let $\delta \in \R$. Let $\beta_* \in \{ -\frac\gamma2, -\frac2\gamma\}$ and recall the LCFT correlation function $F_{\beta_*}(w, (z_j)_j, (x_k)_k)$ defined in~\eqref{eq-corr}. 

In Section~\ref{subsec-bpz-2gamma} we prove the BPZ equation holds for $\beta_* = -\frac2\gamma$, in Section~\ref{subsec-bpz-8} we handle the case $\beta_* = -\frac\gamma2$ and $\gamma \in (0,\sqrt2)$ and in Section~\ref{subsec-bpz-48} we settle the case $\beta_* = -\frac\gamma2$ and $\gamma \in (\sqrt2, 2)$ case. These sections use the coupling of LCFT with $\SLE_\kappa$ for $\kappa \in  (0,4], (8, \infty), (4,8)$ respectively.

\begin{proof}[Proof of Theorem~\ref{thm-bpz}]
	For $\gamma \in (0,2]$ the $\beta_*  = -\frac2\gamma$ BPZ equation is shown in Lemma~\ref{lem-bpz-simple}. 
	For $\gamma \in (0, \sqrt2)$ and $\beta_* = -\frac\gamma2$ it is shown in Lemma~\ref{lem-bpz-sf}, and for $\gamma \in (\sqrt2, 2)$ and $\beta_* = -\frac\gamma2$ see Lemma~\ref{lem-bpz-forr}. Finally, when $\gamma=2$ we have $- \frac\gamma2 = -\frac2\gamma$ so the $\beta_*=-\frac\gamma2$ BPZ equation has already been settled by the first case. 
\end{proof}

\subsection{Case: $\beta_* = -\frac2\gamma$}\label{subsec-bpz-2gamma}

Let $\kappa \leq 4$ and $\gamma = \sqrt \kappa$. 
Consider reverse $\SLE_\kappa$ where the driving function $W_t$ is given by $W_0 = w$ and $dW_t = \sqrt\kappa dB_t$. Let $(\eta_t)_{t \geq 0}$ be the family of curves and $g_t$ the corresponding Loewner maps, and let $T\leq \infty$ be the first time $t$ that $g_t(x_k) \in \eta_t$ for some $k$. For $t < T$ define 
\eqb\label{eq-Mt-simple}
M_t = \prod_j |g_t'(z_j)|^{2\Delta_{\alpha_j}} \prod_k |g_t'(x_k)|^{\Delta_{\beta_k}} F_{-\frac2\gamma} (W_t, (g_t(z_j))_j, (g_t(x_k))_k).
\eqe
\begin{lemma}\label{lem-Mt-simple-mtg}
	$M_t$ is a local martingale. 
\end{lemma}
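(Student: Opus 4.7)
The strategy is to recognize $M_t$ as arising from a quantum-zipper invariant functional against the Liouville field, then use Theorem~\ref{thm-simple-zipper} to conclude that $M_t$ is a martingale. Define
\[
\cL_w(\phi) := \exp\Bigl(-\cA_\phi(\bbH) - \sum_{\substack{0\le k\le n\\ k\ne k_*}}\mu_k\,\cL_\phi(I_k) - \mu_L\,\cL_\phi(I_L) - \mu_R\,\cL_\phi(I_R)\Bigr),
\]
where $I_L = (x_{k_*}, w)$, $I_R = (w, x_{k_*+1})$. Then by definition $F_{-\frac2\gamma}(w,(z_j)_j,(x_k)_k) = \LF_\bbH^{(-\frac1{\sqrt\kappa},w),(\alpha_j,z_j)_j,(\frac{\beta_k}2,x_k)_k,(\frac\delta2,\infty)}[\cL_w(\phi)]$. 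My aim is to show $\E[M_\tau] = M_0$ for suitably localized bounded stopping times $\tau < T$; applying this to $\tau = s\mathbf 1_A + t\mathbf 1_{A^c}$ for $A \in \cF_s$ and $s \le t$ upgrades it to the martingale property.

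Two key observations drive the argument. First, the cosmological coupling~\eqref{eq-cosmocoupling} specialized to $\beta_* = -\frac2\gamma$ gives $\sigma_L - \sigma_R = \pm\frac1\gamma$, and the sum-to-product identity yields
\[
\mu_L + \mu_R \;\propto\; \cos\!\Bigl(\pi\gamma\tfrac{\sigma_L+\sigma_R-Q}{2}\Bigr)\cos\!\Bigl(\tfrac{\pi}{2}\Bigr) \;=\; 0,
\]
so $\mu_R = -\mu_L$. Second, along the conformal welding zipper run up to $\tau < T$, the quantities $\cA_{\phi_\tau}(\bbH)$ and $\cL_{\phi_\tau}(g_\tau(I_k))$ for $k \ne k_*$ coincide with their $\phi_0$ counterparts, by conformal invariance of the LQG measures. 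Moreover, the welding glues equal quantum lengths on the two sides of $W_t$, so the updated intervals $\widetilde I_L, \widetilde I_R$ adjacent to $W_\tau$ satisfy $\cL_{\phi_\tau}(\widetilde I_L) - \cL_{\phi_\tau}(\widetilde I_R) = \cL_{\phi_0}(I_L) - \cL_{\phi_0}(I_R)$. Combined with $\mu_R = -\mu_L$, we conclude
\[
\cL_{W_\tau}(\phi_\tau) \;=\; \cL_w(\phi_0) \qquad \text{a.s.\ for } \tau < T.
\]

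To conclude, apply Theorem~\ref{thm-simple-zipper} (translated so that reverse $\SLE_\kappa$ emanates from $w$) to the joint law of $(\phi_\tau, \eta_\tau)$ obtained from $\phi_0 \sim \LF_\bbH^{(-\frac1{\sqrt\kappa},w),(\alpha_j,z_j)_j,(\frac{\beta_k}2,x_k)_k,(\frac\delta2,\infty)}$ by conformal welding, with test functional $G(\phi,\eta) = \cL_{W_\tau}(\phi)$ read off from $\eta$. By the invariance above the left-hand side equals $\E[\cL_w(\phi_0)] = M_0$, while the right-hand side equals
\[
\E_{\rSLE}\!\Bigl[\prod_j|g_\tau'(z_j)|^{2\Delta_{\alpha_j}}\prod_k|g_\tau'(x_k)|^{\Delta_{\beta_k}}\, F_{-\frac2\gamma}\bigl(W_\tau,(g_\tau(z_j))_j,(g_\tau(x_k))_k\bigr)\Bigr] \;=\; \E_{\rSLE}[M_\tau].
\]
Localizing at stopping times where $|g_t'(z_j)|$ and $|g_t'(x_k)|$ stay in a compact subset of $(0,\infty)$ yields integrability, and hence that $M_t$ is a local martingale on $[0, T)$.

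The main subtlety is bookkeeping the transformation of the boundary intervals under the welding and matching them with the defining intervals of $F_{-\frac2\gamma}(W_\tau, \ldots)$. The identity $\mu_R = -\mu_L$ is precisely what converts $\mu_L\cL_\phi(I_L) + \mu_R\cL_\phi(I_R)$ into the signed difference $\mu_L(\cL_\phi(I_L) - \cL_\phi(I_R))$, which is the only such combination preserved by the equal-length welding that constitutes the zipper. This conceptual role of~\eqref{eq-cosmocoupling} is the origin of the non-trivial cosmological-constant coupling highlighted in the introduction.
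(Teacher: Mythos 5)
Your proposal is correct and follows essentially the same route as the paper: identify $F_{-\frac2\gamma}$ as $\LF_\bbH^{(-\frac1{\sqrt\kappa},w),(\alpha_j,z_j)_j,(\frac{\beta_k}2,x_k)_k,(\frac\delta2,\infty)}[G]$ for the functional $G_t = \exp(-A_t - \sum_{k\ne k_*}\mu_k L_{k,t} - \mu_L(L_t-R_t))$, observe that the coupling~\eqref{eq-cosmocoupling} forces $\mu_L+\mu_R=0$ so that $G_t$ is invariant under the equal-length welding, and then apply Theorem~\ref{thm-simple-zipper} to get $\E[M_\tau]=M_0$. The one place where your write-up is slightly weaker is the localization: keeping $|g_t'(z_j)|, |g_t'(x_k)|$ in a compact set does not by itself control the correlation function as the evolved insertion points approach $W_t$ or each other, so the paper instead stops at $T_N = T\wedge\inf\{t:\ol M_t\ge N\}$ where $\ol M_t$ is built from the absolute value of the integrand in~\eqref{eq-corr}, which directly guarantees the absolute convergence needed to justify the exchanges of expectation.
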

\begin{proof}
	Let $\ol F_{-\frac2\gamma}(w, (z_j)_j, (x_k)_k )< \infty$ be defined via~\eqref{eq-corr} except with the integrand replaced by its absolute value, and let $\ol M_t = \prod_j |g_t'(z_j)|^{2\Delta_{\alpha_j}} \prod_k |g_t'(x_k)|^{\Delta_{\beta_k}} \ol F_{-\frac2\gamma}(w, (z_j)_j, (x_k)_k )$, so $\ol M_t \geq |M_t|$. 
	For $N>0$ let $T_N = T \wedge \inf \{t: \ol M_t \geq N \}$. Since $t\mapsto \ol M_t$ is continuous on $[0,T)$ we have $\lim_{N \to \infty} T_N= T$ almost surely. Thus we are done once we show $M_t$ stopped at $T_N$ is a martingale. 
	
	It suffices to show that for stopping times $\tau_1 \leq \tau_2 \leq T_N$ we have ${\mathbb E}[ M_{\tau_2} \mid \eta_{\tau_1}] = M_{\tau_1}$. Instead, to simplify notation, we show that if $\tau \leq T_N$ is a stopping time then ${\mathbb E}[M_\tau] = M_0$; the proof of the desired claim is identical.
	
	Sample $\phi_0 \sim  \LF_{\mathbb H}^{(\frac{\beta_*}2, w), (\alpha_j, z_j)_j, (\frac{\beta_k}2, x_k)_k, (\frac\delta2, \infty)}$, and define $(\phi_t, \eta_t)$ by conformally welding the boundary arcs to the left and right of $w$ as in Theorem~\ref{thm-simple-zipper}. Let $A_t = \cA_{\phi_t}({\mathbb H})$, let  $L_{k, t} = \cL_{\phi_t}(g_t(I_k))$ for $k \neq k_*$, and let $L_t = \cL_{\phi_t}((g_t(x_{k_*}), W_t))$  and $R_t = \cL_{\phi_t}((W_t, g_t(x_{k_*+1}))$.
	Since the conformal welding does not affect the quantum area nor quantum lengths of boundary segments not adjacent to $w$, the processes $A_t$ and $L_{k, t}$ are constant. Moreover, the conformal welding identifies segments of equal quantum length, so $L_t - R_t = L_0 - R_0$ for all $t$. Thus, $G_t = \exp( - A_t - \sum_{k \neq k_*} \mu_k L_{k, t} - \mu_L( L_t - R_t))$ is constant as $t$ varies. 
	Consequently, writing ${\mathbb E}$ to denote expectation with respect to $\rSLE_\kappa^\tau$ and using Theorem~\ref{thm-simple-zipper}, we have 
	\eqb\label{eq-mtg}
	\begin{aligned}
		M_0 &= \LF_{\mathbb H}^{(-\frac1{\sqrt\kappa}, w), (\alpha_j, z_j)_j, (\frac{\beta_k}2, x_k)_k, (\frac\delta2, \infty)}[G_0] \\
		&= {\mathbb E}[\prod_j |g_\tau'(z_j)|^{2\Delta_{\alpha_j}} \prod_k |g_\tau'(x_k)|^{\Delta_{\beta_k}} \LF_{\mathbb H}^{(-\frac1{\sqrt\kappa}, W_\tau), (\alpha_j, g_\tau(z_j))_j, (\frac{\beta_k}2, g_\tau(x_k))_k, (\frac\delta2, \infty)}[G_\tau]] = {\mathbb E}[M_\tau].
	\end{aligned}	 
	\eqe
	Here, the integrals converge absolutely by the definition of $T_N$. This completes the proof. 
\end{proof}

If we assume that $F_{-\frac2\gamma}$ is smooth, then setting the drift term of $dM_t$ to zero immediately gives Theorem~\ref{thm-bpz} for $\beta_* = -\frac2\gamma$. Since we do not have smoothness a priori, we only know $F_{-\frac2\gamma}$ is a weak solution to the BPZ equation. A hypoellipticity argument originally due to \cite[Lemma 5]{dubedat-virasoro} shows that weak solutions are also strong solutions; we instead follow \cite[Lemma 4.4, Proposition 2.6]{pw-multiple-SLE} which is closer to our setting. See also \cite[Appendix A]{ahsy-nonsimple} for a more detailed treatment of this argument.

\begin{lemma}\label{lem-bpz-simple}
	Theorem~\ref{thm-bpz} holds for $\beta_* = -\frac2\gamma$. 
\end{lemma}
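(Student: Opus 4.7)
The plan is to apply It\^o's formula to the local martingale $M_t$ of~\eqref{eq-Mt-simple} to realize $F_{-2/\gamma}$ as a distributional solution of the BPZ equation, then use a Hörmander hypoellipticity argument as in~\cite{dubedat-virasoro, pw-multiple-SLE} to upgrade to pointwise smoothness. First I would formally compute the drift of $dM_t$. Using $dW_t = \sqrt\kappa\, dB_t$, the reverse Loewner ODE $dg_t(z) = -2(g_t(z)-W_t)^{-1}\,dt$, and $d\log|g_t'(z)| = \Re(2(g_t(z)-W_t)^{-2})\,dt$, It\^o's formula gives a drift equal to $J_t\cdot(\cD F_{-2/\gamma})(W_t, (g_t(z_j))_j, (g_t(x_k))_k)\,dt$, where $J_t$ is the Jacobian prefactor in~\eqref{eq-Mt-simple} and $\cD$ coincides, up to a positive constant, with the operator displayed in Theorem~\ref{thm-bpz} (the match of coefficients comes from $1/\beta_*^2 = \kappa/4$, so the ratio of first- and zeroth-order terms to the $\partial_{ww}$ coefficient agrees). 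If $F_{-2/\gamma}$ were already $C^2$, setting the drift to zero at $t=0$ would finish.

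Without a priori regularity, the It\^o calculation only holds formally, so I would extract a distributional identity following~\cite[Lemma 4.4 and Proposition 2.6]{pw-multiple-SLE}. For a smooth compactly supported test function $\varphi$ on the configuration space of $(w, (z_j), (x_k))$, integrate the identity $\E[M_\tau] - M_0 = 0$ against $\varphi$ over initial data, use Fubini together with the absolute-convergence bound built into the truncation time $T_N$ in the proof of Lemma~\ref{lem-Mt-simple-mtg} to rewrite the left-hand side as a time integral of $\int \varphi \cdot \cD F_{-2/\gamma}$, and then integrate by parts in the configuration variables. Dividing by $\tau$ and sending $\tau \downarrow 0$ yields $\int F_{-2/\gamma}\cdot \cD^*\varphi = 0$, i.e.\ $F_{-2/\gamma}$ is a distributional solution of $\cD F_{-2/\gamma} = 0$.

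Finally, I would check that $\cD$ is hypoelliptic via Hörmander's bracket condition. The only second-order direction is $\partial_w$; the first-order drift field has components that are rational in $w$ with distinct pole structures at each $z_j$, $\bar z_j$, $x_k$, so iterated commutators $[\partial_w, V], [\partial_w,[\partial_w, V]], \dots$ have a Wronskian-type linear independence in the $\partial_{z_j}, \partial_{\bar z_j}, \partial_{x_k}$ basis and together with $\partial_w$ span the full tangent space at every non-collision configuration. Hypoellipticity then forces $F_{-2/\gamma}$ to be smooth, at which point the distributional equation becomes the pointwise BPZ equation claimed by Theorem~\ref{thm-bpz} for $\beta_* = -2/\gamma$. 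The main obstacle is the middle step: one cannot apply It\^o at a single initial condition because the regularity of $F_{-2/\gamma}$ is unknown a priori, so the distributional equation has to be obtained by averaging the martingale identity over initial data and exchanging orders of integration, and this exchange has to be justified using the truncation $T_N$ (as Steps 1 and 3 are essentially routine computations).
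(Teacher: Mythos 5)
Your proposal is correct and follows essentially the same route as the paper: local martingality of $M_t$ (Lemma~\ref{lem-Mt-simple-mtg}) implies $F_{-\frac2\gamma}$ is a weak solution of $\cD F_{-\frac2\gamma}=0$ for the operator $\cD$ obtained after factoring out the Jacobian prefactor (the coefficient match via $1/\beta_*^2=\kappa/4$ is exactly right), and H\"ormander hypoellipticity as in \cite[Proposition 2.6]{pw-multiple-SLE} upgrades this to smoothness and the pointwise BPZ equation. The only cosmetic difference is that the paper phrases the drift computation through the generator of the augmented diffusion $(W_t,(g_t(z_j))_j,(g_t(x_k))_k,(|g_t'(z_j)|)_j,(|g_t'(x_k)|)_k)$ together with the product rule, rather than through a direct It\^o expansion of $M_t$, and delegates the weak-solution extraction you describe to the cited lemma.
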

\begin{proof}
	Consider the diffusion $X_t = (W_t, (g_t(z_j))_j, (g_t(x_k))_k, (|g_t'(z_j)|)_j, (|g_t'(x_k) |)_k)$ where $W_t$ and $g_t$ are defined above~\eqref{eq-Mt-simple}. For each $t$ we have $X_t \in \R \times {\mathbb H}^{m} \times \R^n \times \R^m \times \R^n$. We denote the coordinates of an element of $\R \times {\mathbb H}^{m} \times \R^n \times \R^m \times \R^n$ by $(\mathbf w, (\mathbf z_j)_j, (\mathbf x_k)_k, (\mathbf a_j)_j, (\mathbf b_k)_k)$; with this notation, since $d g_t(z) = -\frac2{g_t(z)-W_t}dt$ and $d|g_t'(z)| = \Re \frac{2|g_t'(z)|}{(g_t(z) - W_t)^2} dt$, the infinitesimal generator $A$ of $X_t$ is 
	\[A = \frac\kappa2 \partial_{\mathbf w}^2 - \sum_j (\frac{2}{\mathbf z_j - \mathbf w} \partial_{\mathbf z_j} + \frac{2}{\ol{\mathbf  z}_j - \mathbf w} \partial_{\ol{\mathbf  z}_j}) - \sum_k \frac2{\mathbf x_k - \mathbf w}\partial_{\mathbf x_k} + \sum_j \Re \frac{2\mathbf a_j}{(\mathbf z_j - \mathbf w)^2} \partial_{\mathbf a_j} + \sum_j \frac{2\mathbf b_k}{(\mathbf x_k - \mathbf w)^2} \partial_{\mathbf b_k}. \]
	Let $F(\mathbf w, (\mathbf z_j)_j, (\mathbf x_k)_k, (\mathbf a_j)_j, \mathbf b_k)_k) = \prod_j \mathbf a_j^{2\Delta_{\alpha_j}} \prod_k \mathbf b_k^{\Delta_{\beta_k}} F_{-\frac2\gamma}(\mathbf w, (\mathbf z_j)_j, (\mathbf x_k)_k)$. Since $M_t$ is a local martingale (Lemma~\ref{lem-Mt-simple-mtg}), $F$ is a weak solution to $AF = 0$. The product rule then yields
	\[0 = A \Big(\prod_j \mathbf a_j^{2\Delta_{\alpha_j}} \prod_k \mathbf b_k^{\Delta_{\beta_k}}  F_{-\frac2\gamma} (\mathbf w, (\mathbf z_j)_j, (\mathbf x_k)_k)\Big) = \prod_j \mathbf a_j^{2\Delta_{\alpha_j}} \prod_k \mathbf b_k^{\Delta_{\beta_k}}  \cD F_{-\frac2\gamma}(\mathbf w, (\mathbf z_j)_j, (\mathbf x_k)_k)\]
	where $\cD$ is the differential operator on $\R \times{\mathbb H}^m \times \R^n$ given by 
	\eqb\label{eq-cD}
	\cD :=  \frac\kappa2 \partial_{\mathbf w}^2 - \sum_j (\frac{2}{\mathbf z_j - \mathbf w} \partial_{\mathbf z_j} + \frac{2}{\ol{\mathbf  z}_j - \mathbf w} \partial_{\ol{\mathbf  z}_j}) - \sum_k \frac2{\mathbf x_k - \mathbf w}\partial_{\mathbf x_k} + \sum_j \Re \frac{4\Delta_{\alpha_j}}{(\mathbf z_j - \mathbf w)^2}  + \sum_j \frac{2 \Delta_{\beta_k}}{(\mathbf x_k - \mathbf w)^2}. 
	\eqe
	We conclude that $\cD F_{-\frac2\gamma} = 0$ in the distributional sense. 
	
	The operator $\cD$ is called \emph{hypoelliptic} if the weak solutions of $\cD F = 0$ are smooth.  H\"ormander's condition gives a criterion for hypoellipticity which we verify for $\cD$ in Lemma~\ref{lem-hypoelliptic}.
	Thus, $F_{-\frac2\gamma}$ is smooth, and $\cD F_{-\frac2\gamma} = 0$ is the desired BPZ equation. 
\end{proof}

\begin{lemma}\label{lem-hypoelliptic}
Let $U \subset \R \times \bbH^m \times \R^n$ be the set of tuples $(\mathbf w, (\mathbf z_j)_j, (\mathbf x_k)_k)$ such that $\mathbf z_1, \dots, \mathbf z_m$ are pairwise distinct, and  $\mathbf w,\mathbf x_1, \dots, \mathbf x_n$ are pairwise distinct.  The operator $\cD$ defined in~\eqref{eq-cD} is hypoelliptic on $U$. 
\end{lemma}
\begin{proof}
	Let $A_0 = - \sum_j (\frac{2}{\mathbf z_j - \mathbf w} \partial_{\mathbf z_j} + \frac{2}{\ol{\mathbf  z}_j - \mathbf w} \partial_{\ol{\mathbf  z}_j}) - \sum_k \frac2{\mathbf x_k - \mathbf w}\partial_{\mathbf x_k}$ and $A_1 = \sqrt{\frac\kappa2} \partial_{\mathbf w}$ be smooth vector fields on $U$, and define the smooth function $b = \sum_j \Re \frac{4\Delta_{\alpha_j}}{(\mathbf z_j - \mathbf w)^2}  + \sum_j \frac{2 \Delta_{\beta_k}}{(\mathbf x_k - \mathbf w)^2}$. H\"ormander's theorem states that if at every point in $U$ the vector fields $A_0$, $A_1$ and their commutators generate a full-rank vector space, then 
	 $\cD= A_1^2 + A_0 + b$ is hypoelliptic on $U$ \cite[Theorem 1.1]{hormander-hypoelliptic}. We will verify H\"ormander's condition for our choice of $A_0, A_1$. Let $A_0^{[0]} = A_0$ and for $\ell \geq 1$ inductively define $A_0^{[\ell]} := \frac1\ell [ \partial_{\mathbf w} , A_0^{[\ell-1]}] = \frac1\ell (\partial_{\mathbf w} A_0^{[\ell - 1]} - A_0^{[\ell-1]} \partial_{\mathbf w})$. Then
	 \begin{align*}
	 	A_0^{[\ell]} = -\sum_j (\frac{2}{(\mathbf z_j - \mathbf w)^{\ell+1}} \partial_{\mathbf z_j} + \frac2{(\ol{\mathbf z}_j - \mathbf w)^{\ell+1}} \partial_{\ol{\mathbf z}_j}) - \sum_k \frac2{(\mathbf x_k - \mathbf w)^{\ell+1}} \partial_{\mathbf x_k} \\
	 	= -\sum_j (\Re \frac{2}{(\mathbf z_j - \mathbf w)^{\ell+1}} \partial_{\mathbf u_j} + \Im \frac2{(\ol{\mathbf z}_j - \mathbf w)^{\ell+1}} \partial_{\mathbf v_j}) - \sum_k \frac2{(\mathbf x_k - \mathbf w)^{\ell+1}} \partial_{\mathbf x_k}
	 \end{align*}
 where we have changed coordinates from $(\partial_{\mathbf z_j},\partial_{\ol{\mathbf z}_j})$ to $(\partial_{\mathbf u_j},\partial_{\mathbf v_j})$ for $(\mathbf u_j, \mathbf v_j) := (\Re \mathbf z_j, \Im \mathbf z_j)$. 
 
	 We now verify that, evaluated at each point in $U$, the $(2m+n+1)$ vectors $A_1, A_0^{[0]}, \dots, A_0^{[2m+n-1]}$  are linearly independent in $\R^{2m+n+1}$. Since $A_1$ is the only vector with a $\partial_{\mathbf w}$ term, it suffices to show $A_0^{[0]}, \dots, A_0^{[2m+n-1]}$ are linearly independent. This is equivalent to $\det B \neq 0$, where  $B$ is the $(2m+n) \times (2m+n)$ matrix whose rows are
	 \begin{align*}
	 	\begin{bmatrix}
	 		- \Re \frac2{(\mathbf z_j - \mathbf w)^{1}} & \cdots & - \Re \frac2{(\mathbf z_j - \mathbf w)^{2m+n}}
	 	\end{bmatrix}& \quad \text{ for }j = 1, \dots, m, \\
 	\begin{bmatrix}
 		- \Im \frac2{(\ol{\mathbf z}_j - \mathbf w)^{1}} & \cdots & - \Im \frac2{(\ol{\mathbf z}_j - \mathbf w)^{2m+n}}
 	\end{bmatrix}& \quad \text{ for }j = 1, \dots, m, \\
 \begin{bmatrix}
 	- \frac2{(\mathbf x_k - \mathbf w)^{1}} & \cdots & - \frac2{(\mathbf x_k - \mathbf w)^{2m+n}}
 \end{bmatrix}& \quad \text{ for }k = 1, \dots, n.
 		 \end{align*}
Using row operations, $\det B \neq 0$ is equivalent to $\det \wt B \neq 0$, where $\wt B$ is the Vandermonde matrix whose  rows are $[ a^1 \, \,  \cdots \, \, a^{2m+n}]$ for $a = (\mathbf z_j - \mathbf w)^{-1}$ ($m$ rows),  $a = (\ol{\mathbf z}_j - \mathbf w)^{-1}$ ($m$ rows), and $a = (\mathbf x_k - \mathbf w)^{-1}$ ($n$ rows). Since the $2m+n$ complex numbers $((\mathbf z_j - \mathbf w)^{-1})_j, ((\ol{\mathbf z}_j - \mathbf w)^{-1})_j, ((\mathbf x_k - \mathbf w)^{-1})_k$ are nonzero and pairwise distinct, we conclude $\det \wt B \neq 0$. This completes the proof. 
\end{proof}

\subsection{Case: $\gamma \in (0,\sqrt2)$ and $\beta_* = -\frac\gamma2$ }\label{subsec-bpz-8}
The key to the BPZ equation is the following martingale which explains the coupling~\eqref{eq-cosmocoupling} between $\mu_L,\mu_R$. 

\begin{lemma}\label{lem-bm-mtg}
	For $\kappa > 4$, let $(X,Y)$ be correlated Brownian motions with covariance
	\[\Var(X_s) = \Var(Y_s) = \mathbbm a^2 s, \quad  \Cov(X_s, Y_s) = -\mathbbm a^2 \cos(\theta)s \quad  \text{ where }  \theta = \frac{4\pi}{\kappa}, \,\, \mathbbm a^2 = \frac{2}{\sin(\theta)}.\]
Let $x \in {\mathbb C}$ and set $\mu_L = \sqrt{1/\sin \theta} \cos x$ and $\mu_R = \sqrt{1/\sin \theta} \cos (x + \theta)$. 
Then the process $e^{- s - \mu_L X_s - \mu_R Y_s}$ is a martingale. 
\end{lemma}
\begin{proof}
	We need the trigonometric identity
	\eqb\label{eq-trig}
	\cos^2 x + \cos^2 (x+\theta) - 2 \cos x \cos(x+\theta) \cos \theta = \sin^2 \theta.
	\eqe
	To see that this holds, we compute
	\alb
	&2 \cos x \cos(x+\theta) \cos \theta = (\cos (2x +\theta) + \cos (\theta) ) \cos \theta \\ &= \frac12 \cos(2(x+\theta)) + \frac12 \cos 2x + 1 - \sin^2\theta
	= \cos^2 (x+\theta) + \cos^2 x - \sin^2\theta,
	\ale
	where the first equality uses the product-to-sum formula, the second  uses the product-to-sum formula and $\sin^2 + \cos^2 = 1$, and the last uses the double-angle formula. Now,~\eqref{eq-trig} gives
	\[{\mathbb E}[e^{-s - \mu_L L_s - \mu_R R_s}] = \exp(- s + \frac12 (\mu_L^2 + \mu_R^2 - 2 \mu_L \mu_R \cos \theta) \mathbbm a^2 s) = \exp(- s + \frac {1}{2 \sin \theta} \cdot \sin^2 \theta \cdot \mathbbm a^2 s) = 1. \]
	This and the strong Markov property of Brownian motion imply that $e^{-s - \mu_L L_s - \mu_R R_s}$ is a martingale. 
\end{proof}

	Since $(X,Y)$ has the law of the Brownian motion in Proposition~\ref{prop-mot}, 
	Lemma~\ref{lem-bm-mtg} immediately gives the following. Here we have the ``mismatched'' term $\mu_L R(\cC_s)$ because eventually $\partial_r^-\cC_s$ will be welded to the left boundary of another surface. 
\begin{lemma}\label{lem-mtg-8}
	Let $\kappa > 8$ and $\gamma = \frac4{\sqrt\kappa}$. 
	Sample $(\cC_s)_{s>0} \sim \mathrm{CRT}^2_\kappa$, let $A(\cC_s) = s$ be the quantum area of $\cC_s$, let $L(\cC_s)$ be the quantum length of $\partial_\ell^+ \cC_s$ minus the quantum length of $\partial_\ell^- \cC_s$, and let $R(\cC_s)$ be defined analogously with $\ell$ replaced by $r$.  
	Let $\sigma$ be a stopping time for the filtration generated by $(\cC_s)_{s>0}$. For $\mu_L, \mu_R$ as in Lemma~\ref{lem-bm-mtg},
	\[\E[e^{-A(\cC_s) - \mu_L R(\cC_s) - \mu_R L(\cC_s)}] = 1. \] 
\end{lemma}
\begin{proof}
	Using Lemma~\ref{lem-bm-mtg}, the claim is immediate from the fact that if $(X_s, Y_s)_{s>0}$ is the process in Proposition~\ref{prop-mot}, then by definition $(R(\cC_s), L(\cC_s)) = (X_s, Y_s)$. 
\end{proof}

Consider reverse $\SLE_\kappa$ where the driving function $W_t$ is given by $W_0 = w$ and $dW_t = \sqrt\kappa dB_t$. Let $(\eta_t)_{t \geq 0}$ be the family of curves, and let $T\leq \infty$ be the first time $t$ that $g_t(x_k) \in \eta_t$ for some $k$. For $t < T$ define 
\[M_t = \prod_j |g_t'(z_j)|^{2\Delta_{\alpha_j}} \prod_k |g_t'(x_k)|^{\Delta_{\beta_k}} F_{-\frac\gamma2} (W_t, (g_t(z_j))_j, (g_t(x_k))_k).\] 

\begin{lemma}\label{lem-Mt-sf-mtg}
	In the case $\kappa > 8$, $M_t$ is a local martingale.
\end{lemma}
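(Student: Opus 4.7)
The plan is to mirror the argument of Lemma~\ref{lem-Mt-simple-mtg}, replacing the welding-invariance of $G_t$ by the Brownian martingale of Lemma~\ref{lem-bm-mtg}. First I would set up the same truncation as before: define $\overline M_t$ by integrating the absolute value of the integrand defining $F_{-\gamma/2}$, set $T_N = T\wedge \inf\{t : \overline M_t \geq N\}$, and reduce to showing $\E[M_\tau] = M_0$ for every stopping time $\tau\leq T_N$, after which $N\to\infty$ yields the local martingale property.

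Next I would realize $M_t$ via the $\kappa>8$ quantum zipper of Theorem~\ref{thm-sf-zipper}: sample $\phi_0 \sim \LF_\bbH^{(\beta_*/2,w),(\alpha_j,z_j)_j,(\beta_k/2,x_k)_k,(\delta/2,\infty)}$ independently from $(X_s,Y_s)_{s\geq 0} \sim \mathrm{CRT}_\kappa$, and define $(\phi_t,\eta_t)$ by mating the CRTs to the boundary of $(\bbH,\phi_0)$ and embedding via hydrodynamic normalization, reparametrized by capacity. With $A_t, L_{k,t}, L_t, R_t$ and $G_t := \exp(-A_t - \sum_{k\neq k_*}\mu_k L_{k,t} - \mu_L L_t - \mu_R R_t)$ defined as in Lemma~\ref{lem-Mt-simple-mtg}, the construction above Theorem~\ref{thm-sf-zipper} yields $A_t = A_0 + s(t)$, $L_{k,t} = L_{k,0}$ for $k\neq k_*$, $L_t = L_0 + X_{s(t)}$, and $R_t = R_0 + Y_{s(t)}$, where $s(t)$ is the quantum area accumulated by capacity time $t$. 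Hence
\[
G_t/G_0 = \exp\bigl(-s(t) - \mu_L X_{s(t)} - \mu_R Y_{s(t)}\bigr).
\]
A short computation shows that with $\theta = 4\pi/\kappa = \pi\gamma^2/4$, the coupling~\eqref{eq-cosmocoupling} with $\sigma_L - \sigma_R = \beta_*/2 = -\gamma/4$ is precisely the form required by Lemma~\ref{lem-bm-mtg}, so $(e^{-s - \mu_L X_s - \mu_R Y_s})_{s\geq 0}$ is a martingale for the CRT filtration $(\cG_s)$.

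To conclude I would apply optional stopping conditionally on $\phi_0$. Given $\phi_0$, the capacity-time curve $\eta_t$ is a deterministic function of $(X_u,Y_u)_{u\leq s(t)}$, so $\{\tau \leq t\}$ lies in $\sigma(\phi_0)\vee \cG_{s(t)}$, and therefore $s(\tau)$ is a $\cG$-stopping time given $\phi_0$. The truncation by $T_N$ provides the integrability needed to apply optional stopping, giving $\E[G_\tau/G_0\mid\phi_0] = 1$, whence Fubini yields $\E[G_\tau] = \E[G_0] = M_0$. On the other hand, invoking Theorem~\ref{thm-sf-zipper} exactly as in the chain of equalities~\eqref{eq-mtg} rewrites $\E[G_\tau]$ as $\E[M_\tau]$. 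The conclusion $\E[M_\tau] = M_0$ then follows.

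The main obstacle will be the measurability claim that conditional on $\phi_0$ the area $s(\tau)$ is a CRT stopping time — one must verify that capacity time is a measurable function of the CRT trajectory on $[0,s(t)]$ once $\phi_0$ is fixed — together with checking that the $T_N$-truncation is enough to legitimately apply optional stopping to the (not uniformly integrable) exponential martingale $\exp(-s - \mu_L X_s - \mu_R Y_s)$.
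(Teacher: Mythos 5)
Your proposal is correct and follows essentially the same route as the paper's proof: truncate by $T_N$, realize $M_t$ via the $\kappa>8$ zipper of Theorem~\ref{thm-sf-zipper}, observe that $G_\tau/G_0 = \exp(-\sigma - \mu_L X_\sigma - \mu_R Y_\sigma)$ with $\sigma = s(\tau)$ a stopping time for $\wt\cF_s = \sigma(\phi_0,(X_\cdot,Y_\cdot)_{[0,s]})$ given $\phi_0$, apply Lemma~\ref{lem-bm-mtg}, and conclude via the chain of equalities~\eqref{eq-mtg}. The subtleties you flag (measurability of $s(\tau)$ and integrability for optional stopping) are handled in the paper exactly as you anticipate.
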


\begin{proof}
	For $N>0$ define the stopping time $T_N$ as in Lemma~\ref{lem-Mt-simple-mtg}, then it suffices to show that $M_t$ stopped at $T_N$ is a martingale. 
	As before, we need to show that for stopping times $\tau_1 \leq \tau_2 \leq T_N$ we have ${\mathbb E}[ M_{\tau_2} \mid \eta_{\tau_1}] = M_{\tau_1}$. We instead show that if $\tau \leq T_N$ is a stopping time then ${\mathbb E}[M_\tau] = M_0$; the former is proved identically. Similarly, to lighten notation we assume that $\mu_k = 0$ for all $k \neq k_*$.
	
	Sample $(\phi_0, (\cC_s)_{s>0} ) \sim \LF_{\mathbb H}^{(\frac{\beta_*}2, w), (\alpha_j, z_j)_j, (\frac{\beta_k}2, x_k), (\frac\delta2, \infty)} \times \mathrm{CRT}^2_\kappa$, use conformal welding to glue $\cC_s$ to $(\bbH, \phi_0, w, \infty)/{\sim_\gamma}$ (identifying the curve starting point of $\cC_s$ with the boundary point $w$), and define $(\phi_t, \eta_t)$ as in Theorem~\ref{thm-sf-zipper}.
	For each $t \leq \tau$ let $s(t) := \cA_{\phi_t}(\eta_t((0,t)))$, and let $\sigma = s(\tau)$. The time $\sigma$ is a stopping time for the filtration $\wt\cF_s = \sigma(\phi_0,(\cC_\cdot)_{[0,s]})$, since for $s = s(t)$ the pair $(\phi_{t}, \eta_{t})$ is obtained from $(\phi_0,(\cC_\cdot)_{[0,s]})$ by conformal welding. 
	
	Let $A_t = \cA_{\phi_t}({\mathbb H})$, $L_t = \cL_{\phi_t}((g_t(x_{k_*}), W_t))$ and $R_t = \cL_{\phi_t}((W_t, g_t(x_{k_*+1})))$. Let $G_t = e^{-A_t - \mu_L L_t - \mu_R R_t}$. 	By the conformal welding construction of the process we have $(A_\tau - A_0, L_\tau - L_0, R_\tau - R_0) = (A(\cC_\sigma), R(\cC_\sigma), L(\cC_\sigma))$ where the latter quantities are as defined in Lemma~\ref{lem-mtg-8}. Since $\sigma$ is a stopping time for $\wt \cF_s$, Lemma~\ref{lem-mtg-8} gives 
	\[{\mathbb E}[G_\tau \mid \phi_0] =  G_0 \E[e^{-A(\cC_\sigma) - \mu_L R(\cC_\sigma) - \mu_R L(\cC_\sigma)} \mid \phi_0] = 1. \]
	Consequently, by Theorem~\ref{thm-sf-zipper}, Equation~\eqref{eq-mtg} holds in this setting, so ${\mathbb E}[M_\tau] = M_0$ as desired. 
\end{proof}

\begin{lemma}\label{lem-bpz-sf}
	Theorem~\ref{thm-bpz} holds for $\beta_* = -\frac\gamma2$ and $\gamma \in (0,\sqrt2)$. 
\end{lemma}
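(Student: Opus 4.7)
The plan is to mimic the proof of Lemma~\ref{lem-bpz-simple} essentially verbatim, using Lemma~\ref{lem-Mt-sf-mtg} in place of Lemma~\ref{lem-Mt-simple-mtg}. Since $\gamma \in (0,\sqrt{2})$ and $\beta_* = -\frac\gamma2$, we have $\kappa = 4/\beta_*^2 = 16/\gamma^2 > 8$, so we are precisely in the space-filling regime handled by Lemma~\ref{lem-Mt-sf-mtg}. First I would introduce the same diffusion $X_t = (W_t, (g_t(z_j))_j, (g_t(x_k))_k, (|g_t'(z_j)|)_j, (|g_t'(x_k)|)_k)$ driven by reverse $\SLE_\kappa$ with $W_0 = w$ and $dW_t = \sqrt\kappa\, dB_t$. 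Since the reverse Loewner ODEs have the same form regardless of $\kappa$, the infinitesimal generator $A$ is literally identical to the one written down in the proof of Lemma~\ref{lem-bpz-simple}.

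Next I would invoke Lemma~\ref{lem-Mt-sf-mtg} to conclude that $M_t = \prod_j |g_t'(z_j)|^{2\Delta_{\alpha_j}} \prod_k |g_t'(x_k)|^{\Delta_{\beta_k}} F_{-\frac\gamma2}(W_t, (g_t(z_j))_j, (g_t(x_k))_k)$ is a local martingale. Writing $F$ for the corresponding function on $\R \times \bbH^m \times \R^n \times \R^m \times \R^n$, this forces $A F = 0$ in the distributional sense. Applying the product rule exactly as in Lemma~\ref{lem-bpz-simple} factors out the $\mathbf a_j^{2\Delta_{\alpha_j}}$ and $\mathbf b_k^{\Delta_{\beta_k}}$ prefactors, yielding $\cD F_{-\frac\gamma2} = 0$ weakly, where $\cD$ is the operator displayed in that proof. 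A quick bookkeeping check confirms that dividing $\cD$ by $2$ produces the operator in Theorem~\ref{thm-bpz}, because $\kappa/4 = 4/\gamma^2 = 1/\beta_*^2$ when $\beta_* = -\frac\gamma2$, which is the correct coefficient on $\partial_{ww}$; the lower-order terms then rescale correspondingly.

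Finally, to upgrade $F_{-\frac\gamma2}$ from a weak to a classical solution, I would appeal to exactly the same hypoellipticity argument: $\cD$ satisfies H\"ormander's bracket condition (the verification is identical to \cite[Proposition 2.6]{pw-multiple-SLE}), so distributional solutions of $\cD F = 0$ are smooth. This simultaneously gives smoothness of $F_{-\frac\gamma2}$ and the BPZ equation in the classical sense. Since essentially everything reduces to input Lemma~\ref{lem-Mt-sf-mtg} and a verbatim replay, there is no serious obstacle; the genuine work—establishing the martingale through the space-filling quantum zipper of Theorem~\ref{thm-sf-zipper} together with the Brownian motion identity of Lemma~\ref{lem-bm-mtg}, which is what forces the cosmological coupling~\eqref{eq-cosmocoupling}—has already been carried out upstream.
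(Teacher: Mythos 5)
Your proposal is correct and matches the paper's proof exactly: the paper's argument for this lemma is precisely to rerun the hypoellipticity argument of Lemma~\ref{lem-bpz-simple} with Lemma~\ref{lem-Mt-sf-mtg} supplying the local martingale in the regime $\kappa = 16/\gamma^2 > 8$. Your additional bookkeeping check that $\kappa/4 = 1/\beta_*^2$ when $\beta_* = -\frac\gamma2$ is accurate and harmless.
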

\begin{proof}
	Given Lemma~\ref{lem-Mt-sf-mtg} the argument of Lemma~\ref{lem-bpz-simple} implies the result. 
\end{proof}

\subsection{Case: $\gamma \in (\sqrt2,2)$ and $\beta_* = -\frac\gamma2$ }\label{subsec-bpz-48}
In this section we prove  Theorem~\ref{thm-bpz} for $\gamma \in (\sqrt2,2)$ and $\beta_* = -\frac\gamma2$. 

We set $\kappa = \frac{16}{\gamma^2}\in (4,8)$. 
In this regime, $\SLE_\kappa$ is self-hitting but not space-filling. There is a variant called \emph{space-filling $\SLE_\kappa$} \cite{ig4} with the property that if $\eta^\mathrm{SF}$ is space-filling $\SLE_\kappa$ in $({\mathbb H}, 0, \infty)$, and $T$ is the set of times $t$ that $\eta^\mathrm{SF}(t)$ is on the boundary of the unbounded connected component of ${\mathbb H}\backslash \eta^\mathrm{SF}([0,t])$, then the ordered collection of points $\{ \eta^\mathrm{SF}(t) : t \in T\}$ has the law of (non-space-filling\footnote{The $\SLE_\kappa$ curve for $\kappa \in (4,8)$ is non-space-filling, but in this section we will (redundantly) refer to it as non-space-filling $\SLE_\kappa$ to make a clear distinction from space-filling $\SLE_\kappa$.}) $\SLE_\kappa$ in $({\mathbb H}, 0, \infty)$. This gives a coupling of space-filling $\SLE_\kappa$ and non-space-filling $\SLE_\kappa$.

The weight $(2 - \frac{\gamma^2}2)$ quantum wedge is thin since $2 - \frac{\gamma^2}2 \in (0, \frac{\gamma^2}2)$ for $\gamma \in (\sqrt2,2)$. As in the case of non-space-filling $\SLE_\kappa$, we define space-filling $\SLE_\kappa$ on the thin quantum wedge to be the concatenation of independent space-filling $\SLE_\kappa$ curves between the marked points in each connected component.
We state the mating of trees theorem \cite[Theorem 1.4.1]{wedges} for this range of $\gamma$, see Figure~\ref{fig-mot-forr}. 
\begin{proposition}\label{prop-mot-forr}
	Let $\kappa \in (4,8)$ and $\gamma = \frac{4}{\sqrt\kappa}$. Sample a weight $(2-\frac{\gamma^2}2)$ quantum wedge decorated by an independent \emph{space-filling} $\SLE_\kappa$ $\eta^\mathrm{SF}$.  
	Parametrize $\eta^\mathrm{SF}$ by quantum area. On the counterclockwise (resp.\ clockwise) boundary arc of $\eta^\mathrm{SF}([0,s])$ from $0$ to $\eta(s)$, let $X_s^-$ and $X_s^+$ (resp.\ $Y_s^-$ and $Y_s^+$) be the quantum lengths of the boundary segments lying in the quantum wedge's boundary and bulk respectively. Then the law of $(X_s, Y_s) := (X_s^+ - X_s^-,Y_s^+ - Y_s^-)$ is Brownian motion with covariance as in Proposition~\ref{prop-mot}. Moreover, the curve-decorated quantum wedge is measurable with respect to $(X_s, Y_s)_{s \geq 0}$.
\end{proposition}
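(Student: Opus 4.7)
The plan is to combine the non-space-filling mating of Proposition~\ref{prop-mate-forests} with a per-disk analysis of space-filling $\SLE_\kappa$. Let $\eta$ be the skeleton of $\eta^{\mathrm{SF}}$, obtained by restricting to the times when $\eta^{\mathrm{SF}}$ lies on the boundary of the unbounded complementary component. By the imaginary-geometry construction of space-filling $\SLE_\kappa$ in \cite{ig4}, $\eta$ has the law described in Proposition~\ref{prop-mate-forests}, so it cuts the weight $(2-\gamma^2/2)$ wedge into two independent forested lines whose bubbles are independent quantum disks. Conditionally on $\eta$, the restriction of $\eta^{\mathrm{SF}}$ to each such bubble is an independent space-filling $\SLE_\kappa$ loop started and returning to the attaching point.

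First I would prove a per-disk mating-of-trees: for a quantum disk $(D,h)$ with two marked boundary points, an independent space-filling $\SLE_\kappa$ starting at the marked point and returning to it, parametrized by quantum area, gives a left/right boundary length process with the law of a Brownian excursion whose variance and covariance match $\mathrm{CRT}_\kappa$. The natural path is a local absolute continuity argument near the root: the quantum disk is absolutely continuous with respect to a thick quantum wedge, and space-filling $\SLE_\kappa$ on the disk is locally absolutely continuous with respect to space-filling $\SLE_\kappa$ on the wedge. This reduces the problem to a thick-wedge mating-of-trees for $\kappa \in (4,8)$, which in turn can be obtained by the direct Brownian motion argument of \cite{wedges} using the imaginary geometry representation of space-filling $\SLE_\kappa$.

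Next I would assemble the full process. When $\eta^{\mathrm{SF}}$ traces along the trunk of a forested line, the corresponding $X_s^-$ or $Y_s^-$ grows at deterministic unit rate while $X_s^+, Y_s^+$ are constant; when $\eta^{\mathrm{SF}}$ fills a single bubble, the previous step describes the joint evolution of $X_s^+, Y_s^+$ on that interval as a Brownian excursion while $X_s^-, Y_s^-$ stay put. The contributions from different bubbles are conditionally independent given $\eta$, and Proposition~\ref{prop-mate-forests} identifies the bubble sizes with the jumps of a $\kappa/4$-stable L\'evy excursion. Summing these contributions and applying the L\'evy characterization of Brownian motion yields that $(X_s,Y_s)$ is a correlated two-dimensional Brownian motion with the claimed qualitative features. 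The explicit values $\mathbbm{a}^2 = 2/\sin(4\pi/\kappa)$ and correlation $-\cos(4\pi/\kappa)$ can be imported from \cite{kappa8-cov, ARS-FZZ}, or verified directly by computing the variance accumulated over a unit area interval.

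For the measurability of the curve-decorated wedge with respect to $(X_s,Y_s)_{s \geq 0}$, the plan is to reconstruct the surface from the tree picture, following the thick-wedge argument of \cite{wedges}. The pair of correlated CRTs determines the topology through leaf identification, and a conformal welding argument identifies the resulting topological surface (equipped with a space-filling curve and area/length measures) uniquely with the quantum wedge up to $\sim_\gamma$. The hard part will be the thin-wedge topology: the chain of countably many beads forces a delicate inductive welding-uniqueness argument to check that the welding of each bead is measurably determined by $(X_s,Y_s)_{s \geq 0}$ alone, without recourse to external randomness. This is where I expect the bulk of the technical work to lie.
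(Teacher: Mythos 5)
This proposition is not proved in the paper at all: it is imported verbatim from \cite[Theorem 1.9]{wedges} (with the constants in $\mathrm{CRT}_\kappa$ supplied by \cite{kappa8-cov, ARS-FZZ}), so you are comparing against a citation, not a proof. Your attempt to rederive it from scratch via a skeleton-plus-bubbles decomposition has a circularity at its core: your per-disk step is reduced ``to a thick-wedge mating-of-trees for $\kappa\in(4,8)$, which in turn can be obtained by the direct Brownian motion argument of \cite{wedges}'' --- but that direct argument (stationarity and independence of increments of the boundary-length process, obtained by re-rooting a quantum cone at quantum-typical points) \emph{is} the proof of the statement you are trying to establish, and it applies uniformly for $\kappa>4$ with no bubble decomposition needed. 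If you may invoke it, the entire skeleton/bubble apparatus is redundant; if you may not, your reduction has no base case. Relatedly, the single-disk mating-of-trees is not an easier input available upstream: in the literature it is deduced \emph{from} the cone/wedge statement, not the reverse, and local absolute continuity near the root does not by itself identify the global law of an excursion-type process.

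The assembly step also misdescribes the process. Since $\eta^{\mathrm{SF}}$ is parametrized by quantum area and the trunks of the forested lines carry zero area, there are no time intervals on which the curve ``traces along the trunk''; the processes $X_s^-,Y_s^-$ are the negated running infima of $X_s,Y_s$ and increase on a Lebesgue-null set of times, not at unit rate. So $(X_s,Y_s)$ is not a concatenation of deterministic drift pieces and independent excursions in the way you describe: reconstructing a Brownian motion from a Poissonian family of per-bubble excursions indexed by the jumps of a $\frac\kappa4$-stable L\'evy excursion, visited in a prescribed order, is an It\^o-excursion-theoretic argument you would have to carry out, and L\'evy's characterization additionally requires continuity and the (local) martingale property of $(X_s,Y_s)$, neither of which is addressed; the asserted conditional independence of the curve's restrictions to distinct bubbles given the skeleton is itself a nontrivial imaginary-geometry input. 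Together with the measurability claim, which you rightly identify as the hardest part but leave entirely open, the proposal does not amount to a proof; the appropriate move here is the one the paper makes, namely to cite \cite[Theorem 1.9]{wedges}.
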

	Using the above, we obtain an analog of Lemma~\ref{lem-mtg-8} for $(\cS_s)_{s>0}$ by relating the mating of independent forested lines to that of the correlated continuum random trees as in Figure~\ref{fig-forr-vs-mot}. We have already defined $\partial_\ell^-$, and now define $\partial_\ell^+\cS_s$ to be the boundary arc of $\cS_s$ such that the disjoint union $\partial_\ell^+\cS_s \cup \partial_\ell^- \cS_s$ is the left boundary arc of $\cS_s$. We likewise define $\partial_r^-$. 
	\begin{lemma}\label{lem-mtg-48}
		Let $\kappa \in (4,8)$ and $\gamma = \frac4{\sqrt\kappa}$. For $(\cS_s)_{s>0} \sim \mathrm{FL}^2_\kappa$, 
		let $A(\cS_s)$ be the quantum area of $\cS_s$, let $L(\cS_s)$ be the quantum length of $\partial_\ell^+ \cS_s$ minus the quantum length of $\partial_\ell^- \cS_s$, and similarly define $R(\cS_s)$. For $\mu_L, \mu_R$ chosen as in Lemma~\ref{lem-bm-mtg}, 
		we have $\E[e^{- A(\cS_s) - \mu_L R(\cS_s) - \mu_R L(\cS_s)}] = 1$ for any stopping time $\sigma$ for the filtration  generated by $(\cS_s)_{s>0}$. 
	\end{lemma}
\begin{proof}
		In the setting of Proposition~\ref{prop-mot-forr}, 
	let $\eta$ be the non-space-filling $\SLE_\kappa$ coupled with $\eta^\mathrm{SF}$. Parametrize $\eta$ by quantum length, and use it to construct the process $(\cS_s)_{s > 0}$ as in Section~\ref{sec-intro-48}.
	For each $s>0$, let $t(s) = \inf \{ t \: : \: \eta^\mathrm{SF}(t) = \eta(s)\}$. By a local version of the measurability claim of Proposition~\ref{prop-mot-forr},
	the process $(X, Y)|_{[0,t(s)]}$ determines $\cS_s$ \cite[Lemma 2.17]{ag-disk}. Therefore, any  stopping time $\sigma$ for $(\cS_s)_{s > 0}$ gives a stopping time $t(\sigma)$ for $(X, Y)$. 
	
	By definition, $(A(\cS_s), R(\cS_s), L(\cS_s)) = (t(s), X_{t(s)}, Y_{t(s)})$. Since $t(\sigma)$ is a stopping time for $(X,Y)$, the claim follows from Lemma~\ref{lem-bm-mtg}.
\end{proof}

\begin{figure}
	\begin{center}
		\includegraphics[scale=0.48]{}
	\end{center}
	\caption{\label{fig-mot-forr} Let $\kappa \in (4,8)$. \textbf{Left:} A pair of correlated continuum random trees described by $(X_s, Y_s)_{s \geq 0}$.  \textbf{Right:} Mating the trees gives a weight $(2-\frac{\gamma^2}2)$ quantum wedge decorated by an independent \emph{space-filling} $\SLE_\kappa$ curve.  \textbf{Middle:} The trees have been mated until the quantum area is $s$. We write $X_s^-, X_s^+, Y_s^-, Y_s^+$ for the quantum lengths of the four labelled boundary arcs.}
\end{figure}

\begin{figure}
	\begin{center}
		\includegraphics[scale=0.48]{}
	\end{center}
	\caption{\label{fig-forr-vs-mot} Let $\kappa \in (4,8)$. \textbf{Left:} A pair of independent forested lines.  \textbf{Middle:} The forested lines have been mated for some amount of time. \textbf{Right:} The pair of forested lines can be coupled with a sample from $\mathrm{CRT}^2_\kappa$ such that the middle picture is obtained by mating continuum random trees, then replacing the space-filling $\SLE_\kappa$ with its coupled non-space-filling $\SLE_\kappa$. }
\end{figure}

As before, consider reverse $\SLE_\kappa$ where the driving function $W_t$ is given by $W_0 = w$ and $dW_t = \sqrt\kappa dB_t$. Let $(\eta_t)_{t \geq 0}$ be the family of curves, and let $T\leq \infty$ be the first time $t$ that $g_t(x_k) \in \eta_t$ for some $k$. For $t < T$ define 
\[M_t = \prod_j |g_t'(z_j)|^{2\Delta_{\alpha_j}} \prod_k |g_t'(x_k)|^{\Delta_{\beta_k}} F_{-\frac\gamma2} (W_t, (g_t(z_j))_j, (g_t(x_k))_k).\]

\begin{lemma}\label{lem-Mt-forr-mtg}
	In the case $\kappa \in (4,8)$, $M_t$ is a local martingale.
\end{lemma}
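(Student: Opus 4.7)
The argument will parallel that of Lemma~\ref{lem-Mt-sf-mtg}, with Theorem~\ref{thm-forest-zipper} replacing Theorem~\ref{thm-sf-zipper} and Proposition~\ref{prop-mot-forr} supplying the mating-of-trees description. As in Lemma~\ref{lem-Mt-sf-mtg}, I first truncate via $T_N = T \wedge \inf\{t : \ol M_t \geq N\}$ and reduce to showing that for every stopping time $\tau \leq T_N$ we have $\E[M_\tau] = M_0$. Sampling $\phi_0 \sim \LF_\bbH^{(\frac{\beta_*}2, w), (\alpha_j, z_j)_j, (\frac{\beta_k}2, x_k)_k, (\frac\delta2, \infty)}$ and running the forested zipper of Theorem~\ref{thm-forest-zipper} to obtain $(\phi_t, \eta_t)$, and setting $G_t := \exp(-A_t - \sum_{k \neq k_*}\mu_k L_{k,t} - \mu_L L_t - \mu_R R_t)$ with the notation of Lemma~\ref{lem-Mt-sf-mtg}, Theorem~\ref{thm-forest-zipper} produces an analogue of~\eqref{eq-mtg} in the present setting once I verify that $\E[G_\tau \mid \phi_0] = G_0$.

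To prove this identity, I would couple the forested zipper with a sample $(X_s, Y_s)_{s \geq 0} \sim \mathrm{CRT}_\kappa$ via the mechanism indicated in the remark before Section~\ref{subsec-bpz-48}. Concretely, after sampling $(\phi_0, (X_\cdot, Y_\cdot))$ from $\LF_\bbH^{(\frac{\beta_*}2, w), (\alpha_j, z_j)_j, (\frac{\beta_k}2, x_k)_k, (\frac\delta2, \infty)} \times \mathrm{CRT}_\kappa$ and performing the space-filling mating-of-trees welding (Proposition~\ref{prop-mot-forr}) for $s$ units of quantum area, the exceptional times $s$ with $X_s^+ = Y_s^+ = 0$ produce simply connected welded surfaces which, upon replacing the space-filling interface by the corresponding ordinary $\SLE_\kappa(\tfrac\kappa2-4;\tfrac\kappa2-4)$ curve, reproduce the forested zipper $(\phi_t, \eta_t)$. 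Define $\sigma := A_\tau - A_0$, the quantum area swallowed by the zipper up to time $\tau$. In the coupling, $\sigma$ is the unique exceptional time whose welded state matches $(\phi_\tau, \eta_\tau)$, and $\sigma$ is a stopping time for $\wt\cF_s := \sigma(\phi_0, (X_u, Y_u)_{u \leq s})$ because the forested zipper run up to the $\cF_\tau$-measurable time $\tau$ is itself measurable with respect to $\wt\cF_\sigma$.

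Under this coupling, the bookkeeping of Proposition~\ref{prop-mot-forr} gives $(A_\tau - A_0, L_\tau - L_0, R_\tau - R_0) = (\sigma, X_\sigma, Y_\sigma)$: quantum area is intrinsic and so is conserved by welding, and at exceptional times $X_\sigma = X_\sigma^+ - X_\sigma^- = -X_\sigma^-$ (and likewise for $Y$) coincides with the boundary-length decrement on the left (resp.\ right); the lengths $L_{k,t}$ for $k \neq k_*$ are unchanged since those arcs lie outside the welded region. The coupling~\eqref{eq-cosmocoupling} with $\beta_* = -\gamma/2$ gives $\sigma_L - \sigma_R = \pm\gamma/4$ and hence $\pi\gamma(\sigma_L - Q/2) - \pi\gamma(\sigma_R - Q/2) = \pm\theta$ with $\theta = \pi\gamma^2/4 = 4\pi/\kappa$, which is precisely the relation required by Lemma~\ref{lem-bm-mtg} upon setting $x = \pi\gamma(\sigma_L - Q/2)$ (the opposite sign is handled by the $L \leftrightarrow R$ symmetry). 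Optional stopping of the bounded martingale $e^{-s - \mu_L X_s - \mu_R Y_s}$ from Lemma~\ref{lem-bm-mtg} at $\sigma$ then yields $\E[e^{-\sigma - \mu_L X_\sigma - \mu_R Y_\sigma} \mid \phi_0] = 1$, which is exactly $\E[G_\tau \mid \phi_0] = G_0$.

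The main obstacle I anticipate is making rigorous the coupling between $(\phi_t, \eta_t)_{t \geq 0}$ and $(X_s, Y_s)_{s \geq 0}$: the remark before Section~\ref{subsec-bpz-48} states only a distributional match at exceptional times, so promoting this to a genuine coupling in which $\sigma$ is verifiably $\wt\cF_s$-adapted requires the extra observation that the forested zipper, reparametrized by quantum area, is Markovian with respect to $\wt\cF_s$ and that its bubble-swallowing jumps are driven precisely by the independent randomness populating the forested lines. Alternatively, one could sidestep the explicit CRT coupling and prove $\E[G_\tau \mid \phi_0] = G_0$ by a direct mating-of-trees computation in the forested setting, which however seems more involved.
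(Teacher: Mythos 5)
Your proposal follows essentially the same route as the paper: truncate at $T_N$, couple the forested zipper to $(X_s,Y_s)\sim\mathrm{CRT}_\kappa$ through the space-filling mating of trees, observe that the CRT time $\sigma$ corresponding to $\tau$ is a stopping time for $\wt\cF_s=\sigma(\phi_0,(X_u,Y_u)_{u\le s})$, and apply Lemma~\ref{lem-bm-mtg} to get $\E[G_\tau\mid\phi_0]=G_0$ and hence~\eqref{eq-mtg}. The coupling you flag as the main obstacle is handled in the paper by building it in the forward direction rather than through the exceptional-times remark: sample $(\phi_0,(X_\cdot,Y_\cdot))$ from the product measure, construct the space-filling-$\SLE_\kappa$-decorated thin quantum wedge from $(X_\cdot,Y_\cdot)$ via Proposition~\ref{prop-mot-forr}, and cut it by the coupled ordinary $\SLE_\kappa(\frac\kappa2-4;\frac\kappa2-4)$; by Proposition~\ref{prop-mate-forests} the resulting pair $(F_L,F_R)$ has law $\mathrm{FL}_\kappa$ and is independent of $\phi_0$, so Theorem~\ref{thm-forest-zipper} applies verbatim to $(\phi_0,(F_L,F_R))$, and the mated state at forested-zipper time $t$ is by construction the CRT mating at time $s(t)$ with the space-filling interface replaced by its coupled ordinary curve, making $\sigma=s(\tau)$ manifestly $\wt\cF_s$-adapted.
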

\begin{proof}
	The argument of Lemma~\ref{lem-Mt-sf-mtg} carries over verbatim to this setting, using Lemma~\ref{lem-mtg-48} instead of Lemma~\ref{lem-mtg-8}.
\end{proof}

\begin{lemma}\label{lem-bpz-forr}
	Theorem~\ref{thm-bpz} holds for $\beta_* = -\frac\gamma2$ and $\gamma \in (\sqrt2,2)$. 
\end{lemma}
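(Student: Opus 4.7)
The plan is to mirror the argument of Lemma~\ref{lem-bpz-simple} (equivalently Lemma~\ref{lem-bpz-sf}), since the only input specific to each regime is the local martingale property of $M_t$, which for $\kappa \in (4,8)$ has already been established as Lemma~\ref{lem-Mt-forr-mtg}. All subsequent steps are purely PDE-theoretic and do not depend on the value of $\kappa$.

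First I would consider the diffusion
\[ X_t = (W_t, (g_t(z_j))_j, (g_t(x_k))_k, (|g_t'(z_j)|)_j, (|g_t'(x_k)|)_k) \]
driven by reverse $\SLE_\kappa$ with $\kappa = 16/\gamma^2$. Its infinitesimal generator $A$ is obtained in the same way as in the proof of Lemma~\ref{lem-bpz-simple}: the stochastic term $\frac{\kappa}{2}\partial_{\mathbf w}^2$ comes from the driving Brownian motion, and the drift terms come from the Loewner ODE for $g_t(z)$ and the corresponding ODE for $|g_t'(z)|$. Applying Lemma~\ref{lem-Mt-forr-mtg} gives that
\[ F(\mathbf w, (\mathbf z_j)_j, (\mathbf x_k)_k, (\mathbf a_j)_j, (\mathbf b_k)_k) := \prod_j \mathbf a_j^{2\Delta_{\alpha_j}} \prod_k \mathbf b_k^{\Delta_{\beta_k}} F_{-\gamma/2}(\mathbf w, (\mathbf z_j)_j, (\mathbf x_k)_k) \]
is a weak solution of $AF = 0$. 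Using the product rule to strip off the $\mathbf a_j$ and $\mathbf b_k$ factors (they just scale in a multiplicative way under $A$), one obtains that $F_{-\gamma/2}$ is a weak solution of $\cD F_{-\gamma/2} = 0$, where
\[ \cD := \tfrac{\kappa}{2}\partial_{\mathbf w}^2 - \sum_j \Bigl(\tfrac{2}{\mathbf z_j - \mathbf w}\partial_{\mathbf z_j} + \tfrac{2}{\ol{\mathbf z}_j - \mathbf w}\partial_{\ol{\mathbf z}_j}\Bigr) - \sum_k \tfrac{2}{\mathbf x_k - \mathbf w}\partial_{\mathbf x_k} + \sum_j \Re \tfrac{4\Delta_{\alpha_j}}{(\mathbf z_j - \mathbf w)^2} + \sum_k \tfrac{2\Delta_{\beta_k}}{(\mathbf x_k - \mathbf w)^2}. \]

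Finally I would invoke hypoellipticity of $\cD$ via H\"ormander's condition, exactly as cited in the proof of Lemma~\ref{lem-bpz-simple} (following \cite[Proposition 2.6]{pw-multiple-SLE}), to conclude that $F_{-\gamma/2}$ is smooth and that $\cD F_{-\gamma/2} = 0$ holds in the strong sense. Rewriting this equation by dividing through by $\kappa/2 = \beta_*^{-2}\cdot\tfrac{\kappa}{2}\beta_*^2$ (with $\beta_* = -\gamma/2$, so $\beta_*^2 = \kappa/4$ and hence $1/\beta_*^2 = 4/\kappa = (2/\kappa)\cdot 2$) yields precisely the BPZ equation in the form stated in Theorem~\ref{thm-bpz}. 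Since there is no obstacle beyond importing the previous argument verbatim, this proof should be essentially a one-line reference to Lemma~\ref{lem-bpz-simple} with Lemma~\ref{lem-Mt-forr-mtg} as input.
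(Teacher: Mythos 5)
Your proposal is correct and is exactly the paper's proof: the paper disposes of this lemma in one line by citing Lemma~\ref{lem-Mt-forr-mtg} for the local martingale property and rerunning the hypoellipticity argument of Lemma~\ref{lem-bpz-simple} verbatim. (Only a cosmetic slip in your final normalization remark: for $\beta_*=-\frac\gamma2$ and $\kappa=\frac{16}{\gamma^2}$ one has $\beta_*^2=\frac{4}{\kappa}$, hence $\frac{1}{\beta_*^2}=\frac{\kappa}{4}$, so one multiplies $\cD$ by $\frac12$ rather than dividing by $\frac\kappa2$ to match the form in Theorem~\ref{thm-bpz}.)
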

\begin{proof}
	Given Lemma~\ref{lem-Mt-forr-mtg} the argument of Lemma~\ref{lem-bpz-simple} implies the result. 
\end{proof}

\bibliographystyle{hmralphaabbrv}
\bibliography{cibib}

\end{document}